\numberwithin{equation}{section}
\numberwithin{figure}{section}
\newtheorem{theorem}{Theorem}[section]
\newtheorem{lemma}[theorem]{Lemma}
\newtheorem{proposition}[theorem]{Proposition}
\newtheorem{corollary}[theorem]{Corollary}
\theoremstyle{definition}
\newtheorem{remark}[theorem]{Remark}
\newcommand{\eps}{\epsilon}
\newcommand{\p}{\mathbf{P}}
\renewcommand{\C}{\mathbf{C}}
\newcommand{\D}{\mathbf{D}}
\newcommand{\E}{\mathbf{E}}
\newcommand{\N}{\mathbf{N}}
\newcommand{\Z}{\mathbf{Z}}
\newcommand{\Q}{\mathbf{Q}}
\newcommand{\R}{\mathbf{R}}
\newcommand{\h}{\mathbf{H}}
\newcommand{\CB}{\mathcal {B}}
\newcommand{\CC}{\mathcal {C}}
\newcommand{\CD}{\mathcal {D}}
\newcommand{\CF}{\mathcal {F}}
\newcommand{\CI}{\mathcal {I}}
\newcommand{\CK}{\mathcal {K}}
\newcommand{\CL}{\mathcal {L}}
\newcommand{\CR}{\mathcal {R}}
\newcommand{\CS}{\mathcal {S}}
\newcommand{\CU}{\mathcal {U}}
\newcommand{\CV}{\mathcal {V}}
\newcommand{\CW}{\mathcal {W}}
\newcommand{\CH}{\mathcal {H}}
\newcommand{\dist}{{\rm dist}}
\newcommand{\diam}{{\rm diam}}
\newcommand{\im}{{\rm Im}}
\newcommand{\re}{{\rm Re}}
\newcommand{\confrad}{{\rm CR}}
\newcommand{\rad}{{\rm rad}}
\newcommand{\one}{{\bf 1}}
\newcommand{\ol}{\overline}
\newcommand{\ul}{\underline}
\newcommand{\wh}{\widehat}
\newcommand{\wt}{\widetilde}
\numberwithin{equation}{section}
\newcommand{\beq}{\begin{equation}}
\newcommand{\eeq}{\end{equation}}
\newcommand{\bthm}{\begin{thm}}
\newcommand{\ethm}{\end{thm}}
\newcommand{\bcor}{\begin{cor}}
\newcommand{\ecor}{\end{cor}}
\newcommand{\bdfn}{\begin{dfn}}
\newcommand{\edfn}{\end{dfn}}
\newcommand{\blem}{\begin{lem}}
\newcommand{\elem}{\end{lem}}
\newcommand{\bppn}{\begin{ppn}}
\newcommand{\eppn}{\end{ppn}}
\newcommand{\brmk}{\begin{rmk}}
\newcommand{\ermk}{\end{rmk}}
\newcommand{\bpf}{\begin{proof}}
\newcommand{\epf}{\end{proof}}
\newcommand{\bnm}{\begin{enumerate}}
\newcommand{\enm}{\end{enumerate}}
\newcommand{\bitm}{\begin{itemize}}
\newcommand{\eitm}{\end{itemize}}
\newcommand{\bex}{\begin{ex}}
\newcommand{\eex}{\end{ex}}
\newcommand{\ka}{\kappa}
\newcommand{\ze}{\zeta} 
\newcommand{\abs}[1]{|#1|}
\newcommand{\ball}[2]{B_{#1}(#2)}
\renewcommand{\emptyset}{\varnothing}
\renewcommand\d[1]{\ d #1}
\newcommand{\pd}{\partial}
\newcommand{\set}[1]{\{#1\}}
\renewcommand{\setminus}{\backslash}
\newcommand{\SLE}{\mathrm{SLE}}
\newcommand{\GFF}{\mathrm{GFF}}
\newcommand{\giv}{\,|\,}
\newcommand{\RohdeSchramm}{MR2153402}
\newcommand{\SmirnovIsing}{MR2680496}
\newcommand{\SSGFF}{MR2486487}
\newcommand{\MillerGL}{arXiv:1010.1356}
\newcommand{\SW}{MR2188260}
\newcommand{\Pommerenke}{MR1217706}
\newcommand{\Schramm}{MR1776084}
\newcommand{\LSW}{MR2044671}
\newcommand{\Smirnov}{MR2227824}
\newcommand{\CamiaNewmanPath}{MR2322705}
\newcommand{\DubedatDuality}{MR2571956}
\newcommand{\ZhanDuality}{MR2439609}
\newcommand{\ZhanDualityII}{MR2682265}
\newcommand{\LawlerCutpointsBM}{MR1386294}
\newcommand{\LSWBrownianIntersectionI}{MR1879850}
\newcommand{\LSWBrownianIntersectionII}{MR1879851}
\newcommand{\LSWBrownianIntersectionIII}{MR1899232}
\newcommand{\DuplantierSaluer}{PhysRevLett.63.2536}
\newcommand{\WernerSLEKappaRho}{MR2060031}
\newcommand{\SheWeld}{2010arXiv1012.4797S}
\newcommand{\T}{\mathbf{T}}
\newcommand{\HH}{\mathbf{H}}
\newcommand{\LH}{\mathcal{H}}
\definecolor{red}{rgb}{1,0,0}
\definecolor{purple}{rgb}{0.7,0,0.7}
\definecolor{gray}{rgb}{0.6,0.6,0.6}
\definecolor{dgreen}{rgb}{0.0,0.4,0.0}
\newcommand{\dimH}{\dim_{\CH}}
\begin{document}
\begin{frontmatter}

\title{Intersections of SLE Paths: the~double~and~cut~point~dimension~of~SLE}
\runtitle{Intersections of SLE Paths}

  \begin{aug}

    \runauthor{Jason Miller and Hao Wu}

    \author{Jason Miller}

    \author{Hao Wu}

\affiliation{Massachusetts Institute of Technology and Universit\'e Paris-Sud}

\end{aug}

\begin{abstract}
We compute the almost-sure Hausdorff dimension of the double points of chordal $\SLE_\kappa$ for $\kappa > 4$,  confirming a prediction of Duplantier--Saleur (1989) for the contours of the FK model.  We also compute the dimension of the cut points of chordal $\SLE_\kappa$ for $\kappa > 4$ as well as analogous dimensions for the radial and whole-plane $\SLE_\kappa(\rho)$ processes for $\kappa > 0$.  We derive these facts as consequences of a more general result in which we compute the dimension of the intersection of two flow lines of the formal vector field $e^{ih/\chi}$, where $h$ is a Gaussian free field and $\chi > 0$, of different angles with each other and with the domain boundary.
\end{abstract}

\thispagestyle{empty}

\setattribute{keyword}{AMS}{AMS 2010 subject classifications:}
\begin{keyword}[class=AMS]
\kwd[Primary ]{60J67} 
\kwd[; secondary ]{60D05.} 
\end{keyword}

\begin{keyword}
\kwd{Schramm--Loewner evolution (SLE), Hausdorff dimension, double points, cut points, Gaussian free field (GFF), imaginary geometry.}
\end{keyword}

\date{\today}

\end{frontmatter}

\maketitle

\section{Introduction}
\subsection{Overview}

The Schramm-Loewner evolution $\SLE_\kappa$ ($\kappa > 0$) is the canonical model for a conformally invariant probability measure on non-crossing, continuous paths in a proper simply connected domain $D$ in $\C$.   $\SLE_\kappa$ was introduced by Oded Schramm \cite{\Schramm} as the candidate for the scaling limit of loop-erased random walk and for the interfaces in critical percolation.  Since its introduction, $\SLE$ has been proved to describe the limiting interfaces in many different models from statistical mechanics \cite{\LSW,\Smirnov,\CamiaNewmanPath,\SSGFF,\MillerGL,\SmirnovIsing,chelkak-smirnov,CDCHKS,HONG_KYT_ISING}.  The purpose of this article is to study self-intersections of $\SLE$ paths as well as the intersection of multiple $\SLE$ paths when coupled together using the Gaussian free field (GFF).  Our main results are Theorems~\ref{thm::double_point_dimension}--\ref{thm::boundary_dimension} which give the dimension of the self-intersection and cut points of chordal, radial, and whole-plane $\SLE_\kappa$ and $\SLE_\kappa(\rho)$ processes as well as the dimension of the intersection of such paths with the domain boundary.  Theorems~\ref{thm::double_point_dimension}--\ref{thm::whole_plane_self_intersection_dimension_cfl} are actually derived from Theorem~\ref{thm::two_flowline_dimension} which gives the dimension of the intersection of two $\SLE_\kappa(\ul{\rho})$ processes coupled together as flow lines of a GFF \cite{She_SLE_lectures, dubedat2009, MakarovSmirnov09,SchrammShe10,HagendorfBauerBernard10,IzyurovKytola10, \SheWeld,IG1,IG2,IG3,IG4} with different angles.

\subsection{Main Results}

Throughout, unless explicitly stated otherwise we shall assume that $\kappa' > 4$ and $\kappa = 16/\kappa' \in (0,4)$.  The first result that we state is the double point dimension for chordal $\SLE_{\kappa'}$.

\begin{theorem}
\label{thm::double_point_dimension}
Let $\eta$ be a chordal $\SLE_{\kappa'}$ process for $\kappa' >4$ and let $\CD$ be the set of double points of $\eta$.  Almost surely,
\begin{equation}
\label{eqn::double_point_dimension}
 \dimH(\mathcal{D})
 =\begin{cases}
   2-\frac{(12-\kappa')(4+\kappa')}{8\kappa'} \quad&\text{for}\quad\quad \kappa' \in (4,8)\\
   1+\frac{2}{\kappa'} \quad&\text{for}\quad\quad \kappa' \geq 8.
 \end{cases}
\end{equation}
In particular, when $\kappa'=6$, $\dimH(\CD)=\tfrac{3}{4}$.
\end{theorem}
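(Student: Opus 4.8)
The plan is to deduce \eqref{eqn::double_point_dimension} from Theorem~\ref{thm::two_flowline_dimension}, as announced in the introduction, by identifying the double point set $\CD$ with the intersection of two flow lines of a Gaussian free field. Normalise so that $\eta'$ is a chordal $\SLE_{\kappa'}$ in $\mathbf{H}$ from $0$ to $\infty$, and couple it with a GFF $h$ on $\mathbf{H}$ so that $\eta'$ is the counterflow line of $h$ from $0$ to $\infty$, following \cite{IG1,IG4}. Two features of this coupling are needed. First, for any $\eta'$-stopping time $\tau$ the left (resp.\ right) outer boundary of $\eta'([0,\tau])$ --- the arc of $\partial(\mathbf{H}\setminus\eta'([0,\tau]))$ lying to the left (resp.\ right) of $\eta'$ and joining $\eta'(\tau)$ to $0$ --- is a flow line of $h$ of angle $+\tfrac{\pi}{2}$ (resp.\ $-\tfrac{\pi}{2}$) run from $\eta'(\tau)$ to $0$, and these arcs are monotone in $\tau$; this is part of the light-cone description of $\eta'$ from \cite{IG4}. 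Second, the coupling is reversible \cite{IG3,IG4}, so the same holds with the roles of $0$ and $\infty$ exchanged.

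Granting this, I would show that a point $z\in\mathbf{H}$ is a double point of $\eta'$ if and only if $z$ lies on a flow line of $h$ terminating at $0$ and on a flow line of $h$ terminating at $\infty$, the two flow lines having angles that differ by $\pi$ (the opening of the light cone of $\eta'$, i.e.\ the angle by which its two outer boundaries differ). For the forward implication: if $z=\eta'(s)=\eta'(t)$ with $s<t$, then $z$ sits on the left outer boundary of $\eta'([0,s])$, a flow line of angle $+\tfrac{\pi}{2}$ from $z$ towards $0$, and --- reading $\eta'$ backwards from $\infty$ and invoking reversibility --- also on the corresponding outer boundary flow line of angle $-\tfrac{\pi}{2}$ from $z$ towards $\infty$; conformally mapping $z$ to a boundary point then exhibits two flow lines through $z$ with angle gap $\pi$. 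The reverse implication uses the light-cone construction together with the flow line interaction rules of \cite{IG1,IG4}: the region caught between two such flow lines is swept out by $\eta'$, which is thereby forced through $z$ at two distinct ``extreme'' visits. One must also verify that the exceptional configurations --- points of multiplicity $\ge 3$ (for the range of $\kappa'$ in question the self-intersection multiplicity is a.s.\ at most $2$), tangential self-touchings, and the set $\eta'\cap\partial\mathbf{H}$ --- have strictly smaller dimension, so that they may be discarded in both the upper and lower bound on $\dimH(\CD)$.

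With the identification in place, $\dimH(\CD)$ is read off from Theorem~\ref{thm::two_flowline_dimension} applied with angle gap $\pi$; after simplification,
\[
\dimH(\CD)\;=\;2-\frac{(12-\kappa')(4+\kappa')}{8\kappa'}\;=\;1+\frac{\kappa'}{8}-\frac{6}{\kappa'}\qquad\text{for }\kappa'\in(4,8).
\]
For $\kappa'\ge 8$ the curve $\eta'$ is space-filling, its double points are exactly its self-crossings, and the relevant flow lines have parameter $\kappa=16/\kappa'\le 2$ and are simple; the analogous, simpler, identification together with Theorem~\ref{thm::two_flowline_dimension} then gives $\dimH(\CD)=1+2/\kappa'$, which is precisely the dimension $1+\kappa/8$ of a single flow line. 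Substituting $\kappa'=6$ yields $\dimH(\CD)=\tfrac34$; as a sanity check this coincides with $2-\tfrac54$, the dimension of the polychromatic four-arm points of critical percolation, matching the Duplantier--Saleur prediction.

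The crux --- and the main obstacle --- is making the identification of $\CD$ with a flow line intersection rigorous. The forward inclusion requires a careful limiting argument as the relevant stopping times increase to the hitting times of $z$, using continuity of flow lines in their initial data together with the monotonicity of the outer boundaries of $\eta'$; the reverse inclusion requires the full interaction/light-cone machinery of \cite{IG4} to show that a generic intersection point of two appropriately-angled flow lines is genuinely traversed twice by $\eta'$. Handling the higher-multiplicity and boundary exceptional sets uniformly --- so that the upper bound (via a first moment/covering estimate on the two-flow-line event) and the lower bound (via a Frostman measure on $\CD$) both reduce cleanly to Theorem~\ref{thm::two_flowline_dimension} --- is the other point that needs care.
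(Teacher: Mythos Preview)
Your identification is the wrong one: the angle gap you use is $\pi$, but $\pi$ is the cut-point angle \eqref{eqn::angle_cut}, not the double-point angle. With $\theta_2-\theta_1=\pi$ one has $\rho=-\tfrac{\kappa}{2}$ and Theorem~\ref{thm::two_flowline_dimension} gives
\[
2-\frac{1}{2\kappa}\Big(\!-\tfrac{\kappa}{2}+\tfrac{\kappa}{2}+2\Big)\Big(\!-\tfrac{\kappa}{2}-\tfrac{\kappa}{2}+6\Big)
=2-\frac{6-\kappa}{\kappa}
=3-\frac{3\kappa'}{8},
\]
which is $\dimH(\CK)$ from Theorem~\ref{thm::cut_set_dimension}, not the double-point formula. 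Your ``sanity check'' at $\kappa'=6$ is misleading: this is exactly the unique $\kappa'$ at which the cut-point and double-point dimensions coincide (both equal $\tfrac34$). At $\kappa'=5$, say, your argument would output $3-\tfrac{15}{8}=\tfrac98$ while the correct value is $1+\tfrac58-\tfrac65=\tfrac{17}{40}$. The conceptual reason is that the pair ``left boundary of $\eta'([0,s])$ / right boundary of the time-reversal'' you describe is precisely the pair whose intersection is the cut set; a double point is not the tip of either segment at the two visit times, and it does not sit at a transverse meeting of the two outer-boundary flow lines with gap $\pi$.

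The paper's route is genuinely different. It first reduces to $\SLE_{\kappa'}(\tfrac{\kappa'}{2}-4;\tfrac{\kappa'}{2}-4)$ and then uses the pocket decomposition of \cite{IG3}: for a boundary point $z$ one draws the outer boundary $\eta_z^1$ of $\eta'$ up to $t(z)$ and, given $\eta_z^1$, the outer boundary $\eta_z^2$ of $\eta'$ after $t(z)$. Both are flow lines with the \emph{same} angle $\tfrac{\pi}{2}$, but $\eta_z^2$ is a flow line of the conditional field given $\eta_z^1$. Lemma~\ref{lem::strip_paths} then identifies the pair $(\eta_z^1,\eta_z^2)$, via the reversibility of \cite{IG2}, with two flow lines of a single GFF whose angle gap is $\theta_{\rm double}=\pi(\kappa-2)/(2-\tfrac{\kappa}{2})$; feeding this into Theorem~\ref{thm::two_flowline_dimension} (with $\rho=\kappa-4$) produces the correct dimension. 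The upper bound is obtained not by a direct covering of $\CD$ but by Lemma~\ref{lem::double_points_covered}, which shows that every double point lies in some $P_\cap(z_j)$ for a countable exploration, so a countable union of sets of the right dimension covers $\CD$. For $\kappa'\ge 8$ the paper does not invoke Theorem~\ref{thm::two_flowline_dimension} at all: it uses directly that the outer boundary of $\eta'([0,t(z)])$ is an $\SLE_\kappa$-type curve of dimension $1+\tfrac{\kappa}{8}=1+\tfrac{2}{\kappa'}$ (Beffara), every point of which except $z$ is revisited since $\eta'$ is space-filling.
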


Recall that chordal $\SLE_{\kappa'}$ is self-intersecting for $\kappa' > 4$ and space-filling for $\kappa' \geq 8$ \cite{\RohdeSchramm}.  The dimension in \eqref{eqn::double_point_dimension} for $\kappa' \in (4,8)$ was first predicted by Duplantier--Saleur \cite{\DuplantierSaluer} in the context of the contours of the FK model.  The almost sure Hausdorff dimension of $\SLE_\kappa$ is $1+\tfrac{\kappa}{8}$ for $\kappa \in (0,8)$ and $2$ for $\kappa \geq 8$ \cite{beffara2008} and, by $\SLE$ duality, the outer boundary of an $\SLE_{\kappa'}$ process for $\kappa' > 4$ stopped at a positive and finite time is described by a certain $\SLE_\kappa$ process \cite{\ZhanDuality,\ZhanDualityII,\DubedatDuality,IG1,IG3,IG4}.  Thus \eqref{eqn::double_point_dimension} for $\kappa' \geq 8$ states that the double point dimension is equal to the dimension of the outer boundary of the path.  We note that chordal $\SLE_{\kappa'}$ does not have triple points for $\kappa' \in (4,8)$ and the set of triple points is countable for $\kappa' \geq 8$; see Remark~\ref{rem::triple_points}.

Our second main result is the dimension of the cut-set of chordal $\SLE_{\kappa'}$:

\begin{theorem}
\label{thm::cut_set_dimension}
Let $\eta$ be a chordal $\SLE_{\kappa'}$ process for $\kappa' > 4$ and let
\[ \CK = \{\eta(t): t\in(0,\infty),\ \eta(0,t)\cap \eta(t,\infty)=\emptyset\}\]
be the cut-set of $\eta$.  Then, for $\kappa' \in(4,8)$, almost surely
\begin{equation}
\label{eqn::cut_set_dimension}
\dimH(\CK)= 3-\frac{3\kappa'}{8}.
\end{equation}
In particular, when $\kappa'=6$, $\dimH(\CK) = \tfrac{3}{4}$. For $\kappa' \ge 8$, almost surely $\CK=\emptyset$.
\end{theorem}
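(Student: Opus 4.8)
The plan is to identify the cut-set $\CK$ with the intersection of the left and right outer boundaries of $\eta$, realized via imaginary geometry as flow lines of a Gaussian free field, and to read off its dimension from Theorem~\ref{thm::two_flowline_dimension}; the regime $\kappa' \geq 8$ is disposed of separately. Indeed, for $\kappa' \geq 8$ the curve $\eta$ is space-filling \cite{\RohdeSchramm}: fixing $t \in (0,\infty)$ and setting $V := \h \setminus \eta[0,t]$ --- a nonempty open set, since the curve $\eta[0,t]$ has finite area and is therefore not all of $\ol{\h}$ --- space-fillingness gives $V \subseteq \eta(t,\infty)$, so $\partial V \cap \h \subseteq \eta[0,t] \cap \ol{\eta[t,\infty)}$, an infinite set; choosing $w$ in it with $w \neq \eta(t)$ we get $w = \eta(s) = \eta(s')$ for some $0 < s < t < s' < \infty$, so $\eta(0,t) \cap \eta(t,\infty) \neq \emptyset$ and $\eta(t)$ is not a cut point. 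Hence $\CK = \emptyset$.

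Now fix $\kappa' \in (4,8)$, so $\kappa = 16/\kappa' \in (2,4)$. Couple $\eta$ with a GFF $h$ on $\h$ so that $\eta$ is the counterflow line of $h$ from $0$ to $\infty$, with the boundary data for which $\eta$ is a plain chordal $\SLE_{\kappa'}$, and let $\eta_L$, $\eta_R$ be the flow lines of $h$ started from $0$ of angle $\tfrac\pi2$ and $-\tfrac\pi2$. By \cite{IG1} these are the left and right outer boundaries of $\eta$ --- the light cone of the counterflow line has $\eta_L$ and $\eta_R$ as its two boundary flow lines --- and since $\kappa < 4$ their angle difference $\pi$ falls in the range for which two flow lines of a common GFF bounce off one another, so $\eta_L \cap \eta_R \neq \{0\}$. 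The crux is the almost sure identity
\begin{equation}
\label{eqn::cut_set_identity}
 \CK = (\eta_L \cap \eta_R) \setminus \{0\}.
\end{equation}
Granting \eqref{eqn::cut_set_identity}, note that $\eta_L$ and $\eta_R$ are flow lines of a common GFF of angle difference $\pi$, so Theorem~\ref{thm::two_flowline_dimension} evaluated at these angles gives $\dimH(\eta_L \cap \eta_R) = 3 - \tfrac{3\kappa'}{8}$, and since $\dimH\{0\} = 0$ we conclude $\dimH(\CK) = 3 - \tfrac{3\kappa'}{8}$; this equals $\tfrac34$ at $\kappa' = 6$ and decreases to $0$ as $\kappa' \uparrow 8$, matching the previous case.

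Establishing \eqref{eqn::cut_set_identity} is, I expect, the main obstacle. A point $z = \eta(t) \in \h$ is a cut point exactly when the trace of $\eta$ is pinched at $z$, i.e.\ when removing $z$ disconnects $\eta(0,t)$ from $\eta(t,\infty)$; this happens precisely when $z$ lies, relative to $\h$, on the boundary of the union of the connected components of $\h \setminus \eta$ to the left of $\eta$ and also on the boundary of the union of those to its right --- which, by the imaginary geometry description of the outer boundary of a counterflow line, means $z \in \eta_L \cap \eta_R$, the point $0$ (the common starting point of $\eta_L$ and $\eta_R$) being the only non-pinched point of this intersection. The delicate direction is to rule out that a double point of $\eta$ --- these are abundant for $\kappa' \in (4,8)$ --- gets counted: at a typical double point $\eta(t_1) = \eta(t_2)$ the arc $\eta[t_1,t_2]$ bounds a bubble, one prong of $\eta$ there points into the interior of the bubble, and that interior is not part of the outer boundary, so the double point lies on at most one of $\eta_L$, $\eta_R$. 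The ingredients needed --- the realization of the two outer boundaries of the counterflow line as flow lines, their bouncing interaction, and the fact that near any $z \in \eta_L \cap \eta_R$ the counterflow line is approached only from the two sides delimited by $\eta_L$ and $\eta_R$ --- are all provided by the imaginary geometry results \cite{IG1,IG4}.
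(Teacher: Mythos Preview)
Your proposal is correct and follows exactly the route the paper takes: identify $\CK$ with $\eta_L \cap \eta_R$ (this is stated in Figure~\ref{fig::counterflowline_and_flowline} and equation~\eqref{eqn::angle_cut}) and then apply Theorem~\ref{thm::two_flowline_dimension} with angle gap $\theta_{\rm cut}=\pi$, while the case $\kappa'\ge 8$ is immediate from space-filling. One minor slip to fix: in the imaginary-geometry coupling the outer-boundary flow lines $\eta_L,\eta_R$ emanate from the \emph{terminal} point of the counterflow line, not its starting point, so either run $\eta'$ from $\infty$ to $0$ (so that $\eta_L,\eta_R$ start at $0$ and Theorem~\ref{thm::two_flowline_dimension} applies verbatim) or invoke the reversibility of flow lines from \cite{IG2}.
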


The dimension \eqref{eqn::cut_set_dimension} was conjectured in \cite{DB2004} by Duplantier.  Note that we recover the cut-set dimension for Brownian motion and $\SLE_6$ established in the works of Lawler and Lawler-Schramm-Werner \cite{\LawlerCutpointsBM,\LSWBrownianIntersectionI,\LSWBrownianIntersectionII,\LSWBrownianIntersectionIII}.  The dimension of the \emph{cut times} (with respect to the capacity parameterization for $\SLE$), i.e.\ the set $\{t \in (0,\infty) : \eta(0,t)\cap \eta(t,\infty)=\emptyset\}$ is $2-\tfrac{\kappa'}{4}$ for $\kappa' \in (4,8)$ and was computed by Beffara in \cite[Theorem 5]{beffara2004}.

Our next result gives the dimension of the self-intersection points of the radial and whole-plane $\SLE_\kappa(\rho)$ processes for $\kappa \in (0,4)$.  Unlike chordal $\SLE_\kappa$ and $\SLE_\kappa(\rho)$ processes, such processes can intersect themselves depending on the value of $\rho > -2$.  The maximum number of times that such a process can hit any given point for $\kappa > 0$ is given by \cite[Proposition~3.31]{IG4}:
\begin{equation}
\label{eqn::whole_plane_intersection_count}
\lceil J_{\kappa,\rho} \rceil \quad\text{where}\quad J_{\kappa,\rho} = \frac{\kappa}{2(2+\rho)}.
\end{equation}
In particular, $J_{\kappa,\rho} \uparrow +\infty$ as $\rho \downarrow -2$ and $J_{\kappa,\rho} \downarrow 1$ as $\rho \uparrow \tfrac{\kappa}{2}-2$.  Recall that $-2$ is the lower threshold for an $\SLE_\kappa(\rho)$ process to be defined.  For radial or whole-plane $\SLE_\kappa(\rho)$, the interval of $\rho$ values in which such a process is self-intersecting is given by $(-2,\tfrac{\kappa}{2}-2)$ (see, e.g., \cite[Section~2.1]{IG4}).  (For chordal $\SLE_\kappa(\rho)$, this is the interval of $\rho$ values in which such a process is boundary intersecting.)  For $\rho \geq \tfrac{\kappa}{2}-2$, such processes are almost surely simple.

\begin{theorem}
\label{thm::whole_plane_self_intersection_dimension}
Suppose that $\eta$ is a radial $\SLE_\kappa(\rho)$ process in $\D$ for $\kappa \in (0,4)$ and $\rho \in \big( -2,\tfrac{\kappa}{2}-2\big)$.  Assume that $\eta$ starts from $1$ and has a single boundary force point of weight $\rho$ located at $1^-$ (immediately to the left of $1$ on $\partial \D$).  For each $j \in \N$, let $\CI_j$ denote the set of points in (the interior of) $\D$ that $\eta$ hits exactly $j$ times.  For each $2 \leq j \leq \lceil J_{\kappa,\rho} \rceil$, where $J_{\kappa,\rho}$ is given by \eqref{eqn::whole_plane_intersection_count}, we have that
\begin{equation}
\label{eqn::whole_plane_multiple_points}
\begin{split}
 &\dimH(\CI_j) = \\
&\frac{1}{8\kappa}\bigg( 4+\kappa + 2\rho - 2j \big( 2+\rho \big) \bigg) \bigg( 4+\kappa-2\rho + 2j \big(2+\rho \big) \bigg)
\end{split}
\end{equation}
almost surely.  For $j >\lceil  J_{\kappa,\rho} \rceil$, almost surely $\CI_j = \emptyset$.  These results similarly hold if $\eta$ is a whole-plane $\SLE_\kappa(\rho)$ process.

Let $\CB_j$ be the set of points in $\partial \D$ that $\eta$ hits exactly $j$ times.  For each $1 \leq j \leq \lceil J_{\kappa,\rho} \rceil -1$, we have that
\begin{equation}
\label{eqn::radial_boundary}
\begin{split}
\dimH(\CB_j) &= \frac{1}{2\kappa}\bigg(\kappa-2 j \big(2+\rho \big) \bigg) \bigg(2+j \big(2+\rho\big) \bigg)\\
&\quad\text{almost surely on}\quad \{\CB_j \neq \emptyset\}.
\end{split}
\end{equation}
For each $j > \lceil J_{\kappa,\rho} \rceil -1$, almost surely $\CB_j = \emptyset$.
\end{theorem}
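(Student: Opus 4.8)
The plan is to deduce both parts from the imaginary-geometry description of radial and whole-plane $\SLE_\kappa(\rho)$ together with Theorem~\ref{thm::two_flowline_dimension}. First, realize $\eta$ as a flow line of a Gaussian free field: by \cite{IG4}, the radial $\SLE_\kappa(\rho)$ in $\D$ from $1$ targeted at $0$ with force point at $1^-$ is the flow line from $1$ of a GFF on $\D$ whose boundary data has a single jump at $1$ encoding the weight $\rho$, and whole-plane $\SLE_\kappa(\rho)$ from $0$ to $\infty$ is the flow line from $0$ of a whole-plane GFF with the analogous additive-constant/winding convention. These two laws are mutually absolutely continuous when restricted to the trace of $\eta$ inside any ball $B$ whose closure avoids the start and target points of the respective process, and, using the scale-invariance of whole-plane $\SLE_\kappa(\rho)$ together with this absolute continuity, it is enough to compute the dimensions of $\CI_j\cap B$ and (for the radial process) $\CB_j\cap B$ for one such $B$. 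Hence we may work in whichever of the two settings is convenient and transfer.

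Second, and this is the heart of the argument, I would establish the local structure of $j$-fold points. If $\eta$ visits a point $z$ in the interior exactly $j$ times, then inside a small neighbourhood of $z$ the trace of $\eta$ consists of exactly $j$ simple arcs through $z$; conditioning successively on $\eta$ run up to each of its hitting times of $z$ and applying the imaginary-geometry Markov property, these arcs are initial segments of flow lines $\eta^1,\dots,\eta^j$ emanating from $z$ whose angles form an arithmetic progression with common increment $\vartheta=\vartheta(\kappa,\rho)$, the increment being the angle at which two flow lines bounce off one another in the manner dictated by a force point of weight $\rho$. By the monotonicity, merging and crossing rules for flow lines, $z$ is a $j$-fold point precisely when the two extreme strands $\eta^1$ and $\eta^j$, which have angle gap $(j-1)\vartheta$, both pass through $z$ in the prescribed relative configuration: the $j-2$ intermediate strands, being sandwiched between $\eta^1$ and $\eta^j$, are then forced and impose no further constraint. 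Thus $\CI_j$ coincides, up to a set of Hausdorff dimension $0$, with a countable union of intersections of pairs of flow lines with angle gap $(j-1)\vartheta$; feeding the corresponding angles and boundary data into Theorem~\ref{thm::two_flowline_dimension} and simplifying produces \eqref{eqn::whole_plane_multiple_points}. The admissible range of $j$ is exactly the range in which such a two-flow-line intersection is non-empty, equivalently in which the right-hand side of \eqref{eqn::whole_plane_multiple_points} is non-negative, which reproves \cite[Proposition~3.31]{IG4}; for $j>\lceil J_{\kappa,\rho}\rceil$ the intersection, hence $\CI_j$, is empty.

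Third, the boundary statement (which concerns only the radial process) is handled in the same way, with the role of the outermost strand played by the arc of $\partial\D$: a point $w\in\partial\D$ hit $j$ times carries $j$ strands issuing from $w$, and accumulating the force point across the successive bounces amounts to replacing the weight $\rho$ by the effective weight $j(2+\rho)-2$, so that $\CB_j$ agrees up to a set of dimension $0$ with the boundary-intersection set of a chordal flow line carrying that effective weight. The boundary part of Theorem~\ref{thm::two_flowline_dimension} then yields \eqref{eqn::radial_boundary}; consistently, for $j=1$ this is the known boundary-intersection dimension of chordal $\SLE_\kappa(\rho)$, and the expression is non-negative exactly for $j\le\lceil J_{\kappa,\rho}\rceil-1$. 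The restriction to $\{\CB_j\neq\emptyset\}$ reflects that, in contrast to $\CI_j$, non-emptiness of $\CB_j$ for $j$ near the threshold is a non-trivial event of positive but not full probability, which I would settle separately by a direct estimate on the chance that a fixed boundary ball contains a $j$-fold boundary point.

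The main obstacle is the local-structure step: one must verify rigorously that the set of $j$-fold points differs from the relevant two-flow-line intersection only by a set of Hausdorff dimension $0$ --- in particular that the successive conditionings genuinely yield flow lines with the claimed arithmetic progression of angles, that the intermediate strands are indeed forced and do not further restrict the intersection, and that the force-point and angle bookkeeping is tracked with enough care that the general dimension formula of Theorem~\ref{thm::two_flowline_dimension} collapses to the clean expressions \eqref{eqn::whole_plane_multiple_points} and \eqref{eqn::radial_boundary}. Once this dictionary is in place, the Hausdorff-dimension upper and lower bounds are inherited directly from Theorem~\ref{thm::two_flowline_dimension}.
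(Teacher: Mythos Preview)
Your strategy is essentially the paper's: reduce the interior $j$-fold points to the intersection of two GFF flow lines at a specific angle gap and then invoke Theorem~\ref{thm::two_flowline_dimension}, and handle the boundary $j$-fold points by an analogous reduction to a single boundary-intersection dimension. Two points of comparison are worth noting.

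First, the step you flag as the ``main obstacle'' --- the local-structure statement that $\CI_j$ is, up to sets of lower dimension, a countable union of intersections of pairs of flow lines with angle gap $(j-1)\vartheta$ --- is not re-derived in the paper. It is imported wholesale from \cite[Proposition~3.32]{IG4}, which already gives the local absolute continuity of $\CI_j$ with the intersection of two flow lines at angle gap $\theta_j = 2\pi(j-1)(2+\rho)/(4-\kappa)$; plugging this into Theorem~\ref{thm::two_flowline_dimension} and simplifying yields \eqref{eqn::whole_plane_multiple_points}. So the paper treats your heuristic about arithmetic progressions of angles and forced intermediate strands as a black box, and you should either do the same or be prepared to reproduce a nontrivial portion of \cite{IG4}.

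Second, a small correction on the boundary part: there is no ``boundary part of Theorem~\ref{thm::two_flowline_dimension}.'' The reduction for $\CB_j$ is instead to Theorem~\ref{thm::boundary_dimension} (the boundary-intersection dimension of a chordal $\SLE_\kappa(\rho)$), via \cite[Proposition~3.33]{IG4}. Concretely, the effective angle gap is $\theta_{j+1}$ (same formula as above with $j$ replaced by $j+1$), which translates to the effective weight $\rho_{\mathrm{eff}} = j(2+\rho)-2$ you wrote; inserting this into \eqref{eqn::boundary_dimension} gives \eqref{eqn::radial_boundary}. Your description of accumulating the force point across successive bounces is the right intuition, but the formal input is again a citation to \cite{IG4} rather than a new argument.
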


Note that $J_{\kappa,\rho}+1$ is the value of $j$ that makes the right side of \eqref{eqn::whole_plane_multiple_points} equal to zero.  Similarly, $J_{\kappa,\rho}$ is the value of $j$ that makes the right side of \eqref{eqn::radial_boundary} equal to zero.  Inserting $j=1$ into \eqref{eqn::whole_plane_multiple_points} we recover the dimension formula for the range of an $\SLE_\kappa$ process \cite{beffara2008} (though we do not give an alternative proof of this result).

We next state the corresponding result for whole-plane and radial $\SLE_{\kappa'}(\rho)$ processes with $\kappa' > 4$.  Such a process has two types of self-intersection points.  Those which arise when the path wraps around its target point and intersects itself in either its left or right boundary (which are defined by lifting the path to the universal cover of the domain minus the target point of the path) and those which occur between the left and right boundaries.  It is explained in \cite[Section~4.2]{IG4} that these two self-intersection sets are almost surely disjoint and the dimension of the latter is almost surely given by the corresponding dimension for chordal $\SLE_{\kappa'}$ (Theorem~\ref{thm::double_point_dimension}).  In fact, the set which consists of the multiple intersection points of the path where the path hits itself without wrapping around its target point and are also contained in its left and right boundaries is almost surely countable.  The following gives the dimension of the former:

\begin{theorem}
\label{thm::whole_plane_self_intersection_dimension_cfl}
Suppose that $\eta'$ is a radial $\SLE_{\kappa'}(\rho)$ process in $\D$ for $\kappa' > 4$ and $\rho \in \big( \tfrac{\kappa'}{2}-4,\tfrac{\kappa'}{2}-2)$.  Assume that $\eta'$ starts from $1$ and has a single boundary force point of weight $\rho$ located at $1^-$ (immediately to the left of $1$ on $\partial \D$).  For each $j \in \N$, let $\CI_j'$ denote the set of points that $\eta'$ hits exactly $j$ times and which are also contained in its left and right boundaries.  For each $2 \leq j \leq \lceil J_{\kappa',\rho} \rceil$ where $J_{\kappa',\rho}$ is given by \eqref{eqn::whole_plane_intersection_count}, we have that
\begin{equation}
\label{eqn::whole_plane_multiple_points_cfl}
\begin{split}
&\dimH(\CI_j') =\\
&\frac{1}{8\kappa'}\bigg(4+\kappa'+2\rho-2j\big(2+\rho\big) \bigg) \bigg(4+\kappa'-2\rho+2j \big(2+\rho\big) \bigg)
\end{split}
\end{equation}
almost surely.  For $j > \lceil J_{\kappa',\rho} \rceil$, almost surely $\CI_j' = \emptyset$.  These results similarly hold if $\eta'$ is a whole-plane $\SLE_{\kappa'}(\rho)$ process.

Similarly, let $\CL_j'$ (resp.\ $\CR_j'$) be the set of points on $\partial \D$ which $\eta'$ hits exactly $j$ times while traveling in the clockwise (resp.\ counterclockwise) direction.  Then
\begin{equation}
\label{eqn::radial_boundary_cfl_left}
\begin{split}
\dimH(\CL_j') &= \frac{1}{2\kappa'}\bigg(\kappa'-2 j\big(2+\rho\big) \bigg) \bigg(2+j \big(2+\rho\big) \bigg)\\
&\quad\text{almost surely on}\quad \{\CL_j' \neq \emptyset\}.
\end{split}
\end{equation}
and
\begin{align}
\quad\quad\quad \dimH(\CR_j') &= \frac{1}{2\kappa'}\bigg(\kappa'+2\rho-2j(2+\rho) \bigg)\bigg(2-\rho+j(2+\rho) \bigg)\notag \\
&\quad\text{almost surely on}\quad \{\CR_j' \neq \emptyset\}. \label{eqn::radial_boundary_cfl_right}
\end{align}
\end{theorem}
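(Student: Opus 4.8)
The plan is to deduce Theorem~\ref{thm::whole_plane_self_intersection_dimension_cfl} from Theorem~\ref{thm::two_flowline_dimension}, using its flow-line/flow-line case for $\CI_j'$ and its flow-line/boundary case for $\CL_j'$ and $\CR_j'$ (cf.\ Theorem~\ref{thm::boundary_dimension}), by realizing $\eta'$ as the counterflow line of a GFF and lifting the picture to the universal cover of $\C\setminus\{0\}$ (resp.\ $\D\setminus\{0\}$), where the winding of $\eta'$ around its target point unfolds. I would first reduce to the whole-plane case: a whole-plane $\SLE_{\kappa'}(\rho)$ and a radial $\SLE_{\kappa'}(\rho)$, both targeted at $0$ and stopped upon first entering a fixed disk about $0$, are mutually absolutely continuous, and each of $\CI_j'$, $\CL_j'$, $\CR_j'$ is a countable union of its intersections with fixed annuli around $0$; hence the two statements are equivalent and it suffices to work with whichever process is more convenient.

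For the interior sets, take $\eta'$ to be the counterflow line of a whole-plane GFF with a single boundary force point of weight $\rho$, so that $\eta'$ is a whole-plane $\SLE_{\kappa'}(\rho)$ targeted at $0$. By \cite{IG4}, after lifting to the universal cover of $\C\setminus\{0\}$ the left and right boundaries of $\eta'$ become flow lines of the lifted field, and applying the deck transformation repeatedly produces, on each side, a bi-infinite family of flow lines whose angles form an arithmetic progression; the common difference $\vartheta$ and the offset between the left and right progressions are explicit functions of $\kappa'$ and $\rho$. A point $z$ that $\eta'$ hits lies on the lifted left or right boundary of $\eta'$ exactly when some preimage of $z$ lies on a member of one of these progressions, and $\eta'$ hits $z$ exactly $j$ times with $z$ on its boundary exactly when $z$ has preimages on $j$ consecutive members of one of them; discarding a countable set --- the non-wrapping multiple points, which by \cite{IG4} have dimension $0$ --- the set $\CI_j'$ is the projection of the set of points that lie simultaneously on the first and last flow line of such a block of $j$. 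That set is the intersection of two flow lines of the lifted field whose angle gap is the left/right offset plus $(j-1)\vartheta$, and substituting this gap into Theorem~\ref{thm::two_flowline_dimension} and simplifying should reproduce the right side of \eqref{eqn::whole_plane_multiple_points_cfl}. One checks that the range $2\le j\le\lceil J_{\kappa',\rho}\rceil$ corresponds precisely to the range of angle gaps on which Theorem~\ref{thm::two_flowline_dimension} returns a nonnegative dimension, and that $\CI_j'=\emptyset$ for $j>\lceil J_{\kappa',\rho}\rceil$ is immediate from the maximal multiplicity bound $\lceil J_{\kappa',\rho}\rceil$ of \cite[Proposition~3.31]{IG4}.

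The sets $\CL_j'$ and $\CR_j'$ are handled the same way, except that all $j$ hits of a boundary point occur on the same side of $\eta'$, so the relevant block of $j$ flow lines all belong to the left (resp.\ right) progression and the point additionally lies on the lifted boundary $\partial\D$. The set of points of $\partial\D$ hit at least $j$ times clockwise (resp.\ counterclockwise) is then the intersection with $\partial\D$ of the $j$-th member of the left (resp.\ right) progression, i.e.\ a flow line whose boundary data differs from that of the corresponding boundary of $\eta'$ by $(j-1)\vartheta$, and its dimension is supplied by the flow-line/boundary case of Theorem~\ref{thm::two_flowline_dimension}. The asymmetry between \eqref{eqn::radial_boundary_cfl_left} and \eqref{eqn::radial_boundary_cfl_right} is that the clockwise (left) progression does not feel the force point at $1^-$, while the counterclockwise (right) progression begins from a flow line whose boundary data carries an extra shift, linear in $\rho$, due to that force point; propagating this shift through the computation converts \eqref{eqn::radial_boundary_cfl_left} into \eqref{eqn::radial_boundary_cfl_right}. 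In all three cases the passage from counting points hit at least $j$ times to counting points hit exactly $j$ times is harmless: the points hit at least $j+1$ times form a subset of the union over $i\ge j+1$ of the points hit exactly $i$ times, and the right sides of \eqref{eqn::whole_plane_multiple_points_cfl}, \eqref{eqn::radial_boundary_cfl_left} and \eqref{eqn::radial_boundary_cfl_right} are strictly decreasing in $j$ over the relevant range and vanish at $j=J_{\kappa',\rho}+1$ (interior), resp.\ $j=J_{\kappa',\rho}$ (boundary), so removing this smaller set leaves the dimension unchanged.

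The main obstacle I anticipate is the bookkeeping in the universal cover: pinning down $\vartheta$ and the left/right offset precisely --- hence confirming that the phase transition at $j=\lceil J_{\kappa',\rho}\rceil$ in \eqref{eqn::whole_plane_multiple_points_cfl} is exactly the one already present in Theorem~\ref{thm::two_flowline_dimension} --- and justifying the reduction to the first and last flow line of each block. That reduction immediately gives the upper bound on the dimension, since the $j$-fold set is contained in a two-flow-line (or flow-line/boundary) intersection; the matching lower bound, that the stated dimension is attained, is precisely where the second-moment/Frostman content of Theorem~\ref{thm::two_flowline_dimension} is used, and one must check that its hypotheses --- in particular that the two extremal flow lines, carrying the boundary data dictated by the winding, actually intersect --- are met for every $j$ in the stated range.
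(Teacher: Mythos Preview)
Your proposal is correct and follows essentially the same approach as the paper: deduce the interior dimension \eqref{eqn::whole_plane_multiple_points_cfl} from Theorem~\ref{thm::two_flowline_dimension} and the boundary dimensions \eqref{eqn::radial_boundary_cfl_left}, \eqref{eqn::radial_boundary_cfl_right} from Theorem~\ref{thm::boundary_dimension}, via the reduction to flow-line intersections carried out in \cite{IG4}. The paper does not give a standalone proof of Theorem~\ref{thm::whole_plane_self_intersection_dimension_cfl}; it simply records the explicit angle gaps $\theta_j'$, $\phi_{j,L}$, $\phi_{j,R}$ in \eqref{eqn::angle_whole_plane_cfl}, \eqref{eqn::angle_gap_cfl_left}, \eqref{eqn::angle_gap_cfl_right} and cites \cite[Propositions~4.10 and~4.11]{IG4} for the identification of the multiple-point sets with flow-line intersections at those gaps---this is precisely the ``bookkeeping in the universal cover'' you flag as the main obstacle, so you can (and should) simply quote those results rather than rederive the arithmetic progressions and offsets by hand.
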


The reason that we restrict to the case that $\rho > \tfrac{\kappa'}{2}-4$ is that for $\rho \leq \tfrac{\kappa'}{2}-4$ such processes almost surely fill their own outer boundary.  That is, for any time $t$, the outer boundary of the range of the path drawn up to time $t$ is almost surely contained in $\eta'([t,\infty])$ and processes of this type fall outside of the framework described in \cite{IG4}.

The proofs of Theorem~\ref{thm::double_point_dimension} and Theorem~\ref{thm::cut_set_dimension} are based on using various forms of $\SLE$ duality which arises in the interpretation of the $\SLE_\kappa$ and $\SLE_\kappa(\ul{\rho})$ processes for $\kappa \in (0,4)$ as flow lines of the vector field $e^{ih/\chi}$ where $h$ is a GFF and $\chi = \tfrac{2}{\sqrt{\kappa}}-\tfrac{\sqrt{\kappa}}{2}$ \cite{\DubedatDuality,dubedat2009,IG1,IG3,IG4}.  We will refer to these paths simply as ``GFF flow lines.''  (An overview of this theory is provided in Section~\ref{subsec::imaginary_geometry}.) The duality statement which is relevant for the cut-set (see Figure~\ref{fig::counterflowline_and_flowline}) is that the left (resp.\ right) boundary of an $\SLE_{\kappa'}$ process is given by an $\SLE_\kappa$ flow line of a GFF with angle $\tfrac{\pi}{2}$ (resp.\ $-\tfrac{\pi}{2}$).  Thus the cut set dimension is given by the dimension of the intersection of two flow lines with an angle gap of
\begin{equation}
\label{eqn::angle_cut}
\theta_{\rm cut} = \pi.
\end{equation}
Another form of duality which describes the boundary of an $\SLE_{\kappa'}$ process before and after hitting a given boundary point and also arises in the GFF framework allows us to relate the double point dimension to the dimension of the intersection of GFF flow lines with an angle gap of \cite{IG3}
\begin{equation}
\label{eqn::angle_double}
\theta_{\rm double} = \pi \left( \frac{\kappa-2}{2-\tfrac{\kappa}{2}} \right).
\end{equation}
We will explain this in more detail in Section~\ref{sec::main_results}.
The set of points which a whole-plane or radial $\SLE_\kappa(\rho)$ process for $\kappa \in (0,4)$ and $\rho \in (-2,\frac{\kappa}{2}-2)$ hits $j$ times (in the interior of the domain) is locally absolutely continuous with respect to the intersection of two flow lines with an angle gap of
\begin{equation}
\label{eqn::angle_whole_plane}
\theta_j = 2\pi (j-1) \left(\frac{2+\rho}{4-\kappa} \right) \quad\text{for}\quad 2 \leq j \leq \lceil J_{\kappa,\rho} \rceil;
\end{equation}
see \cite[Proposition~3.32]{IG4}.  The angle gap which gives the dimension of the self-intersection set contained in the interior of the domain for $\kappa' > 4$ and $\rho \in (\tfrac{\kappa'}{2}-4,\tfrac{\kappa'}{2}-2)$ is given by
\begin{equation}
\label{eqn::angle_whole_plane_cfl}
\theta_j' = \pi \left(\frac{2 j (2+\rho) -2\rho - \kappa'}{\kappa'-4}\right) \quad\text{for}\quad 2 \leq j \leq \lceil J_{\kappa',\rho} \rceil;
\end{equation}
see \cite[Proposition~4.10]{IG4}.  Thus Theorems~\ref{thm::double_point_dimension}--\ref{thm::whole_plane_self_intersection_dimension_cfl} follow from (with the exception of \eqref{eqn::radial_boundary}, \eqref{eqn::radial_boundary_cfl_left}, \eqref{eqn::radial_boundary_cfl_right}):

\begin{theorem}
\label{thm::two_flowline_dimension}
Suppose that $h$ is a GFF on $\HH$ with piecewise constant boundary data.  Fix $\kappa\in (0,4)$, angles
\[ \theta_1<\theta_2<\theta_1+\left(\frac{\kappa\pi}{4-\kappa} \right),\]
and let
\[\rho=\frac{1}{\pi} (\theta_2-\theta_1) \left(2-\frac{\kappa}{2}\right) - 2.\]
For $i=1,2$, let $\eta_{\theta_i}$ be the flow line of $h$ starting from $0$.  We have that
\[ \dimH(\eta_{\theta_1} \cap \eta_{\theta_2} \cap \h)=2-\frac{1}{2\kappa}\left(\rho+\frac{\kappa}{2}+2\right)\left(\rho-\frac{\kappa}{2}+6\right)\]
almost surely on the event $\{ \eta_{\theta_1} \cap \eta_{\theta_2} \cap \h \neq \emptyset\}$.
\end{theorem}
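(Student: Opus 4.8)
The plan is to establish the dimension formula via the standard two-sided scheme: an upper bound from a first-moment (Markov inequality) estimate, and a matching lower bound from a second-moment argument together with a Frostman-type energy estimate. Throughout one works with dyadic (or $\epsilon$-mesh) boxes. Fix $d = 2-\frac{1}{2\kappa}(\rho+\frac{\kappa}{2}+2)(\rho-\frac{\kappa}{2}+6)$. For a box $B = B(z,\epsilon)$ at distance of order one from $0$ and from $\partial\HH$, the key input is a one-point estimate of the form
\[
\mathbf{P}\big[\eta_{\theta_1}\cap\eta_{\theta_2}\cap B \neq \emptyset\big] \asymp \epsilon^{2-d},
\]
i.e.\ the probability that \emph{both} flow lines pass within distance $\epsilon$ of $z$ in such a way that they actually intersect near $z$. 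The exponent $2-d$ should be read as a sum of contributions: each flow line $\eta_{\theta_i}$ individually is an $\SLE_\kappa(\underline\rho)$-type curve, and the cost for it to come near $z$ is governed by the corresponding boundary/interior exponents; the additional constraint that the two curves of angle gap $\theta_2-\theta_1$ meet (rather than just come close) contributes the interaction exponent. The arithmetic is arranged precisely so that $\rho = \frac{1}{\pi}(\theta_2-\theta_1)(2-\frac{\kappa}{2})-2$ is the effective weight of the configuration, and the constraint $\theta_2 - \theta_1 < \frac{\kappa\pi}{4-\kappa}$ is exactly $\rho < \frac{\kappa}{2}-2 < 0$, which keeps the curves from merging and keeps $d$ in the relevant range.

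The concrete mechanism for the one-point estimate is conformal-Markov/coordinate-change combined with the imaginary-geometry description of how $\eta_{\theta_2}$ interacts with $\eta_{\theta_1}$: conditionally on $\eta_{\theta_1}$, the field $h$ in each complementary component has prescribed boundary data, so $\eta_{\theta_2}$ enters and exits $\eta_{\theta_1}$ as an $\SLE_\kappa(\rho^L;\rho^R)$ process whose boundary-hitting behavior is governed by the angle gap; the points of $\eta_{\theta_1}\cap\eta_{\theta_2}$ are precisely the points where this conditional process hits the ``flow-line boundary'' $\eta_{\theta_1}$. One then reduces the near-$z$ event to a boundary-proximity event for an $\SLE_\kappa(\underline\rho)$ process hitting a segment, for which the relevant one- and two-point exponents are available from the $\SLE_\kappa(\underline\rho)$ boundary-hitting analysis (the exponent $2-d$ is the natural two-dimensional analogue of the boundary exponent $\frac{1}{\kappa}(\rho+2)(\rho+4-\frac{\kappa}{2})$ that appears there). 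Summing $\mathbf P[\cdot\cap B\neq\emptyset]$ over the $\asymp\epsilon^{-2}$ boxes gives the expected number of intersected boxes $\asymp \epsilon^{-d}$; Markov's inequality and Borel--Cantelli over dyadic scales give $\dimH(\eta_{\theta_1}\cap\eta_{\theta_2}\cap\HH)\le d$ almost surely on the nonempty event.

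For the lower bound I would localize: condition on the event (positive probability, by the imaginary-geometry theory, under the stated angle constraint) that the two flow lines intersect inside a fixed sub-box $Q$ away from $0$ and $\partial\HH$. On this event build a random measure $\mu$ on $\eta_{\theta_1}\cap\eta_{\theta_2}\cap Q$ as a limit of $\epsilon^{-d}\sum_B \I\{\eta_{\theta_1}\cap\eta_{\theta_2}\cap B\neq\emptyset\}\,\mathrm{Leb}|_B$; the first-moment bound controls $\mathbf E[\mu(Q)]$ from above, a (harder) uniform lower bound on the one-point probability on a further positive-probability sub-event controls $\mathbf E[\mu(Q)]$ from below, and the two-point estimate
\[
\mathbf P\big[\eta_{\theta_1}\cap\eta_{\theta_2}\cap B(z,\epsilon)\neq\emptyset,\ \eta_{\theta_1}\cap\eta_{\theta_2}\cap B(w,\epsilon)\neq\emptyset\big] \lesssim \epsilon^{2(2-d)}\,|z-w|^{-(2-d)}
\]
gives $\mathbf E\!\left[\int\!\!\int |x-y|^{-s}\,d\mu(x)\,d\mu(y)\right] < \infty$ for every $s<d$. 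Frostman's lemma then yields $\dimH \ge d$ on an event of positive probability, and a zero-one law / countable-covering argument upgrades this to ``almost surely on $\{\eta_{\theta_1}\cap\eta_{\theta_2}\cap\HH\neq\emptyset\}$''.

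The main obstacle is the \emph{sharp} two-point estimate with the correct power of $|z-w|$: one must show that conditioning on an intersection near $z$ leaves a configuration near $w$ that, after conformal map, still looks like the original intersection event at scale $\epsilon/|z-w|$ with constants uniform in $z,w$. This requires careful use of the Markov property for \emph{both} flow lines simultaneously and good control (distortion, boundary-data) on the conditional law of $(\eta_{\theta_1},\eta_{\theta_2})$ in the relevant subdomain, including the case where one curve's portion near $w$ lies on the boundary created by the other — this is where the imaginary-geometry machinery of \cite{IG1,IG3,IG4} and the $\SLE_\kappa(\underline\rho)$ boundary-hitting exponents do the real work, and where separation/quasi-independence lemmas across scales must be proved.
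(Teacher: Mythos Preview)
Your overall framework—first-moment upper bound, second-moment/Frostman lower bound—matches the paper's, and your mechanism for the one-point estimate is correct in spirit: the paper also conditions on $\eta_{\theta_1}$ up to the time it first reaches $\partial B(z,\epsilon)$ and then applies the boundary-hitting exponent (Theorem~\ref{thm::one_point}) to $\eta_{\theta_2}$, which in the conditional picture is an $\SLE_\kappa(\underline\rho)$ in the slit domain. One refinement you do not mention: to get a sharp exponent one needs not just $\p[\eta_{\theta_1}\text{ near }z]$ but the correct moment of the derivative $|g'_{\tau_\epsilon}(z)|$ of the uniformizing map, since the conditional boundary-hit probability for $\eta_{\theta_2}$ scales like $(|g'|\epsilon)^\alpha$, not $\epsilon^\alpha$. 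The paper handles this by reweighting by the radial martingale $M_t=|Z_t|^r Y_t^\xi |g_t'|^\nu$ (Section~\ref{subsec::derivative}), which is the standard device but does not quite fit your description of ``summing contributions from each flow line.''

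The genuine gap is the two-point estimate, which you correctly flag as the main obstacle but for which you do not propose a workable mechanism. The direct approach you sketch—condition on an intersection near $z$, conformally map, argue the picture near $w$ is unchanged—does not go through as stated, because once both $\eta_{\theta_1}$ and $\eta_{\theta_2}$ have been drawn into a neighborhood of $z$, their continuations are coupled in a way that resists a clean Markov decoupling at the intermediate scale $|z-w|$. The paper resolves this with a ``perfect point'' construction (Section~\ref{subsec::intersection_lower_bound}): at each scale $e^{-k\wt\beta}$ around $z$ one launches an \emph{auxiliary} flow line $\wt\eta_{k,2}$ of angle $\theta$ from a point on $\eta_{\theta_1}$, and requires it to merge with the previous-scale auxiliary path inside a fixed annulus. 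These auxiliary paths, together with $\eta_{\theta_1}$, shield the inner scales from the outer ones (Figure~\ref{fig::paths_shield}), so that the event at scales $m+1,\dots,n$ is conditionally independent of scales $1,\dots,m$ up to bounded Radon--Nikodym factors (Lemma~\ref{lem::perfect_conditional}); only at the end does one argue that $\eta_{\theta_2}$ itself merges into the outermost auxiliary path. Without this or an equivalent multi-scale decoupling device, the second-moment bound with the correct power of $|z-w|$ does not follow from what you have written.
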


Theorem~\ref{thm::two_flowline_dimension} gives the dimension of the intersection of two flow lines in the bulk.  The following result gives the dimension of the intersection of one path with the boundary.

\begin{theorem}
\label{thm::boundary_dimension}
Fix $\kappa>0$ and $\rho\in((-2) \vee (\tfrac{\kappa}{2}-4),\tfrac{\kappa}{2}-2)$.  Let $\eta$ be an $\SLE_\kappa(\rho)$ process with a single force point located at $0^+$.  Almost surely,
\begin{equation}
\label{eqn::boundary_dimension}
\dimH(\eta\cap\R_+)=1-\frac{1}{\kappa}(\rho+2)\left(\rho+4-\frac{\kappa}{2}\right).
\end{equation}
\end{theorem}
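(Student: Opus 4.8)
The plan is to compute $\dimH(\eta \cap \R_+)$ by the standard two-step scheme: an upper bound via a first-moment estimate and a lower bound via a second-moment (Frostman) argument, both built on a one-point function for the event that $\eta$ comes within distance $\epsilon$ of a fixed boundary point $x \in \R_+$. Write $p_\epsilon(x) = \p[\eta \cap B(x,\epsilon) \neq \emptyset]$. The first thing I would do is derive the scaling exponent: using the $\SLE_\kappa(\rho)$ Loewner flow with the single force point at $0^+$, map out the process until the conformal radius of $x$ (or the half-plane capacity seen from $x$) has decreased by a factor of order $\epsilon$, and track the evolution of the two relevant boundary quantities — the distance from the tip to $x$ and the distance from the force point to $x$. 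As in the analyses of boundary hitting for $\SLE_\kappa(\rho)$ (the relevant one-dimensional diffusion is a Bessel-type process, cf.\ the hitting criterion $\rho < \tfrac{\kappa}{2}-2$ that appears in the hypothesis), this gives $p_\epsilon(x) \asymp \epsilon^{\alpha}$ with $\alpha = \tfrac{1}{\kappa}(\rho+2)(\rho+4-\tfrac{\kappa}{2})$, so that the conjectured dimension is $1-\alpha$. I would make this rigorous by writing the Loewner evolution time-changed so that the distance to $x$ is normalized, identifying the resulting SDE, and reading off the exponent from the invariant density / the indicial equation of the corresponding generator, together with a Girsanov change of measure that removes the drift coming from the force point.

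For the upper bound, I would cover $\R_+$ (locally, say $\R_+ \cap [a,b]$ with $0 < a < b < \infty$) by $\asymp \epsilon^{-1}$ intervals of length $\epsilon$, and observe that if $\eta$ hits such an interval then $\eta$ comes within $\epsilon$ of its center. By the one-point estimate, the expected number of intervals hit is $\asymp \epsilon^{-1} \cdot \epsilon^{\alpha} = \epsilon^{\alpha - 1}$, so for any $s > 1-\alpha$ the expected $s$-dimensional covering content tends to $0$, whence $\dimH(\eta \cap \R_+) \leq 1-\alpha$ almost surely. Care is needed near $0$: one either works on $[a,b]$ and then lets $a \downarrow 0$ using countable stability, or notes that a neighborhood of $0$ contributes a set that is handled separately (and has no larger dimension by absolute continuity after mapping away from the force point).

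For the lower bound — which I expect to be the main obstacle — I would define, on the event that $\eta$ hits $[a,b]$, a random measure $\mu_\epsilon$ supported near $\eta \cap [a,b]$ by $\mu_\epsilon(dx) = \epsilon^{-\alpha}\, \I\{\eta \cap B(x,\epsilon)\neq\emptyset\}\, dx$, and show that along a subsequence $\epsilon_n \to 0$ these converge (after normalization) to a nontrivial measure carried by $\eta \cap \R_+$ with finite $s$-energy for every $s < 1-\alpha$; Frostman's lemma then gives the matching lower bound on the event $\{\eta \cap \R_+ \neq \emptyset\}$. This requires the two-point estimate
\[
\p[\eta \cap B(x,\epsilon)\neq\emptyset,\ \eta\cap B(y,\epsilon)\neq\emptyset] \;\lesssim\; \epsilon^{2\alpha} |x-y|^{-\alpha}
\quad\text{for}\quad |x-y|\ge\epsilon,
\]
uniformly in $x,y$ in a compact subset of $\R_+$. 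The derivation of this two-point bound is the crux: I would condition on $\eta$ reaching $B(x,\epsilon)$ (for $x$ the point closer to $0$), apply the domain Markov property, and then use that in the conformally mapped-out picture the remaining path is again an $\SLE_\kappa(\rho)$-type process with a force point, whose configuration relative to $y$ is comparable — up to bounded distortion and the monotonicity of hitting probabilities — to the unconditioned one; the harmonic-measure / conformal-distortion estimates let one pass from the mapped-out scales back to $|x-y|$ and $\epsilon$, producing the factor $|x-y|^{-\alpha}$. One then checks that $\mu_{\epsilon_n}$ has uniformly bounded second moment and uniformly positive first moment on $\{\eta\cap[a,b]\neq\emptyset\}$, extracts a weak limit, verifies it charges no set of dimension $< 1-\alpha$ via the energy bound, and concludes.
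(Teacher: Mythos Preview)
Your upper-bound argument matches the paper's: a one-point estimate (this is the paper's Theorem~\ref{thm::one_point_exponent_general}) plus a dyadic cover of $[a,b]$ gives $\dimH(\eta\cap\R_+)\le 1-\alpha$ exactly as you describe. One subtlety you should be aware of is that the paper only proves $p_\epsilon(x)=\epsilon^{\alpha+o(1)}$ in full generality; the sharp $\asymp\epsilon^\alpha$ (their Lemma~\ref{lem::lower_bound_one_point_exponent}) holds only on the restricted event that $\eta$ first meets $\partial B(x,\epsilon)$ at height $\ge\delta\epsilon$. This does not affect the upper bound, but it matters for the lower bound.

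The genuine gap is in your two-point estimate. After you condition on $\eta$ reaching $B(x,\epsilon)$ and map out, the remaining curve is an $\SLE_\kappa(\rho)$ whose force point sits at the image of the rightmost visited boundary point; you have no a~priori control on how close this is to the tip, so ``the configuration relative to $y$ is comparable to the unconditioned one'' is exactly the hard, unproved step. The paper avoids this entirely: for the lower bound it does \emph{not} work with the bare events $\{\eta\cap B(x,\epsilon)\neq\emptyset\}$ but introduces, via the GFF coupling, auxiliary flow lines $\eta^{x_k}$ launched from a sequence of points $x_k\to x$ and asks that each $\eta^{x_k}$ reach a smaller ball around $x$ and then \emph{merge} with the next one (see their $E_k^1(x),E_k^2(x)$ and Figure~\ref{fig::two_flowlines_merge}). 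Because these events are essentially local to disjoint annuli, a domain-change Radon--Nikodym bound (Lemma~\ref{lem::change_of_domains_bound_RN}) gives the approximate independence $\p[E^{m,n}(x)\mid E^m(x)]\asymp\p[E^{m,n}(x)]$ and hence the two-point estimate in Proposition~\ref{prop::two_point_estimate}. This flow-line/merging trick is the substantive idea for the lower bound; your Markov-property sketch would need a replacement argument of comparable strength.

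Finally, you do not treat $\kappa'>4$ at all. The paper handles this range by SLE duality: for an $\SLE_{\kappa'}(\rho')$ counterflow line, the right boundary is an $\SLE_\kappa$-type flow line with $\kappa=16/\kappa'$ and explicitly computed weights (Figure~\ref{fig::counterflowline_and_flowline}), so $\eta'\cap\R_+$ coincides with the boundary intersection of this dual path, reducing the problem to the already-proved $\kappa\in(0,4)$ case.
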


(Recall that $\tfrac{\kappa}{2}-4$ is the threshold at which such processes become boundary filling and $-2$ is the threshold for these processes to be defined.)  In the case that $\rho = \tfrac{\theta}{\pi} (2-\tfrac{\kappa}{2})-2$ for $\theta > 0$ and $\kappa \in (0,4)$, we say that $\eta$ intersects $\partial \h$ with an angle gap of $\theta$.  This comes from the interpretation of such an $\SLE_\kappa(\rho)$ process as a GFF flow line explained in Section~\ref{subsec::imaginary_geometry}.  See, in particular, Figure~\ref{fig::conditional_law}.  By \cite[Proposition~3.33]{IG4}, applying Theorem~\ref{thm::boundary_dimension} with an angle gap of $\theta_{j+1}$ where $\theta_j$ is as in \eqref{eqn::angle_whole_plane} gives \eqref{eqn::radial_boundary} of Theorem~\ref{thm::whole_plane_self_intersection_dimension}.  Similarly, by \cite[Proposition~4.11]{IG4}, applying Theorem~\ref{thm::boundary_dimension} with an angle gap of
\begin{equation}
\label{eqn::angle_gap_cfl_left}
 \phi_{j,L} = \pi\left(\frac{4-\kappa'+2j(2+\rho)}{\kappa'-4}\right)
\end{equation}
gives \eqref{eqn::radial_boundary_cfl_left} and with an angle gap of
\begin{equation}
\label{eqn::angle_gap_cfl_right}
\phi_{j,R} = \pi\left(\frac{4-\kappa'-2\rho+2j(2+\rho)}{\kappa'-4}\right)
\end{equation}
gives \eqref{eqn::radial_boundary_cfl_right}.  Theorem~\ref{thm::boundary_dimension} is proved first by computing the boundary intersection dimension for $\kappa \in (0,4)$ and then using $\SLE$ duality to extend to the case that $\kappa' > 4$.  We remark that an alternative proof to the lower bound of Theorem~\ref{thm::boundary_dimension} for $\kappa \in (8/3,4)$ is given in \cite{WERNER_WU_CLE} using the relationship between the $\SLE_\kappa(\rho)$ processes for these $\kappa$ values and the Brownian loop soups.  We obtain as a corollary (when $\rho=0$) the following which was first proved in \cite{sheffield2008}.

\begin{corollary}
\label{cor::bigger_than_4_boundary_intersection}
Fix $\kappa' \in (4,8)$ and let $\eta$ be an $\SLE_{\kappa'}$ process in $\h$ from $0$ to $\infty$.  Then, almost surely
\[ \dimH(\eta \cap \R)=2-\frac{8}{\kappa'}.\]
\end{corollary}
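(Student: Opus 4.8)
The plan is to deduce the corollary directly from Theorem~\ref{thm::boundary_dimension} by taking $\rho=0$, since an $\SLE_{\kappa'}(0)$ process with a single boundary force point of weight $0$ located at $0^+$ is nothing but an ordinary chordal $\SLE_{\kappa'}$ from $0$ to $\infty$ in $\h$. The only two things to arrange are that $\rho=0$ lies in the admissible parameter window of Theorem~\ref{thm::boundary_dimension}, and that one can pass from the half-line $\R_+$ appearing there to the whole line $\R$.

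For the first point, note that when $\kappa'\in(4,8)$ one has $\tfrac{\kappa'}{2}-4\in(-2,0)$ and $\tfrac{\kappa'}{2}-2\in(0,2)$, so that $(-2)\vee(\tfrac{\kappa'}{2}-4)=\tfrac{\kappa'}{2}-4<0<\tfrac{\kappa'}{2}-2$ and hence $0\in\big((-2)\vee(\tfrac{\kappa'}{2}-4),\tfrac{\kappa'}{2}-2\big)$. Applying Theorem~\ref{thm::boundary_dimension} with $\kappa=\kappa'$ and $\rho=0$ then gives, almost surely,
\[
\dimH(\eta\cap\R_+) = 1-\frac{1}{\kappa'}(0+2)\Big(0+4-\frac{\kappa'}{2}\Big) = 1-\frac{8-\kappa'}{\kappa'} = 2-\frac{8}{\kappa'}.
\]
Since $\kappa'\in(4,8)$, the curve $\eta$ almost surely does hit $\R_+$, which is why no nonemptiness qualifier is needed here, and for these values $2-\tfrac{8}{\kappa'}\in(0,1)$.

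For the second point, I would use the reflection symmetry of chordal $\SLE_{\kappa'}$ in $\h$ from $0$ to $\infty$: its Loewner driving function is a scaled Brownian motion, which is symmetric about the origin, so $\eta$ has the same law as $-\overline{\eta}$, its image under $z\mapsto -\overline{z}$. This map is a boundary isometry taking $\R_-$ onto $\R_+$, so $\eta\cap\R_-$ has the same law as $-(\eta\cap\R_+)$, whence $\dimH(\eta\cap\R_-)=2-\tfrac{8}{\kappa'}$ almost surely as well. (Equivalently, when $\rho=0$ the underlying curve is unchanged and Theorem~\ref{thm::boundary_dimension} applies verbatim with the force point at $0^-$ and $\R_+$ replaced by $\R_-$.) Since $\R$ is the union of $\R_-$, $\R_+$, and the single point $0$, and $\eta\cap\{0\}$ has dimension $0$, the set $\eta\cap\R$ is a finite union of sets of Hausdorff dimension at most $2-\tfrac{8}{\kappa'}$, two of which attain that value; therefore $\dimH(\eta\cap\R)=2-\tfrac{8}{\kappa'}$ almost surely, recovering the dimension obtained in \cite{sheffield2008}.

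I do not expect any genuine obstacle in this argument: all of the content sits in Theorem~\ref{thm::boundary_dimension}. The only mildly delicate steps are confirming that $\rho=0$ falls strictly inside the admissible range $\big((-2)\vee(\tfrac{\kappa'}{2}-4),\tfrac{\kappa'}{2}-2\big)$ for every $\kappa'\in(4,8)$, and the elementary symmetry/two-sided reasoning used to upgrade the statement about $\R_+$ to one about all of $\R$.
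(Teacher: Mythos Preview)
Your proposal is correct and follows the same approach as the paper, which simply remarks that the corollary is obtained from Theorem~\ref{thm::boundary_dimension} by taking $\rho=0$ without writing out any further details. Your additional verification that $\rho=0$ lies in the admissible range and your reflection-symmetry argument to pass from $\R_+$ to $\R$ are correct and more explicit than what the paper provides.
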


One of the main inputs in the proof of Theorem~\ref{thm::two_flowline_dimension} and Theorem~\ref{thm::boundary_dimension} is the following theorem, which gives the exponent for the probability that an $\SLE_\kappa(\ul{\rho})$ process gets very close to a given boundary point.

\begin{theorem}
\label{thm::one_point}
Fix $\kappa >0$, $\rho_{1,R} > -2$, $\rho_{2,R} \in \R$ such that $\rho_{1,R} + \rho_{2,R} > \tfrac{\kappa}{2}-4$.  Let $\eta$ be an $\SLE_{\kappa}(\rho_{1,R},\rho_{2,R})$ process with force points $(0^+,1)$.  Let
\begin{equation}
\label{eqn::sle_kappa_rho_exponent}
 \alpha=\frac{1}{\kappa}(\rho_{1,R}+2)\left(\rho_{1,R}+\rho_{2,R}+4-\frac{\kappa}{2}\right).
\end{equation}
For each $\epsilon > 0$, we let $\tau_\epsilon = \inf\{t \geq 0 : \eta(t) \in \partial B(1,\epsilon)\}$.  We have that
\begin{equation}
\label{eqn::onepointestimate}
 \p[\tau_\epsilon < \infty]=\eps^{\alpha+o(1)}\quad\text{as}\quad \epsilon \to 0.
\end{equation}
\end{theorem}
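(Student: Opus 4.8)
The plan is to follow the standard route for SLE boundary-hitting exponents: reduce $\{\tau_\epsilon<\infty\}$, up to a subpolynomial factor $\epsilon^{o(1)}$, to a hitting event for a one-dimensional diffusion extracted from the Loewner data of $\eta$; identify the exact nonnegative local martingale that governs that diffusion, which pins down $\alpha$; and then deduce the two-sided estimate \eqref{eqn::onepointestimate} from a Girsanov change of measure. The only genuinely new feature compared with the single-force-point case is the force point at $1$ of weight $\rho_{2,R}$, which is accommodated by enlarging the state space by one coordinate.

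First I would set up the chordal Loewner evolution $(g_t)$ driven by $W_t$, with $V_t^1=g_t(0^+)$ and $V_t^2=g_t(1)$ evolving according to the $\SLE_\kappa(\rho_{1,R},\rho_{2,R})$ equations, and track $R_t=V_t^2-W_t>0$, $A_t=V_t^1-W_t\in[0,R_t]$, and $g_t'(1)$ (whose logarithm satisfies $\tfrac{d}{dt}\log g_t'(1)=-2R_t^{-2}$). The boundary Koebe distortion estimate gives $\dist(1,K_t)\asymp R_t/g_t'(1)$, where $K_t$ is the hull. A point requiring care is that $\tau_\epsilon$ concerns the \emph{trace}, not the hull, so the naive event ``the hull is $\epsilon$-close to $1$'' is far too weak; I would get around this by a scale-invariance and quasi-independence argument showing that $\{\eta\cap B(1,\epsilon)\neq\emptyset\}$ and the event that $\eta$ separates $1-\epsilon$ from $1+\epsilon$ have comparable probabilities up to a factor $\epsilon^{o(1)}$, and that the latter disconnection event is captured by the Loewner data. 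After the time change $u(t)=\int_0^t R_s^{-2}\,ds$ the shape $\Theta_t:=A_t/R_t\in[0,1]$ becomes, as a function of $u$, a time-homogeneous diffusion on $[0,1]$ with explicit $\kappa,\rho_{1,R},\rho_{2,R}$-dependent drift; the endpoint $\Theta=1$ corresponds to $\eta$ disconnecting $1$, while $\Theta=0$ corresponds to $\eta$ touching the force point $0^+$, which, since $\rho_{1,R}>-2$, is an instantaneously reflecting boundary.

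Next I would look for a nonnegative local martingale of the form $M_t=\big(R_t/g_t'(1)\big)^{-\alpha}F(\Theta_t)$. It\^o's formula, applied using the SDEs for $A_t$ and $R_t$ and the expansion of $\log g_t'(1)$, forces the scalar $\alpha$ and the function $F$ to solve a second-order linear ODE of hypergeometric type on $(0,1)$; imposing that $F$ be finite at the reflecting endpoint $\Theta=0$ and satisfy the appropriate boundary condition at $\Theta=1$ turns this into a Sturm-Liouville eigenvalue problem whose leading (ground-state) eigenvalue I expect, after simplification, to equal exactly $\alpha=\tfrac1\kappa(\rho_{1,R}+2)(\rho_{1,R}+\rho_{2,R}+4-\tfrac\kappa2)$ from \eqref{eqn::sle_kappa_rho_exponent}, with $F$ strictly positive and bounded above and below on compact subsets of $[0,1)$. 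The hypothesis $\rho_{1,R}+\rho_{2,R}>\tfrac\kappa2-4$ is precisely positivity of $\alpha$ and is what makes the eigenvalue problem nondegenerate; the factored form of $\alpha$ also reflects that $M_t$ is, up to the factor $F(\Theta_t)$, the Radon-Nikodym derivative for adjoining the force point of weight $\rho_{2,R}$ at $1$, so one could alternatively first treat $\rho_{2,R}=0$ (which is the case feeding into Theorem~\ref{thm::boundary_dimension}) and then add the force point.

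Finally I would tilt by $M$. Setting $d\widetilde\p=(M_{t\wedge\tau_\epsilon}/M_0)\,d\p$, Girsanov's theorem shows that under $\widetilde\p$ the diffusion $\Theta$ gains a drift pushing it away from $1$, hence is recurrent on $(0,1)$; in particular $\tau_\epsilon<\infty$ $\widetilde\p$-a.s., and $\Theta_{\tau_\epsilon}$ stays at distance at least $\delta$ from $1$ with $\widetilde\p$-probability bounded below uniformly in $\epsilon$, for each fixed small $\delta>0$. Then
\[ \p[\tau_\epsilon<\infty]=M_0\,\E^{\widetilde\p}\!\big[M_{\tau_\epsilon}^{-1}\mathbf{1}_{\{\tau_\epsilon<\infty\}}\big]\asymp\epsilon^{\alpha}\,\E^{\widetilde\p}\!\big[F(\Theta_{\tau_\epsilon})^{-1}\big], \]
and the lower bound in \eqref{eqn::onepointestimate} follows by restricting to $\{\Theta_{\tau_\epsilon}\le 1-\delta\}$, while the upper bound follows from the supermartingale inequality $\E[M_{\tau_\epsilon}\mathbf{1}_{\{\tau_\epsilon<\infty\}}]\le M_0$ together with a separate treatment, via the strong Markov property, of the event that $\Theta$ is close to $1$ at time $\tau_\epsilon$; the $O(1)$ distortion constants and the $\epsilon^{o(1)}$ from the geometric reduction are absorbed into the $o(1)$ in the exponent. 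I expect the principal obstacle to be the middle step: writing down the ODE and, above all, choosing the boundary conditions that select the correct eigenvalue, since this is where the full parameter dependence and the various thresholds enter and where the behavior of $\Theta$ at both endpoints must be analyzed carefully. A secondary, more technical difficulty is making the passage from the Euclidean event $\{\eta\cap B(1,\epsilon)\neq\emptyset\}$ to the Loewner/diffusion picture rigorous with only an $\epsilon^{o(1)}$ loss.
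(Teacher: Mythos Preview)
Your martingale--Girsanov architecture matches the paper's, and the ODE you anticipate collapses: in your variables the local martingale is simply
\[
M_t=\Big(\tfrac{V_t^2-V_t^1}{g_t'(1)}\Big)^{-\alpha}\Big(\tfrac{V_t^2-W_t}{V_t^2-V_t^1}\Big)^{-\beta}
=\big(R_t/g_t'(1)\big)^{-\alpha}(1-\Theta_t)^{\beta-\alpha},\qquad \beta=\tfrac{2}{\kappa}\big(\rho_{1,R}+\rho_{2,R}+4-\tfrac{\kappa}{2}\big),
\]
which is the Schramm--Wilson martingale reweighting $\SLE_\kappa(\rho_{1,R},\rho_{2,R})$ to $\SLE_\kappa(\rho_{1,R},\kappa-8-2\rho_{1,R}-\rho_{2,R})$. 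So no eigenvalue problem: $F$ is a monomial and the reweighted law is another $\SLE_\kappa(\ul\rho)$. Your lower bound then runs exactly as in the paper; the paper verifies $\p^\star[\tau_\epsilon<\infty]\asymp1$ by the M\"obius change $z\mapsto\epsilon z/(1-z)$, under which the tilted process has a left force point of weight $\ge\tfrac{\kappa}{2}-2$ and hence never hits it. Two small corrections: the boundary Koebe estimate gives $\dist(1,K_t)\asymp(V_t^2-V_t^1)/g_t'(1)$, not $R_t/g_t'(1)$; and $\Theta\to1$ is not ``$\eta$ disconnecting $1$'' but rather the harmonic measure (from $\infty$) of the right side of $\eta$ overwhelming that of the remaining boundary gap before $1$.

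The genuine gap is the upper bound. At time $\tau_\epsilon$ one has $M_{\tau_\epsilon}\asymp\epsilon^{-\alpha}(1-\Theta_{\tau_\epsilon})^{\beta}$ with $\beta>0$, so the supermartingale inequality yields only $\E\big[(1-\Theta_{\tau_\epsilon})^{\beta}\one_{\tau_\epsilon<\infty}\big]\lesssim\epsilon^{\alpha}$; this says nothing about the event that $\eta$ reaches $\partial B(1,\epsilon)$ with $1-\Theta_{\tau_\epsilon}$ small, and your ``strong Markov'' remark does not close this without essentially assuming the estimate you want. The paper handles the upper bound by a different mechanism. It first applies the M\"obius change, so that $\{\tau_\epsilon<\infty\}$ becomes the event that an $\SLE_\kappa(\rho_L;\rho_R)$ with $\rho_L=\kappa-6-(\rho_{1,R}+\rho_{2,R})<\tfrac{\kappa}{2}-2$ and force points $(-\epsilon;x_R)$ reaches $\partial\D$ before hitting $(-\infty,-\epsilon]$. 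It then uses not $M$ but the \emph{left} martingale $M^L_t=|W_t-V_t^L|^{(\kappa-4-2\rho_L)/\kappa}|V_t^L-V_t^R|^{(\kappa-4-2\rho_L)\rho_R/(2\kappa)}$ and splits on whether $M^L$ has exceeded a threshold $u\asymp\epsilon^{\beta\gamma}$: the overshoot event costs $M_0^L/u$ by optional stopping, while on the complement $|W_t-V_t^L|$ stays below $\vartheta=\epsilon^{\beta}$, which forces $\eta$ to live in a horizontal tube of height $O(\vartheta)$. A deterministic Loewner estimate plus an iteration of Lemma~\ref{lem::sle_kappa_rho_boundary_hitting} (the ``tube lemma'') then shows that the probability of traversing a unit distance inside such a tube without touching $(-\infty,-\epsilon]$ is at most $p_2^{c/\vartheta}$, which is subpolynomial. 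That tube argument is the idea your sketch is missing.
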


By taking $\rho = \rho_{1,R} \in ((-2) \vee (\tfrac{\kappa}{2}-4),\tfrac{\kappa}{2}-2)$ and $\rho_{2,R} = 0$, Theorem~\ref{thm::one_point} gives the exponent for the probability that an $\SLE_\kappa(\rho)$ process gets close to a fixed point on the boundary.  Theorem~\ref{thm::one_point} is proved (in somewhat more generality) in Section~\ref{subsec::upper_bound}.

\subsection*{Outline}

The remainder of this article is structured as follows.  In Section~\ref{sec::preliminaries}, we will review the definition and important properties of the $\SLE_\kappa$ and $\SLE_\kappa(\rho)$ processes.  We will also describe the coupling between $\SLE$ and the Gaussian free field.  Next, in Section~\ref{sec::sle_kappa_rho_boundary}, we will compute the Hausdorff dimension of $\SLE_\kappa(\rho)$ intersected with the boundary.  We will extend this to compute the dimension of the intersection of two GFF flow lines in Section~\ref{sec::flow_line_intersection}.  Finally, in Section~\ref{sec::main_results} we will complete the proof of Theorem~\ref{thm::double_point_dimension}.

\section{Preliminaries}
\label{sec::preliminaries}

We will give an overview of the $\SLE_\kappa$ and $\SLE_\kappa(\rho)$ processes in Section~\ref{subsec::sle}.  Next, in Section~\ref{subsec::imaginary_geometry}, we will give an overview of the $\SLE$/GFF coupling and then use the coupling to establish several useful lemmas regarding the behavior of the $\SLE_\kappa$ and $\SLE_\kappa(\ul{\rho})$ processes.   In Section~\ref{subsec::radon_nikodym}, we will compute the Radon-Nikodym derivative associated with a change of domains and perturbation of force points for an $\SLE_\kappa(\ul{\rho})$ process.  Finally, in Section~\ref{subsec::distort} we will record some useful estimates for conformal maps. Throughout, we will make use of the following notation.  Suppose that $f,g$ are functions.  We will write $f \asymp g$ if there exists a constant $C \geq 1$ such that $C^{-1} f(x) \leq g(x) \leq C f(x)$ for all $x$.  We will write $f \lesssim g$ if there exists a constant $C > 0$ such that $f(x) \leq C g(x)$ and $f \gtrsim g$ if $g \lesssim f$.

\subsection{$\SLE_\kappa$ and $\SLE_\kappa(\rho)$ processes}
\label{subsec::sle}

We will now give a very brief introduction to $\SLE$.  More detailed introductions can be found in many excellent surveys of the subject, e.g., \cite{W03, lawler2005}.  Chordal $\SLE_\kappa$ in $\h$ from $0$ to $\infty$ is defined by the random family of conformal maps $(g_t)$ obtained by solving the Loewner ODE
\begin{equation}
\label{eqn::loewner_ode}
\partial_t g_t(z) = \frac{2}{g_t(z) - W_t},\quad\quad g_0(z) = z
\end{equation}
with $W = \sqrt{\kappa} B$ and $B$ a standard Brownian motion.  Write $K_t := \{z \in \h: \tau(z) \leq t \}$ where $\tau(z)$ is the swallowing time of $z$ defined by $\sup\{t\ge 0: \min_{s\in[0,t]}|g_s(z)-W_s|>0\}$.  Then $g_t$ is the unique conformal map from $\h_t := \h \setminus K_t$ to $\h$ satisfying $\lim_{|z| \to \infty} |g_t(z) - z| = 0$.

Rohde and Schramm showed that there almost surely exists a curve $\eta$ (the so-called $\SLE$ \emph{trace}) such that for each $t \geq 0$ the domain $\h_t$ of $g_t$ is the unbounded connected component of $\h \setminus \eta([0,t])$, in which case the (necessarily simply connected and closed) set $K_t$ is called the ``filling'' of $\eta([0,t])$ \cite{\RohdeSchramm}.  An $\SLE_\kappa$ connecting boundary points $x$ and $y$ of an arbitrary simply connected Jordan domain can be constructed as the image of an $\SLE_\kappa$ on $\h$ under a conformal transformation $\varphi \colon \h \to D$ sending $0$ to $x$ and $\infty$ to $y$.  (The choice of $\varphi$ does not affect the law of this image path, since the law of $\SLE_\kappa$ on $\h$ is scale invariant.)  For $\kappa\in[0,4],$ $\SLE_\kappa$ is simple and, for $\kappa > 4$, $\SLE_\kappa$ is self-intersecting \cite{\RohdeSchramm}.  The dimension of the path is $1+\tfrac{\kappa}{8}$ for $\kappa \in [0,8]$ and $2$ for $\kappa > 8$ \cite{beffara2008}.

An $\SLE_{\kappa}(\underline{\rho}_L;\underline{\rho}_R)$ process is a generalization of
$\SLE_{\kappa}$ in which one keeps track of additional marked points which are called \emph{force points}.  These processes were first introduced in \cite[Section 8.3]{LawlerSchrammWernerConformalRestrictionChordal}.  Fix $\underline{x}_L=(x_{\ell,L}<\cdots<x_{1,L}\le 0)$ and $\underline{x}_R=(0\le x_{1,R}<\cdots<x_{r,R})$.  We associate with each $x_{i,q}$ for $q \in \{L,R\}$ a weight $\rho_{i,q} \in \R$.  An $\SLE_{\kappa}(\underline{\rho}_L; \underline{\rho}_R)$ process with force points $(\underline{x}_L; \underline{x}_R)$ is the measure on continuously growing compact hulls $K_t$ generated by the Loewner chain with $W_t$ replaced by the solution to the system of SDEs:
\begin{equation}
\label{eqn::slesde}
\begin{split}
d W_t &= \sum_{i=1}^{\ell} \frac{\rho_{i,L}}{W_t-V_t^{i,L}} dt + \sum_{i=1}^{r} \frac{\rho_{i,R} }{W_t-V_t^{i,R}} dt + \sqrt{\kappa}d B_t,\\
d V_t^{i,q}&= \frac{2}{V_t^{i,q}- W_t} dt,\quad V_0^{i,q} = x_{i,q},\quad i\in\N,\quad q\in\{L, R\}.
\end{split}
\end{equation}
It is explained in \cite[Section 2]{IG1} that for all $\kappa>0,$ there is a unique solution to \eqref{eqn::slesde} up until the \emph{continuation threshold} is hit --- the first time $t$ for which either
\begin{equation*}
\sum_{i:V^{i,L}_t=W_t} \rho_{i,L}\le -2 \quad\mbox{ or }\quad \sum_{i:V^{i,R}_t=W_t} \rho_{i,R}\le -2.
\end{equation*}
The almost sure continuity of the $\SLE_\kappa(\ul{\rho})$ processes is proved in \cite[Theorem 1.3]{IG1}.  Let
\begin{equation}
\label{eqn::rho_bar}
\overline{\rho}_{j,q}=\sum_{i=0}^{j}\rho_{i,q} \quad\text{for}\quad q \in \{L,R\} \quad\text{and}\quad j \in \N
\end{equation}
with the convention that $\rho_{0,L}=\rho_{0,R}=0$,  $x_{0,L}=0^-$, $x_{\ell+1,L}=-\infty$, $x_{0,R}=0^+$, and $x_{r+1,R}=+\infty$.  The value of $\ol{\rho}_{k,R}$ determines how the process interacts with the interval $(x_{k,R},x_{k+1,R})$ (and likewise when $R$ is replaced with $L$).  In particular:

\begin{lemma}
\label{lem::sle_kappa_rho_boundary_interaction}
Suppose that $\eta$ is an $\SLE_\kappa(\ul{\rho}_L;\ul{\rho}_R)$ process in $\h$ from $0$ to $\infty$ with force points located at $(\ul{x}_L;\ul{x}_R)$.
\begin{enumerate}[(i)]
\item If $\ol{\rho}_{k,R} \geq \tfrac{\kappa}{2}-2$, then $\eta$ almost surely does not hit $(x_{k,R},x_{k+1,R})$.
\item If $\kappa \in (0,4)$ and $\ol{\rho}_{k,R} \in (\tfrac{\kappa}{2}-4,-2]$, then $\eta$ can hit $(x_{k,R},x_{k+1,R})$ but cannot be continued afterwards.
\item If $\kappa > 4$ and $\ol{\rho}_{k,R} \in (-2,\tfrac{\kappa}{2}-4]$, then $\eta$ can hit $(x_{k,R},x_{k+1,R})$ and be continued afterwards.  Moreover, $\eta \cap (x_{k,R},x_{k+1,R})$ is almost surely an interval.
\item If $\ol{\rho}_{k,R} \in ((-2) \vee (\tfrac{\kappa}{2}-4), \tfrac{\kappa}{2}-2)$ then $\eta$ can hit and bounce off of $(x_{k,R},x_{k+1,R})$.  Moreover, $\eta \cap (x_{k,R},x_{k+1,R})$ has empty interior.
\end{enumerate}
\end{lemma}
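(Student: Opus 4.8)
The plan is to reduce every assertion to a statement about a single‑force‑point process. By translation and scaling invariance and the domain Markov property, together with the local absolute continuity between $\SLE_\kappa(\ul{\rho}_L;\ul{\rho}_R)$ and $\SLE_\kappa(\ol{\rho}_{k,R})$ with a single force point at $0^+$ --- which follows from the Radon--Nikodym derivative computed in Section~\ref{subsec::radon_nikodym} once the tip of $\eta$ has entered a neighborhood of $(x_{k,R},x_{k+1,R})$ that is bounded away from the remaining force points --- it suffices, writing $\rho:=\ol{\rho}_{k,R}$, to determine how an $\SLE_\kappa(\rho)$ process with a single force point at $0^+$ interacts with $\R_+$: whether it ever hits $\R_+$, whether it can be continued after doing so, and whether $\eta\cap\R_+$ contains an interval. (The interaction with the right endpoint $x_{k+1,R}$ is itself an instance of the lemma and plays no role here.)

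Next I would set up the Bessel description of the Loewner flow near the force point. Writing $V_t=g_t(0^+)$ and $Z_t=V_t-W_t\ge0$, the system~\eqref{eqn::slesde} gives $dZ_t=\tfrac{\rho+2}{Z_t}\,dt-\sqrt\kappa\,dB_t$, so $Z_t/\sqrt\kappa$ evolves as a Bessel process of dimension $\delta=1+\tfrac{2(\rho+2)}{\kappa}$ (run up to the continuation threshold when $\rho\le-2$). Since $\eta$ hits $\R_+$ exactly when an initial segment of $\R_+$ is swallowed, $\eta$ hits $\R_+$ if and only if $Z$ reaches $0$, and in the case $\rho\le-2$ the first time $Z$ reaches $0$ is the continuation threshold.

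The four cases then follow. If $\rho\ge\tfrac\kappa2-2$ then $\delta\ge2$; a Bessel process of dimension $\ge2$ a.s.\ never returns to $0$, so $Z_t>0$ for $t>0$ and $\eta$ does not hit $\R_+$, giving (i). If $\rho<\tfrac\kappa2-2$ then $\delta<2$ and $Z$ a.s.\ reaches $0$ in finite time, so $\eta$ reaches the interval. If moreover $\rho\le-2$ --- which together with $\rho>\tfrac\kappa2-4$ forces $\kappa<4$ --- the continuation threshold is hit at that moment and the process terminates, giving (ii). If $\rho>-2$ the process continues, and it remains to decide whether $\eta$ slides along $\R_+$ or bounces off it. When $\kappa\le4$, or $\kappa>4$ and $\rho\in\big((-2)\vee(\tfrac\kappa2-4),\tfrac\kappa2-2\big)$, we have $\delta\in(1,2)$: the Bessel process is instantaneously reflecting, its zero set is a.s.\ nowhere dense, and each excursion of $Z$ off $0$ corresponds to an excursion of $\eta$ that disconnects a non‑degenerate subinterval of $\R_+$ from $\infty$; hence $\eta$ omits a dense family of open subintervals of $\R_+$ and $\eta\cap\R_+$ has empty interior --- this is (iv). When $\kappa>4$ and $\rho\le\tfrac\kappa2-4$, the weight is at or below the boundary‑filling threshold, and one shows instead that upon hitting $\R_+$ the curve traces a non‑degenerate interval --- e.g.\ via a first‑moment estimate giving $\mathrm{Leb}(\eta\cap\R_+)>0$ almost surely, together with the fact that the set of swallowed boundary points is a connected interval --- so $\eta\cap\R_+$ is an interval, giving (iii). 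Finally, the statements about hitting the \emph{open} interval $(x_{k,R},x_{k+1,R})$ (rather than merely its left endpoint) and the interval/empty‑interior dichotomies are promoted from ``near $0$'' to ``throughout the interval'' by reapplying conformal invariance and the Markov property at successive hitting times together with a $0$--$1$ law for the relevant germ event.

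The main obstacle is case (iii): it is not detected by the dimension $\delta$ of the single Bessel process $Z_t$, since $\delta\in(1,2)$ in both (iii) and (iv), so one genuinely needs the additional input that for $\kappa>4$ and $\rho\le\tfrac\kappa2-4$ the curve fills an interval of the boundary --- either through the boundary‑occupation moment computation indicated above or by importing the boundary‑filling property of $\SLE_\kappa(\rho)$ for these parameters from the $\SLE$/GFF coupling described in Section~\ref{subsec::imaginary_geometry}. A secondary technical point is making the single‑force‑point reduction rigorous, since the conformal images of $x_{1,R},\dots,x_{k,R}$ need not remain comparably separated from $W_t$ as the tip approaches the interval; this is handled via the explicit Radon--Nikodym derivative of Section~\ref{subsec::radon_nikodym} and the continuity of $\SLE_\kappa(\ul{\rho})$ in its force‑point data.
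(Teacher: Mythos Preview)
The paper does not argue this lemma at all; it simply cites \cite[Remark~5.3 and Theorem~1.3]{IG1} and \cite[Lemma~15]{\DubedatDuality}. Your sketch is exactly the standard Bessel-process analysis that underlies those references: the reduction to a single force point by absolute continuity, the identification of $Z_t/\sqrt{\kappa}$ with a $\mathrm{BES}(\delta)$ process for $\delta=1+2(\rho+2)/\kappa$, and the translation of the thresholds $\rho=\tfrac{\kappa}{2}-2$ and $\rho=-2$ into $\delta=2$ and $\delta=1$. You have also correctly isolated the one place where the Bessel picture alone is insufficient --- case~(iii), where $\delta\in(1,2)$ just as in~(iv) --- and correctly named the extra input (boundary filling for $\rho\le\tfrac{\kappa}{2}-4$, which in \cite{IG1} is obtained from the GFF coupling).

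Two small comments. First, your alternative route to~(iii) via ``a first-moment estimate giving $\mathrm{Leb}(\eta\cap\R_+)>0$'' is not the argument in the cited references and would take some work to make precise; the GFF-coupling route you also mention is what is actually used. Second, in your argument for~(iv) the step ``each excursion of $Z$ off $0$ disconnects a non-degenerate subinterval'' deserves one more sentence: during such an excursion $V_t$ is strictly increasing (since $dV_t=2/(V_t-W_t)\,dt>0$), so the curve returns to $\R_+$ strictly to the right of where it left, and the skipped open interval lies in the hull but not on the trace. With that made explicit, your outline is complete and matches what the citations contain.
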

\begin{proof}
See \cite[Remark 5.3 and Theorem 1.3]{IG1} as well as \cite[Lemma 15]{\DubedatDuality}.
\end{proof}

In this article, it will also be important for us to consider \emph{radial} $\SLE_\kappa$ and $\SLE_\kappa(\rho)$ processes.  These are typically defined using the radial Loewner equation.  On the unit disk $\D$, this is described by the ODE
\begin{equation}
\label{eqn::lde.radial}
\partial_t g_t(z) = -g_t(z) \frac{g_t(z) + W_t}{g_t(z) - W_t},
\quad g_0(z)=z
\end{equation}
where $W_t$ is a continuous function which takes values in $\partial \D$.  For $w \in \partial \D$, radial $\SLE_\kappa$ starting from $w$ is the growth process associated with \eqref{eqn::lde.radial} where $W_t = w e^{i \sqrt{\kappa} B_t}$ and $B$ is a standard Brownian motion.  For $w,v\in \partial \D$, radial $\SLE_\kappa(\rho)$ with starting configuration $(w,v)$ is the growth process associated with the solution of \eqref{eqn::lde.radial} where the driving function solves the SDE
\begin{equation}
\label{eqn::radial_sle_kappa_rho}
dW_t = -\frac{\ka}{2} W_t\d t
	+ i \sqrt{\ka} W_t \d B_t
	- \frac{\rho}{2} W_t \frac{W_t+V_t}{W_t-V_t}\d t,\quad
W_0=w
\end{equation}
with $V_t=g_t(v)$, the force point.  The continuity of the radial $\SLE_\kappa(\rho)$ processes for $\rho > -2$ can be extracted from the continuity of chordal $\SLE_\kappa(\ul{\rho})$ processes given in \cite[Theorem 1.3]{IG1}; this is explained in \cite[Section~2.1]{IG4}.  The value of $\rho$ for a radial $\SLE_\kappa(\rho)$ process has the same interpretation as in the setting of chordal $\SLE_\kappa(\rho)$ explained in Lemma~\ref{lem::sle_kappa_rho_boundary_interaction}.  That is, the processes are boundary filling for $\rho \in (-2,\tfrac{\kappa}{2}-4]$ (for $\kappa > 4$), boundary hitting but not filling for $\rho \in ( (-2) \vee (\tfrac{\kappa}{2}-4),\tfrac{\kappa}{2}-2)$, and boundary avoiding for $\rho \geq \tfrac{\kappa}{2}-2$.  In particular, by the conformal Markov property for radial $\SLE_\kappa(\rho)$, such processes are self-intersecting for $\rho \in (-2,\tfrac{\kappa}{2}-2)$ and fill their own outer boundary for $\rho \in (-2,\tfrac{\kappa}{2}-4]$ ($\kappa > 4$).  The latter means that, for any time $t$, the outer boundary of the range of $\eta$ up to time $t$ is almost surely contained in $\eta([t,\infty))$.

\subsubsection*{Martingales}

From the form of \eqref{eqn::slesde} and the Girsanov theorem, it follows that the law of an $\SLE_\kappa(\ul{\rho})$ process can be constructed by reweighting the law of an ordinary $\SLE_\kappa$ process by a certain local martingale, at least until the first time $\tau$ that $W$ hits one of the force points $V^{i,q}$ \cite{\WernerSLEKappaRho}.  It is shown in \cite[Theorem~6 and Remark~7]{\SW} that this local martingale can be expressed in the following more convenient form.  Suppose $x_{1,L}<0<x_{1,R}$ and define
\begin{equation}
\label{eqn::martingalebetweensles}
\begin{split}
M_t &=\prod_{i,q}\left|g'_t(x_{i,q})\right|^{\frac{(4-\kappa+\rho_{i,q})\rho_{i,q}}{4\kappa}}\times \prod_{i,q} \left| W_t-V_t^{i,q} \right|^{\frac{\rho_{i,q}}{\kappa}}\\
    &\times \prod_{(i,q) \neq (i',q')}\left|V_t^{i,q}-V_t^{i',q'}\right|^{\frac{\rho_{i,q}\rho_{i',q'}}{2\kappa}}.
\end{split}
\end{equation}
Then $M_t$ is a local martingale and the law of a standard $\SLE_{\kappa}$ process weighted by $M$ (up to time $\tau$, as above) is equal to that of an $\SLE_{\kappa}(\underline{\rho}_L;\underline{\rho}_R)$ process with force points $(\underline{x}_L;\underline{x}_R)$.  We remark that there is an analogous martingale in the setting of radial $\SLE_\kappa(\rho)$ processes \cite[Equation 9]{\SW}, a special case of which we will describe and make use of in Section~\ref{sec::flow_line_intersection}.

One application of this that will be important for us is as follows.  Suppose that $\eta$ is an $\SLE_{\kappa}(\rho_L;\rho_R)$ process with only two force points $x_L<0<x_R$. If we weight the law of $\eta$ by the local martingale
\begin{equation}
\label{eqn::left_conditioning}
M^L_t=|W_t-V_t^L|^{\frac{\kappa-4-2\rho_L}{\kappa}}\times|V_t^L-V_t^R|^{\frac{(\kappa-4-2\rho_L)\rho_R}{2\kappa}}
\end{equation}
then the law of the resulting process is that of an $\SLE_{\kappa}(\wh{\rho}_L;\rho_R)$ process where $\wh{\rho}_L=\kappa-4-\rho_L$. If $\rho_L<\tfrac{\kappa}{2}-2$ so that $\wh{\rho}_L > \tfrac{\kappa}{2}-2$, Lemma~\ref{lem::sle_kappa_rho_boundary_interaction} implies that the reweighted process almost surely does not hit $(-\infty,x_L)$.

\subsection{$\SLE$ and the $\GFF$}
\label{subsec::imaginary_geometry}

We are now going to give a brief overview of the coupling between $\SLE$ and the GFF.  We refer the reader to \cite[Sections~1~and~2]{IG1} as well as \cite[Section 2]{IG2} for a more detailed overview.  Throughout, we fix $\kappa \in (0,4)$ and $\kappa'=16/\kappa > 4$.

Suppose that $D \subseteq \C$ is a given domain.  The Sobolev space $H_0^1(D)$ is the Hilbert space closure of $C_0^\infty(D)$ with respect to the Dirichlet inner product
\begin{equation}
\label{eqn::dirichlet_inner_product}
 (f,g)_\nabla = \frac{1}{2\pi} \int \nabla f(x) \cdot \nabla g(x) dx.
\end{equation}
The zero-boundary Gaussian free field (GFF) $h$ on $D$ is given by
\begin{equation}
\label{eqn::gff_series}
h = \sum_n \alpha_n f_n
\end{equation}
where $(\alpha_n)$ is a sequence of i.i.d.\ $N(0,1)$ random variables and $(f_n)$ is an orthonormal basis for $H_0^1(D)$.  The sum \eqref{eqn::gff_series} does not converge in $H_0^1(D)$ (or any space of functions) but rather in an appropriate space of distributions.  The GFF $h$ with boundary data $f$ is given by taking the sum of the zero-boundary GFF on $D$ and the function $F$ in $D$ which is harmonic and is equal to $f$ on $\partial D$.  See \cite{sheffield2007} for a detailed introduction.

\begin{figure}[ht!]
\begin{center}
\includegraphics[scale=0.85]{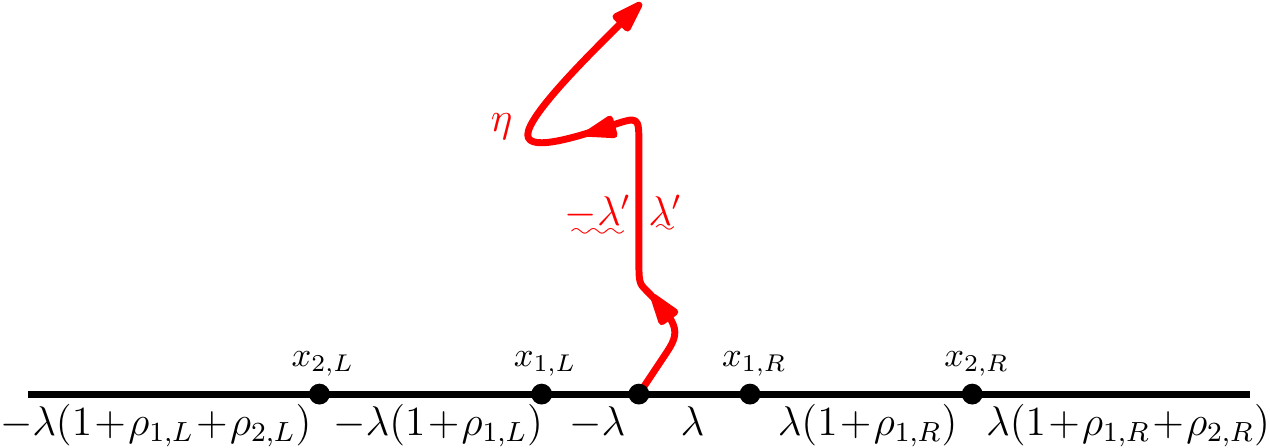}
\end{center}
\caption{\label{fig::gff_boundary_data_flow}
Suppose that $h$ is a GFF on $\h$ whose boundary data is as indicated above.  Then the flow line $\eta$ of $h$ starting from $0$ is an $\SLE_{\kappa}(\rho_{2,L},\rho_{1,L};\rho_{1,R},\rho_{2,R})$ process ($\kappa \in (0,4)$) from $0$ to $\infty$ with force points located at $x_{2,L} < x_{1,L} < 0 < x_{1,R} < x_{2,R}$.  The conditional law of $h$ given $\eta$ (or $\eta$ up to a stopping time) is that of a GFF off of $\eta$ with the boundary data as illustrated on $\eta$; the notation \uwave{$x$} is shorthand for $x + \chi \cdot {\rm winding}$ and is explained in detail in \cite[Figures~1.9~and~1.10]{IG1}.  The boundary data for the coupling of $\SLE_\kappa(\ul{\rho})$ with many force points arises as the obvious generalization of the above.}
\end{figure}

\begin{figure}[ht!]
\includegraphics[scale=0.85]{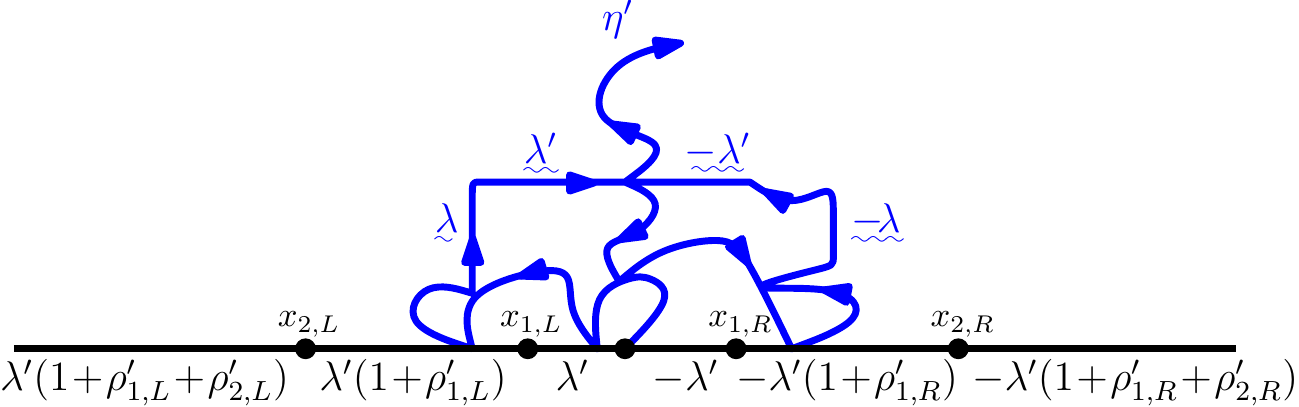}
\caption{\label{fig::gff_boundary_data_cfl}
Suppose that $h$ is a GFF on $\h$ whose boundary data is as indicated above.  Then the counterflow line $\eta'$ of $h$ starting from $0$ is an $\SLE_{\kappa'}( \rho_{2,L}',\rho_{1,L}'; \rho_{1,R}', \rho_{2,R}')$ process ($\kappa'>4$) from $0$ to $\infty$ with force points located at $x_{2,L} < x_{1,L} < 0 < x_{1,R} < x_{2,R}$.  The conditional law of $h$ given $\eta'$ (or $\eta'$ up to a stopping time) is that of a GFF off of $\eta'$ with the indicated boundary data; the notation \uwave{$x$} is shorthand for $x + \chi \cdot {\rm winding}$ and is explained in detail in \cite[Figures~1.9~and~1.10]{IG1}.  The boundary data for the coupling of $\SLE_{\kappa'}(\ul{\rho}')$ with many force points arises as the obvious generalization of the above.
}
\end{figure}

\begin{figure}[ht!]
\begin{center}
\subfigure[Monotonicity of flow lines.]{\includegraphics[scale=0.85]{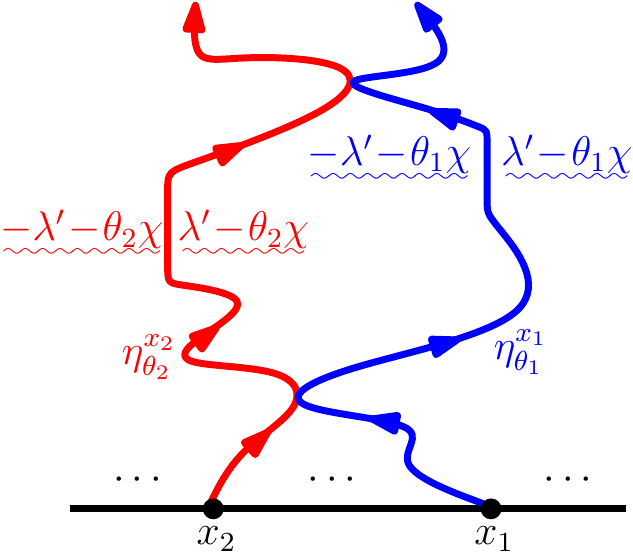}}
\hspace{0.02\textwidth}
\subfigure[Flow lines merging.]{\includegraphics[scale=0.85]{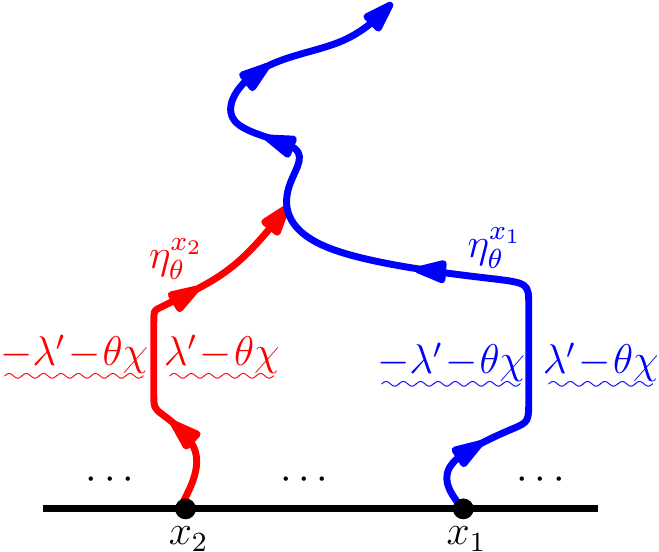}}
\end{center}
\caption{\label{fig::two_flowlines} Suppose that $h$ is a GFF on $\h$ with piecewise constant boundary data and $x_1,x_2 \in \partial \h$ with $x_2 \leq x_1$.  Fix angles $\theta_1,\theta_2$ and, for $i=1,2$, let $\eta_{\theta_i}^{x_i}$ be the flow line of $h$ with angle $\theta_i$ starting from $x_i$.  If $\theta_2 > \theta_1$, then $\eta_{\theta_2}^{x_2}$ almost surely stays to the left of (but may bounce off of) $\eta_{\theta_1}^{x_1}$.  If $\theta_1=\theta_2=\theta$, then $\eta_\theta^{x_1}$ merges with $\eta_\theta^{x_2}$ upon intersecting after which the paths never separate.}
\end{figure}

\begin{figure}[ht!]
\begin{center}
\includegraphics[scale=0.85]{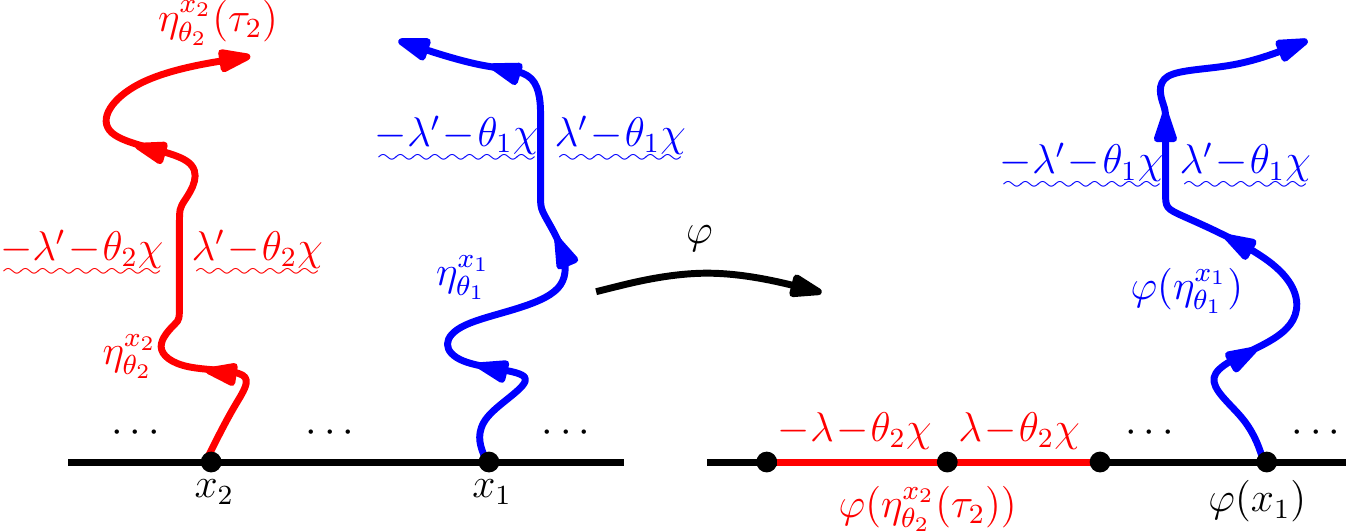}
\end{center}
\caption{\label{fig::conditional_law} Assume that we have the same setup as in Figure~\ref{fig::two_flowlines} and that $\tau_2$ is a stopping time for $\eta_{\theta_2}^{x_2}$.  Then we can compute the conditional law of $\eta_{\theta_1}^{x_1}$ given $\eta_{\theta_2}^{x_2}|_{[0,\tau_2]}$.  Let $\varphi$ be a conformal map which takes the unbounded connected component of $\h \setminus \eta_{\theta_2}^{x_2}([0,\tau_2])$ to $\h$ and let $h_2 = h \circ \varphi^{-1} - \chi \arg (\varphi^{-1})'$.  Then $\varphi(\eta_{\theta_1}^{x_1})$ is the flow line of $h_2$ starting from $\varphi(x_1)$ with angle $\theta_1$ and we can read off its conditional law from the boundary data of $h_2$ as in Figure~\ref{fig::gff_boundary_data_flow}.}
\end{figure}

\begin{figure}[ht!]
\begin{center}
\includegraphics[scale=0.85]{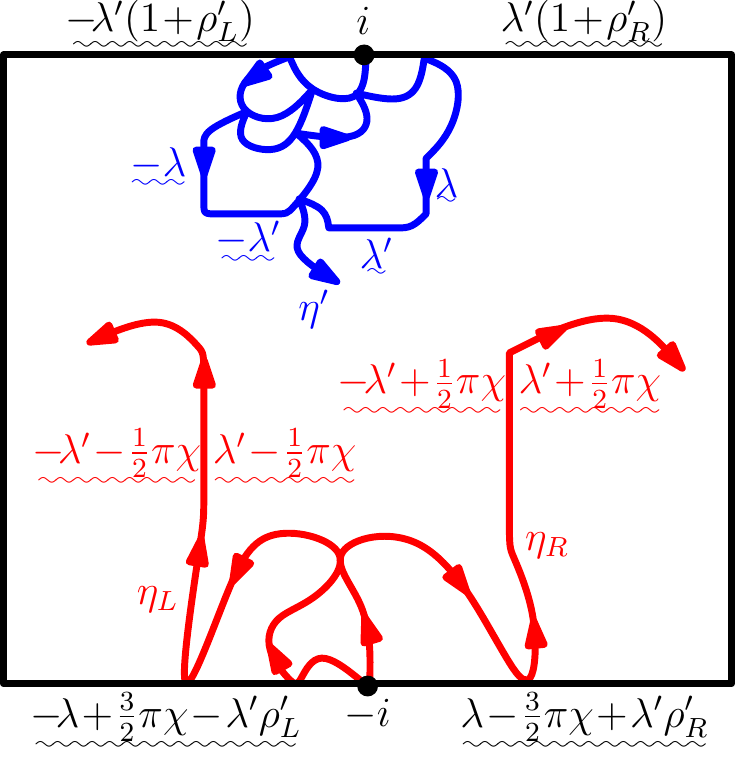}
\end{center}
\caption{
\label{fig::counterflowline_and_flowline}  Let $h$ be a GFF on $[-1,1]^2$ with the illustrated boundary data.  Then the counterflow line $\eta'$ of $h$ from $i$ to $-i$ is an $\SLE_{\kappa'}(\rho_L';\rho_R')$ process ($\kappa'>4$) with force points located at $(i)^-,(i)^+$ (immediately to the left and right of $i$).  The left (resp.\ right) boundary $\eta_L$ (resp.\ $\eta_R$) of $\eta'$ is given by the flow line of $h$ with angle $\tfrac{\pi}{2}$ (resp.\ $-\tfrac{\pi}{2}$) starting from $-i$ and targeted at $i$; these paths can be drawn if $\rho_L',\rho_R' \geq \tfrac{\kappa'}{2}-4$.  Explicitly, $\eta_L$ (resp.\ $\eta_R$) is an $\SLE_\kappa(\kappa-4+\tfrac{\kappa}{4}\rho_L';\tfrac{\kappa}{2}-2+\tfrac{\kappa}{4}\rho_R')$ (resp.\ $\SLE_\kappa(\tfrac{\kappa}{2}-2+\tfrac{\kappa}{4}\rho_L';\kappa-4+\tfrac{\kappa}{4}\rho_R')$) process in $[-1,1]^2$ from $-i$ to $i$ with force points located at $(-i)^-,(-i)^+$ ($\kappa=16/\kappa' \in (0,4)$).  The cut-set of $\eta'$ is given by $\eta_L \cap \eta_R$ and $\eta' \cap \partial ([-1,1]^2) =(\eta_L \cup \eta_R) \cap \partial ([-1,1])^2$.  The same holds if $[-1,1]^2$ is replaced by a proper, simply-connected domain and the boundary data of the GFF is transformed according to \eqref{eqn::ac_eq_rel}.  Finally, if $\rho_L',\rho_R' \geq \tfrac{\kappa'}{2}-4$, then conditional law of $\eta'$ given $\eta_L$ and $\eta_R$ is independently that of an $\SLE_{\kappa'}(\tfrac{\kappa'}{2}-4;\tfrac{\kappa'}{2}-4)$ in each of the bubbles of $[-1,1]^2 \setminus (\eta_L \cup \eta_R)$ which lie to the right of $\eta_L$ and to the left of $\eta_R$.
}
\end{figure}

Let
\begin{equation}
\label{eqn::constants}
\chi=\frac{2}{\sqrt{\kappa}}-\frac{\sqrt{\kappa}}{2}, \quad \lambda=\frac{\pi}{\sqrt{\kappa}},\quad\text{and}\quad \lambda'=\frac{\pi}{\sqrt{\kappa'}} = \frac{\pi}{4}\sqrt{\kappa} = \lambda-\frac{\pi}{2} \chi.
\end{equation}
Suppose that $\eta$ is an $\SLE_{\kappa}(\ul{\rho}_L;\ul{\rho}_R)$ process in $\h$ from $0$ to $\infty$ with force points $(\ul{x}_L;\ul{x}_R)$, let $(g_t)$ be the associated Loewner flow, $W$ its driving function, and $f_t = g_t - W_t$.  Let $h$ be a GFF on $\h$ with zero boundary values.  It is shown in \cite{She_SLE_lectures, dubedat2009, MakarovSmirnov09,SchrammShe10,HagendorfBauerBernard10,IzyurovKytola10, \SheWeld} that there exists a coupling $(\eta,h)$ such that the following is true. Suppose $\tau$ is any stopping time for $\eta$.  Let $\phi_t^0$ be the function which is harmonic in $\h$ with boundary values (recall~\eqref{eqn::rho_bar})
\begin{equation*}
\left\{\begin{array}{ccc}
       -\lambda(1+\overline{\rho}_{j,L}) & \rm{if} & x \in [f_t(x_{j+1,L}),f_t(x_{j,L})) \\
       \lambda(1+\overline{\rho}_{j,R}) & \rm{if} & x \in (f_t(x_{j,R}), f_t(x_{j+1,R})]. \\
     \end{array}\right.
\end{equation*}
Let
\begin{equation*}
\phi_t(z)=\phi^0_t(f_t(z))-\chi \arg f'_t(z).
\end{equation*}
Then the conditional law of $(h+\phi_0)|_{\h\setminus K_{\tau}}$ given $K_{\tau}$ is equal to the law of $h\circ f_{\tau}+\phi_{\tau}$.  In this coupling, $\eta$ is almost surely determined by $h$ \cite{SchrammShe10,dubedat2009,IG1}.  For $\kappa \in (0,4)$, $\eta$ has the interpretation as being the flow line of the (formal) vector field $e^{i(h+\phi_0)/\chi}$ \cite{\SheWeld} starting from $0$; we will refer to $\eta$ simply as a flow line of $h+\phi_0$.  See Figure~\ref{fig::gff_boundary_data_flow} for an illustration of the boundary data.  The notation \uwave{$x$} is used to indicate that the boundary data for the field is given by $x+\chi \cdot {\rm winding}$ where ``winding'' refers to the winding of the path or domain boundary.  For curves or domain boundaries which are not smooth, it is not possible to make sense of the winding along the curve or domain boundary.  However, the harmonic extension of the winding does make sense.  This notation as well as this point are explained in detail in \cite[Figures~1.9~and~1.10]{IG1}.  When $\kappa=4$, $\eta$ has the interpretation of being the level line of $h+\phi_0$ \cite{SchrammShe10}.  Finally, when $\kappa' > 4$, $\eta'$ has the interpretation of being a ``tree of flow lines'' which travel in the opposite direction of $\eta'$ \cite{IG1,IG4}.  For this reason, $\eta'$ is referred to as a \emph{counterflow line} of $h+\phi_0$ in this case.

If $h$ were a smooth function, $\eta$ a flow line of the vector field $e^{ih/\chi}$, and $\varphi$ a conformal map, then $\varphi(\eta)$ is a flow line of $e^{i\wt{h}/\chi}$ where
\begin{equation}
\label{eqn::ac_eq_rel}
\wt{h} = h \circ \varphi^{-1} - \chi \arg (\varphi^{-1})';
\end{equation}
see \cite[Figure 1.6]{IG1}.  The same is true when $h$ is a GFF and this formula determines the boundary data for coupling the GFF with an $\SLE_\kappa(\ul{\rho}_L;\ul{\rho}_R)$ process on a domain other than $\h$.  See also \cite[Figure 1.9]{IG1}.  $\SLE_\kappa$ flow lines and $\SLE_{\kappa'}$, $\kappa'=16/\kappa \in (4,\infty)$, counterflow lines can be coupled with the same GFF.  In order for both paths to transform in the correct way under the application of a conformal map, one thinks of the flow lines as being coupled with $h$ as described above and the counterflow lines as being coupled with $-h$.  This is because $\chi(\kappa')=-\chi(\kappa)$; see the discussion after the statement of \cite[Theorem~1.1]{IG1}.  This is why the signs of the boundary data in Figure~\ref{fig::gff_boundary_data_cfl} are reversed in comparison to that in Figure~\ref{fig::gff_boundary_data_flow}.

The theory of how the flow lines, level lines, and counterflow lines of the GFF interact with each other and the domain boundary is developed in \cite{IG1,IG4}.  See, in particular, \cite[Theorem 1.5]{IG1}.  The important facts for this article are as follows.  Suppose that $h$ is a GFF on $\h$ with piecewise constant boundary data. For each $\theta \in \R$ and $x \in \partial\h$, let $\eta^x_{\theta}$ be the flow line of $h$ starting at $x$ with angle $\theta$ (i.e., the flow line of $h+\theta\chi$ starting at $x$). If $\theta_1<\theta_2$ and $x_1\ge x_2$ then $\eta^{x_1}_{\theta_1}$ almost surely stays to the right of $\eta^{x_2}_{\theta_2}$. If $\theta_1=\theta_2,$ then $\eta^{x_1}_{\theta_1}$ may intersect $\eta^{x_2}_{\theta_2}$ and, upon intersecting, the two flow lines merge and never separate thereafter.  See Figure~\ref{fig::two_flowlines}.  Finally, if $\theta_2+\pi>\theta_1>\theta_2,$ then $\eta^{x_1}_{\theta_1}$ may intersect $\eta^{x_2}_{\theta_2}$ and, upon intersecting, crosses and possibly subsequently bounces off of $\eta^{x_2}_{\theta_2}$ but never crosses back.  It is possible to compute the conditional law of one flow line given the realization of several others; see Figure~\ref{fig::conditional_law}.  For simplicity, we use $\eta_{\theta}$ to indicate $\eta^x_{\theta}$ when $x=0$.  If $\eta'$ is a counterflow line coupled with the GFF, then its outer boundary is described in terms of a pair of flow lines starting from the terminal point of $\eta'$ \cite{\DubedatDuality,dubedat2009,IG1,IG4}; see Figure~\ref{fig::counterflowline_and_flowline}.

We are now going to use the $\SLE$/GFF coupling to collect several useful lemmas regarding the behavior of $\SLE_\kappa(\ul{\rho})$ processes.

\begin{lemma}
\label{lem::sle_kappa_rho_cont_in_force_points}
Fix $\kappa > 0$. Suppose that $(x_{n,L})$ (resp.\ $(x_{n,R})$) is a sequence of negative (resp.\ positive) real numbers converging to $x_L \leq 0^-$ (resp.\ $x_R \geq 0^+$) as $n \to \infty$.  For each $n$, suppose that $(W^n, V^{n,L},V^{n,R})$ is the driving triple for an $\SLE_\kappa(\rho_L;\rho_R)$ process in $\h$ with force points located at $(x_{n,L} \leq 0 \leq x_{n,R})$.  Then $(W^{n,L},V^{n,L},V^{n,R})$ converges weakly in law with respect to the local uniform topology to the driving triple $(W,V^L,V^R)$ of an $\SLE_\kappa(\rho_L;\rho_R)$ process with force points located at $(x_L \leq 0 \leq x_R)$ as $n \to \infty$.  The same likewise holds in the setting of multi-force-point $\SLE_\kappa(\ul{\rho})$ processes.
\end{lemma}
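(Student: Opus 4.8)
The plan is to couple all the driving triples $(W^n,V^{n,L},V^{n,R})$ and the candidate limit $(W,V^L,V^R)$ by solving \eqref{eqn::slesde} with one common standard Brownian motion $B$, and to show that in this coupling $(W^n,V^{n,L},V^{n,R}) \to (W,V^L,V^R)$ locally uniformly in probability, which gives the asserted weak convergence in law. The only feature of \eqref{eqn::slesde} that obstructs a routine Gr\"onwall argument is the singular drift $\rho_{i,q}/(W^n_t - V^{n,i,q}_t)$, which is not Lipschitz near a collision $W^n = V^{n,i,q}$, in particular near the initial configuration when a force point sits at $0^\pm$. Using the symmetry between the two sides of $\h$ and perturbing the force points one at a time, I would reduce to two situations --- from which the general statement, and the multi-force-point version, follow by combining them and tracking the cross terms --- namely the \emph{regular case}, where $|x_{n,q}|$ and $|x_q|$ are bounded below by some $\delta>0$ and no two force points coalesce, and the \emph{boundary-touching case}, where $x_R=0^+$, $x_{n,R}=\eps_n\downarrow 0$, and the remaining force points are regular (force points merging with one another away from the origin are treated exactly as in the latter case).

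The basic tool is the behavior of the gap at a single force point. If $x$ is a (simple or combined) force point of weight $\rho$ and $Z^n_t=V^n_t-W^n_t$, then \eqref{eqn::slesde} gives, as long as the other force points stay separated from $W^n$,
\[
dZ^n_t=\frac{2+\rho}{Z^n_t}\,dt+b^n_t\,dt-\sqrt{\kappa}\,dB_t,
\]
where $b^n$ is bounded and depends on the state in a locally Lipschitz way. For $\rho>-2$ this is a perturbed Bessel SDE of dimension $d=1+\tfrac{2(2+\rho)}{\kappa}>1$: its solution is pathwise unique and globally defined, depends continuously on the initial value --- including at $0$ --- in the coupling that fixes $B$ (standard facts about Bessel processes; see e.g.\ \cite[Ch.~XI]{\RevuzYor}), and satisfies $\int_0^t (Z^n_s)^{-1}\,ds<\infty$ with a bound uniform in $n$ (by comparison with a Bessel-type process started from $0$). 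For $\rho\leq-2$ one has $d\leq1$, the continuation threshold coincides with the first zero of $Z^n$, and these first-zero times converge because the SDEs for $Z^n$ converge.

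Given this, in both the regular and the boundary-touching cases every gap $Z^{n,i,q}_\cdot=V^{n,i,q}_\cdot-W^n_\cdot$ converges locally uniformly, almost surely in the common-$B$ coupling, either because its initial value converges to a point away from $0$ or because it converges to $0$. One then recovers the force points via $V^{n,i,q}_t=x_{n,i,q}+\int_0^t 2\,(Z^{n,i,q}_s)^{-1}\,ds$, which converges by dominated convergence using the uniform integrability from the previous paragraph, and $W^n=V^{n,i,q}-Z^{n,i,q}$ for any one choice of $i,q$; if some weight $\rho_{i,q}\leq-2$ the process is instead stopped at the (convergent) first-zero time of the corresponding gap. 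This yields $(W^n,V^{n,L},V^{n,R})\to(W,V^L,V^R)$ locally uniformly. The limit solves \eqref{eqn::slesde} with the limiting force-point positions, now possibly at $0^\pm$; by the uniqueness of solutions to \eqref{eqn::slesde} recorded in \cite[Section~2]{IG1} it is therefore the driving triple of the stated $\SLE_\kappa(\rho_L;\rho_R)$ process (see also \cite{\SW,IG1}), and since this limit does not depend on the approximating sequence, the whole sequence converges. The multi-force-point case is identical.

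The step I expect to be the main obstacle is precisely the passage to the limit through the singular $1/Z$ drift near a force point: this needs the pathwise uniqueness and the continuous dependence on the initial value of the Bessel-type SDE down to $0$ --- valid exactly because the effective dimension exceeds $1$ iff $\rho>-2$ --- together with the uniform integrability of $\int_0^\cdot (Z^n_s)^{-1}\,ds$. When several force points on one side accumulate at a common point, the dynamics near that point is a coupled Bessel-type system rather than a single scalar gap, and the analysis there should be matched with the construction of $\SLE_\kappa(\ul\rho)$ with several adjacent force points given in \cite{IG1}.
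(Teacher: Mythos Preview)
Your approach is essentially the one the paper defers to: the paper's proof is simply a citation to \cite[Section~2]{IG1}, and the construction there of $\SLE_\kappa(\ul{\rho})$ proceeds exactly via the Bessel-type analysis you outline---coupling by a common Brownian motion, identifying each gap $Z_t = V_t - W_t$ with a (perturbed) Bessel process of dimension $1+\tfrac{2(2+\rho)}{\kappa}$, and using monotone comparison of Bessel processes to pass to the limit at $0^\pm$. Your sketch is a correct expansion of that citation; the points you flag as delicate (continuous dependence through the singular drift, uniform control of $\int_0^t Z_s^{-1}\,ds$, and coalescing force points) are precisely the issues addressed in \cite[Section~2]{IG1}.
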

\begin{proof}
See \cite[Section 2]{IG1}.
\end{proof}

\begin{figure}[ht!]
\begin{center}
\includegraphics[scale=0.85]{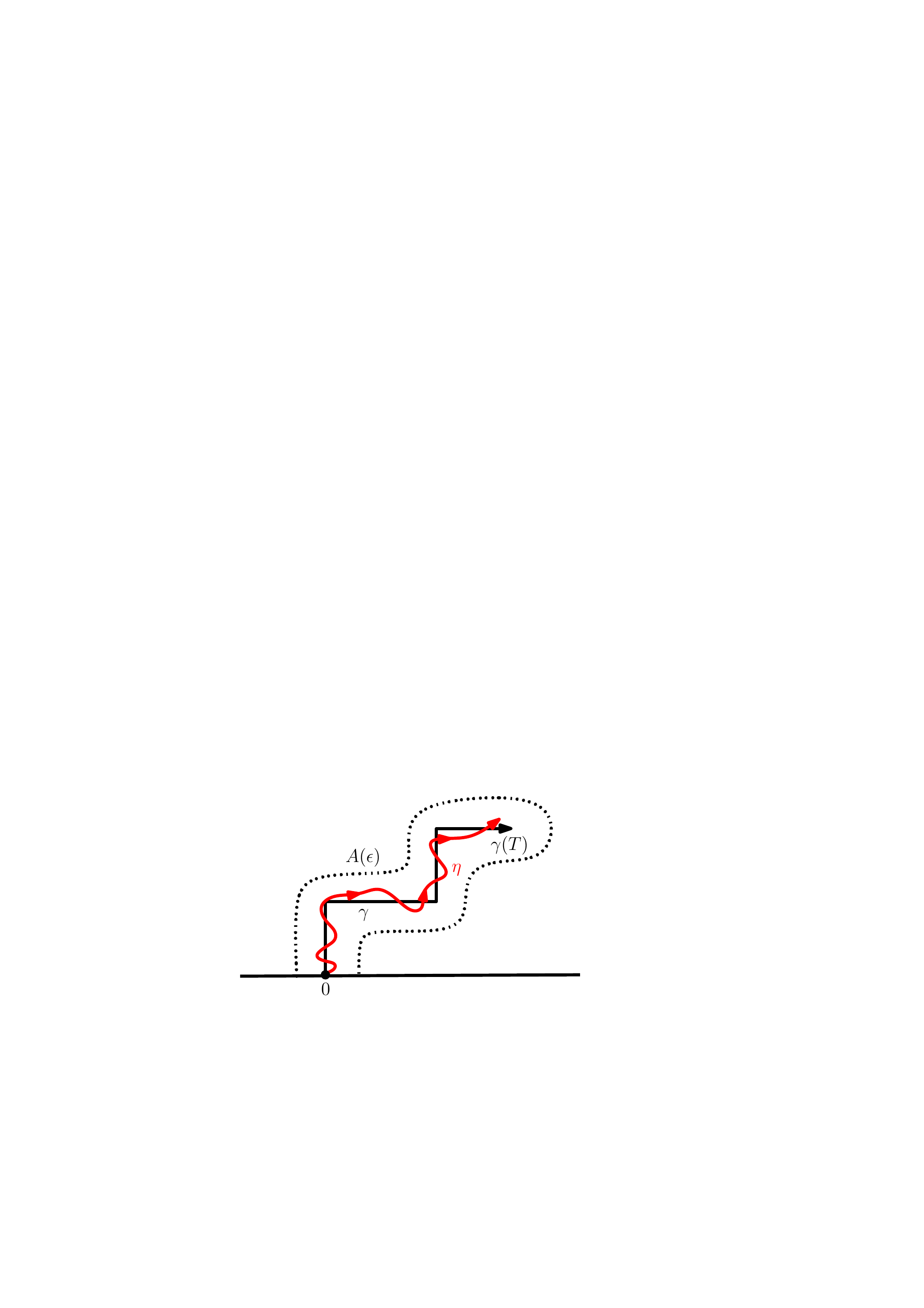}
\end{center}
\caption{\label{fig::sle_kappa_rho_close_to_curve}
Suppose that $\eta$ is an $\SLE_\kappa(\ul{\rho}_L;\ul{\rho}_R)$ process in $\h$ from $0$ to $\infty$ with $x_{1,L}=0^-$ and $x_{1,R}=0^+$ with $\rho_{1,L},\rho_{1,R} > -2$ and fix any deterministic curve $\gamma \colon [0,T] \to \h$.  For each $\epsilon > 0$, let $A(\epsilon)$ be the $\epsilon$  neighborhood of $\gamma$.  We show in Lemma~\ref{lem::sle_kappa_rho_close_to_curve} that with positive probability, $\eta$ gets within distance $\epsilon$ of $\gamma(T)$ before leaving $A(\epsilon)$.}
\end{figure}

\begin{lemma}
\label{lem::sle_kappa_rho_close_to_curve}
Fix $\kappa > 0$. Suppose that $\eta$ is an $\SLE_\kappa(\ul{\rho}_L;\ul{\rho}_R)$ process in $\h$ from $0$ to $\infty$ with force points located at $(\ul{x}_L;\ul{x}_R)$ with $x_{1,L} = 0^-$ and $x_{1,R} = 0^+$ (possibly by taking $\rho_{1,q} = 0$ for $q \in \{L,R\}$).  Assume that $\rho_{1,L}, \rho_{1,R} > -2$.  Suppose that $\gamma \colon [0,T] \to \R$ is any deterministic simple curve in $\ol{\h}$ starting from $0$ and otherwise does not hit $\partial \h$.  Fix $\epsilon > 0$, let $A(\epsilon)$ be the $\epsilon$ neighborhood of $\gamma([0,T])$, and define stopping times
\[ \sigma_1 = \inf\{t \geq 0: |\eta(t) - \gamma(T)| \leq \epsilon\}\quad\text{and}\quad \sigma_2 = \inf\{t \geq 0 : \eta(t) \notin A(\epsilon)\}.\]
Then $\p[ \sigma_1 < \sigma_2]  >0$.
\end{lemma}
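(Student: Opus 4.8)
The plan is to reduce the assertion, using the conformal Markov property together with local absolute continuity with respect to ordinary chordal $\SLE_\kappa$, to the corresponding statement for an ordinary chordal $\SLE_\kappa$, and then to deduce that statement from the Loewner correspondence and the support theorem for Brownian motion.

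\emph{Preliminary reductions.} Since the event $\{\sigma_1 < \sigma_2\}$ is increasing in $\epsilon$, we may take $\epsilon$ as small as we wish; in particular small enough that $A(\epsilon) \cap \partial\h$ is an interval containing $0$ but disjoint from every $x_{i,q}$ with $i \geq 2$. We may moreover replace $\gamma$ by a smooth simple curve $\beta$ with $\beta(0) = 0$, $\beta(T) = \gamma(T)$, $\beta((0,T]) \subset \h$, chosen so that the $\tfrac{\epsilon}{2}$-neighbourhood of $\beta([0,T])$ is contained in $A(\epsilon)$ and so that $\beta$ does not return to a fixed neighbourhood of $0$ after leaving it; following $\beta$ inside its $\tfrac{\epsilon}{2}$-tube then forces $\eta$ to follow $\gamma$ inside $A(\epsilon)$, so from here on we assume $\gamma = \beta$ is smooth. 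Write $s_r$ for the first time $\gamma$ exits $B(0,r)$.

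\emph{Behaviour near the starting point.} The case of force points exactly at $0^\pm$ is the delicate one, because there the change of measure \eqref{eqn::martingalebetweensles} relating $\SLE_\kappa(\ul{\rho})$ to $\SLE_\kappa$ degenerates at time $0$. We handle it by first using \eqref{eqn::martingalebetweensles} (and the Radon--Nikodym computations of Section~\ref{subsec::radon_nikodym}) to remove the far force points: on the event $\{\eta([0,\tau_r]) \subset B(0,r)\}$, where $\tau_r = \inf\{t : |\eta(t)| = r\}$ and $r$ is small, the density of the full process $\eta$ with respect to the two-force-point process $\SLE_\kappa(\rho_{1,L};\rho_{1,R})$ with force points at $0^\pm$ is bounded above and below by constants (the only surviving factors come from $x_{i,q}$, $i\ge 2$, which stay a fixed distance from the tip). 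The two-force-point process is scale invariant, so it suffices to know that it has positive probability of exiting $B(0,r)$ through a small arc of $\partial B(0,r)$ near $\gamma(s_r)$ while remaining in $A(\epsilon)$; this is a statement about its behaviour near $0$, which follows from the local absolute continuity of $\SLE_\kappa(\rho_{1,L};\rho_{1,R})$ with respect to ordinary $\SLE_\kappa$ on time intervals bounded away from $0$ (see \cite{IG1}), combined with the ordinary $\SLE_\kappa$ input proved below.

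\emph{After the starting point, and the ordinary $\SLE_\kappa$ case.} On the above event, condition on $\eta|_{[0,\tau_r]}$; by the conformal Markov property the continuation is an $\SLE_\kappa(\ul{\rho}_L;\ul{\rho}_R)$ in the unbounded component $U$ of $\h \setminus \eta([0,\tau_r])$. Let $\psi \colon U \to \h$ fix $\infty$ and send $\eta(\tau_r)$ to $0$. Then $\psi$ applied to the continuation is an $\SLE_\kappa(\ul{\rho})$ from $0$ \emph{all of whose force points now lie a positive distance from $0$}, and we need it to follow $\psi(\gamma|_{[s_r,T]})$ inside $\psi(A(\epsilon) \setminus \ol{B(0,r)})$ and to end near $\psi(\gamma(T))$; by the distortion estimates of Section~\ref{subsec::distort} the image curves and tubes are controlled uniformly over the conditioning event, so it is enough to obtain a positive lower bound for one fixed such configuration. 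On the event that the path stays inside the image tube it remains a positive distance from the image force points, so by \eqref{eqn::martingalebetweensles} the density of this $\SLE_\kappa(\ul{\rho})$ with respect to ordinary $\SLE_\kappa$ is bounded above and below, and we are reduced to the claim for an ordinary chordal $\SLE_\kappa$ from $0$ to $\infty$: with positive probability it follows a given smooth simple curve from $0$ into $\h$ inside a prescribed tube and ends near the curve's endpoint. Parametrising that curve by half-plane capacity, it is generated by a H\"older continuous Loewner driving function $w$ on $[0,S]$, $S = \hcap$, with $w(0)=0$; the $\SLE_\kappa$ trace on $[0,S]$ is driven by $\sqrt\kappa B$, and by the support theorem for Brownian motion $\p[\,\sup_{[0,S]}|\sqrt\kappa B - w| < \de\,] > 0$ for every $\de > 0$. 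By continuity of the Loewner trace in the driving function in a neighbourhood of $w$, choosing $\de$ small forces the $\SLE_\kappa$ trace to stay $\epsilon$-close to the target curve on $[0,S]$ and to end $\epsilon$-close to its endpoint, which is the desired event.

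\emph{Main obstacle.} The two delicate inputs are the last two. First, stability of the Loewner trace under perturbation of the driving function fails for general continuous driving functions; this is precisely why we replace $\gamma$ by a \emph{smooth} curve at the outset, so that $w$ is regular enough for the quantitative estimates of \cite{\RohdeSchramm, \Lawler} to give the required closeness of traces (alternatively one can chain small pieces using the domain Markov property). Second, the analysis near the starting point $0$, where the force points sit and the change of measure \eqref{eqn::martingalebetweensles} degenerates, must be treated separately, which we do via scale invariance of the two-force-point process and local absolute continuity with respect to ordinary $\SLE_\kappa$ away from time $0$ as indicated above.
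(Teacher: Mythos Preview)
Your approach is genuinely different from the paper's. After the same initial reduction near $0$ (run $\eta$ briefly so the tip separates from the force points at $0^\pm$, then conformally renormalize), the paper does \emph{not} pass through Girsanov and the support theorem. Instead it fixes a Jordan domain $U$ with $\gamma([0,T])\subset U\subset A(\epsilon)$ and $\partial U\cap\R\subset(x_{2,L},x_{2,R})$, and compares the flow line of the GFF $h$ on $\h$ with the flow line of an auxiliary GFF $\wt h$ on $U$ whose boundary data make that auxiliary path an ordinary $\SLE_\kappa$ in $U$ from $0$ to a point $x_0\in\partial U$ near $\gamma(T)$ (respectively an $\SLE_{\kappa'}(\tfrac{\kappa'}{2}-2;\tfrac{\kappa'}{2}-2)$ when $\kappa'>4$). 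That auxiliary path \emph{almost surely} stays in $U\subset A(\epsilon)$ until it gets within $\epsilon$ of $\gamma(T)$; the transfer to $\eta$ is by mutual absolute continuity of $h|_{U_\delta}$ and $\wt h|_{U_\delta}$, a soft Gaussian fact, together with the measurability of flow lines in the field. No Loewner trace stability is invoked.

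Your route is GFF-free and is viable, but two steps are doing real work that you have only sketched. First, uniform convergence of drivers gives Carath\'eodory convergence of hulls, hence $K_S^W\subset A(\epsilon)$; but this alone does not say that the \emph{trace} reaches $B(\gamma(T),\epsilon)$, and for $\kappa>4$ (where ordinary $\SLE_\kappa$ is boundary-hitting and the hull strictly contains the trace) that conclusion genuinely needs the finer stability or the chaining you allude to --- the references you cite do not contain it in the form you need, so the chaining should be carried out. The paper sidesteps this entirely by using a non-boundary-hitting auxiliary process in the subdomain $U$. Second, after mapping out $\eta|_{[0,\tau_r]}$ your claim that ``all force points now lie a positive distance from $0$'' is only true realization by realization on $\{\eta(\tau_r)\in\h\}$; the separation can be arbitrarily small, so the uniform Radon--Nikodym bound you assert does not hold over the whole conditioning event. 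This is harmless (pointwise positivity of the conditional probability suffices), but your phrasing ``a positive lower bound for one fixed such configuration'' and the appeal to uniform distortion control overstate what is actually needed and available.
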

\begin{proof}
See Figure \ref{fig::sle_kappa_rho_close_to_curve} for an illustration.  We will use the terminology ``flow line,'' but the proof holds for $\kappa > 0$. By running $\eta$ for a very small amount of time and using that $\p[W_t = V_t^{1,L}] = \p[ W_t = V_t^{1,R}] = 0$ for all $t > 0$ before the continuation threshold is reached \cite[Section 2]{IG1} and then conformally mapping back, we may assume without loss of generality that $\rho_{1,L} = \rho_{1,R} = 0$.  Let $U$ be a Jordan domain which contains $\gamma([0,T])$ and is contained in $A(\epsilon)$.  Assume, moreover, that $\partial U \cap [x_{2,L},x_{2,R}]$ is an interval, say $[y_L,y_R]$, which contains $0$.  Suppose $\kappa \in (0,4)$ and let $h$ be a GFF on $\h$ whose boundary data has been chosen so that its flow line $\eta$ from $0$ is an $\SLE_\kappa(\ul{\rho}_L;\ul{\rho}_R)$ process as in the statement of the lemma.  Pick a point $x_0 \in \partial U$ with $|\gamma(T) - x_0| \leq \epsilon$.  Let $\wt{h}$ be a GFF on $U$ whose boundary conditions are chosen so that its flow line $\wt{\eta}$ starting from $0$ is an $\SLE_\kappa$ process from $0$ to $x_0$.  Let $\wt{\sigma}_1 = \inf\{t \geq 0: |\wt{\eta}(t) - \gamma(T)| \leq \epsilon\}$.  Since $\wt{\eta}|_{(0,\wt{\sigma}_1]}$ almost surely does not hit $\partial U$, it follows that $\wt{X} \equiv \dist(\wt{\eta}|_{[0,\wt{\sigma}_1]}, \partial U \setminus [y_L,y_R]) > 0$ almost surely.  For each $\delta > 0$, let $U_\delta = \{x \in U : \dist(x,\partial U \setminus [y_L,y_R]) > \delta\}$.  Then the laws of $h|_{U_\delta}$ and $\wt{h}|_{U_\delta}$ are mutually absolutely continuous \cite[Proposition~3.2]{IG1}.  Thus the result follows since we can pick $\delta > 0$ sufficiently small so that $\p[ \wt{X} > \delta] > 0$.  This proves the result for $\kappa \in (0,4)$.  For $\kappa' > 4$, one chooses the boundary data for $\wt{h}$ so that the counterflow line is an $\SLE_{\kappa'}(\tfrac{\kappa'}{2}-2;\tfrac{\kappa'}{2}-2)$ process (recall Lemma~\ref{lem::sle_kappa_rho_boundary_interaction}).
\end{proof}

\begin{lemma}
\label{lem::sle_kappa_rho_exit_disk}
Fix $\kappa > 0$. Suppose that $\eta$ is an $\SLE_\kappa(\rho_L;\rho_R)$ process in $\h$ from $0$ to $\infty$ with force points located at $(x_L \leq 0 \leq x_R)$ and with $\rho_R > -2$.  Let $\gamma \colon [0,1] \to \ol{\h}$ be the unit segment connecting $0$ to $i$.  Fix $\epsilon > 0$ and define stopping times $\sigma_1$, $\sigma_2$ as in Lemma~\ref{lem::sle_kappa_rho_close_to_curve}.  For each $x_0^L < 0$ there exists $p_0 = p_0(x_0^L,\epsilon) > 0$ such that for every $x_L \in (-\infty,x_0^L]$ and $x_R \geq 0$, we have that
\begin{equation}
\label{eqn::close_to_line}
 \p[\sigma_1 < \sigma_2] \geq p_0.
\end{equation}
If $\rho_L > -2$, then there exists $p_0 = p_0(\epsilon)$ such that \eqref{eqn::close_to_line} holds for $x_0^L = 0^-$.
\end{lemma}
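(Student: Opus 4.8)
The plan is to deduce the uniform bound from the pointwise positivity in Lemma~\ref{lem::sle_kappa_rho_close_to_curve} by a compactness argument in the locations of the two force points. First I would replace the event $\{\sigma_1 < \sigma_2\}$ by a slightly smaller \emph{open} event: let $E$ be the event that there is a time $t$ with $\eta(t) \in B(i,\epsilon/2)$ and $\eta([0,t])$ contained in the open $\tfrac{\epsilon}{2}$-neighborhood of $\gamma([0,1])$. Since $\eta$ is continuous and these sets are open, $E$ is an open subset of the space of curves in $\ol{\h}$ with the topology of local uniform convergence, and $E \subseteq \{\sigma_1 < \sigma_2\}$ (with the parameter $\epsilon$): on $E$ one has $\sigma_1 \le t$, while continuity forces $\sigma_2 > t$. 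So it is enough to bound $\p[E]$ below uniformly over the admissible force points.

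Next I would introduce the compact parameter space. Write $\mathcal{Q} = [-\infty,x_0^L] \times [0^+,+\infty]$ (with the one-point compactifications at the infinite ends), which is compact, and for $(x_L,x_R) \in \mathcal{Q}$ let $\mu_{x_L,x_R}$ be the law of the $\SLE_\kappa(\rho_L;\rho_R)$ trace in $\h$ from $0$ to $\infty$ with force points at $(x_L;x_R)$, with the conventions that a force point at $\mp\infty$ is simply absent and that a force point at $0^+$ is permitted (here $\rho_R > -2$ is exactly what is needed). In every boundary case $\mu_{x_L,x_R}$ is the law of a standard $\SLE_\kappa(\ul{\rho})$ process with at most two force points, all of weight exceeding $-2$ after inserting weight-$0$ force points at $0^\pm$. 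Using the continuity of the $\SLE_\kappa(\ul{\rho})$ processes in the force points from \cite[Section~2]{IG1} --- which promotes the convergence of driving triples in Lemma~\ref{lem::sle_kappa_rho_cont_in_force_points}, together with the locally uniform modulus-of-continuity estimates for the traces, to weak convergence of the curves --- the map $(x_L,x_R) \mapsto \mu_{x_L,x_R}$ is continuous on $\mathcal{Q}$ for the weak topology. Hence, by the portmanteau theorem and the openness of $E$, the function $(x_L,x_R) \mapsto \mu_{x_L,x_R}[E]$ is lower semicontinuous on $\mathcal{Q}$.

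Now I would apply Lemma~\ref{lem::sle_kappa_rho_close_to_curve} to the (simple, and otherwise not boundary-touching) segment $\gamma$: for every $(x_L,x_R) \in \mathcal{Q}$ it gives $\mu_{x_L,x_R}[\sigma_1 < \sigma_2] > 0$, and applying it with a slightly smaller radius gives $\mu_{x_L,x_R}[E] > 0$. A lower semicontinuous, strictly positive function on a compact space has a strictly positive minimum, so $p_0 = \inf_{(x_L,x_R) \in \mathcal{Q}} \mu_{x_L,x_R}[E] > 0$; combined with $E \subseteq \{\sigma_1 < \sigma_2\}$ this is exactly \eqref{eqn::close_to_line} for all $x_L \le x_0^L$ and $x_R \ge 0$, with $p_0$ depending only on $(x_0^L,\epsilon)$. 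For the last assertion, when $\rho_L > -2$ the same argument runs with $\mathcal{Q}$ enlarged to $[-\infty,0^-] \times [0^+,+\infty]$: the additional boundary point $x_L = 0^-$ is admissible because an $\SLE_\kappa(\rho_L;\rho_R)$ with left force point at $0^-$ and $\rho_L > -2$ is a well-defined continuous process which is not at its continuation threshold at time $0$, and Lemma~\ref{lem::sle_kappa_rho_cont_in_force_points} allows $x_L \to 0^-$. (When $\rho_L \le -2$ this breaks down precisely because at $x_L = 0^-$ the continuation threshold is reached immediately and the limiting law is degenerate; this is why the hypothesis $x_0^L < 0$ cannot be removed in general.)

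The step I expect to be the main obstacle is upgrading Lemma~\ref{lem::sle_kappa_rho_cont_in_force_points}, which is stated only at the level of driving functions, to weak convergence of the traces valid \emph{uniformly over} the compactified parameter space --- in particular across the degenerate limits where a force point merges with the seed ($x_L \to 0^-$ or $x_R \to 0^+$) or escapes to infinity. This needs the a priori modulus-of-continuity bounds for $\SLE_\kappa(\ul{\rho})$ traces to be locally uniform in the force-point data; such bounds are part of the continuity theory developed in \cite{IG1}. An alternative that sidesteps curve convergence is to run the argument inside the Gaussian free field coupling, as in the proof of Lemma~\ref{lem::sle_kappa_rho_close_to_curve}: one picks the auxiliary Jordan domain $U$ (with $\gamma([0,1]) \subseteq U \subseteq A(\epsilon)$) so that every admissible force point lies strictly to the left of the interval $\partial U \cap \partial\h$, and then the only thing to check is that the Radon--Nikodym derivative between $h|_{U_\delta}$ and $\wt{h}|_{U_\delta}$ is bounded uniformly over the admissible boundary data --- which holds because that data then ranges over a compact family of piecewise constant boundary functions.
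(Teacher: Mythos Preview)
Your proposal is correct and follows essentially the same approach as the paper: pointwise positivity from Lemma~\ref{lem::sle_kappa_rho_close_to_curve} together with a compactness/continuity argument in the force-point locations via Lemma~\ref{lem::sle_kappa_rho_cont_in_force_points}. The paper's proof is simply the terse two-line version of yours, with the step you flag as the ``main obstacle'' --- upgrading convergence of driving triples to convergence usable at the level of the trace --- handled by the citation to \cite[Section~4.7]{lawler2005}, which provides exactly the Loewner-chain convergence needed.
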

\begin{proof}
We know that this event has positive probability for each fixed choice of $x_L,x_R$ as above by Lemma~\ref{lem::sle_kappa_rho_close_to_curve}.  Therefore the result follows from Lemma~\ref{lem::sle_kappa_rho_cont_in_force_points} and the results of \cite[Section~4.7]{lawler2005}.
\end{proof}

\begin{figure}[ht!]
\begin{center}
\includegraphics[scale=0.85]{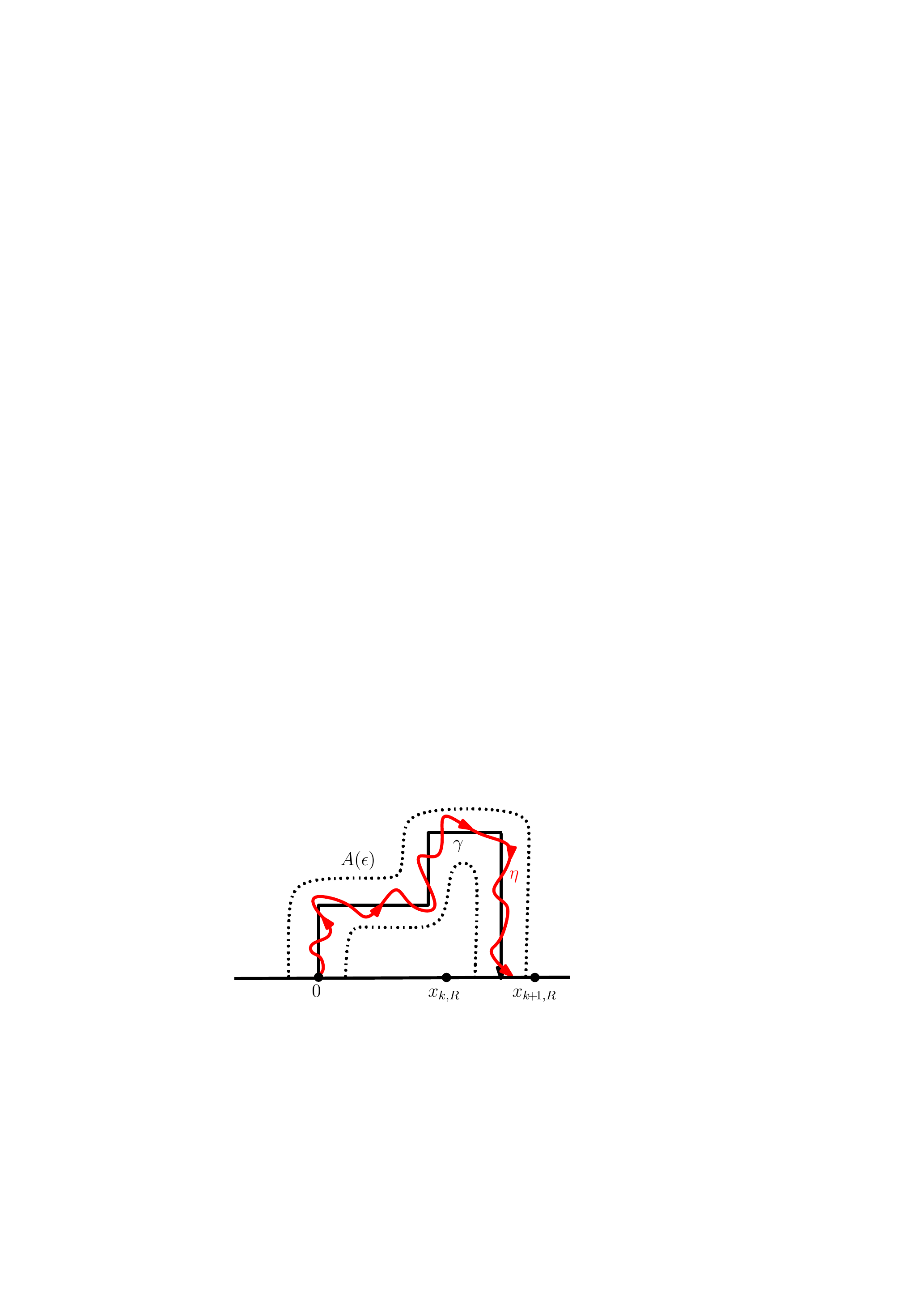}
\end{center}
\caption{\label{fig::sle_kappa_rho_hit_boundary_segment}
Suppose that $\eta$ is an $\SLE_\kappa(\ul{\rho}_L;\ul{\rho}_R)$ process in $\h$ from $0$ to $\infty$ with $x_{1,L}=0^-$ and $x_{1,R}=0^+$ with $\rho_{1,L},\rho_{1,R} > -2$ and fix any deterministic curve $\gamma \colon [0,T] \to \h$ which connects $0$ to $[x_{k,R},x_{k+1,R}]$ where $k$ is such that $\sum_{j=1}^k \rho_{j,R} \in (\tfrac{\kappa}{2}-4,\tfrac{\kappa}{2}-2)$.  For each $\epsilon > 0$, let $A(\epsilon)$ be the $\epsilon$  neighborhood of $\gamma$.  We show in Lemma~\ref{lem::sle_kappa_rho_hit_boundary_segment} that with positive probability, $\eta$ hits $[x_{k,R},x_{k+1,R}]$ before leaving $A(\epsilon)$.}
\end{figure}

\begin{lemma}
\label{lem::sle_kappa_rho_hit_boundary_segment}
Fix $\kappa > 0$. Suppose that $\eta$ is an $\SLE_\kappa(\ul{\rho}_L;\ul{\rho}_R)$ process in $\h$ from $0$ to $\infty$ with force points located at $(\ul{x}_L;\ul{x}_R)$ with $x_{1,L} = 0^-$ and $x_{1,R} = 0^+$ (possibly by taking $\rho_{1,q} = 0$ for $q \in \{L,R\}$).  Assume that $\rho_{1,L}, \rho_{1,R} > -2$.  Fix $k \in \N$ such that $\rho = \sum_{j=1}^k \rho_{j,R} \in (\tfrac{\kappa}{2}-4,\tfrac{\kappa}{2}-2)$ and $\epsilon > 0$.  There exists $p_1 > 0$ depending only on $\kappa,\max_{i,q} |\rho_{i,q}|$, $\rho$, and $\epsilon$ such that if $|x_{2,q}| \geq \epsilon$ for $q \in \{L,R\}$, $x_{k+1,R} - x_{k,R} \geq \epsilon$, and $x_{k,R} \leq \epsilon^{-1}$ then the following is true.  Suppose that $\gamma$ is a simple curve starting from $0$, terminating in $[x_{k,R},x_{k+1,R}]$, and otherwise does not hit $\partial \h$.  Let $A(\epsilon)$ be the $\epsilon$ neighborhood of $\gamma([0,T])$ and let
\[ \sigma_1 = \inf\{t \geq 0 : \eta(t) \in (x_{k,R}, x_{k+1,R})\} \quad\text{and}\quad \sigma_2 = \inf\{t \geq 0 : \eta(t) \notin A(\epsilon)\}.\]
Then $\p[\sigma_1 < \sigma_2] \geq p_1$.
\end{lemma}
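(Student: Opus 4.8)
The plan is to follow the two--step strategy of Lemmas~\ref{lem::sle_kappa_rho_close_to_curve}~and~\ref{lem::sle_kappa_rho_exit_disk}. First, for a fixed configuration of force points satisfying the hypotheses and a fixed curve $\gamma$, I will use the $\SLE$/GFF coupling to produce a lower bound on $\p[\sigma_1<\sigma_2]$. Second, I will make this bound uniform over the admissible configurations (and curves) using the continuity of $\SLE_\kappa(\ul\rho)$ in its force points (Lemma~\ref{lem::sle_kappa_rho_cont_in_force_points}) together with a compactness argument, exactly as in the proof of Lemma~\ref{lem::sle_kappa_rho_exit_disk}.

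For the first step I mimic the proof of Lemma~\ref{lem::sle_kappa_rho_close_to_curve}, except that the reference curve is a flow line that genuinely \emph{hits} the target boundary interval rather than being targeted at a boundary point. Couple $\eta$ with a GFF $h$ on $\h$ so that $\eta$ is the flow line of $h+\phi_0$ from $0$ (when $\kappa>4$ one works instead with the counterflow line of $-h$, as in Lemma~\ref{lem::sle_kappa_rho_close_to_curve}, using the counterflow--line analogue of Lemma~\ref{lem::sle_kappa_rho_boundary_interaction}). Choose a Jordan domain $U$ with $\gamma([0,T])\subseteq U\subseteq A(\epsilon)$ whose boundary meets $\partial\h$ in exactly two relatively open intervals: a neighbourhood $I_0\ni 0$ contained in $(x_{2,L},x_{2,R})$, and an interval $I_T\subseteq(x_{k,R},x_{k+1,R})$ whose closure contains $\gamma(T)$ (this is possible because $x_{k+1,R}-x_{k,R}\geq\epsilon$ and $\gamma(T)\in[x_{k,R},x_{k+1,R}]$). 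Let $\wt h$ be a GFF on $U$ whose boundary data agrees with that of $h+\phi_0$ along $I_0$ and along $I_T$ --- so in particular its effective weight along $I_T$ is $\ol{\rho}_{k,R}=\rho$, which by hypothesis is a weight for which a flow line may hit $I_T$ (Lemma~\ref{lem::sle_kappa_rho_boundary_interaction}) --- and whose boundary data on the two arcs of $\partial U$ lying in $\h$ is taken large (effective weight $\geq\tfrac{\kappa}{2}-2$), so that the flow line $\wt\eta$ of $\wt h$ from $0$ to an interior point of $I_T$ almost surely does not hit those arcs. Then $\wt\eta$ hits $I_T$ with positive probability; writing $\wt\sigma_1$ for the first such hitting time, on this event $\wt\eta([0,\wt\sigma_1])$ is almost surely a compact set meeting $\partial U$ only in $I_0\cup I_T$, and hence $\wt X:=\dist(\wt\eta([0,\wt\sigma_1]),\,\partial U\setminus(I_0\cup I_T))>0$ almost surely on this event.

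Now fix $\delta>0$ small enough that $\p[\wt X>\delta]>0$, and set $U_\delta:=\{x\in U:\dist(x,\partial U\setminus(I_0\cup I_T))>\delta\}$. Because the boundary data of $h+\phi_0$ and of $\wt h$ agree on $\ol{U_\delta}\cap\partial\h\subseteq I_0\cup I_T$ and $\ol{U_\delta}$ lies at distance at least $\delta$ from the remainder of $\partial U$, the laws of $(h+\phi_0)|_{U_\delta}$ and $\wt h|_{U_\delta}$ are mutually absolutely continuous by \cite[Proposition~3.2]{IG1}; moreover the flow line stopped upon leaving $U_\delta$ is a measurable function of the field restricted to $U_\delta$ \cite{IG1}. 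Transferring the event $\{\wt X>\delta\}$ through this absolute continuity shows that with positive probability $\eta$ reaches $I_T\subseteq(x_{k,R},x_{k+1,R})$ before leaving $U_\delta\subseteq A(\epsilon)$, i.e.\ $\sigma_1<\sigma_2$. This establishes the first step.

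The second step proceeds as in the proof of Lemma~\ref{lem::sle_kappa_rho_exit_disk}: Lemma~\ref{lem::sle_kappa_rho_cont_in_force_points} gives that the driving process of $\eta$ depends continuously in law, locally uniformly, on the locations of the force points; the hypotheses $|x_{2,q}|\geq\epsilon$, $x_{k+1,R}-x_{k,R}\geq\epsilon$ and $x_{k,R}\leq\epsilon^{-1}$ confine to a compact set the force points that can influence whether $\eta$ hits $(x_{k,R},x_{k+1,R})$ near its left endpoint, while the remaining force points can be pushed off to $\pm\infty$ and handled as a degenerate limit of the parameter space; combined with the estimates of \cite[Section~4.7]{lawler2005} this produces the constant $p_1=p_1(\kappa,\max_{i,q}|\rho_{i,q}|,\rho,\epsilon)>0$. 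I expect this uniformity to be the main obstacle: one must verify that the positive probability produced in the first step does not degenerate as the configuration tends to the boundary of the admissible region --- that the hittable interval $I_T$ cannot be forced to collapse to a point, that $x_{k,R}$ cannot collide with the neighbouring force point, and that no force point can pinch off the starting point of $\eta$ --- which is exactly what the stated hypotheses prevent. Concretely, the constant in the Radon--Nikodym bound of \cite[Proposition~3.2]{IG1} is controlled by the separation between the boundary--data jumps that bound $U_\delta$ and by the lengths of $I_0$ and $I_T$, all bounded below in terms of $\kappa$, $\max_{i,q}|\rho_{i,q}|$, $\rho$ and $\epsilon$ under the hypotheses, so $\delta$, and hence $p_1$, stays bounded away from $0$.
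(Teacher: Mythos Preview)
Your approach is essentially the same as the paper's. Both arguments couple $\eta$ with a GFF, build a comparison flow line $\wt\eta$ in a Jordan subdomain $U\subseteq A(\epsilon)$ whose boundary meets $\partial\h$ in two intervals (one near $0$ and one inside $(x_{k,R},x_{k+1,R})$), choose the boundary data so that $\wt\eta$ must hit the target interval without touching the arcs of $\partial U$ lying in $\h$, and then transfer the event to $\eta$ via absolute continuity of the restricted GFFs on $U_\delta$. The minor differences are: the paper first runs $\eta$ for a small positive time to reduce to $\rho_{1,L}=\rho_{1,R}=0$, so that it can take $\wt\eta$ to be simply an $\SLE_\kappa(\rho)$ in $U$ from $0$ to $y_R$ with a single force point at $y_L$, where $[y_L,y_R]=\partial U\cap[x_{k,R},x_{k+1,R}]$ (this $\wt\eta$ hits $[y_L,y_R]$ \emph{almost surely}, not merely with positive probability); you instead match the boundary data of $h$ on $I_0\cup I_T$ and force the arc data large, which accomplishes the same thing without the preliminary reduction. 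For $\kappa'>4$ the paper makes the comparison counterflow line an $\SLE_{\kappa'}(\tfrac{\kappa'}{2}-2;\tfrac{\kappa'}{2}-2,\rho-(\tfrac{\kappa'}{2}-2))$, which is exactly the analogue you gesture at. Your second step, deducing the uniform constant $p_1$ from continuity in the force points (Lemma~\ref{lem::sle_kappa_rho_cont_in_force_points}) plus compactness in the spirit of Lemma~\ref{lem::sle_kappa_rho_exit_disk}, is more explicit than the paper, which simply asserts the uniform bound after the absolute-continuity step.
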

\begin{proof}
See Figure \ref{fig::sle_kappa_rho_hit_boundary_segment} for an illustration.  We will use the terminology ``flow line,'' but the proof holds for $\kappa > 0$.  Arguing as in the proof of Lemma~\ref{lem::sle_kappa_rho_close_to_curve}, we may assume without loss of generality that $\rho_{1,L} = \rho_{1,R} = 0$.  Let $U$ be a Jordan domain which contains $\gamma$ and is contained in $A(\epsilon)$.  Assume, moreover, that $\partial U \cap [x_{2,L},x_{2,R}]$ is an interval which contains $0$ and $\partial U \cap [x_{k,R},x_{k+1,R}]$ is also an interval, say $[y_L,y_R]$.  Suppose $\kappa \in (0,4)$.  Let $h$ be a GFF on $\h$ whose boundary data has been chosen so that its flow line $\eta$ from $0$ is an $\SLE_\kappa(\ul{\rho}_L;\ul{\rho}_R)$ process as in the statement of the lemma.  Let $\wt{h}$ be a GFF on $U$ whose boundary conditions are chosen so that its flow line $\wt{\eta}$ starting from $0$ and targeted at $y_R$ is an $\SLE_\kappa(\rho)$ process with a single force point located at $y_L$ with $\rho$ as in the statement of the lemma.  Let $\wt{\sigma}_1$ be the first time that $\wt{\eta}$ hits $[y_L,y_R]$.  Since $\wt{\eta}|_{(0,\wt{\sigma}_1]}$ almost surely does not hit $\partial U \setminus [y_L,y_R]$, it follows that
\[ \dist(\wt{\eta}|_{[0,\wt{\tau}]}, \partial U \setminus ([x_{2,L},x_{2,R}] \cup [y_L,y_R])) > 0\]
almost surely.  Since $\wt{\eta}$ almost surely hits $[y_L,y_R]$, the assertion follows using the same absolute continuity argument for GFFs as in the proof of Lemma~\ref{lem::sle_kappa_rho_close_to_curve}.  As in the proof of Lemma~\ref{lem::sle_kappa_rho_close_to_curve}, one proves the result for $\kappa' > 4$ by taking the boundary conditions for $\wt{h}$ on $U$ so that the counterflow line starting from $0$ is an $\SLE_{\kappa'}(\tfrac{\kappa'}{2}-2;\tfrac{\kappa'}{2}-2,\rho - (\tfrac{\kappa'}{2}-2))$ process.
\end{proof}

\begin{lemma}
\label{lem::sle_kappa_rho_boundary_hitting}
Fix $\kappa  > 0$. Suppose that $\eta$ is an $\SLE_\kappa(\rho_L;\rho_R)$ process in $\h$ from $0$ to $\infty$ with force points located at $(x_L \leq 0 \leq x_R)$ with $\rho_L  \in (\tfrac{\kappa}{2}-4, \tfrac{\kappa}{2}-2)$ and $\rho_R > -2$.  For each $x_0^L \in (-1,0)$ there exists $p_2 = p_2(x_0^L) \in [0,1)$ such that the following is true.  Fix $x_L \in [x_0^L,0]$ and define stopping times
\[ \sigma_1 = \inf\{t \geq 0 : |\eta(t)| = 1\}\quad\text{and}\quad \tau_0^L = \inf\{t \geq 0: \eta(t) \in (-\infty,x_L]\}.\]
Then we have that
\[ \p[\sigma_1 \leq \tau_0^L] \leq p_2.\]
\end{lemma}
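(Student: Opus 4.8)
The plan is to bound the complementary probability: I shall produce a constant $p_1=p_1(x_0^L)\in(0,1]$, depending also on the fixed $\kappa,\rho_L,\rho_R$, such that $\p[\tau_0^L<\sigma_1]\ge p_1$ for every $x_L\in[x_0^L,0]$ and every $x_R\ge0$; then, since $\{\sigma_1\le\tau_0^L\}$ is the complement of $\{\tau_0^L<\sigma_1\}$, we get $\p[\sigma_1\le\tau_0^L]\le 1-p_1=:p_2\in[0,1)$. To get this lower bound I force $\eta$ to follow a fixed deterministic curve that leads into the left boundary while staying inside $B(0,1)$. Set $b:=\tfrac12(1-x_0^L)\in(|x_0^L|,1)$ and $\delta_0:=\tfrac14(1+x_0^L)>0$; one checks that $-b+\delta_0<x_0^L\le x_L$ for every admissible $x_L$, so the interval $J:=[-b-\delta_0,-b+\delta_0]$ satisfies $J\subseteq(-\infty,x_L]$ for all $x_L\in[x_0^L,0]$, and also $J\subseteq(-1,0)\subseteq B(0,1)$. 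Fix once and for all a simple curve $\gamma\colon[0,1]\to\overline{\h}$ with $\gamma(0)=0$, $\gamma(1)=-b$, $\gamma((0,1))\subseteq\h$, and $\gamma([0,1])\subseteq\overline{B(0,1-\delta_0)}$, and choose $\epsilon_0=\epsilon_0(x_0^L)>0$ small (in particular $\epsilon_0<\min(\delta_0,1)$) so that the $\epsilon_0$-neighborhood $A(\epsilon_0)$ of $\gamma([0,1])$ has $\overline{A(\epsilon_0)}\subseteq B(0,1)$ and $A(\epsilon_0)\cap\R$ is the disjoint union of an interval inside $(-\epsilon_0,\epsilon_0)$ and an interval inside $J$. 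Let $E$ be the event that $\eta$ hits $J$ before exiting $A(\epsilon_0)$. Since $J\subseteq(-\infty,x_L]$ and $A(\epsilon_0)\subseteq B(0,1)$, we have $E\subseteq\{\tau_0^L<\sigma_1\}$, so it suffices to bound $\p[E]$ from below, uniformly in $x_L\in[x_0^L,0]$ and $x_R\ge0$.

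For the bulk of the parameter range, namely configurations with $|x_L|\ge\epsilon_0$ and $x_R\ge\epsilon_0$, I would obtain a uniform bound $\p[E]\ge q_1>0$ directly from Lemma~\ref{lem::sle_kappa_rho_hit_boundary_segment}. Applying the reflection $z\mapsto-\overline{z}$ turns $\eta$ into an $\SLE_\kappa(\rho_R;\rho_L)$ process $\wt{\eta}$ with force points at $-x_R\le0\le-x_L$, and I apply Lemma~\ref{lem::sle_kappa_rho_hit_boundary_segment} to $\wt{\eta}$ with the configuration $x_{1,L}=0^-,\ x_{2,L}=-x_R,\ x_{1,R}=0^+,\ x_{2,R}=-x_L$ of weights $0,\rho_R,0,\rho_L$, with $k=2$ (so that $\rho=\overline{\rho}_{2,R}=\rho_L\in(\tfrac{\kappa}{2}-4,\tfrac{\kappa}{2}-2)$, as required), target interval $(x_{2,R},x_{3,R})=(-x_L,+\infty)$, curve $-\overline{\gamma}$ (which terminates at $b\in(-x_L,+\infty)$ and otherwise avoids $\partial\h$), and $\epsilon=\epsilon_0$; the remaining hypotheses $|x_{2,q}|\ge\epsilon_0$, $x_{3,R}-x_{2,R}=+\infty$, and $x_{2,R}=|x_L|<1\le\epsilon_0^{-1}$ hold by assumption. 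Lemma~\ref{lem::sle_kappa_rho_hit_boundary_segment} then gives, with $q_1=q_1(\kappa,\rho_L,\rho_R,\epsilon_0)>0$, that $\wt{\eta}$ hits $(-x_L,+\infty)$ before leaving $A(\epsilon_0)$ with probability at least $q_1$; since $\epsilon_0\le|x_L|$, the near-origin component of $A(\epsilon_0)\cap\R$ is disjoint from $(-x_L,+\infty)$, so such a first hit lies in the other component, which reflects back into $J$. Hence $\p[E]\ge q_1$ throughout this regime — in particular uniformly as $x_R\to+\infty$.

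It remains to treat the residual range, where one of the force points lies within distance $\epsilon_0$ of the origin; this uniformity-under-degeneration step is the part I expect to be the main obstacle. For each fixed such configuration one still has $\p[E]>0$, by the same absolute-continuity argument for GFF flow lines used in the proofs of Lemmas~\ref{lem::sle_kappa_rho_close_to_curve} and~\ref{lem::sle_kappa_rho_hit_boundary_segment}: run $\eta$ for a short time so that its driving function separates from the force points, then compare $\eta$, inside a Jordan neighborhood of $\gamma$, to an ordinary $\SLE_\kappa$ (or, for $\kappa'>4$, an $\SLE_{\kappa'}$ counterflow line) aimed at a point near $-b$, via \cite[Proposition~3.2]{IG1}; near $-b$ the relevant boundary weight is $\rho_L\in(\tfrac{\kappa}{2}-4,\tfrac{\kappa}{2}-2)$, so the comparison path indeed touches $J$ with positive probability. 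To upgrade this to a uniform positive lower bound $q_2$, I would argue exactly as in the proof of Lemma~\ref{lem::sle_kappa_rho_exit_disk}: the residual set is contained in a compact parameter space once the point $x_R=+\infty$ is adjoined (interpreted as deletion of the right force point, so that $\eta$ becomes an $\SLE_\kappa(\rho_L)$ process with force point $x_L$), the law of $\eta$ depends continuously on the force-point locations there — by Lemma~\ref{lem::sle_kappa_rho_cont_in_force_points} for finite force points, and because the drift contributed by the right force point tends to $0$ locally uniformly as $x_R\to+\infty$ — and one then concludes using the results of \cite[Section~4.7]{lawler2005}. Taking $p_1:=\min(q_1,q_2)$ completes the proof. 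The single genuinely delicate point is this last uniformity as the force points degenerate; everything else is a direct application of Lemmas~\ref{lem::sle_kappa_rho_cont_in_force_points}--\ref{lem::sle_kappa_rho_hit_boundary_segment}.
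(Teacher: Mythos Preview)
Your proof is correct and follows the same two-ingredient strategy as the paper: use Lemma~\ref{lem::sle_kappa_rho_hit_boundary_segment} to show that for each fixed configuration the complementary event $\{\tau_0^L<\sigma_1\}$ has positive probability, then invoke continuity of the law in the force-point locations (Lemma~\ref{lem::sle_kappa_rho_cont_in_force_points}) to upgrade this to a uniform bound. The paper's own proof is two sentences and leaves the compactification at $x_R=\infty$ implicit; your bulk/residual split and the explicit treatment of $x_R\to\infty$ make the argument more transparent but do not constitute a genuinely different approach.
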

\begin{proof}
See Figure \ref{fig::sle_kappa_rho_boundary_hitting}. Lemma~\ref{lem::sle_kappa_rho_hit_boundary_segment} implies that this event has probability strictly smaller than 1 for each fixed choice of $x_L,x_R$ as above.  Therefore the result follows from Lemma~\ref{lem::sle_kappa_rho_cont_in_force_points}.
\end{proof}

\begin{figure}[ht!]
\begin{center}
\includegraphics[scale=0.85]{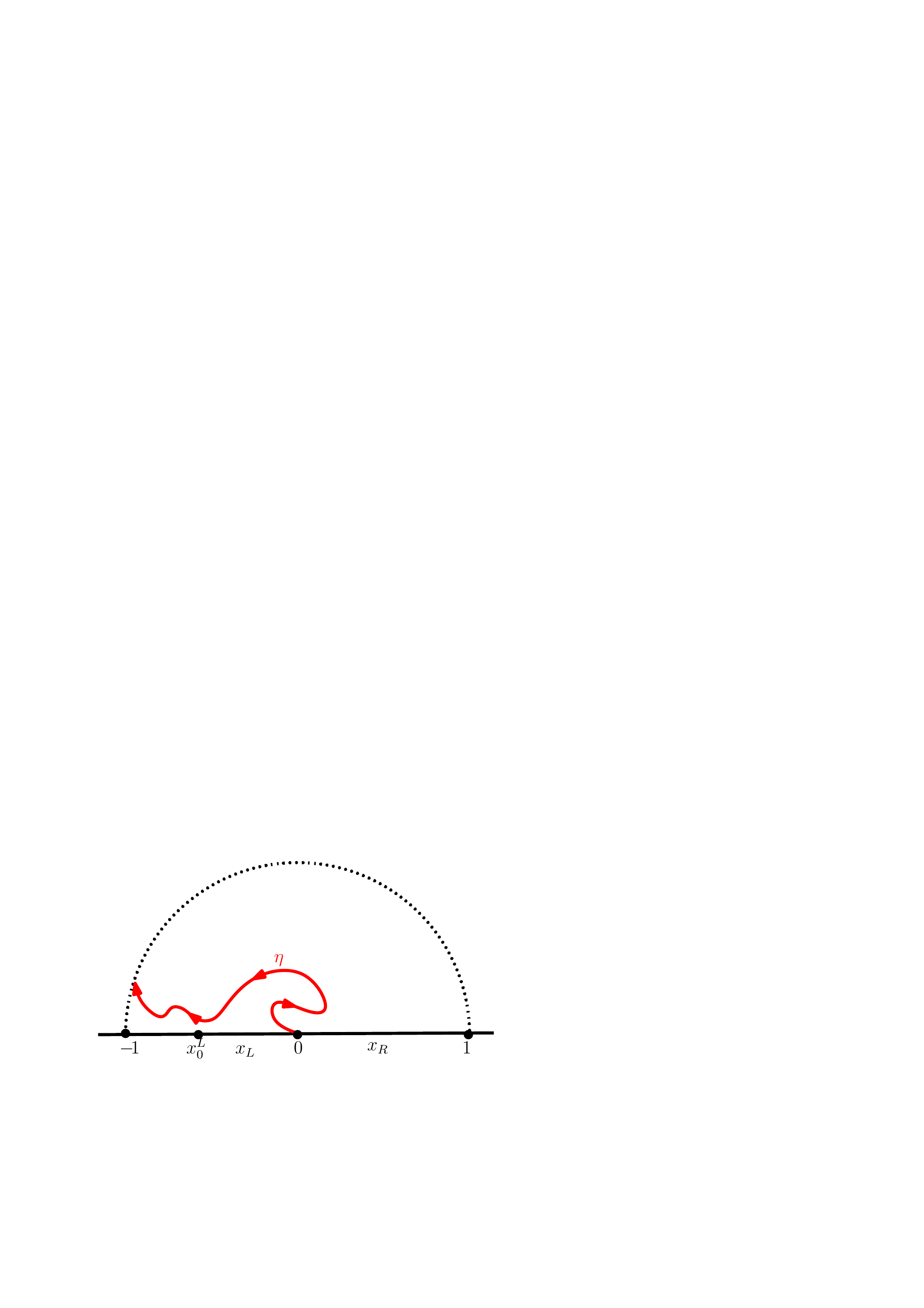}
\end{center}
\caption{\label{fig::sle_kappa_rho_boundary_hitting}
Suppose that $\eta$ is an $\SLE_\kappa(\rho_L;\rho_R)$ process in $\h$ starting from $0$ to $\infty$ with force points located at $x_L \leq 0 \leq x_R$ with $\rho_L \in (\tfrac{\kappa}{2}-4,\tfrac{\kappa}{2}-2)$ and $\rho_R > -2$.  We show in Lemma~\ref{lem::sle_kappa_rho_boundary_hitting} that for each choice of $x_0^L \in (-1,0)$ there exists $p_2 = p_2(x_0^L) \in [0,1)$ such that the probability that $\eta$ hits $\partial B(0,1)$ before hitting $(-\infty,x_L]$ is at most $p_2$ uniformly in $x_L \in [x_0^L,0]$.
}

\end{figure}

\subsection{Radon-Nikodym Derivative}
\label{subsec::radon_nikodym}

Following \cite[Lemma~13]{\DubedatDuality}, we will now describe the Radon-Nikodym derivative between $\SLE_\kappa(\ul{\rho})$ processes arising from a change of domains and the locations and weights of the force points.  Let $c=(D, z_0, \underline{x}_L, \underline{x}_R, z_{\infty})$ be a configuration consisting of a Jordan domain $D$ in $\C$ with $\ell+r+2$ marked points on $\partial D$. An $\SLE_{\kappa}(\ul{\rho}_L;\ul{\rho}_R)$ process $\eta$ with configuration $c$ is given by the image of an $\SLE_{\kappa}(\ul{\rho}_L;\ul{\rho}_R)$ process $\wt{\eta}$ in $\h$ under a conformal transformation $\varphi$ taking $\h$ to $D$ with $\varphi(0)=z_0$, $\varphi(\infty)=z_\infty$, and which takes the force points of $\wt{\eta}$ to those of $\eta$.

Suppose that $c=(D,z_0,\ul{x}_L,\ul{x}_R,z_{\infty})$ and $\wt{c}=(\wt{D},z_0,\wt{\ul{x}}_L,\wt{\ul{x}}_R,\wt{z}_{\infty})$ are two configurations such that $\wt{D}$ agrees with $D$ in a neighborhood $U$ of $z_0$.  Let $\mu_c^U$ denote the law of an $\SLE_\kappa(\ul{\rho}_L;\ul{\rho}_R)$ process in $c$ stopped at the first time $\tau$ that it exits $U$ and define $\mu_{\wt{c}}^U$ analogously.  Let
\[ \rho_{\infty}=\kappa-6-\sum_{i,q}\rho_{i,q}\]
and
\begin{equation}\label{eqn::total_mass}
\begin{split}
Z(c)&=\LH_D(z_0,z_{\infty})^{-\frac{\rho_{\infty}}{2\kappa}} \times \prod_{i,q} \LH_D(z_0,x_{i,q})^{-\frac{\rho_{i,q}}{2\kappa}} \\
&\times \prod_{(i,q) \neq (i',q')}\LH_D(x_{i,q},x_{i',q'})^{-\frac{\rho_{i,q}\rho_{i',q'}}{4\kappa}}\times \prod_{i,q}\LH_D(x_{i,q},z_{\infty})^{-\frac{\rho_{i,q}\rho_{\infty}}{4\kappa}}
\end{split}
\end{equation}
where $\LH_D$ is the Poisson excursion kernel of the domain $D$.  We also let
\begin{equation*}
\xi=\frac{(6-\kappa)(8-3\kappa)}{2\kappa},
\end{equation*}
\begin{equation*}
c_{\tau}=(D\setminus K_{\tau}, \eta(\tau),\ul{x}^\tau_L,\ul{x}^\tau_R,z_{\infty}),
\end{equation*}
\begin{equation*}
m(D;K,K')=\mu^{\rm loop}\left( \ell : \ell \subseteq D,\ \ell \cap K\neq \emptyset,\ \ell \cap K'\neq \emptyset \right),
\end{equation*}
where $K_{\tau}$ is the compact hull associated with $\eta([0,\tau])$ and $\mu^{\rm loop}$ the Brownian loop measure on unrooted loops in $\C$ (see \cite{LawlerWernerBrownianLoopsoup} for more on the Brownian loop measure).  Also, $x_{i,q}^\tau = x_{i,q}$ if $x_{i,q}$ is not swallowed by time $\tau$, otherwise $x^\tau_{i,L}$ (resp.\ $x^{\tau}_{i,R}$) is the leftmost (resp.\ rightmost) point of $\partial K_\tau\cap\partial D$ in the clockwise (resp.\ counterclockwise) arc on $\partial D$ from $z_0$ to $z_\infty$, .

The following result is proved in \cite[Lemma 13]{\DubedatDuality} in the case that $U$ is at a positive distance from the marked points of $c,\wt{c}$ other than $z_0$.  We are now going to use the $\SLE$/GFF coupling described in the previous section to extend the result to the case that $U$ is at a positive distance from the marked points of $c,\wt{c}$ which are different.

\begin{lemma}
\label{lem::change_of_domains}
Assume that we have the setup described just above.  Suppose that $U$ is at a positive distance from those marked points of $c,\wt{c}$ which differ.  The probability measures $\mu_{\wt{c}}^U$ and $\mu_c^U$ are mutually absolutely continuous and
\begin{equation}
\begin{split}
&\frac{d\mu^U_{\wt{c}}}{d\mu^U_{c}}(\eta) \\
=&\left(\frac{Z(\wt{c}_{\tau})/Z(\wt{c})}{Z(c_{\tau})/Z(c)}\right)\exp\big(-\xi m(D;K_{\tau}, D\setminus\wt{D})+\xi m(\wt{D} ; K_{\tau}, \wt{D}\setminus D) \big) \label{eqn::domain_change_rn}
\end{split}
\end{equation}
\end{lemma}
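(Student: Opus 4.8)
The plan is to deduce Lemma~\ref{lem::change_of_domains} from the already-established case of \cite[Lemma~13]{\DubedatDuality} (in which $U$ stays away from \emph{all} marked points other than $z_0$) together with the $\SLE$/GFF coupling recalled in Section~\ref{subsec::imaginary_geometry}. The point of the generalization is to allow $U$ to be close to, or to contain, those marked points of $c$ and $\wt c$ which are the \emph{same} in the two configurations; only the points which \emph{differ} are required to be at positive distance from $U$. The right-hand side of \eqref{eqn::domain_change_rn} is manifestly insensitive to this, since $Z(c_\tau)/Z(c)$ only involves ratios of excursion kernels and, as $\wt D$ and $D$ agree on $U$ and the differing marked points are away from $U$, all the loop-measure terms $m(D;K_\tau,D\setminus\wt D)$ and $m(\wt D;K_\tau,\wt D\setminus D)$ only see the region where the domains disagree. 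So the content is really that the Radon--Nikodym derivative still has this form when force points sit inside $U$.

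First I would reduce to the chordal half-plane picture: by conformal invariance we may take $D=\h$, $z_0=0$, $z_\infty=\infty$, and $\wt D$ a Jordan domain agreeing with $\h$ on a neighborhood $U$ of $0$; the force points of $c$ that lie in $U$ coincide with those of $\wt c$. I then want to peel the near force points off using the martingale description \eqref{eqn::martingalebetweensles}: weighting the law of $\SLE_\kappa$ (or of an $\SLE_\kappa(\ul\rho)$ with only the ``far'' force points) by $M_t$ reinserts the force points inside $U$, up to the stopping time $\tau$ at which $\eta$ exits $U$ and before any of the near force points is swallowed. Crucially, the \emph{same} reweighting functional $M_t$ — written in terms of $g_t$, $W_t$ and the $V_t^{i,q}$ — applies verbatim in $D$ and in $\wt D$ for $t<\tau$, because on $[0,\tau]$ the Loewner flows, the images of the near force points, and $g_t'$ at those points agree: $\eta$ has not left $U$, so it has not felt the difference between $\h$ and $\wt D$. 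Hence
\[
\frac{d\mu^U_{\wt c}}{d\mu^U_c}(\eta)
=\frac{(d\mu^U_{\wt c^\circ}/d\mu^{\rm SLE}_{\wt D})(\eta)\cdot M_\tau(\eta)}{(d\mu^U_{c^\circ}/d\mu^{\rm SLE}_{D})(\eta)\cdot M_\tau(\eta)}
=\frac{d\mu^U_{\wt c^\circ}}{d\mu^U_{c^\circ}}(\eta),
\]
where $c^\circ,\wt c^\circ$ have only the far force points, to which \cite[Lemma~13]{\DubedatDuality} already applies; the $M_\tau$ factors cancel exactly. It then remains to check that the product $\bigl(Z(\wt c_\tau)/Z(\wt c)\bigr)/\bigl(Z(c_\tau)/Z(c)\bigr)$ for the full configuration equals the corresponding ratio for $c^\circ,\wt c^\circ$ — again because the extra excursion-kernel factors involving the near force points are identical in $D$ and $\wt D$ on $[0,\tau]$ and so cancel in the double ratio, exactly matching the cancellation of the $M_\tau$'s. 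For $\kappa>4$ one runs the same argument with the counterflow-line coupling, or one simply invokes the $\kappa\mapsto 16/\kappa$ duality as in the references; alternatively the martingale-reweighting step is purely at the level of the Loewner SDE \eqref{eqn::slesde} and makes no use of $\kappa\le 4$.

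The step I expect to be most delicate is the bookkeeping of \emph{swallowing}: the derivation above is clean only on the event that $\eta$ exits $U$ before any near force point of $c$ (equivalently $\wt c$) is swallowed, whereas the definitions of $Z(c_\tau)$ and of $x_{i,q}^\tau$ are designed precisely to handle the swallowed case. I would first treat the ``generic'' event where no near force point is swallowed by time $\tau$ — there the martingale reweighting and the cancellation go through cleanly — and then argue that the remaining event can be absorbed: either by noting that for near force points strictly inside $U$ one may replace $U$ by a slightly smaller $U'\Subset U$ still containing $z_0$ and those points, at the cost of a harmless further restriction of the stopping time, and then pass to the limit; or by checking directly, using the definition of $x_{i,q}^\tau$ and the conformal-Markov/GFF coupling, that both sides of \eqref{eqn::domain_change_rn} transform consistently when a near force point is swallowed, since at that moment it is replaced by a boundary point of $\partial K_\tau\cap\partial D$ lying in the region where $D$ and $\wt D$ agree. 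A clean way to package all of this is to note both probability measures $\mu^U_c$ and $\mu^U_{\wt c}$ are obtained by GFF restriction (\cite[Proposition~3.2]{IG1}, as used in Lemmas~\ref{lem::sle_kappa_rho_close_to_curve}--\ref{lem::sle_kappa_rho_hit_boundary_segment}): the fields coupled with the two processes are mutually absolutely continuous on any $U_\delta\Subset U$, which gives mutual absolute continuity of the path laws for free, and then the explicit form of the density is pinned down by identifying it on the generic event via \cite[Lemma~13]{\DubedatDuality} and the $M_\tau$ cancellation, using continuity of both sides to extend.
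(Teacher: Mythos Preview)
Your martingale-cancellation route contains a genuine error. The claim that ``on $[0,\tau]$ the Loewner flows, the images of the near force points, and $g_t'$ at those points agree'' because $\eta$ has not left $U$ is false: the Loewner map $g_t\colon D\setminus K_t\to\h$ is a global object and depends on all of $D$, not just on the part where $\eta$ has been. Concretely, if you uniformize $\wt D$ to $\h$ by $\phi$, the martingale $M_\tau^{\wt D}$ for inserting the near force point $x$ involves $\wt g_\tau(\phi(x))$, $\wt g_\tau'(\phi(x))$, $\wt W_\tau$, which are not the functionals $g_\tau(x)$, $g_\tau'(x)$, $W_\tau$ appearing in $M_\tau^D$. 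So the two $M_\tau$ factors do not cancel, and for the same reason the ``extra excursion-kernel factors'' $\LH_{D_\tau}(x_i^\tau,\cdot)/\LH_D(x_i,\cdot)$ and $\LH_{\wt D_\tau}(x_i^\tau,\cdot)/\LH_{\wt D}(x_i,\cdot)$ do not agree. What is true is that $M_\tau^D/M_0^D$ equals the ratio $\tfrac{Z(c_\tau)/Z(c)}{Z(c^\circ_\tau)/Z(c^\circ)}$ (and similarly in $\wt D$), so the algebra does close up after one writes this out---but that is a different computation from the one you describe, and it still only holds on the event that no near force point has been swallowed by time $\tau$, since the martingale \eqref{eqn::martingalebetweensles} is only valid up to that time.

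Your closing paragraph is in fact the paper's approach, but you leave out its one nontrivial step. Mutual absolute continuity via GFF restriction is fine, and the right-hand side of \eqref{eqn::domain_change_rn} is visibly continuous in $\eta$. What is not automatic is that the Radon--Nikodym derivative $f(\eta)=d\mu^U_{\wt c}/d\mu^U_c$ is itself continuous in $\eta$; a Radon--Nikodym derivative between path measures has no a priori regularity. The paper extracts this by passing through the joint law of $(\eta,h|_U)$: since $\eta$ is a.s.\ determined by $h$, one can write
\[
f(\eta)=\frac{d\nu^U_{\wt c,h}}{d\nu^U_{c,h}}(\,\cdot\,)\times\frac{d\nu^U_c(\,\cdot\mid\eta)}{d\nu^U_{\wt c}(\,\cdot\mid\eta)},
\]
and the second factor---a ratio of conditional GFF laws given the path---is manifestly continuous in $\eta$ with respect to the uniform topology. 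Once $f$ is known to be continuous, it agrees with \eqref{eqn::domain_change_rn} on the dense set of paths covered by \cite[Lemma~13]{\DubedatDuality}, hence everywhere; the case $x_{1,L}=z_0$ or $x_{1,R}=z_0$ is then recovered by Lemma~\ref{lem::sle_kappa_rho_cont_in_force_points}. Without this continuity argument, your extension from the ``generic event'' does not go through.
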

\begin{proof}
We are first going to prove the result in the case that $x_{1,L} \neq z_0 \neq x_{1,R}$.  We know that we can couple $\eta \sim \mu_c^U$ (resp.\ $\wt{\eta} \sim \mu_{\wt{c}}^U$) with a GFF $h$ (resp.\ $\wt{h}$) on $D$ (resp.\ $\wt{D}$) so that $\eta$ (resp.\ $\wt{\eta}$) is the flow line of $h$ (resp.\ $\wt{h}$) starting from $z_0$.  By our hypotheses, the boundary data of $h$ and $\wt{h}$ agree with each other in the boundary segments which are also contained in $\partial U$.  Consequently, the laws of $h|_{U}$ and $\wt{h}|_{U}$ are mutually absolutely continuous \cite[Proposition~3.2]{IG1}.  Since $\eta$ (resp.\ $\wt{\eta}$) is almost surely determined by $h$ (resp.\ $\wt{h}$) \cite[Theorem~1.2]{IG1}, it follows that $\mu_c^U$ and $\mu_{\wt{c}}^U$ are mutually absolutely continuous.  Thus, to complete the proof, we just need to identify $f(\eta) := (d\mu_{\wt{c}}^U/ d\mu_c^U)(\eta)$.  By \cite[Lemma 13]{\DubedatDuality}, we know that $f(\eta)$ is equal to the right side of \eqref{eqn::domain_change_rn} for paths $\eta$ which intersect the boundary only in the counterclockwise segment of $\partial D$ from $x_{1,L}$ to $x_{1,R}$ (and this only happens for $\kappa > 4$).  Therefore, to complete the proof, we need to show that the same equality holds for paths $\eta$ which intersect the other parts of the domain boundary.  Note that the right hand side of \eqref{eqn::domain_change_rn} is a continuous function of $\eta$ with respect to the uniform topology on paths.  Therefore, to complete the proof, it suffices to show that the Radon-Nikodym derivative $f(\eta)$ is also continuous with respect to the same topology.  Indeed, then the result follows since both functions are continuous and agree with each other on a dense set of paths.  We are going to prove that this is the case using that $\eta,\wt{\eta}$ are coupled with $h,\wt{h}$, respectively.

Let $\nu_c^U$ (resp.\ $\nu_{\wt{c}}^U$) denote the joint law of $(\eta,h|_U)$ (resp.\ $(\wt{\eta},\wt{h}|_U)$).  As explained above, $\nu_c^U$ and $\nu_{\wt{c}}^U$ are mutually absolutely continuous.  Moreover, the Radon-Nikodym derivative $d\nu_{\wt{c}}^U / d\nu_c^U$
is a function of $h$ alone since $h,\wt{h}$ almost surely determine $\eta,\wt{\eta}$, respectively.  Let $\nu_c^U(\cdot \giv \cdot)$ (resp.\ $\nu_{\wt{c}}^U(\cdot \giv \cdot)$) denote the conditional law of $h|_U$ given $\eta$ (resp.\ $\wt{h}|_U$ given $\wt{\eta}$).  Note that
\[ \eta \mapsto \frac{d\nu_{\wt{c}}^U(\cdot \giv \eta)}{d\nu_{c}^U(\cdot \giv \eta)}\]
is continuous in $\eta$ with respect to the uniform topology on continuous paths.  Let $\nu_{c,h}^U(\cdot)$ (resp.\ $\nu_{\wt{c},h}^U(\cdot )$) denote the law of $h|_U$ (resp.\ $\wt{h}|_U$).  Then we have that
\begin{align*}
 \frac{d\nu_{\wt{c},h}^U}{d\nu_{c,h}^U}(\cdot) = \frac{d \nu_{\wt{c}}^U}{d\nu_{c}^U}(\eta,\cdot) = \frac{d\nu_{\wt{c}}^U}{d\nu_{c}^U}(\cdot \giv \eta) \times \frac{d\mu_{\wt{c}}^U}{d\mu_{c}^U}(\eta) = \frac{d\nu_{\wt{c}}^U}{d\nu_{c}^U}(\cdot \giv \eta) \times f(\eta).
\end{align*}
Rearranging, we see that
\[ f(\eta) = \frac{d \nu_{\wt{c},h}^U(\cdot)}{d \nu_{c,h}^U(\cdot)} \times  \frac{d\nu_{c}^U(\cdot \giv \eta)}{d\nu_{\wt{c}}^U(\cdot \giv \eta)}\]
(the right side does not depend on the choice of $\cdot$ since the left side does not depend on $\cdot$).
This implies the desired result in the case that $x_{1,L} \neq z_0 \neq x_{1,R}$ since the latter factor on the right side is continuous in $\eta$, as we remarked above.  The result follows in the case that one or both of $x_{1,L},x_{1,R}$ agrees with $z_0$ since the laws converge as one or both of $x_{1,L},x_{1,R}$ converge to $z_0$ (Lemma~\ref{lem::sle_kappa_rho_cont_in_force_points}).
\end{proof}

\begin{lemma}
\label{lem::change_of_domains_bound_RN}
Assume that we have the same setup as in Lemma \ref{lem::change_of_domains} with $D=\h$, $\wt{D}\subseteq \h$, $U \subseteq \h$ bounded, and $z_0=0$. Fix $\zeta>0$ and suppose that the distance between $U$ and $\h\setminus \wt{D}$ is at least $\zeta$, the force points of $c,\wt{c}$ in $\ol{U}$ are identical, the corresponding weights are also equal, and the force points which are outside of $U$ are at distance at least $\zeta$ from $U$.  There exists a constant $C \geq 1$ depending on $U$, $\zeta$, $\kappa$, and the weights of the force points such that
\[\frac{1}{C} \leq \frac{d\mu^U_{\wt{c}}}{d\mu^U_c} \leq C.\]
\end{lemma}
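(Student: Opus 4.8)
The plan is to start from the explicit formula for the Radon--Nikodym derivative supplied by Lemma~\ref{lem::change_of_domains} and to bound each of its two factors, above and below, uniformly over all configurations meeting the hypotheses. Since $D=\h$ and $\wt D\subseteq\h$, the set $\wt D\setminus D$ is empty, so the loop term $m(\wt D;K_\tau,\wt D\setminus D)$ in \eqref{eqn::domain_change_rn} is zero and
\[
\frac{d\mu^U_{\wt c}}{d\mu^U_c}(\eta)=\left(\frac{Z(\wt c_\tau)/Z(\wt c)}{Z(c_\tau)/Z(c)}\right)\exp\!\big(-\xi\, m(\h;K_\tau,\h\setminus\wt D)\big),
\]
where $\xi=(6-\kappa)(8-3\kappa)/(2\kappa)$ depends only on $\kappa$, so it suffices to bound the loop term and the ratio of partition functions.

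For the loop factor I would use that $\eta$ is stopped upon exiting $U$, so that $K_\tau\subseteq\ol U$, and hence, by monotonicity of the Brownian loop measure in its first argument together with the hypothesis $\dist(U,\h\setminus\wt D)\ge\zeta$,
\[
0\le m(\h;K_\tau,\h\setminus\wt D)\le m\big(\h;\ol U,\{z\in\h:\dist(z,U)\ge\zeta\}\big)=:m_0.
\]
The right-hand side $m_0$ is finite and depends only on $U$ and $\zeta$ by a standard estimate for the Brownian loop measure: the relevant loops are confined to $\h$ and join the bounded set $\ol U$ to a set at distance at least $\zeta$ from it, so loops of large diameter are suppressed. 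Consequently the exponential factor lies in $[e^{-|\xi|m_0},e^{|\xi|m_0}]$.

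For the ratio of partition functions I would write it as $(Z(\wt c_\tau)/Z(c_\tau))\cdot(Z(c)/Z(\wt c))$ and use the conformal covariance of the Poisson excursion kernel $\LH$ together with the conformal map estimates of Section~\ref{subsec::distort}. The force points lying in $\ol U$ occur with identical locations and weights for $c$ and $\wt c$, and $\h$ and $\wt D$ agree inside $U$; moreover every point entering the remaining $\LH$-factors --- the other force points and the target points $z_\infty,\wt z_\infty$ --- is at distance at least $\zeta$ from $U\supseteq K_\tau$. Using that the Loewner maps uniformizing $\h\setminus K_\tau$ and $\wt D\setminus K_\tau$, together with their derivatives, are uniformly comparable at such points (since $K_\tau\subseteq\ol U$), and domain comparison / Harnack estimates for $\LH$ between $\h$, $\wt D$, $\h\setminus K_\tau$, $\wt D\setminus K_\tau$, one finds that each $\LH$-factor appearing in $Z(\wt c_\tau)/Z(c_\tau)$ and in $Z(c)/Z(\wt c)$ either cancels or contributes a bounded multiplicative error; since the exponents depend only on $\kappa$ and the weights, the whole ratio lies in $[1/C',C']$ for some $C'=C'(U,\zeta,\kappa,\text{weights})$. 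Combining with the loop bound gives the claim with $C=C'e^{|\xi|m_0}$.

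The partition-function step is where I expect the real work to be, and the delicate point within it is the bookkeeping for force points swallowed by $K_\tau$: when $x_{i,q}\in\ol U$ is swallowed, $Z(c_\tau)$ and $Z(\wt c_\tau)$ use the extreme points of $\partial K_\tau\cap\partial\h$ and of $\partial K_\tau\cap\partial\wt D$ respectively, and one must check (using $K_\tau\subseteq\ol U$ and that $\h$, $\wt D$ agree in $U$) that these relocated points agree, or at least are comparable, and that the kernels evaluated at the curve tip $\eta(\tau)$, where the uniformizing maps degenerate, are controlled exactly as in the derivation of \eqref{eqn::domain_change_rn}. Finally, the degenerate case where an in-$U$ force point coincides with $z_0=0$ is handled, as in the proof of Lemma~\ref{lem::change_of_domains}, by approximation with force points at $0^{\pm}$ and the continuity of the driving process in the force points (Lemma~\ref{lem::sle_kappa_rho_cont_in_force_points}).
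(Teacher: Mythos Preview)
Your outline is the same as the paper's: reduce to bounding the loop mass and the ratio of $Z$'s, using that $\wt D\subseteq\h$ kills one loop term. Your loop-mass bound is exactly what the paper does, citing \cite[Corollary~4.6]{LawlerBrownianLoopMeasure} for finiteness of $m(\h;U,\h\setminus U^\zeta)$.

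The gap is in the $Z$-ratio. Your sketch handles the ``far'' marked points (Case~1 in the paper, delegated to \cite[Lemma~14]{\DubedatDuality}) by derivative comparability of the uniformizing maps, which is fine. But for pairs $u,v\in\ol U$ the quantity to control is
\[
\frac{\LH_{\wt D\setminus K_\tau}(u^\tau,v^\tau)}{\LH_{\h\setminus K_\tau}(u^\tau,v^\tau)},
\]
and the point is to bound this \emph{uniformly in the random hull} $K_\tau\subseteq\ol U$, not at a single configuration. ``Harnack / domain comparison'' is not the right handle here: Harnack compares values at different interior points of a fixed domain, whereas you need to compare boundary Poisson kernels across different domains at the same boundary pair. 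The paper instead uses the probabilistic identity from \cite[Proposition~4.1]{LawlerSchrammWernerConformalRestrictionChordal} identifying this ratio with the mass $q$ of Brownian excursions in $\h\setminus K_\tau$ from $u^\tau$ to $v^\tau$ avoiding $\h\setminus\wt D$. This gives the upper bound $1$ for free; the uniform positive lower bound comes from $q\ge q(\h,0,1,\h\setminus g(U^\zeta))$ after normalizing, and then a compactness argument over hulls $K\subseteq\ol U$ in the Hausdorff topology to show the right side is bounded below by some $q_0(U,\zeta)>0$. Without this excursion interpretation plus compactness, your ``bounded multiplicative error'' claim for the in-$U$ factors is unsupported.

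Your remark about swallowed force points is correctly flagged but turns out not to be the hard part: since $\h$ and $\wt D$ agree in $U\supseteq K_\tau$, the relocated points $x^\tau_{i,q}$ coincide for $c_\tau$ and $\wt c_\tau$, so this is absorbed into Case~2 above. The $z_0=0^\pm$ approximation is not invoked in the paper's proof of this lemma; it was used earlier in Lemma~\ref{lem::change_of_domains} itself.
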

\begin{proof}
Note that $0\le m(\h; K_{\tau},\h\setminus \wt{D})\le m(\h; U,\h\setminus U^{\zeta})$ where $U^\zeta$ is the $\zeta$-neighborhood of $U$. Moreover, we have that $m(\h; U,\h\setminus U^{\zeta})$ is bounded from above by a finite constant depending on $U$ and $\zeta$ since the mass according to $\mu^{\rm loop}$ of the loops which are contained in $\h$, intersect $U$, and have diameter at least $\zeta$ is finite \cite[Corollary~4.6]{LawlerBrownianLoopMeasure}.  Consequently, by Lemma~\ref{lem::change_of_domains}, we only need to bound the quantity $\tfrac{Z(\wt{c}_{\tau})/Z(\wt{c})}{Z(c_{\tau})/Z(c)}$.

Recall from \eqref{eqn::total_mass} that the terms in $\tfrac{Z(\wt{c}_{\tau})/Z(\wt{c})}{Z(c_{\tau})/Z(c)}$ are ratios of terms of the form $\LH_X(u,v)$ where $X$ is one of $\h$, $\h_\tau$, $\wt{D}$, $\wt{D}_\tau$ and $u,v$ are two marked points on the boundary of $X$. 
We will complete the proof by considering several cases depending on the location of the marked points.

\smallskip
\noindent{\bf Case 1.}  At least one marked point is outside of $U^\zeta$.  This is the case handled in the proof of \cite[Lemma~14]{\DubedatDuality}.
\smallskip

\noindent{\bf Case 2.} Both marked points $u,v$ are contained in $\overline{U}$ and $u\neq v$. It is enough to bound from above and below the ratios:
\[ A = \frac{\LH_{\wt{D}}(x,y)}{\LH_{\h}(x,y)}\ \quad\text{and}\quad B = \frac{\LH_{\wt{D}_\tau}(x^\tau,y^\tau)}{\LH_{\h_\tau}(x^\tau,y^\tau)}\]
where $x,y \in \partial U\cap\R$ are distinct and $x^\tau,y^\tau \in \partial\h_\tau\cap\overline{U}$ are distinct. 

We can bound $A$ as follows.  Let $\varphi \colon \wt{D} \to \h$ be the unique conformal transformation with $\varphi(x)=x$, $\varphi(y)=y$, and $\varphi'(x)=1$. Then $A =|\varphi'(y)|$ which, by \cite[Proposition~4.1]{LawlerSchrammWernerConformalRestrictionChordal}, is equal to the mass of those Brownian excursions in $\h$ connecting $x$ and $y$ which avoid $\h \setminus\wt{D}$.  We will write $q(\h,x,y,\h\setminus\wt{D})$ for this quantity.  Since this is given by a probability, we have that $|\varphi'(y)| \leq 1$ and it follows that $|\varphi'(y)|$ is bounded from below by $q(\h, x, y, U^{\zeta}) > 0$. This lower bound is a positive continuous function in $x,y \in \partial U\cap \partial \h$ hence yields a uniform lower bound.  Consequently, $A$ is bounded from both above and below.

Similarly, $B$ is equal to the mass $q(\h \setminus K_\tau,x^\tau,y^\tau,\h \setminus\wt{D})$
of those Brownian excursions in $\h \setminus K_\tau$ which connect $x^\tau$ and
$y^\tau$ and avoid $\h \setminus\wt{D}$. As before, this quantity is bounded
from above by $1$.  We will now establish the lower bound. Let $g$ be the conformal map from $\h\setminus K_\tau$ onto $\h$ which sends the triple $(x^\tau,y^\tau,\infty)$ to $(0,1,\infty)$. Note that $g$ can be extended to $\C\setminus (K_\tau \cup \bar{K}_\tau)$ by Schwarz reflection where $\bar{K}_\tau=\{z\in\C: \bar{z}\in K_\tau\}$. We will view $g$ as such an extension. Then it is clear that
\begin{align*}
q(\h \setminus K_\tau,x^\tau,y^\tau,\h \setminus\wt{D})&\ge q(\h \setminus K_\tau,x^\tau,y^\tau,\h \setminus U^{\zeta})\\
&=q(\h, 0,1,\h\setminus g(U^{\zeta})).
 \end{align*}
Note that $q(\h, 0,1,\h\setminus g(U^{\zeta}))$ is a continuous functional on compact hulls $K$ inside $\overline{U}$ equipped with the Hausdorff metric.  Indeed, suppose that $(K_n)$ is a sequence of compact hulls inside $\overline{U}$ converging towards $K$ in the Hausdorff metric and, for each $n$, let $g_n$ be the corresponding conformal map. Then $g_n$ converges to $g$ uniformly away from $K\cup\bar{K}$. In particular, $g_n(U^{\zeta})$ converges to $g(U^{\zeta})$ in Hausdorff metric. Let $\phi_n$ (resp.\ $\phi$) be the conformal map from $\h\setminus g_n(U^\zeta)$ (resp.\ $\h\setminus g(U^\zeta)$) onto $\h$ which fixes $0$, $1$ and has derivative $1$ at $1$.  Then $\phi_n'(0)$ converges to $\phi'(0)$. Thus $q(\h, 0,1,\h\setminus g_n(U^{\zeta}))=\phi_n'(0)$ converges to $q(\h, 0,1,\h\setminus g(U^{\zeta}))=\phi'(0)$ which explains the continuity of $q(\h,0,1,\h\setminus g(U^{\zeta}))$ in $K$. Since the set of compact hulls inside $\overline{U}$ endowed with Hausdorff metric is compact, there exists $q_0>0$ depending only on $U$ and $\zeta$ such that 
\[ q(\h \setminus K_\tau,x^\tau,y^\tau,\h \setminus\wt{D})\ge q(\h, 0,1,\h\setminus g(U^{\zeta}))\ge q_0.\]
\smallskip

\noindent{\bf Case 3.}  A single marked point $u$ contained in $\overline{U}$.  The ratios which involve terms of the form $\LH_X(u,u)$ are interpretted using limits hence are uniformly bounded by the argument of Case 2.
\smallskip

\end{proof}

\subsection{Estimates for conformal maps}
\label{subsec::distort}

For a proper simply connected domain $D$ and $w\in D$, let $\confrad(w; D)$ denote the conformal radius of $D$ with respect to~$w$, i.e., $\confrad(w; D)\equiv f'(0)$ for $f$ the unique conformal map $\D\to D$ with $f(0)=w$ and $f'(0)>0$. Let $\rad(w;D)\equiv\inf\set{r:\ball{r}{w}\supseteq D}$ denote the out-radius of $D$ with respect to $w$. By the Schwarz lemma and the Koebe one-quarter theorem,
\begin{equation}
\label{eqn::ir.cr.or}
\dist(w,\pd D)
\le\confrad(w;D)
\le[4\,\dist(w,\pd D)] \wedge \rad(w;D).
\end{equation}
Further (see e.g.\ \cite[Theorem~1.3]{\Pommerenke})
\begin{align}
\label{eqn::distortion}
\frac{\abs{\ze}}{(1+\abs{\ze})^2}
&\leq \frac{\abs{f(\ze)-w}}{\confrad(w;D)}
\leq \frac{\abs{\ze}}{(1-\abs{\ze})^2}
\end{align}
As a consequence,
\begin{equation}
\label{eqn::growth}
\frac{\abs{\ze}}{4}
\le \frac{\abs{f(\ze)-w}}{\confrad(w;D)}
\le4\abs{\ze}
\end{equation}
where the right-hand inequality above holds for $\abs{\ze}\le1/2$.

Finally, we state the Beurling estimate \cite[Theorem~3.76]{lawler2005} which we will frequently use in conjunction with the conformal invariance of Brownian motion.

\begin{theorem}[Beurling Estimate]
\label{thm::beurling}
Suppose that $B$ is a Brownian motion in $\C$ and $\tau_\D =\inf\{t \geq 0: B(t) \in \partial \D\}$.  There exists a constant $c < \infty$ such that if $\gamma \colon [0,1] \to \C$ is a curve with $\gamma(0) = 0$ and $|\gamma(1)| = 1$, $z \in \D$, and $\p^z$ is the law of $B$ when started at $z$, then
\[ \p^z[ B([0,\tau_\D]) \cap \gamma([0,1]) = \emptyset] \leq c |z|^{1/2}.\]
\end{theorem}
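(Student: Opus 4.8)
The plan is to rephrase the left side as a harmonic measure, reduce to the case of a radial slit by the Beurling projection theorem, and then evaluate the extremal harmonic measure via an explicit conformal map to $\h$.

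First, since Brownian motion killed on $\partial\D$ stays in $\ol{\D}$, only $\gamma([0,1])\cap\ol{\D}$ is relevant, and if $z$ lies on this set the bound is trivial; so assume $z\notin\gamma([0,1])$. Let $s=\inf\{t\in[0,1]:|\gamma(t)|=1\}$; then $K:=\gamma([0,s])$ is a connected compact subset of $\ol{\D}$ joining $0$ to a point of $\partial\D$, so its set of moduli is all of $[0,1]$, and it does not contain the full circle $\{|w|=|z|\}$ (which would force $z\in K$). Writing $\omega(z;A,V)$ for the probability that a Brownian motion started at $z$ exits the domain $V$ through $A\subseteq\partial V$, conformal invariance of Brownian motion together with $K\subseteq\gamma([0,1])$ gives
\[
\p^z\big[B([0,\tau_\D])\cap\gamma([0,1])=\emptyset\big]\;=\;\omega\big(z;\partial\D,\D\setminus\gamma([0,1])\big)\;\le\;\omega\big(z;\partial\D,\D\setminus K\big).
\]
Next I would apply the \emph{Beurling projection theorem} (circular symmetrization; see e.g.\ \cite{lawler2005} or \cite{\Pommerenke}): writing $z=|z|e^{i\theta}$ and letting $K^*$ be the circular symmetrization of $K$ onto the ray in direction $\theta+\pi$, one has $\omega(z;\partial\D,\D\setminus K)\le\omega(z;\partial\D,\D\setminus K^*)$. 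Since $K$ meets every circle of radius $t\in(0,1]$ but not the full circle of radius $|z|$, the symmetrized set $K^*$ is connected, does not contain $z$, and contains the radial slit $S_\theta:=\{te^{i(\theta+\pi)}:t\in[0,1]\}$; hence $\omega(z;\partial\D,\D\setminus K^*)\le\omega(z;\partial\D,\D\setminus S_\theta)$. Rotating by $-\theta$ and then by $\pi$, this last quantity equals $\omega(-r;\partial\D,\D\setminus[0,1])$ with $r:=|z|\in(0,1)$, reducing the theorem to the estimate $\omega(-r;\partial\D,\D\setminus[0,1])\le c\,r^{1/2}$.

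Finally I would compute this extremal harmonic measure exactly. Let $\Phi=J\circ\sqrt{\,\cdot\,}$, where $\sqrt{\,\cdot\,}$ is the branch of the square root with argument in $(0,2\pi)$ — which maps $\D\setminus[0,1]$ conformally onto the upper half-disk $\{u:|u|<1,\ \im u>0\}$, sending the tip $0$ to $0$ and $\partial\D$ to the upper unit semicircle — and $J(u)=-\tfrac12(u+u^{-1})$ is the Joukowski map, which sends the upper half-disk conformally onto $\h$ and the upper semicircle onto the segment $[-1,1]$. Then $\Phi(-r)=\tfrac{i}{2}(r^{-1/2}-r^{1/2})$, so by conformal invariance and the Poisson kernel of $\h$,
\[
\omega\big(-r;\partial\D,\D\setminus[0,1]\big)\;=\;\omega\big(\Phi(-r);[-1,1],\h\big)\;=\;1-\frac{2}{\pi}\arctan\!\Big(\tfrac12\big(r^{-1/2}-r^{1/2}\big)\Big),
\]
which is a continuous function of $r\in(0,1)$ equivalent to $\tfrac{4}{\pi}r^{1/2}$ as $r\to0$, hence bounded by a universal constant times $r^{1/2}$ for all $r\in(0,1)$. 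Chaining the displays yields the claim with a universal $c$.

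The step I expect to be the main obstacle is the Beurling projection theorem itself: the circular-symmetrization inequality is the substantive geometric input, and it is precisely what produces the sharp exponent $\tfrac12$. In this paper it is simply quoted from \cite{lawler2005}; a self-contained treatment would establish the symmetrization inequality either by the classical extremal-length / Dirichlet-energy argument or by a reflection coupling of Brownian motion across the symmetrization ray, and I would expect that — rather than the elementary half-plane computation above — to be where the real work lies.
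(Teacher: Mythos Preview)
Your argument is correct and follows the classical route: reduce to harmonic measure, apply Beurling's projection/symmetrization theorem to reduce to the radial slit, then compute the extremal case via $\sqrt{\,\cdot\,}$ and the Joukowski map. One small remark: for this application the full circular symmetrization is more than you need --- Beurling's \emph{radial projection} theorem (replace $K$ by $\{-|w|:w\in K\}$) already gives $\omega(z;\partial\D,\D\setminus K)\le\omega(-|z|;\partial\D,\D\setminus[0,1])$ directly, since a connected set joining $0$ to $\partial\D$ projects onto the full segment $[0,1]$; this bypasses your intermediate step through $K^*$.

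As for comparison with the paper: there is nothing to compare. The paper does not prove Theorem~\ref{thm::beurling}; it merely quotes it from \cite[Theorem~3.76]{lawler2005} and uses it as a black box. Your proposal therefore supplies strictly more than the paper does. The ``main obstacle'' you identify --- the symmetrization/projection inequality --- is indeed the substantive content, and the paper simply outsources it to the reference, exactly as you anticipated.
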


\section{The intersection of $\SLE_\kappa(\rho)$ with the boundary}
\label{sec::sle_kappa_rho_boundary}

\subsection{The upper bound}
\label{subsec::upper_bound}

The main result of this section is the following theorem, which in turn implies Theorem~\ref{thm::one_point}.

\begin{theorem}
\label{thm::one_point_exponent_general}
Fix $\kappa >0$, $\rho_{1,R} > -2$, and $\rho_{2,R} \in \R$ such that $\rho_{1,R} + \rho_{2,R} > \tfrac{\kappa}{2}-4$.  Fix $x_R \in [0^+,1)$ and let $\eta$ be an $\SLE_{\kappa}(\rho_{1,R},\rho_{2,R})$ process with force points $(x_R,1)$.  Let
\begin{equation}
\label{eqn::alpha_definition}
\alpha=\frac{1}{\kappa}(\rho_{1,R}+2)\left(\rho_{1,R}+\rho_{2,R}+4-\frac{\kappa}{2}\right).
\end{equation}
For each $\epsilon > 0$, let $\tau_\epsilon = \inf\{t \geq 0 : \eta(t) \in \partial B(1,\epsilon)\}$ and, for each $r > 0$, let $\sigma_r = \inf\{t \geq 0 : \eta(t) \in \partial (r \D)\}$.  For each $\delta \in [0,1)$ and $r \geq 2$ fixed, let
\begin{equation}
\label{eqn::e_definition}
E_\eps^{\delta,r}=\{\tau_\eps<\sigma_r,\ \im(\eta(\tau_\eps))\ge \delta\eps\}.
\end{equation}
We have that
\begin{equation}
\label{eqn::onepointestimate}
 \p[E_\eps^{\delta,r}]=\eps^{\alpha+o(1)}\quad\text{as}\quad \epsilon \to 0.
\end{equation}
\end{theorem}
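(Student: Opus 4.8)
The plan is to run $\eta$ in the capacity parametrization, with Loewner maps $(g_t)$ and driving function $W_t$ solving \eqref{eqn::slesde}, and to follow the three real-valued processes $O_t = V_t^{1,R}-W_t = g_t(x_R)-W_t$, $R_t = V_t^{2,R}-W_t = g_t(1)-W_t$, and $A_t = g_t'(1)$. By the Koebe/Schwarz bounds \eqref{eqn::ir.cr.or} and the distortion estimates \eqref{eqn::distortion}--\eqref{eqn::growth}, the quantity $\Upsilon_t := R_t/A_t$ is comparable, up to universal multiplicative constants, to $\dist(1,\overline{K_t})$; hence, after replacing $\eps$ by a constant multiple of itself, $\tau_\eps$ is essentially the first time that $\Upsilon_t$ reaches order $\eps$. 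The position of the inner force point $x_R\in[0^+,1)$ only affects constants in what follows — using scaling together with the absolute-continuity bound of Lemma~\ref{lem::change_of_domains_bound_RN} one checks that the relevant laws, stopped on leaving a fixed neighbourhood of $1$, move by bounded factors — so one may keep $x_R$ general throughout and absorb this into the eventual $o(1)$.

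The heart of the argument is a change of measure. From \eqref{eqn::slesde} and It\^o's formula there is a multi-parameter family of local martingales built from real powers of $R_t$, $A_t$, $O_t$, $g_t'(x_R)$ and the separation $V_t^{2,R}-V_t^{1,R}$; concretely these arise by combining the $\SLE_\kappa(\underline\rho)$ martingale \eqref{eqn::martingalebetweensles} with a conditioning martingale of the type \eqref{eqn::left_conditioning}. Imposing the drift-annihilation (martingale) condition, and selecting the member of the family whose net exponent in $\Upsilon_t\asymp\dist(1,\overline{K_t})$ equals $-\alpha$ with $\alpha$ as in \eqref{eqn::alpha_definition}, singles out a distinguished local martingale $M_t$ with two features. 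First, weighting the law of $\eta$ by $M_t/M_0$ up to $\tau_\eps\wedge\sigma_r$ produces the law $\widetilde\p$ of an $\SLE_\kappa(\rho_{1,R};\widehat\rho_{2,R})$ process whose weight at $1$ is the "dual" weight; the hypothesis $\rho_{1,R}+\rho_{2,R}>\tfrac\kappa2-4$ is exactly what places this new weight in the boundary-hitting/filling range of Lemma~\ref{lem::sle_kappa_rho_boundary_interaction}, so that under $\widetilde\p$ the curve almost surely accumulates at $1$ and in particular $\tau_\eps<\infty$ a.s. Second, on the event $E_\eps^{\delta,r}$ one has the two-sided bound $M_0/M_{\tau_\eps}=\eps^{\alpha+o(1)}$: the $-\alpha$ power of $\Upsilon_{\tau_\eps}$ contributes $\eps^{\alpha}$ up to honest constants (the constraints $\tau_\eps<\sigma_r$ and $\im\eta(\tau_\eps)\ge\delta\eps$ force $K_{\tau_\eps}$ to have non-degenerate shape near $1$, so the Koebe bound applies with uniform constants), and the remaining powers of $O_{\tau_\eps}$, $g_{\tau_\eps}'(x_R)$ and $V_{\tau_\eps}^{2,R}-V_{\tau_\eps}^{1,R}$ are confined between $\eps^{o(1)}$ and $\eps^{-o(1)}$ using \eqref{eqn::distortion}--\eqref{eqn::growth} and the Beurling estimate (Theorem~\ref{thm::beurling}).

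Combining the two features through a localization/optional-stopping argument for the nonnegative local martingale $M$ (truncating also on the rare events where the auxiliary quantities would blow up, and checking these contribute negligibly) yields
\[ \p[E_\eps^{\delta,r}] \;=\; \widetilde{\E}\!\left[\mathbf{1}_{E_\eps^{\delta,r}}\,\frac{M_0}{M_{\tau_\eps}}\right] \;=\; \eps^{\alpha+o(1)}\,\widetilde{\p}[E_\eps^{\delta,r}]. \]
The upper bound $\p[E_\eps^{\delta,r}]\le\eps^{\alpha+o(1)}$ is then immediate from $\widetilde\p[E_\eps^{\delta,r}]\le1$. For the matching lower bound it remains to show $\widetilde\p[E_\eps^{\delta,r}]$ is bounded below by a positive constant (which is $\eps^{o(1)}$): since under $\widetilde\p$ the curve a.s.\ accumulates at $1$ it must cross $\partial B(1,\eps)$, and Lemmas~\ref{lem::sle_kappa_rho_close_to_curve}, \ref{lem::sle_kappa_rho_exit_disk} and \ref{lem::sle_kappa_rho_hit_boundary_segment} — applied to a fixed curve that approaches $1$ non-tangentially while staying well inside $r\D$ — guarantee that with probability bounded below it does so before leaving $r\D$ and with $\im\eta(\tau_\eps)\ge\delta\eps$. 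This gives $\p[E_\eps^{\delta,r}]=\eps^{\alpha+o(1)}$.

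I expect the main obstacle to be the two-sided control $M_0/M_{\tau_\eps}=\eps^{\alpha+o(1)}$ on $E_\eps^{\delta,r}$ — that is, upgrading the estimate from "$\eps^\alpha$ up to an uncontrolled multiplicative amount" to an honest $\eps^{o(1)}$ error. This is precisely where the geometric restrictions defining $E_\eps^{\delta,r}$ are used: one must rule out that, at the single scale $\eps$, either the hull $K_{\tau_\eps}$ or the conformal images of the force points degenerate, so that Koebe and Beurling apply with uniform constants and the auxiliary powers remain subpolynomial in $\eps$. A secondary technical point is the justification of optional stopping for the possibly unbounded local martingale $M$, which is handled by a routine truncation together with the fact that on $E_\eps^{\delta,r}$ all relevant quantities are already under control.
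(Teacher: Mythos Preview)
Your martingale change-of-measure strategy is exactly what the paper does for the case $\delta>0$ (their Lemma~\ref{lem::lower_bound_one_point_exponent}), and in that regime it even yields the sharper estimate $\p[E_\eps^{\delta,r}]\asymp\eps^\alpha$ with honest constants rather than an $o(1)$ exponent. The argument you sketch---that on $E_\eps^{\delta,r}$ a Brownian motion started near $\eta(\tau_\eps)$ sees all the relevant boundary pieces with comparable harmonic measure, hence $M_{\tau_\eps}\asymp\eps^{-\alpha}$---is correct when $\delta>0$, and the lower bound on $\widetilde\p[E_\eps^{\delta,r}]$ via the M\"obius map $z\mapsto\eps z/(1-z)$ and Lemma~\ref{lem::sle_kappa_rho_exit_disk} is precisely how the paper proceeds.

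The gap is the case $\delta=0$, which is explicitly included in the theorem ($\delta\in[0,1)$). Here $E_\eps^{0,r}=\{\tau_\eps<\sigma_r\}$ carries no constraint on $\im\eta(\tau_\eps)$, and your key claim that $M_0/M_{\tau_\eps}=\eps^{\alpha+o(1)}$ on this event fails: if $\eta$ approaches $1$ nearly tangentially to $\R$, the ratio $(g_{\tau_\eps}(1)-W_{\tau_\eps})/(g_{\tau_\eps}(1)-V_{\tau_\eps}^R)$ can degenerate, so $M_{\tau_\eps}$ is \emph{not} controlled by $\eps^{-\alpha}$. You acknowledge this when you write that the geometric restrictions are ``precisely where'' the control comes from---but for $\delta=0$ those restrictions are vacuous. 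Your proposed fix, ``truncating on the rare events where the auxiliary quantities would blow up and checking these contribute negligibly,'' is begging the question: proving that tangential approach is $\eps^{o(1)}$-rare is exactly the missing content, and it does not follow from Koebe or Beurling alone.

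The paper handles the $\delta=0$ upper bound by an entirely separate argument. After the M\"obius change of coordinates $z\mapsto\eps z/(1-z)$, the question becomes whether an $\SLE_\kappa(\rho_L;\rho_R)$ with left force point at $-\eps$ reaches $\partial\D$ before the left force point is swallowed. One then splits on whether the ``left'' martingale $M^L$ of \eqref{eqn::left_conditioning} ever exceeds a threshold $u\sim\eps^{\beta\gamma}$: if it does, optional stopping for $M^L$ gives probability $\lesssim\eps^{\alpha}/u$; if it does not, the driving function stays within $O(\eps^\beta)$ of the left force point, which by a deterministic Loewner estimate (Lemma~\ref{lem::tubeestimate}) forces the path into a tube of width $O(\eps^\beta)$, and an iteration argument (Lemma~\ref{lem::tubelemma}, using Lemma~\ref{lem::sle_kappa_rho_boundary_hitting}) shows the path escapes such a tube before reaching $\partial\D$ except with probability exponentially small in $\eps^{-\beta}$. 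Optimizing over $\beta$ gives the $\eps^{\alpha+o(1)}$ upper bound. This tube-lemma step is the genuinely new idea your proposal is missing.
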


The $o(1)$ in the exponent of \eqref{eqn::onepointestimate} tends to $0$ as $\epsilon \to 0$ and depends only on $\kappa$, $\delta$, $x_R$, and the weights $\rho_{1,R}$, $\rho_{2,R}$.  The $o(1)$, however, is uniform in $r \geq 2$.  Taking $\rho_{1,R} > (-2) \vee (\tfrac{\kappa}{2}-4)$ and $\rho_{2,R} = 0$, we have that
\begin{equation}
\label{eqn::onepoint_constant_rho}
\alpha=\frac{1}{\kappa}(\rho+2)\left(\rho+4-\frac{\kappa}{2}\right).
\end{equation}
Thus Theorem~\ref{thm::one_point_exponent_general} leads to the upper bound of Theorem~\ref{thm::boundary_dimension}.  We begin with the following lemma which contains the same statement as Theorem~\ref{thm::one_point_exponent_general} except is restricted to the case that $\delta \in (0,1)$ and, in particular, is not applicable for $\delta=0$.

\begin{lemma}
\label{lem::lower_bound_one_point_exponent}
Assume that we have the same setup and notation as in Theorem~\ref{thm::one_point_exponent_general}. Then for each $\delta\in(0,1)$ and $r \geq 2$ fixed, we have that
\[ \p[E_\eps^{\delta,r}]\asymp \eps^{\alpha}\]
where the constants in $\asymp$ depend only on $\kappa$, $\delta$, $x_R$, and the weights $\rho_{1,R}$,~$\rho_{2,R}$.
\end{lemma}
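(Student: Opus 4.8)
The plan is to prove the two matching bounds $\p[E_\eps^{\delta,r}]\lesssim\eps^\alpha$ and $\p[E_\eps^{\delta,r}]\gtrsim\eps^\alpha$ separately, in both cases by combining the explicit local martingale attached to an $\SLE_\kappa(\ul{\rho})$ process with the conformal distortion estimates of Section~\ref{subsec::distort}. Let $(g_t)$ be the Loewner maps of $\eta$, $W_t$ its driving function, $K_{\tau_\eps}$ the hull at time $\tau_\eps$, and write $Y_t=g_t(1)-W_t$ and $O_t=g_t(x_R)-W_t$. Running $\eta$ for an arbitrarily short time and invoking Lemma~\ref{lem::sle_kappa_rho_cont_in_force_points}, we may assume $x_R\in(0,1)$ strictly; the possibility that $x_R$ is swallowed before $\tau_\eps$ will only affect bounded multiplicative factors below.

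The central object is a positive local martingale $M_t$ for $\eta$, obtained from It\^o's formula applied to a product of a power of $g_t'(1)$ with powers of $Y_t$, of $O_t$, and of $g_t(1)-g_t(x_R)$, in the spirit of \eqref{eqn::martingalebetweensles} and \eqref{eqn::left_conditioning}, enjoying two properties. \textbf{(a)} $M_0$ is a positive constant depending only on $\kappa$, $x_R$ and the weights, and reweighting the law of $\eta$ by $M_{t\wedge\tau}/M_0$ (for $\tau$ a stopping time before the continuation threshold) produces an $\SLE_\kappa(\ul{\hat\rho})$ process $\hat\eta$ which, with probability bounded from below, comes within any distance $\eps$ of the point $1$: one chooses the effective weight of $\hat\eta$ at $1$ so that $\hat\eta$ accumulates at $1$ when $\kappa\le4$, and so that $\hat\eta$ fills $\partial\h$ immediately to the right of $1$ when $\kappa>4$ — both are genuinely different from $\eta$ because the hypothesis $\rho_{1,R}+\rho_{2,R}>\tfrac{\kappa}{2}-4$ is strict. \textbf{(b)} On the event $E_\eps^{\delta,r}$ one has
\[ M_{\tau_\eps}\asymp\eps^{-\alpha}\]
with $\alpha$ as in \eqref{eqn::alpha_definition}, the constants depending only on $\kappa,\delta,x_R$ and the weights, and uniformly in $r\ge2$. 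Property (b) is where the work lies: on $E_\eps^{\delta,r}$ one has $\dist(1,K_{\tau_\eps})=\eps$ with the closest point of $K_{\tau_\eps}$ equal to $\eta(\tau_\eps)$, and the extra information $\im\eta(\tau_\eps)\ge\delta\eps$, together with $K_{\tau_\eps}\subset r\D$, forces the relevant conformal radii near $1$ — e.g.\ $\confrad(1+i\eps;\h\setminus K_{\tau_\eps})\asymp\eps$ — whence, via \eqref{eqn::ir.cr.or}--\eqref{eqn::growth} and the Beurling estimate (Theorem~\ref{thm::beurling}), two-sided comparisons for $g_{\tau_\eps}'(1)$, $Y_{\tau_\eps}$ and $g_{\tau_\eps}(1)-g_{\tau_\eps}(x_R)$ with the appropriate powers of $\eps$; plugging these into the explicit form of $M_t$ gives (b). It is precisely here that $\delta>0$ is used: it forces $\eta(\tau_\eps)$ to sit at macroscopic angle from $\R$, so that $g_{\tau_\eps}$ has controlled distortion in a neighbourhood of $1$ of size $\asymp\eps$.

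Granting (a) and (b), the \textbf{upper bound} is immediate: $M$ being a nonnegative local martingale is a supermartingale, so $M_0\ge\E[M_{\tau_\eps\wedge\sigma_r}]\ge\E[M_{\tau_\eps}\one_{E_\eps^{\delta,r}}]\gtrsim\eps^{-\alpha}\,\p[E_\eps^{\delta,r}]$, hence $\p[E_\eps^{\delta,r}]\lesssim\eps^\alpha$. For the \textbf{lower bound}, let $\hat\p$ be the law of $\eta$ reweighted by $M$ (equivalently, the law of $\hat\eta$). Using (b) on $E_\eps^{\delta,r}$,
\[ \p[E_\eps^{\delta,r}]=\hat\E\!\left[\one_{E_\eps^{\delta,r}}\,\frac{M_0}{M_{\tau_\eps}}\right]\asymp\eps^{\alpha}\,\hat\p[E_\eps^{\delta,r}],\]
so it remains to show $\hat\p[E_\eps^{\delta,r}]\gtrsim1$, uniformly in $\eps$ and in $r\ge2$. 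Under $\hat\p$ the process reaches every neighbourhood of $1$ with uniformly positive probability by (a); combining the conformal Markov property of $\hat\eta$ with Lemmas~\ref{lem::sle_kappa_rho_close_to_curve}--\ref{lem::sle_kappa_rho_boundary_hitting} applied at a macroscopic scale, this reduces to the scale invariance of the behaviour of $\hat\eta$ near the point $1$: by conformal invariance the probability that the first visit of $\partial B(1,\eps)$ takes place before leaving $2\D$ and at a point of imaginary part at least $\delta\eps$ does not depend on $\eps$ and is positive for every fixed $\delta<1$. Since $2\D\subseteq r\D$ for $r\ge2$ this bound is uniform in $r$.

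The \textbf{main obstacle} is property (b): turning the purely metric statement $\dist(1,K_{\tau_\eps})=\eps$, together with the angle information built into $E_\eps^{\delta,r}$, into sharp two-sided control of $g_{\tau_\eps}'(1)$, $Y_{\tau_\eps}$ and $g_{\tau_\eps}(1)-g_{\tau_\eps}(x_R)$ by the correct powers of $\eps$; this is the heart of the argument, and is where the distortion estimates, the Beurling estimate and the Radon--Nikodym comparison of Lemma~\ref{lem::change_of_domains_bound_RN} (which is also what lets one ``localize'' near $1$ and thereby obtain constants that do not degenerate as $r\to\infty$) are all needed. A secondary technical point is the bookkeeping around the continuation threshold and the possible swallowing of $x_R$ before $\tau_\eps$, which is handled by stopping and again absorbing the difference into bounded factors.
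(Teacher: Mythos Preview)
Your overall strategy matches the paper's: introduce the local martingale $M_t$ that reweights $\SLE_\kappa(\rho_{1,R},\rho_{2,R})$ to $\SLE_\kappa(\rho_{1,R},\wt{\rho}_{2,R})$ with $\wt{\rho}_{2,R}=\kappa-8-2\rho_{1,R}-\rho_{2,R}$, show $M_{\tau_\eps}\asymp\eps^{-\alpha}$ on $E_\eps^{\delta,r}$, and conclude $\p[E_\eps^{\delta,r}]\asymp\eps^\alpha\,\p^\star[E_\eps^{\delta,r}]$. Your justification of property~(b) is morally right, though the paper carries it out more concretely: rather than appealing to Beurling and Koebe abstractly, it launches a Brownian motion from the midpoint of the segment $[1,\eta(\tau_\eps)]$ and uses the condition $\im(\eta(\tau_\eps))\ge\delta\eps$ to argue directly that this Brownian motion exits through each of the left side of $K_{\tau_\eps}$, the right side of $K_{\tau_\eps}$, $[x_R,1]$, and $(1,\infty)$ with probability $\asymp 1$, which gives the comparabilities $g_{\tau_\eps}(1)-W_{\tau_\eps}\asymp g_{\tau_\eps}(1)-O^R_{\tau_\eps}\asymp g_{\tau_\eps}(1)-V^R_{\tau_\eps}$ in one stroke.

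The real gap is in your last step, the uniform lower bound $\hat\p[E_\eps^{\delta,r}]\gtrsim 1$. Your appeal to ``scale invariance of the behaviour of $\hat\eta$ near~$1$'' is not a proof: the reweighted process still carries a force point at $x_R\in(0,1)$, so it is not literally scale-invariant about~$1$, and you have not explained why the probability in question is independent of $\eps$. The paper resolves this by applying the explicit M\"obius map $\varphi(z)=\eps z/(1-z)$, which sends $1\mapsto\infty$, $0\mapsto 0$, $\infty\mapsto-\eps$, and $x_R\mapsto \eps x_R/(1-x_R)$. Under $\varphi$ (together with the standard target-swap), the $\p^\star$-process becomes an $\SLE_\kappa(2+\rho_{1,R}+\rho_{2,R};\rho_{1,R})$ in $\h$ from $0$ to $\infty$ with force points $(-\eps;\eps x_R/(1-x_R))$; the hypothesis $\rho_{1,R}+\rho_{2,R}>\tfrac{\kappa}{2}-4$ ensures the left weight is $\ge\tfrac{\kappa}{2}-2$, so this process never hits $(-\infty,-\eps)$. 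The event $E_\eps^{\delta,r}$ transforms into the event that this new process first hits $\partial B(-\eps,1)$ with imaginary part $\ge\delta$ and before hitting a tiny circle near $-\eps$, and Lemma~\ref{lem::sle_kappa_rho_exit_disk} gives a lower bound uniform in the locations of the force points (both of which sit within $O(\eps)$ of the origin). This is precisely the ``conformal invariance'' you gesture at, but it requires the coordinate change and the uniform estimate of Lemma~\ref{lem::sle_kappa_rho_exit_disk}; Lemmas~\ref{lem::sle_kappa_rho_close_to_curve}--\ref{lem::sle_kappa_rho_boundary_hitting} by themselves, applied ``at a macroscopic scale'' as you suggest, do not give uniformity in~$\eps$. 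Your invocation of Lemma~\ref{lem::change_of_domains_bound_RN} is a red herring here --- it plays no role in the paper's proof of this lemma.
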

\begin{proof}
For $\eta$, the $\SLE_{\kappa}(\rho_{1,R},\rho_{2,R})$ process with force points $(x_R,1)$, let $(g_t)$ be the associated Loewner evolution and let $V_t^R$ denote the evolution of $x_R$. From \eqref{eqn::martingalebetweensles} we know that
\[M_t=\left(\frac{g_t(1)-V^R_t}{g'_t(1)}\right)^{-\alpha}\left(\frac{g_t(1)-W_t}{g_t(1)-V^R_t}\right)^{-\frac{2}{\kappa}(\rho_{1,R}+\rho_{2,R}+4-\kappa/2)}\]
is a local martingale and the law of $\eta$ reweighted by $M$ is that of an $\SLE_{\kappa}(\rho_{1,R},\wt{\rho}_{2,R})$ process where $\wt{\rho}_{2,R}= -2\rho_{1,R}-\rho_{2,R}-8+\kappa$.
We write $K=K_{\tau_\eps}$ and $\ol{K}=\{\ol{z}: z\in K\}$. Let $G$ be the extension of $g_{\tau_\eps}$ to $\C\setminus (K\cup\ol{K})$ which is obtained by Schwarz reflection. By \eqref{eqn::ir.cr.or}, we have
\begin{equation}
\label{eqn::distance}
G'(x)\dist(x,K)\asymp \dist(G(x), G(K\cup\ol{K})).
\end{equation}
Observe that $G(K \cup \ol{K}) = [O_{\tau_\epsilon}^L,O_{\tau_{\epsilon}}^R]$ where $O_t^L$ (resp.\ $O_t^R$) is the image of the leftmost (resp.\ rightmost) point of $K_t \cap \R$ under $g_t$.  Note that \eqref{eqn::distance} implies
\[ \eps g'_{\tau_\eps}(1)\asymp g_{\tau_\eps}(1)-O_{\tau_\eps}^R.\]

It is clear that $g_t(1)-W_t\ge g_t(1)-O_t^R \ge g_t(1)-V_t^R$.  On the event $E_{\eps}^{\delta,r}$, we run a Brownian motion started from the midpoint of the line segment $[1,\eta(\tau_{\eps})]$.  Then this Brownian motion has uniformly positive (though $\delta$-dependent) probability to exit $\h\setminus K$ through each of the left side of $K$, the right side of $K$, the interval $[x_R,1]$, and the interval $(1,\infty)$.  Consequently, by the conformal invariance of Brownian motion,
\[ g_{\tau_\eps}(1)-W_{\tau_\eps}\asymp g_{\tau_\eps}(1)-O_{\tau_\eps}^R \asymp g_{\tau_\eps}(1)-V_{\tau_\eps}^R \quad \text{on}\quad E_{\eps}^{\delta, r}.\]

These facts imply that $M_{\tau_{\eps}}\asymp \eps^{-\alpha}$ on $E_{\eps}^{\delta,r}$ where the constants in $\asymp$ depend only on $\kappa$, $\delta$, $x_R$, and the weights $\rho_{1,R}$, $\rho_{2,R}$.  Thus
\[\p[E_{\eps}^{\delta,r}]\asymp \eps^{\alpha}\E[M_{\tau_{\eps}}\one_{E_{\eps}^{\delta,r}}]=\eps^{\alpha}\p^\star[E_{\eps}^{\delta,r}]\]
where $\p^\star$ is the law of $\eta$ weighted by the martingale $M$. As we remarked earlier, $\p^\star$ is the law of an $\SLE_{\kappa}(\rho_{1,R}, \wt{\rho}_{2,R})$ with force points $(x_R,1)$.

\begin{figure}[ht!]
\begin{center}
\includegraphics[scale=0.85]{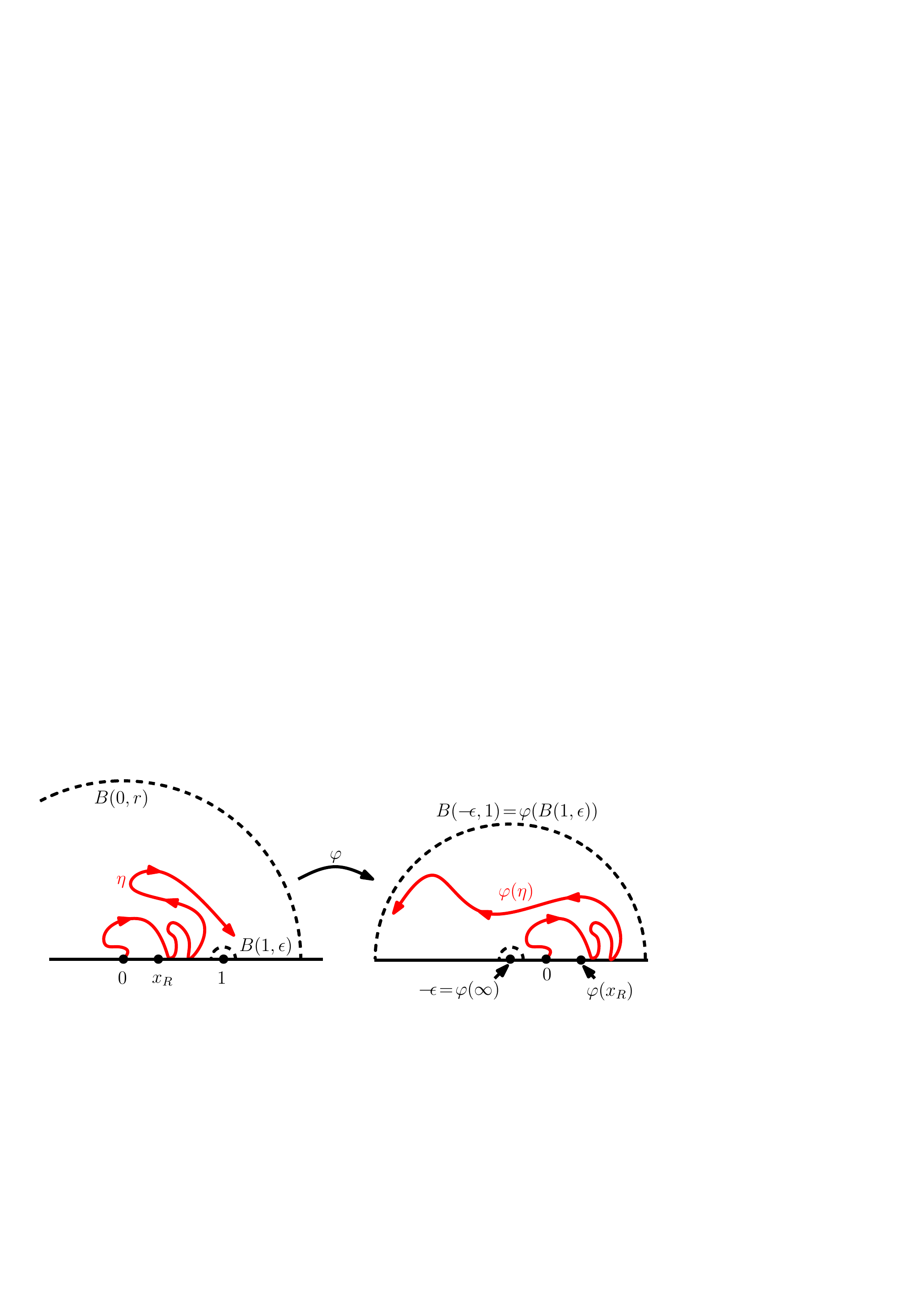}
\end{center}
\caption{\label{fig::change_coordinates} The image of an $\SLE_{\kappa}(\rho_{1,R},\rho_{2,R})$ process in $\h$ from $0$ to $\infty$ with force points $(x_R,1)$ under $\varphi(z)=\eps z/(1-z)$ has the same law as an $\SLE_{\kappa}(\rho_L;\rho_R)$ process in $\h$ from $0$ to $\infty$ with force points $(-\eps; \eps x_R/(1-x_R))$ where $\rho_R=\rho_{1,R}$ and $\rho_L=\kappa-6-(\rho_{1,R}+\rho_{2,R})$.}
\end{figure}

We now perform a coordinate change using the M\"obius transformation $\varphi(z)=\eps z/(1-z)$.  Then the law of the image of a path distributed according to $\p^\star$ under $\varphi$ is equal to that of an $\SLE_{\kappa}(2+\rho_{1,R}+\rho_{2,R}; \rho_{1,R})$ process in $\h$ from $0$ to $\infty$ with force points $(-\eps;\eps x_R/(1-x_R))$ (see Figure~\ref{fig::change_coordinates}). Note that $2+\rho_{1,R}+\rho_{2,R} \geq \tfrac{\kappa}{2}-2$ by the hypotheses of the lemma.  Let $\eta^\star$ be an $\SLE_{\kappa}(2+\rho_{1,R}+\rho_{2,R}; \rho_{1,R})$ process in $\h$ from $0$ to $\infty$ with force points $(-\eps;\eps x_R/(1-x_R))$.   In particular, by Lemma~\ref{lem::sle_kappa_rho_boundary_interaction}, $\eta^\star$ almost surely does not hit $(-\infty,-\eps)$.  Under the coordinate change, the event $E_{\eps}^{\delta,r}$ becomes $\{ \sigma^\star_{1,\eps}< \xi^\star_{\eps,r},\ \im(\eta^\star(\sigma^\star_{1,\eps}))\ge \delta\}$ where $\sigma^\star_{1,\eps}$ is the first time that $\eta^\star$ hits $\partial B(-\eps,1)$, $\xi^\star_{\eps,r}$ is the first time that $\eta^\star$ hits $\partial B(-\eps r^2 / (r^2-1),\eps r/(r^2-1))$. By Lemma~\ref{lem::sle_kappa_rho_exit_disk}, the probability of the event $\{ \sigma^\star_{1,\eps}< \xi^\star_{\eps,r},\ \im(\eta^\star(\sigma^\star_{1,\eps}))\ge \delta\}$ is bounded from below by a positive constant depending only on $\kappa$, $\delta$, $\rho_{1,R}$, and $\rho_{2,R}$. Thus $\p^\star[E_{\eps}^{\delta,r}]\asymp 1$ which implies $\p[E_{\eps}^{\delta,r}]\asymp \eps^{\alpha}$ and the constants in $\asymp$ depend only on $\kappa$, $\delta$, $x_R$, and the weights $\rho_{1,R}$, $\rho_{2,R}$.
\end{proof}

\begin{corollary}
\label{cor::lower_bound_one_point_exponent}
Fix $\kappa>0$, $\rho_L>-2,\rho_{1,R}>-2$ and $\rho_{2,R}\in\R$ such that $\rho_{1,R} + \rho_{2,R} > \tfrac{\kappa}{2}-4$.  Fix $x_L\le 0, x_R \in [0^+,1)$ and let $\eta$ be an $\SLE_{\kappa}(\rho_L;\rho_{1,R},\rho_{2,R})$ process with force points $(x_L;x_R,1)$.  Let $E_{\epsilon}^{\delta,r}$ be the event as in Theorem \ref{thm::one_point_exponent_general}, then for each $\delta\in(0,1)$ and $r \geq 2$ fixed, we have that
\[ \p[E_\eps^{\delta,r}]\asymp \eps^{\alpha}\]
where the constants in $\asymp$ depend only on $\kappa$, $\delta$, $r$, $x_L$, $x_R$, and the weights $\rho_L, \rho_{1,R}$,~$\rho_{2,R}$.
\end{corollary}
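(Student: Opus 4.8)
The plan is to rerun the argument of the proof of Lemma~\ref{lem::lower_bound_one_point_exponent}, carrying the extra left force point $\rho_L$ at $x_L$ through the martingale reweighting and the M\"obius change of coordinates $\varphi(z)=\eps z/(1-z)$.  Let $(g_t)$ be the Loewner flow of $\eta$, $W$ its driving function, and $V^L_t,V^R_t,V^1_t$ the images under $g_t$ of $x_L,x_R,1$.  Set $\tilde\rho_{2,R}=\kappa-8-2\rho_{1,R}-\rho_{2,R}$ (the same choice as in Lemma~\ref{lem::lower_bound_one_point_exponent}) and let $M_t$ be the normalized ratio of the martingales \eqref{eqn::martingalebetweensles} for the weight configurations $(\rho_L;\rho_{1,R},\tilde\rho_{2,R})$ and $(\rho_L;\rho_{1,R},\rho_{2,R})$, both with force points $(x_L;x_R,1)$.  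Only the terms attached to the force point at $1$ survive in this ratio, so $M_t$ is the local martingale appearing in the proof of Lemma~\ref{lem::lower_bound_one_point_exponent} multiplied by the extra factor $|g_t(1)-V^L_t|^{-\frac{\rho_L}{\kappa}(\rho_{1,R}+\rho_{2,R}+4-\kappa/2)}$, it is a local martingale under the law of $\eta$ up until $1$ is swallowed, and weighting the law of $\eta$ by $M$ yields an $\SLE_\kappa(\rho_L;\rho_{1,R},\tilde\rho_{2,R})$ process with force points $(x_L;x_R,1)$.  Applying $\varphi$ and using target invariance exactly as in Lemma~\ref{lem::lower_bound_one_point_exponent} (note $\varphi$ fixes $0$, sends $1$ to $\infty$, $\infty$ to $-\eps$, $x_R$ to $\eps x_R/(1-x_R)\ge 0$, and $x_L$ to $\eps x_L/(1-x_L)\in(-\eps,0]$), the image process $\eta^\star$ is an $\SLE_\kappa(2+\rho_{1,R}+\rho_{2,R}-\rho_L,\rho_L;\rho_{1,R})$ process in $\h$ from $0$ to $\infty$ with force points $(-\eps,\eps x_L/(1-x_L);\eps x_R/(1-x_R))$; its total left weight along $(-\infty,-\eps)$ equals $2+\rho_{1,R}+\rho_{2,R}\ge\tfrac{\kappa}{2}-2$, so by Lemma~\ref{lem::sle_kappa_rho_boundary_interaction} it almost surely does not hit $(-\infty,-\eps)$.

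Let $\p^\star$ be the law of $\eta$ weighted by $M$.  As in the proof of Lemma~\ref{lem::lower_bound_one_point_exponent}, it then suffices to establish (a) $M_{\tau_\eps}\asymp\eps^{-\alpha}$ on $E_\eps^{\delta,r}$ and (b) $\p^\star[E_\eps^{\delta,r}]\asymp 1$; granting these, $\p[E_\eps^{\delta,r}]\asymp\eps^{\alpha}\,\E[M_{\tau_\eps}\one_{E_\eps^{\delta,r}}]=\eps^{\alpha}\,\p^\star[E_\eps^{\delta,r}]\asymp\eps^{\alpha}$, with all constants depending only on $\kappa,\delta,r,x_L,x_R$ and the weights.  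The factors of $M_{\tau_\eps}$ already present in Lemma~\ref{lem::lower_bound_one_point_exponent} are handled there verbatim: by Schwarz reflection, the conformal radius estimate \eqref{eqn::ir.cr.or}, and the conformal invariance of a Brownian motion started at the midpoint of $[1,\eta(\tau_\eps)]$ one gets, on $E_\eps^{\delta,r}$, that $g_{\tau_\eps}(1)-W_{\tau_\eps}\asymp g_{\tau_\eps}(1)-O^R_{\tau_\eps}\asymp g_{\tau_\eps}(1)-V^R_{\tau_\eps}$ and $(g_{\tau_\eps}(1)-V^R_{\tau_\eps})/g_{\tau_\eps}'(1)\asymp\eps$, where $O^R_t$ (resp.\ $O^L_t$) denotes the image under $g_t$ of the rightmost (resp.\ leftmost) point of $K_t\cap\R$.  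For (a) it therefore remains to show $g_{\tau_\eps}(1)-V^L_{\tau_\eps}\asymp 1$ on $E_\eps^{\delta,r}$: here one uses that on $E_\eps^{\delta,r}$ the hull $K_{\tau_\eps}$ is contained in $\ol{r\D}$, so that its half-plane capacity --- and hence its real footprint $O^R_{\tau_\eps}-O^L_{\tau_\eps}$ and the amount $O^L_{\tau_\eps}-V^L_{\tau_\eps}$ by which $g_{\tau_\eps}$ moves $x_L$ to the left of $O^L_{\tau_\eps}$ --- is controlled in terms of $r$ and the fixed distance $|x_L|$, while $g_{\tau_\eps}(1)-V^L_{\tau_\eps}\ge g_{\tau_\eps}(1)-W_{\tau_\eps}>0$ gives the lower bound.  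For (b): under $\varphi$ the event $E_\eps^{\delta,r}$ becomes the event that $\eta^\star$ exits $B(-\eps,1)$ through a point of imaginary part at least $\delta$ before hitting $\partial B(-\eps r^2/(r^2-1),\eps r/(r^2-1))$ --- exactly the transformed event in the proof of Lemma~\ref{lem::lower_bound_one_point_exponent}, only with $\eta^\star$ now carrying an additional left force point of weight $\rho_L$ --- and since $\eta^\star$ does not hit $(-\infty,-\eps)$, the multi-force-point analogue of Lemma~\ref{lem::sle_kappa_rho_exit_disk} (whose proof via Lemma~\ref{lem::sle_kappa_rho_close_to_curve}, Lemma~\ref{lem::sle_kappa_rho_cont_in_force_points} and \cite[Section~4.7]{lawler2005} extends with an extra force point) gives a lower bound uniform in $\eps$; the probability is trivially at most $1$, and for $\eps$ bounded away from $0$ two-sided bounds follow by compactness, so $\p^\star[E_\eps^{\delta,r}]\asymp 1$.

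I expect the crux to be part (a), and within it the bound $g_{\tau_\eps}(1)-V^L_{\tau_\eps}\asymp 1$ on $E_\eps^{\delta,r}$.  Unlike the factor $g_{\tau_\eps}(1)-V^R_{\tau_\eps}$, which the Brownian-motion argument of Lemma~\ref{lem::lower_bound_one_point_exponent} pins down relative to $g_{\tau_\eps}'(1)$, the quantity $g_{\tau_\eps}(1)-V^L_{\tau_\eps}$ must be shown to be of unit order, and the comparison constants genuinely deteriorate as $x_L\to 0^-$ (since then $x_L$ sits essentially at the base of the hull); this is precisely why the corollary allows the constants to depend on $r$ and $x_L$.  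One should also be slightly careful when $x_L$ is very close to $0$ (or equal to $0^-$), where the martingale \eqref{eqn::martingalebetweensles} is interpreted via a limit, and when $\kappa>4$ and $x_L$ has been swallowed by $\tau_\eps$, where $V^L_{\tau_\eps}=O^L_{\tau_\eps}$ must be used in place of $g_{\tau_\eps}(x_L)$; in all cases the key input is the confinement $K_{\tau_\eps}\subseteq\ol{r\D}$.
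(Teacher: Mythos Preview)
Your proposal is correct and follows essentially the same approach as the paper's own proof: both introduce the same local martingale --- the one from Lemma~\ref{lem::lower_bound_one_point_exponent} multiplied by the extra factor $(g_t(1)-V_t^L)^{-\frac{\rho_L}{\kappa}(\rho_{1,R}+\rho_{2,R}+4-\kappa/2)}$ --- reweight to an $\SLE_\kappa(\rho_L;\rho_{1,R},\wt\rho_{2,R})$ process, and identify the sole new ingredient as the two-sided bound $g_{\tau_\eps}(1)-V_{\tau_\eps}^L\asymp 1$ on $E_\eps^{\delta,r}$, which is obtained from the confinement $K_{\tau_\eps}\subseteq\ol{r\D}$. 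The paper's proof is terser (it simply cites ``similar analysis in Lemma~\ref{lem::tubeestimate}'' for this bound and then says the rest of Lemma~\ref{lem::lower_bound_one_point_exponent} applies), whereas you spell out the half-plane-capacity/footprint reasoning and the M\"obius image of the extra force point explicitly; but the argument is the same.
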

\begin{proof}
Let $(g_t)$ be the Loewner evolution associated with $\eta$ and let $V_t^L,V_t^R$ denote the evolution of $x_L,x_R$, respectively, under $g_t$. From \eqref{eqn::martingalebetweensles} we know that
\begin{align*}M_t=&\left(\frac{g_t(1)-V^R_t}{g'_t(1)}\right)^{-\alpha}\times\left(\frac{g_t(1)-W_t}{g_t(1)-V^R_t}\right)^{-\frac{2}{\kappa}(\rho_{1,R}+\rho_{2,R}+4-\kappa/2)}\\
&\times(g_t(1)-V_t^L)^{-\frac{\rho_L}{\kappa}(\rho_{1,R}+\rho_{2,R}+4-\kappa/2)}\end{align*}
is a local martingale which yields that the law of $\eta$ reweighted by $M$ is that of an $\SLE_{\kappa}(\rho_L;\rho_{1,R},\wt{\rho}_{2,R})$ process where $\wt{\rho}_{2,R}= -2\rho_{1,R}-\rho_{2,R}-8+\kappa$. Note that, by similar analysis in Lemma \ref{lem::tubeestimate}, the term $g_{\tau_\eps}(1)-V_{\tau_\eps}^L$ is bounded both from below and above by positive finite constants depending only on $r$ on the event $E_{\eps}^{\delta,r}$. The rest of the analysis in the proof of Lemma \ref{lem::lower_bound_one_point_exponent} applies similarly in this setting.
\end{proof}

Throughout the rest of this subsection, we let:
\begin{equation}
\label{eqn::t_definition}
\T =  \R \times (0,1).
\end{equation}

\begin{lemma}
\label{lem::tubeestimate}
Let $\eta$ be a continuous curve in $\ol{\h}$ starting from $0$ with continuous Loewner driving function $W$ and let $(g_t)$ be the corresponding family of conformal maps.  For each $t \geq 0$, let $O_t^L$ (resp.\ $O_t^R$) be the leftmost (resp.\ rightmost) point of $g_t(\eta([0,t]))$ in $\R$.  There exists a universal constant $C \geq 1$ such that the following is true.  Fix $\vartheta > 0$ and let $\sigma$ be the first time that $\eta$ exits $\vartheta \T$.  Then
\begin{equation}
\label{eqn::tube_lower}
 |W_{\sigma}-O_{\sigma}^q| \geq \frac{\vartheta}{C}\quad \text{for} \quad q\in \{L,R\}.
\end{equation}
Let $\zeta$ be the first time that $\eta$ exits $\D \cap\vartheta\T$.  Then
\begin{equation}
\label{eqn::tube_upper}
  |W_t - O_t^q| \leq C \vartheta \quad \text{for}\quad q\in \{L,R\} \quad\text{and all}\quad t \in [0,\zeta].
\end{equation}
Finally, if $\eta$ exits $\D \cap\vartheta\T$ through the right side of $\partial \D \cap \vartheta \T$, then
\begin{equation}
\label{eqn::tube_left_lower}
|W_{\zeta}-O_{\zeta}^L|\ge \frac{1}{C}.
\end{equation}
\end{lemma}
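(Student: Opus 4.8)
All three assertions are deterministic statements about the Loewner hulls $K_t$ generated by the curve $\eta$, and the plan is to deduce them from the conformal invariance of planar Brownian motion, elementary half-plane capacity estimates, and (for the lower bounds) the Beurling estimate of Theorem~\ref{thm::beurling}. First I would use the scaling covariance of the Loewner equation --- if $W,O^L,O^R$ are associated to $\eta$, then $\vartheta W_{\cdot/\vartheta^2}$, $\vartheta O^L_{\cdot/\vartheta^2}$, $\vartheta O^R_{\cdot/\vartheta^2}$ are associated to $\vartheta\,\eta(\cdot/\vartheta^2)$ --- to normalize $\vartheta=1$ when proving \eqref{eqn::tube_lower} and \eqref{eqn::tube_upper}.

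For the upper bound \eqref{eqn::tube_upper}, extend $g_t$ to $\C\setminus(K_t\cup\ol{K_t})$ by Schwarz reflection; it is then the hydrodynamically normalized conformal map onto $\C\setminus[O_t^L,O_t^R]$, so $O_t^R-O_t^L=4\,\mathrm{cap}(K_t\cup\ol{K_t})$, the logarithmic capacity, which is comparable to $\diam(K_t)$. Since $\eta([0,t])\subseteq\D\cap\vartheta\T$ for $t\le\zeta$, the hull $K_t$ is contained in $\ol{\D}\cap\ol{\vartheta\T}$, and one reads off $|W_t-O_t^q|\le O_t^R-O_t^L\lesssim\vartheta$ from the fact that $W_t=g_t(\eta(t))\in[O_t^L,O_t^R]$. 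I expect this part to be routine.

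For the two lower bounds the common mechanism is the identity (conformal invariance of Brownian motion under $g$, together with the fact that ratios of harmonic measures of adjacent subintervals of a real interval seen from $\infty$ are ratios of lengths)
\[ \frac{W-O^L}{O^R-O^L}=\omega_\infty(\Lambda_L),\qquad \frac{O^R-W}{O^R-O^L}=\omega_\infty(\Lambda_R), \]
where $\Lambda_L$ (resp.\ $\Lambda_R$) is the part of $\partial K$ carried by the reflected map onto $[O^L,W]$ (resp.\ $[W,O^R]$) --- concretely the ``left'' (resp.\ ``right'') side of the curve relative to its orientation --- and $\omega_\infty(\cdot)$ denotes harmonic measure in $\h\setminus K$ seen from $\infty$, renormalized by the harmonic measure of $\partial K$. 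For \eqref{eqn::tube_lower} (with $\vartheta=1$) one has $K_\sigma\subseteq\{0\le\im z\le 1\}$ while $\eta$ runs from $0\in\R$ to the tip $\eta(\sigma)$ at height $1$; hence each of $\Lambda_L,\Lambda_R$ contains a connected subarc crossing between $\{\im z\le\frac12\}$ and $\{\im z\ge\frac12\}$, and I would argue, via the Beurling estimate applied in the slab $\T$ and using $\diam(K_\sigma\cup\ol{K_\sigma})\ge 2$, that $\omega_\infty(\Lambda_L)\wedge\omega_\infty(\Lambda_R)\ge c$ for a universal $c>0$; the displayed identity then gives $|W_\sigma-O_\sigma^q|\ge c\,(O_\sigma^R-O_\sigma^L)\ge c$, which is \eqref{eqn::tube_lower}. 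For \eqref{eqn::tube_left_lower}, the hypothesis that $\eta$ leaves $\D\cap\vartheta\T$ through the right part of $\partial\D$ forces the tip $\eta(\zeta)$ onto $\partial\D$ with real part bounded below, so $\diam K_\zeta\ge\frac12$ and thus $O_\zeta^R-O_\zeta^L\ge c_0>0$; since $\eta([0,\zeta])$ stays in the thin strip $\vartheta\T$, its left side $\Lambda_L$ is an exposed barrier of horizontal extent $\asymp 1$, so $\omega_\infty(\Lambda_L)\ge c$ by the same Beurling argument, and hence $|W_\zeta-O_\zeta^L|\ge c\,(O_\zeta^R-O_\zeta^L)\ge cc_0=:1/C$.

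\textbf{The main obstacle} is precisely the harmonic-measure lower bound $\omega_\infty(\Lambda_L)\wedge\omega_\infty(\Lambda_R)\ge c$ (and $\omega_\infty(\Lambda_L)\ge c$ in the last part): $K$ may be wide and highly non-convex, so one cannot simply invoke a thin-tube picture, and the work is in extracting from the geometric constraint --- the curve reaches height $\vartheta$, resp.\ reaches the right part of $\partial\D$ --- a definite subarc of each side that a Brownian motion coming in from infinity hits with uniformly positive probability. For this I would fix a disk whose radius is comparable to the horizontal extent of the relevant piece of the curve, centred at an interior point of that piece, apply Theorem~\ref{thm::beurling} there, and transfer the estimate back to $\h\setminus K$ using the distortion bounds \eqref{eqn::ir.cr.or}--\eqref{eqn::growth}. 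Everything else is bookkeeping.
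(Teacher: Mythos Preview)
Your approach to the two lower bounds \eqref{eqn::tube_lower} and \eqref{eqn::tube_left_lower} is essentially the paper's: both rest on the identity
\[
|W-O^q|=\lim_{y\to\infty}y\,\p^{iy}\big[B\text{ exits }\h\setminus K\text{ on the }q\text{-side of }\eta\big]
\]
from \cite[Remark~3.50]{lawler2005}. The paper makes this a bit more concrete by first stopping the Brownian motion when it reaches the line $\{\im=\vartheta\}$ and using the Poisson kernel on the shifted half-plane to compute the factor $\vartheta/\pi$ exactly, then invoking a universal constant $p_0>0$ for the probability that a Brownian motion started in the interval $I=[\eta(\sigma)-\vartheta,\eta(\sigma)]$ on that line subsequently exits on the desired side of the curve. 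Your Beurling/distortion route is a reasonable alternative for the latter step, and the paper leaves that step at ``it is easy to see'' in any case.

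There is, however, a genuine gap in your argument for \eqref{eqn::tube_upper}. From $O_t^R-O_t^L=4\,\mathrm{cap}(K_t\cup\ol{K_t})\asymp\diam(K_t\cup\ol{K_t})$ and $K_t\subseteq\ol{\D}\cap\ol{\vartheta\T}$ you conclude $O_t^R-O_t^L\lesssim\vartheta$, but the containment only gives $\diam(K_t)\le 2$: nothing prevents the curve from sweeping horizontally across most of $\D$ while remaining below height $\vartheta$. For instance, if $\eta$ runs from $0$ up to $i\vartheta/2$ and then right to $a+i\vartheta/2$ with $a$ close to $1$, the reflected hull has diameter $\asymp 1$ and indeed $O_t^R-O_t^L\asymp 1$, independently of $\vartheta$. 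Your proposed normalization $\vartheta=1$ does not rescue this either, since after rescaling the curve lives in $(\vartheta^{-1}\D)\cap\T$, whose horizontal extent is $\asymp\vartheta^{-1}$. So the capacity/diameter route yields at best $|W_t-O_t^q|\lesssim 1$, not $\lesssim\vartheta$. The paper does not use a capacity argument here; it says \eqref{eqn::tube_upper} is obtained ``similarly'' from the same harmonic-measure identity. Whatever the right argument is, the height $\vartheta$ has to enter through a probability estimate---how hard it is for Brownian motion, once at height $\vartheta$, to reach a given side of the curve before $\R$---and not merely through the diameter of the hull.
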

\begin{proof}
For $z \in \C$, we let $\p^{z}$ denote the law of a Brownian motion $B$ in $\C$ started at $z$.  By \cite[Remark~3.50]{lawler2005} we have that
\[ |W_{\sigma}-O_{\sigma}^L|=\lim_{y\to\infty}y\p^{yi}\left[ B\mbox{ exits }\HH\setminus\eta[0,\sigma]\mbox{ on the left side of } \eta([0,\sigma]) \right].\]
Let $\tau$ be the exit time of $B$ from $\h \setminus \vartheta \T$ and let $I=[\eta(\sigma)-\vartheta,\eta(\sigma)]$.  Then
\begin{align}
|W_{\sigma}-O_{\sigma}^L| \geq& \lim_{y\to\infty}y\p^{yi}\left[ B_\tau \in I \right] \notag\\
&\times \p^{yi}\left[ B \mbox{ exits } \HH\setminus\eta([0,\sigma]) \mbox{ on the left side of }\eta([0,\sigma]) \giv B_{\tau}\in I\right]. \label{eqn::left_side_lbd}
\end{align}
We have,
\begin{align}
    \lim_{y \to\infty} y\p^{yi}\left[ B_\tau \in I \right]
 =&\lim_{y \to \infty} \int_{I-\vartheta i}  \frac{1}{\pi} \frac{y(y-\vartheta)}{w^2+(y-\vartheta)^2} dw \notag \\
 =&\int_{I-\vartheta i}  \frac{1}{\pi} dw =\frac{\vartheta}{\pi} \label{eqn::bm_hit_interval}
\end{align}
(recall the form of the Poisson kernel on $\h$, see e.g.\ \cite[Exercise~2.23]{lawler2005}).  It is easy to see that there exists a universal constant $p_0 > 0$ such that for any $z \in I$,
\begin{equation}
\label{eqn::bm_hit_interval_exit_left_side}
\p^z\left[ B \text{ exits } \HH\setminus\eta[0,\sigma] \text{ on the left side of }\eta([0,\sigma]) \right] \geq p_0.
\end{equation}
Combining \eqref{eqn::left_side_lbd} with \eqref{eqn::bm_hit_interval} and \eqref{eqn::bm_hit_interval_exit_left_side} gives \eqref{eqn::tube_lower}.  The bounds \eqref{eqn::tube_upper} and \eqref{eqn::tube_left_lower} are proved similarly.
\end{proof}

\begin{lemma}
\label{lem::tubelemma}
Fix $\kappa>0$, $\rho_L \in (\tfrac{\kappa}{2}-4,\tfrac{\kappa}{2}-2)$, and $\rho_R>-2$. Let $\eta$ be an $\SLE_\kappa(\rho_L;\rho_R)$ process with force points $(-\eps;x_R)$ for $x_R \geq 0^+$ and $\epsilon > 0$. Let $\sigma_1=\inf\{t\ge 0: \eta(t)\in\partial \D\}$. Define, for $u\ge 0$, $T_u^L=\inf\{t\ge 0: W_t-V_t^L=u\}$, where $V_t^L$ denotes the evolution of $x^L$. Let $p_2 = p_2(\tfrac{1}{2})$ be the constant from Lemma~\ref{lem::sle_kappa_rho_boundary_hitting}.  There exists constants $\epsilon_0 > 0$, $\vartheta_0>0$, and $C > 0$ such that for all $\eps \in (0,\eps_0)$ and $\vartheta\in(0,\vartheta_0)$ we have
\[ \p[\sigma_1<T_0^L \wedge T^L_{\vartheta}]\le p_2^{1/(C \vartheta)}.\]
\end{lemma}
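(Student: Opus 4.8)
The plan is to show that, on the event $E := \{\sigma_1 < T_0^L \wedge T_\vartheta^L\}$, the curve is trapped in a horizontal strip of height $\asymp\vartheta$ and must therefore traverse $\gtrsim\vartheta^{-1}$ ``units'' of horizontal distance, and then to run an iteration in which each such unit contributes a multiplicative factor $p_2$ via the conformal Markov property and Lemma~\ref{lem::sle_kappa_rho_boundary_hitting}. Write $D_t = W_t - V_t^L\ge 0$ for the gap; it suffices to treat the case $\eps<\vartheta$, which is the one used below, and on $E$ we then have $D_t\in(0,\vartheta)$ for all $t\in[0,\sigma_1]$.

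First I would confine the curve to a strip. Since $\eta$ does not hit $(-\infty,-\eps]$ before $T_0^L$, the force point is not swallowed, so (with the notation of Lemma~\ref{lem::tubeestimate}) $V_t^L\le O_t^L$ and hence $0\le W_t-O_t^L\le D_t<\vartheta$ for all $t\in[0,\sigma_1]$ on $E$. Contraposing \eqref{eqn::tube_lower}, applied with the strip $C_0\vartheta\,\T$ where $C_0$ is the universal constant there, shows that on $E$ the whole curve $\eta([0,\sigma_1])$ lies in $\R\times(0,C_0\vartheta)$; since $\eta(0)=0$, $|\eta(\sigma_1)|=1$ and $\im\eta(\sigma_1)\le C_0\vartheta$, for $\vartheta_0$ small we have $|\re\eta(\sigma_1)|\ge\tfrac12$ on $E$.

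Next I would set up the iteration. Put $\nu_0=0$; given $\nu_k<\sigma_1\wedge T_0^L\wedge T_\vartheta^L$, let $\varphi_k=(2D_{\nu_k})^{-1}(g_{\nu_k}-W_{\nu_k})\colon\h\setminus K_{\nu_k}\to\h$, so that $\varphi_k(\eta(\nu_k))=0$, $\varphi_k(-\eps)=-\tfrac12$, and, by the conformal Markov property, $\varphi_k(\eta(\nu_k+\cdot))$ is an $\SLE_\kappa(\rho_L;\rho_R)$ from $0$ with left force point $-\tfrac12$ and right force point in $[0,+\infty]$. Let $R_k=\varphi_k^{-1}(\overline\D\cap\overline\h)$ and let $\nu_{k+1}$ be the first time after $\nu_k$ that $\eta$ exits $R_k$, capped at $\sigma_1\wedge T_0^L\wedge T_\vartheta^L$. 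Because $\varphi_k$ maps $(-\infty,-\eps]$ onto $(-\infty,-\tfrac12]$ order-preservingly, the event that $\eta$ exits $R_k$ before hitting $(-\infty,-\eps]$ equals the event that $\varphi_k(\eta)$ exits $\D$ before hitting $(-\infty,-\tfrac12]$, so Lemma~\ref{lem::sle_kappa_rho_boundary_hitting} with $x_0^L=-\tfrac12$ yields
\[\p\big[\,\eta\text{ exits }R_k\text{ before hitting }(-\infty,-\eps]\,\big|\,\cF_{\nu_k}\,\big]\le p_2\qquad\text{on }\{\nu_k<\sigma_1\wedge T_0^L\wedge T_\vartheta^L\}.\]

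Finally, I would count the steps and conclude. The key point is the bound $\diam R_k\le C_1\vartheta$ for a constant $C_1$ depending only on $\kappa$ and the weights: since $D_{\nu_k}<\vartheta$ and, by the first step, $K_{\nu_k}\subseteq\R\times(0,C_0\vartheta)$, the distortion estimates \eqref{eqn::ir.cr.or}--\eqref{eqn::growth}, the Beurling estimate (Theorem~\ref{thm::beurling}) and the tube estimate together control the distortion of $\varphi_k^{-1}$ near the boundary point $0$ and give this bound. Granting it, on $E$ the inequality $|\re\eta(\nu_{k+1})-\re\eta(\nu_k)|\le\diam R_k\le C_1\vartheta$ gives $|\re\eta(\nu_k)|\le kC_1\vartheta$, which combined with $|\re\eta(\sigma_1)|\ge\tfrac12$ forces $\nu_k<\sigma_1$ for every $k\le N:=\lfloor1/(2C_1\vartheta)\rfloor$, so $\nu_N<\sigma_1\wedge T_0^L\wedge T_\vartheta^L$ on $E$; moreover $\eta$ never hits $(-\infty,-\eps]$ on $E$, so applying the one-step estimate at $\nu_0,\dots,\nu_{N-1}$ and iterating gives $\p[E]\le p_2^N\le p_2^{1/(C\vartheta)}$ for a suitable $C$ (using $0\le p_2<1$ and $\vartheta<\vartheta_0$). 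I expect the main obstacle to be exactly the diameter bound $\diam R_k\lesssim\vartheta$: the uniformizing map near the tip can be badly distorted --- e.g.\ when $\eta$ has nearly enclosed $-\eps$ by a long thin pocket inside the strip --- and showing that such configurations themselves cannot occur without already forcing many failed escape attempts is the delicate part, and where the Beurling and tube estimates are used most carefully.
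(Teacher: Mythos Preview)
Your overall architecture matches the paper's: confine $\eta$ to a strip of height $\asymp\vartheta$ via the tube estimate, then iterate Lemma~\ref{lem::sle_kappa_rho_boundary_hitting} to pick up a factor $p_2$ roughly $\vartheta^{-1}$ times. The divergence is precisely at the step you flag, and there the proposed bound is not just delicate but actually false as stated.

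The claim $\diam R_k\le C_1\vartheta$ cannot hold uniformly on $E$. Since $V_{\nu_k}^L\le O_{\nu_k}^L\le W_{\nu_k}$ and $W_{\nu_k}-V_{\nu_k}^L=D_{\nu_k}$, the whole interval $[O_{\nu_k}^L,W_{\nu_k}]$ lies in $[W_{\nu_k}-D_{\nu_k},W_{\nu_k}]$, hence its image under $\varphi_k$ lies in $[-\tfrac12,0]\subset\overline\D$. But $g_{\nu_k}$ sends the \emph{entire left side} of $\eta([0,\nu_k])$ onto $[O_{\nu_k}^L,W_{\nu_k}]$, so that left side is contained in $\overline{R_k}$. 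Its diameter is at least $|\re\eta(\nu_k)|$ (it joins a prime end at $0$ to the tip), which by your own induction is $\asymp k\vartheta$ at step $k$. Thus $\diam R_k$ grows with $k$ and the increment bound $|\re\eta(\nu_{k+1})-\re\eta(\nu_k)|\le\diam R_k$ does not give what you need. One could try to salvage the argument by bounding not $\diam R_k$ but the location of $\varphi_k^{-1}(\partial\D\cap\h)$ relative to the tip; this is plausible but requires exactly the kind of harmonic-measure analysis the paper carries out, and is no easier.

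The paper runs the iteration the other way round. It first observes (via the analog of \eqref{eqn::tube_left_lower}) that on $E_\vartheta$ the curve must exit $\D$ through the \emph{left} side, then places fixed vertical lines $L_k=\{\re z=-kC\vartheta\}$ in the physical domain and shows that, after applying $g_{\zeta_k}$ and rescaling by $W_{\zeta_k}-V_{\zeta_k}^L$, the image of $L_{k+1}$ lies outside $2\D$ once $C$ is large. This is done by a direct Brownian-motion computation: starting a Brownian motion from the point $\eta(\zeta_k)-\vartheta$, one checks that the probability of reaching $L_{k+1}$ before hitting $\eta([0,\zeta_k])\cup\R$ is $\lesssim 1/C$, while the probabilities of hitting the right side of $\eta$ and the segment $(-(k+1)C\vartheta,-kC\vartheta)$ are each $\gtrsim 1$; conformal invariance then gives $\dist(0,\wt L_{k+1})\to\infty$ as $C\to\infty$. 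With this in hand, Lemma~\ref{lem::sle_kappa_rho_boundary_hitting} applies directly at each step. The point is that estimating the \emph{image} of a fixed barrier via hitting probabilities from a well-chosen base point is tractable, whereas controlling the \emph{preimage} of $\partial\D$ (your $\partial R_k$) runs into the distortion issue you anticipated.
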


\begin{proof}
Let $E_{\vartheta}=\{\sigma_1<T_0^L \wedge T^L_{\vartheta}\}$.  By definition, we have that
\begin{equation}
\label{eqn::w_v_bound2}
|W_t-V_t^L|<\vartheta\quad\text{for all}\quad t \in [0,\sigma_1]\quad\text{on}\quad E_\vartheta.
\end{equation}
By \eqref{eqn::tube_lower} of Lemma~\ref{lem::tubeestimate} there exists a constant $C_1 > 0$ such that $\eta([0,\sigma_1]) \subseteq C_1 \vartheta \T$.  Moreover, $\eta$ exits $\D\cap \big( C_1 \vartheta \T \big)$ on its left side for all $\vartheta > 0$ small enough because a Brownian motion argument (analogous to \eqref{eqn::tube_left_lower}) implies there exists a constant $C_2 > 0$ such that $|W_{\sigma_1}-V^L_{\sigma_1}| \geq C_2$ on the event that $\eta$ exits through the right side, contradicting \eqref{eqn::w_v_bound2}.

Suppose $C > 0$; we will set its value later in the proof.  For each $1 \leq k \leq \tfrac{1}{C \vartheta}$, we let
\[ L_k=\{z\in\HH: \re(z)=-kC\vartheta\}\quad \text{ and } \quad \zeta_k = \inf\{t \geq 0 : \eta(t) \in L_k\}.\]
On $E_\vartheta$, we have that $\zeta_1<\zeta_2< \cdots <\sigma_1<T_0^L$.  For each $k$, let $F_k = \{\zeta_k < T_{\vartheta}^L\}$ and let $\CF_k$ be the $\sigma$-algebra generated by $\eta|_{[0,\zeta_k]}$.  To complete the proof, we will show that
\[ \p[ \zeta_{k+1} < T_0^L \giv \CF_k] \one_{F_k} \leq p_2 \one_{F_k} \quad \text{ for each } \quad 1\le k \leq \frac{1}{C\vartheta}\]
where $p_2=p_2(\tfrac{1}{2})$ is the constant from Lemma~\ref{lem::sle_kappa_rho_boundary_hitting}.  To see this, we just need to show that $g_{\zeta_k}(\eta|_{[\zeta_k,\zeta_{k+1}]})$ satisfies the hypotheses of Lemma~\ref{lem::sle_kappa_rho_boundary_hitting} and that with
\[ \wt{L}_{k+1} = \frac{g_{\zeta_k}(L_{k+1}) - W_{\zeta_k}}{W_{\zeta_k} - V_{\zeta_k}^L}\]
we have that $\wt{L}_{k+1} \cap 2\D = \emptyset$ on $F_k$.

Therefore it suffices to prove
\begin{equation}
\label{eqn::dist_ratio_bound}
 \frac{\dist(W_{\zeta_k}, g_{\zeta_k}(L_{k+1}))}{W_{\zeta_k}-V_{\zeta_k}^L} \to \infty\quad\text{on}\quad F_k\quad\text{as}\quad C \to \infty.
\end{equation}
Let $B$ be a Brownian motion starting from $z_k^\vartheta = \eta(\zeta_k)-\vartheta$ and let $H_{k+1} = \{z \in \h : \re(z) \geq -(k+1)C \vartheta\}$ be the subset of $\HH$ which is to the right of $L_{k+1}$ (see Figure~\ref{fig::tube_lemma}). The probability that $B$ exits $H_{k+1} \setminus \eta([0,\zeta_k])$ through the right side of $\eta([0,\zeta_k])$ (blue) is $\gtrsim 1$, through $(-(k+1)C\vartheta, -kC\vartheta)$ (green) is $\gtrsim 1$, and through $L_{k+1}$ (orange) is $\lesssim 1/C$ (since this probability is less than the probability that the Brownian motion exits $\{z\in\C: -(k+1)C\vartheta<\re(z)<-kC\vartheta\}$ through $L_{k+1}$ which is less than $1/C$). Let
\[ \wt{z}_k^\vartheta \equiv \wt{x}_k^\vartheta + \wt{y}_k^\vartheta i \equiv \frac{g_{\zeta_k}(z_k^\vartheta) - W_{\zeta_k}}{W_{\zeta_k} - V_{\zeta_k}^L}\quad.\]

\begin{figure}[ht!]
\begin{center}
\includegraphics[scale=0.85]{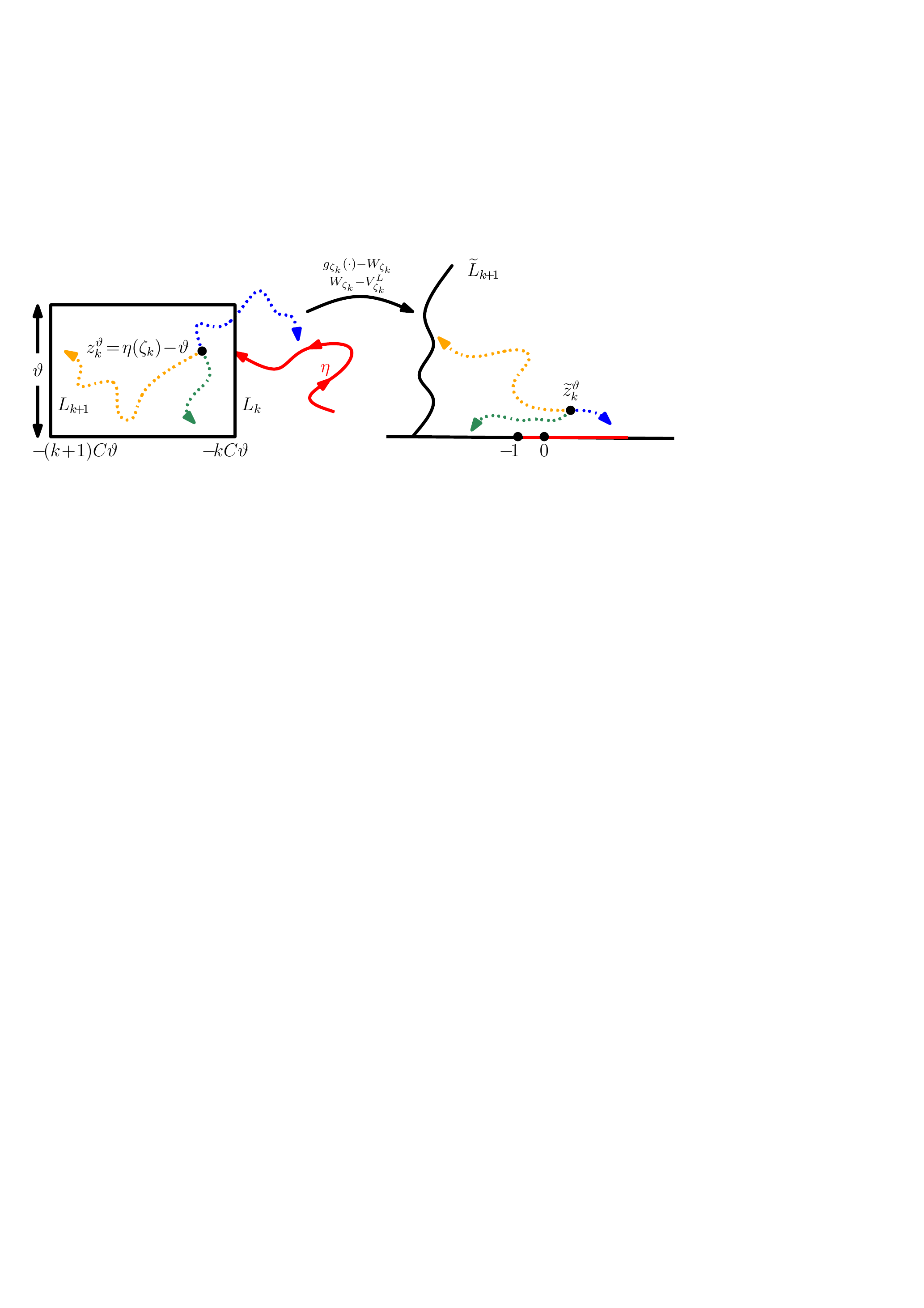}
\end{center}
\caption{\label{fig::tube_lemma}
Illustration of the justification of \eqref{eqn::dist_ratio_bound} in the proof of Lemma~\ref{lem::tubelemma}.}
\end{figure}

By the conformal invariance of Brownian motion, we have that 
\begin{equation}
\label{eqn::dist_ratio_bound2}
\frac{\dist(\wt{z}_k^\vartheta,\wt{L}_{k+1})}{\wt{y}_k^\vartheta} \gtrsim C.
\end{equation}
Indeed, the probability of a Brownian motion started from $\wt{z}_k^\vartheta$ to exit $\wt{H}_{k+1}:=(g_{\zeta_k}(H_{k+1}) - W_{\zeta_k})/(W_{\zeta_k} - V_{\zeta_k}^L)$ through $\wt{L}_{k+1}$ is bounded from below by a positive universal constant times the probability that a Brownian motion starting from $\wt{z}_k^\vartheta$ exits $B(\wt{z}_k^\vartheta,\wt{d})\cap\h$, $\wt{d}=\dist(\wt{z}_k^\vartheta,\wt{L}_{k+1})$, through $\partial B(\wt{z}_k^{\vartheta},\wt{d}) \cap \h$. This latter probability is bounded from below by a positive universal constant times $\wt{y}_k^\vartheta/\wt{d}$. Thus $1/C\gtrsim \wt{y}_k^\vartheta/\wt{d}$, as desired.

The conformal invariance of Brownian motion and the estimates above also imply that $\sin(\arg(\wt{z}_k^\vartheta)) \asymp 1$, hence $|\wt{z}_k^\vartheta| \asymp |\wt{y}_k^\vartheta|$.  Combining this with \eqref{eqn::dist_ratio_bound2} implies that
\[ \frac{\dist(\wt{z}_k^\vartheta,\wt{L}_{k+1})}{|\wt{z}_k^\vartheta|} \gtrsim C.\]
Thus, by the triangle inequality,
\[ \dist(\wt{L}_{k+1},0) \gtrsim C | \wt{z}_k^\vartheta|\]
(provided $C$ is large enough).  Since $|\wt{z}_k^\vartheta| \asymp 1$, this proves \eqref{eqn::dist_ratio_bound}, hence the lemma.
\end{proof}

\begin{proof}[Proof of Theorem~\ref{thm::one_point_exponent_general}]
Lemma~\ref{lem::lower_bound_one_point_exponent} implies the lower bound in \eqref{eqn::onepointestimate} because we can take, e.g., $\delta=\tfrac{1}{2}$. In order to prove the upper bound, it is sufficient to show
\[ \p[\tau_{\eps}<\infty] \le \eps^{\alpha+o(1)} \quad\text{as}\quad \eps \to 0.\]

We are first going to perform a change of coordinates.  Let $\varphi \colon \HH \to \HH$ be the M\"obius transformation
$z \mapsto \varphi(z) :=\eps z/(1-z)$.
Fix $\wt{x}^R \in [0^+,1)$ and let $\wt{\eta}$ be an $\SLE_\kappa(\rho_{1,R},\rho_{2,R})$ process with force points located at $(\wt{x}^R, 1)$ as in Theorem~\ref{thm::one_point_exponent_general}.  Then the law of $\eta=\varphi(\wt{\eta})$ is that of an $\SLE_{\kappa}(\rho_L;\rho_R)$ process with force points $(-\eps; x_R)$ where $x_R = \eps \wt{x}^R / (1-\wt{x}^R)$ and
\begin{equation}
\label{eqn::changeinparameter}
\rho_L=\kappa-6-\big(\rho_{1,R} + \rho_{2,R}\big) \quad \text{ and } \quad \rho_R=\rho_{1,R}.
\end{equation}
Let $\sigma_1$ be the first time that $\eta$ hits $\partial \D$ and let $V_t^L,V_t^R$ denote the evoltuion of $x_L,x_R$ under $g_t$, respectively. For $u\ge 0$, define $T^L_u=\inf\{t\ge 0: W_t-V^L_t=u\}$ (as in the statement of Lemma~\ref{lem::tubelemma}). Then it is sufficient to prove $\p[\sigma_1<T_0^L]\le \eps^{\alpha+o(1)}$. Note that the exponent $\alpha$ comes from the sum of the exponent of $|V_t^L - V_t^R|$ and the exponent of $|W_t - V_t^L|$ in the left martingale $M^L$ from \eqref{eqn::left_conditioning} with these weights. For $u\ge 0$, define $\tau^L_u=\inf\{t\ge 0: M^L_t=u \}$. Note that $\tau^L_0=T^L_0$. Fix $\beta\in(0,1)$ and set $\vartheta=\eps^\beta$. For $u>0$, we have the bound
\begin{equation}
\label{eqn::two_terms}
\p[\sigma_1<\tau_0^L]\le \p[\tau^L_u<\tau_0^L]+\p[\sigma_1<\tau_0^L<\tau^L_u].
\end{equation}

We claim that exists constants $C_1 > 0$ and $\gamma > 0$ depending only on $\rho_L$, $\rho_R$, and $\kappa$ such that
\begin{equation}
\label{eqn::w_v_bound}
 |W_t-V_t^L|^{\gamma} \leq C_1 M_t^L \quad \text{for all} \quad t \in [0,\sigma_1].
\end{equation}
Since $\rho_{1,R} + \rho_{2,R} > \tfrac{\kappa}{2}-4$ it follows that $\rho_L < \tfrac{\kappa}{2}-2$.  Therefore the sign of the exponent of $|V_t^L - V_t^R|$ in the definition of $M_t^L$ is the same as the sign of $\rho_R$.  If $\rho_R \geq 0$, then the exponent has a positive sign.  In this case, $M_t^L \geq |W_t - V_t^L|^\alpha$ so that we can take $\gamma=\alpha$.  Now suppose that $\rho_R < 0$.  By \eqref{eqn::tube_upper} of Lemma~\ref{lem::tubeestimate} we know that there exists a constant $C_2 > 0$ such that
\begin{equation}
\label{eqn::exit_left_right_difference}
 |V_t^L - V_t^R| \leq C_2\quad\text{for all}\quad t \in [0,\sigma_1].
\end{equation}
Thus, in this case, there exists a constant $C_3 > 0$ such that $M_t^L \geq C_3 |W_t - V_t^L|^{(\kappa-4-2\rho_L)/\kappa}$.  Therefore we can take $\gamma = (\kappa-4-2\rho_L) / \kappa$.  This proves the claimed bound in \eqref{eqn::w_v_bound}.

Set $u=\vartheta^\gamma/C_1$.  To bound the second term on the right side of \eqref{eqn::two_terms}, we first note by \eqref{eqn::w_v_bound} that
\begin{equation}
\label{eqn::pre_application_tube}
 \p[\sigma_1<\tau_0^L<\tau^L_u]\le \p[\sigma_1<T^L_0 \wedge T^L_{\vartheta}].
\end{equation}
By Lemma~\ref{lem::tubelemma}, we know that
\begin{equation}
\label{eqn::application_tube}
\p[\sigma_1<T_0^L \wedge T^L_{\vartheta}]\le p_2^{1/(C\vartheta)}.
\end{equation}

We will now bound the first term on the right side of \eqref{eqn::two_terms}.  Since $\tau^L_0,\tau^L_u$ are stopping times for the martingale $M^L$ and $M_{\tau_0 \wedge \tau_u} = u \p[\tau_u^L < \tau_0^L]$, we have that
\begin{equation}
\label{eqn::application_mart}
\p[\tau_u^L<\tau_0^L] = \frac{1}{u} \E[ M^L_{\tau_0 \wedge \tau_u}] = \frac{M^L_0}{u} =\frac{\eps^{\alpha}}{u(1-\wt{x}^R)^{(\kappa-4-2\rho_L) \rho_R /(2\kappa)}}.
\end{equation}
Combining \eqref{eqn::two_terms} with \eqref{eqn::application_tube} and \eqref{eqn::application_mart} we get that $\p[\sigma_1<T^L_0]\le \eps^{\alpha+o(1)}$, as desired.
\end{proof}

Recall that (see for example \cite[Section 4]{BM}) the $\beta$-Hausdorff measure of a set $A \subseteq \R$ is defined as
\[\LH^{\beta}(A)=\lim_{\eps \to 0^+} \LH^{\beta}_{\eps}(A)\]
where
\[\LH^{\beta}_{\eps}(A):=\inf\left\{\sum_j |I_j|^{\beta} \ :\  A \subseteq \cup_j I_j\ \text{ and }\ |I_j|\le \eps\ \text{ for all }\ j\right\}.\]

\begin{proof}[Proof of Theorem~\ref{thm::boundary_dimension} for $\kappa \in (0,4)$, upper bound]
Fix $\kappa\in (0,4),\rho\in (-2,\frac{\kappa}{2}-2).$ Let $\eta$ be an $\SLE_{\kappa}(\rho)$ process with a single force point located at $0^+$.  Let $\alpha \in (0,1)$ be as in \eqref{eqn::onepoint_constant_rho}.  Fix $0 < x < y$.  We are going to prove the result by showing that
\begin{equation}
\label{eqn::hd_ubd}
\dimH(\eta\cap [x,y])\le 1-\alpha \quad\text{almost surely}.
\end{equation}
For each $k \in \Z$ and $n \in \N$ we let $I_{k,n} = [k2^{-n},(k+1)2^{-n}]$ and let $z_{k,n}$ be the center of $I_{k,n}$.  Let $\CI_n$ be the set of $k$ such that $I_{k,n} \subseteq [x/2,2y]$ and let $E_{k,n}$ be the event that $\eta$ gets within distance $2^{1-n}$ of $z_{k,n}$.  Therefore there exists $n_0 =n_0(x,y)$ such that for every $n \geq n_0$ we have that $\{ I_{k,n} : k \in \CI_n,\ E_{k,n} \text{ occurs}\}$ is a cover of $\eta \cap [x,y]$.

Fix $\zeta > 0$.  Theorem~\ref{thm::one_point_exponent_general} implies that there exists a constant $C_1 > 0$ (independent of $n$) and $n_1 = n_1(\zeta)$ such that
\[ \p[E_{k,n}] \le C_1 2^{-(\alpha-\zeta)n} \quad\text{for each}\quad n \geq n_1 \quad\text{and}\quad k \in \CI_n.\]
Consequently, there exists a constant $C_2 > 0$ such that
\[ \E \left[ \LH^{\beta}_{2^{-n}}(\eta\cap [x,y]) \right]\le \E\left[\sum_{k \in\CI_n} 2^{-\beta n}\one_{E_{k,n}}\right]\le C_2 2^{-\beta n} \times  2^n \times 2^{-(\alpha-\zeta)n}.\]
By Fatou's lemma,
\begin{align*}
 \E\left[ \LH^{1-\alpha+2\zeta}(\eta\cap [x,y]) \right] &\le \liminf_n \E\left[ \LH^{1-\alpha+2\zeta}_{2^{-n}}(\eta\cap [x,y]) \right]\\
 &\le \liminf_n C_2 2^{-n \zeta} =0.
\end{align*}
This implies that $\LH^{1-\alpha+2\zeta}(\eta\cap [x,y])=0$ almost surely.  This proves \eqref{eqn::hd_ubd} which completes the proof of the upper bound.
\end{proof}

\subsection{The lower bound}
\label{subsec::boundary_intersection_lower_bound}

Throughout, we fix $\kappa\in (0,4)$ and $\rho \in (-2,\tfrac{\kappa}{2}-2)$ and let $h$ be a GFF on $\HH$ with boundary data $-\lambda$ on $\R_-$ and $\lambda(1+\rho)$ on $\R_+$.  (Recall the values in \eqref{eqn::constants} as well as Figure~\ref{fig::gff_boundary_data_flow}.)  For each $x \geq 0$, we let $\eta^x$ be the flow line of $h$ starting from $x$ and let $\eta = \eta^0$.  Note that $\eta$ is an $\SLE_\kappa(\rho)$ process in $\h$ from $0$ to $\infty$ with a single force point located at~$0^+$, i.e., has configuration $(\h,0,0^+,\infty)$ (recall the notation of Section~\ref{subsec::radon_nikodym}).  By Lemma~\ref{lem::sle_kappa_rho_boundary_interaction}, it follows that $\eta$ can hit $(0,\infty)$.  For each $x > 0$, $\eta^x$ is an $\SLE_\kappa(2+\rho,-2-\rho;\rho)$ process with configuration $(\h,x,(0,x^-),(x^+),\infty)$.  By Lemma~\ref{lem::sle_kappa_rho_boundary_interaction}, it follows that $\eta^x$ can hit $(x,\infty)$ and, if $\rho > -\kappa/2$, then $\eta^x$ can also hit $(0,x)$.  Fix $\delta \in (0,1)$, $a > \log 8$, and let
\[ \eps_n = e^{-an} \quad\text{for each}\quad n \in \N.\]
We will eventually take limits as $a \to \infty$ and $\delta \to 0^+$.  For $U \subseteq \HH$, we let
\begin{equation}
\label{eqn::hitting_time}
 \sigma^x(U)=\inf\{t \geq 0: \eta^x(t) \in \ol{U}\}.
\end{equation}
We will omit the superscript in \eqref{eqn::hitting_time} if $x =0$.  For $k \in \N$ and $x \in [1,\infty)$, we let
\[ x_k =
\begin{cases}
x-\tfrac{1}{4}\eps_k \quad&\text{if}\quad\quad k \geq 2 \quad\quad\text{and}\\
0 \quad&\text{if}\quad \quad k=1.
\end{cases}\]
We also let
\begin{equation}
\label{eqn::hitting_time_ball}
\sigma_m^x = \sigma^{x_m}(B(x,\eps_{m+1})).
\end{equation}
Let $E_k^1(x)$ be the event that
\begin{enumerate}[(i)]
\item $\sigma_k^x < \infty$ and $\im(\eta^{x_k}(\sigma_k^x)) \geq \delta \eps_{k+1}$ and
\item $\eta^{x_k}$ hits $B(x,\eps_{k+1})$ before exiting $B(x,\tfrac{1}{2}\eps_k)$.
\end{enumerate}

\begin{figure}[ht!]
\begin{center}
\includegraphics[scale=0.85]{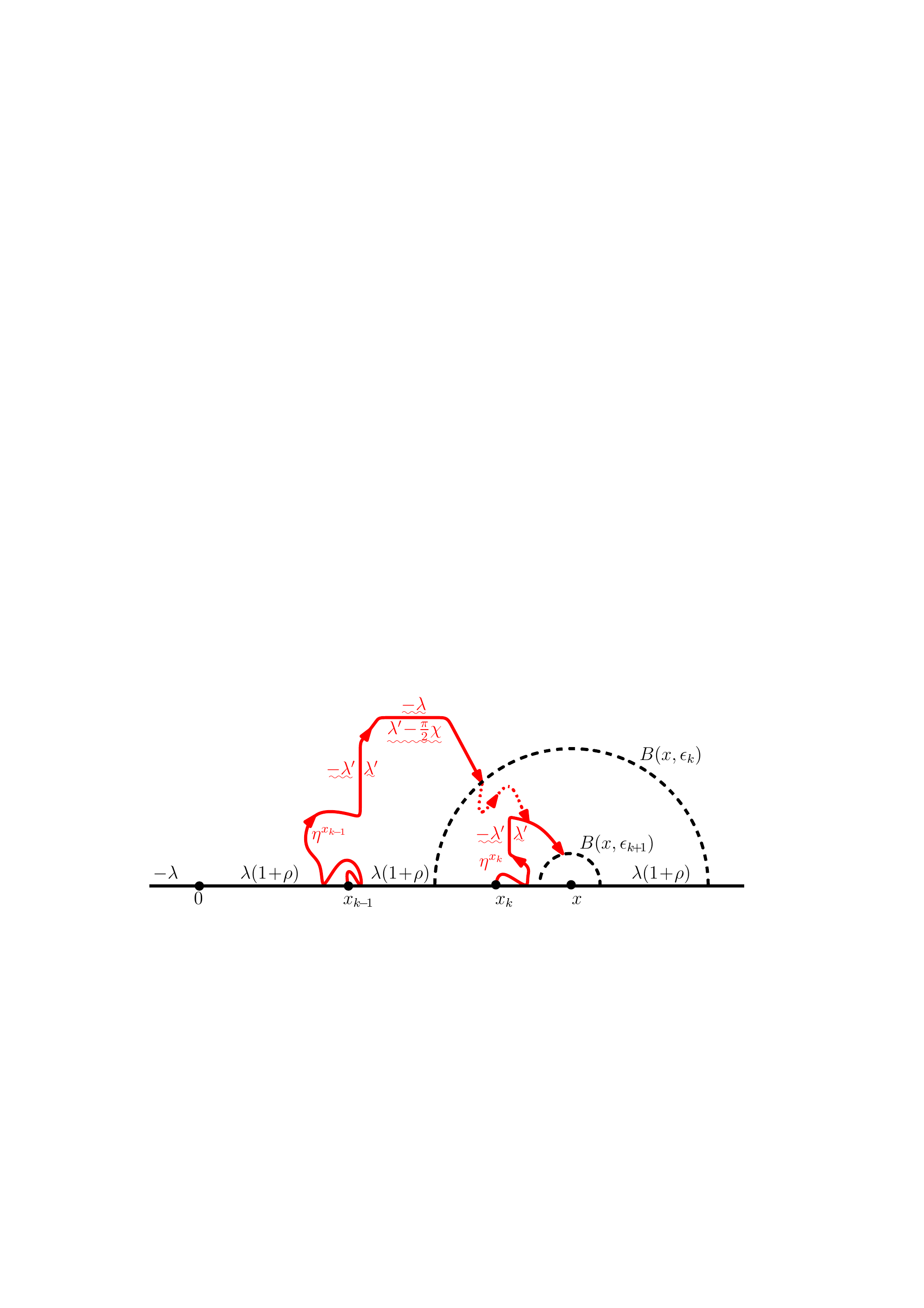}
\end{center}
\caption{\label{fig::two_flowlines_merge}
On $E_{k-1}^1(x)$, $\eta^{x_{k-1}}$ hits $B(x,\eps_k)$ and does so for the first time above the horizontal line through $i \delta \eps_k$.  Given that $E_k^1(x)$ has occurred, $E_k^2(x)$ is the event that $\eta^{x_{k-1}}$ merges with $\eta^{x_k}$ before the path leaves the annulus $B(x,\tfrac{1}{2} \eps_{k-1}) \setminus B(x,\eps_{k+1})$.  Also indicated is the boundary data for $h$ along $\partial \h$ as well as along the paths $\eta^{x_{k-1}}$ and $\eta^{x_k}$.
}
\end{figure}
\noindent We let $E_k^2(x)$ be the event that
$\eta^{x_{k-1}}|_{[\sigma_{k-1}^x,\infty)}$ merges with $\eta^{x_k}|_{[0,\sigma_k^x]}$ before exiting the annulus $B(x,\tfrac{1}{2} \eps_{k-1}) \setminus B(x,\eps_{k+1})$ (see Figure~\ref{fig::two_flowlines_merge}).  Finally, we let $E_k(x) = E_k^1(x) \cap E_k^2(x)$,
\[E^{m,n}(x)= E_{m+1}^1(x) \cap \bigcap_{k=m+2}^n E_k(x), \quad\text{and}\quad E^n(x) = E^{0,n}(x).\]
The following is the main input into the proof of the lower bound.

\begin{proposition}
\label{prop::two_point_estimate}
For each $\delta \in (0,1)$, there exists a constant $c(\delta) > 0$ such that for all $x,y \in [1,2]$ and $m \in \N$ such that $\frac{1}{2}\eps_{m+1}\le |x-y|<\frac{1}{2}\eps_m$ we have
\[ \p[E^n(x), E^n(y)] \leq c(\delta)^{-m} \eps_m^{-\alpha} \p[E^n(x)]\p[E^n(y)].\]
\end{proposition}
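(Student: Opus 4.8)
The plan is to read this as a \emph{quasi-multiplicativity} estimate. Decompose each nested event $E^n(x)$ (and $E^n(y)$) into a \textbf{coarse part} at scales $\eps_1,\dots,\eps_m$ and a \textbf{fine part} at scales $\eps_{m+1},\dots,\eps_n$, linked by the merging events $E_k^2$. Because $\tfrac12\eps_{m+1}\le|x-y|<\tfrac12\eps_m$ and $a>\log 8$, one checks $B(y,\eps_{k+1})\subseteq B(x,\tfrac12\eps_k)$ and symmetrically for all $k\le m$, so the coarse events $E^{0,m}(x)$ and $E^{0,m}(y)$ impose the same constraint on the active flow line up to a bounded change of radii: \emph{the coarse part is shared}. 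The target bound then reduces to three facts: (i) $\p[E^{0,m}(x)]\asymp c(\delta)^m\eps_m^{\alpha}$; (ii) $\p[E^n(x)]\asymp\p[E^{0,m}(x)]\cdot g(x)$, where $g(x)$ is the conditional probability of the fine event given that the active flow line has merged near $x$ at scale $\asymp\eps_m$, and $g(x)\asymp g(y)$; and (iii) $\p[E^n(x),E^n(y)]\lesssim\p[E^{0,m}(x)]\,g(x)\,g(y)$ up to a per‑scale constant. Combining (i)--(iii) and cancelling gives $\p[E^n(x),E^n(y)]\lesssim c(\delta)^{-m}\eps_m^{-\alpha}\p[E^n(x)]\p[E^n(y)]$, and all the stray constants can be reorganized into a single $c(\delta)^{-m}$ (making $c(\delta)$ smaller) since $m\ge1$.

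For (i), I would iterate scale by scale. Given $E^{0,k-1}(x)$, the merging event $E_{k-1}^2(x)$ has placed the active flow line with its tip within $\eps_k$ of $x$ and above height $\delta\eps_{k+1}$; conditioning on this and using the bounded Radon--Nikodym derivatives of Lemma~\ref{lem::change_of_domains_bound_RN} to replace the conditional law of the continuation in the slit domain by that of a clean $\SLE_\kappa(\ul{\rho})$ in a comparable domain, the probability that the path next reaches $B(x,\eps_{k+1})$ without leaving $B(x,\tfrac12\eps_k)$ is, by the rescaled one‑point estimate of Theorem~\ref{thm::one_point_exponent_general} (or Corollary~\ref{cor::lower_bound_one_point_exponent}, to accommodate the extra slit force point), comparable to $(\eps_{k+1}/\eps_k)^{\alpha}=e^{-a\alpha}$, with constants depending only on $\kappa$, $\delta$, and the weights; the probability that $E_k^2(x)$ then occurs, i.e.\ that $\eta^{x_{k-1}}$ merges with $\eta^{x_k}$ inside $B(x,\tfrac12\eps_{k-1})\setminus B(x,\eps_{k+1})$, is bounded below by a $\delta$‑dependent constant by the merging property of GFF flow lines and Lemma~\ref{lem::sle_kappa_rho_close_to_curve}. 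Multiplying over $k=1,\dots,m$ gives $\p[E^{0,m}(x)]\asymp c(\delta)^m e^{-am\alpha}=c(\delta)^m\eps_m^{\alpha}$. Fact (ii) is the same renewal argument carried past scale $m$: once the path has merged with the canonical flow line $\eta^{x_m}$ it forgets how it approached $x$, so the conditional probability of $\bigcap_{k>m}E_k(x)$ is comparable to an unconditional quantity $g(x)$; the comparison $g(x)\asymp g(y)$ is translation together with a further application of Lemma~\ref{lem::change_of_domains_bound_RN}, using $x,y\in[1,2]$ so that all relevant distances are $\asymp1$.

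The heart of the matter is (iii), the \textbf{decoupling of the two fine events}. On $E^n(x)\cap E^n(y)$, after the coarse stage the active flow line is within $\eps_{m+1}$ of $x$ (hence within $\eps_{m+1}+|x-y|\le\eps_m$ of $y$, so at scale $\asymp\eps_m$ near $y$ for free), and it must then both descend through the nested annuli around $x$ and, at some time, descend through the nested annuli around $y$; the former takes place in $B(x,\eps_{m+1})$ and the latter in $B(y,\eps_{m+1})$. These balls need not be disjoint, since $|x-y|$ may be as small as $\tfrac12\eps_{m+1}$, so I would perform the separation at a fixed number of scales coarser — split at level $m-j_0$ for a constant $j_0=j_0(a)$ chosen so that $B(x,\eps_{m-j_0})$ and $B(y,\eps_{m-j_0})$ lie at distance at least their radii apart — which changes $\eps_m^{-\alpha}$ only by a constant factor, absorbed into $c(\delta)^{-m}$. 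With the two fine tasks occurring in regions a definite multiple of their diameter apart, the strong Markov property at the successive hitting and merging times, combined again with the bounded Radon--Nikodym derivatives of Lemma~\ref{lem::change_of_domains_bound_RN} and the Beurling estimate (Theorem~\ref{thm::beurling}) to bound the influence of the far region on each conditional law, yields $\p[\text{descend near }x\text{ and near }y\mid\text{coarse}]\lesssim g(x)\,g(y)$ up to a per‑scale constant, i.e.\ up to $c(\delta)^{-m}$.

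I expect step (iii) to be the main obstacle. One must show the flow line can leave the vicinity of $x$, travel to $y$, and complete the second nested family essentially independently of its behaviour near $x$, despite $x$ and $y$ being separated by only a bounded number of scales and the flow line being strongly constrained at every scale (forced into thin neighbourhoods of the deterministic approach curves). The technical engine is the conformal distortion estimates of Section~\ref{subsec::distort}, the Beurling estimate, and above all Lemma~\ref{lem::change_of_domains_bound_RN}, which lets one replace the heavily conditioned law of the continuation in the complicated slit domain by that of a standard $\SLE_\kappa(\ul{\rho})$ in a domain whose geometry near $x$ and near $y$ is comparable to that of $\h$ — at a cost of one constant per scale, which is precisely what the $c(\delta)^{-m}$ prefactor is there to absorb.
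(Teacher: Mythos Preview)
Your high-level architecture is exactly the paper's: split each one-point event into a coarse piece $E^m$ and a fine piece $E^{m,n}$, show they factorize (your~(ii), the paper's Lemma~\ref{lem::approx_ind2}), lower-bound the coarse piece by $c(\delta)^m\eps_m^\alpha$ (your~(i), the paper's Lemma~\ref{lem::two_point_lower_bound}), and decouple the two fine pieces (your~(iii), the paper's Lemma~\ref{lem::approx_ind1}).

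There are, however, two genuine slips. First, in~(iii) you propose to separate the fine events by passing to a \emph{coarser} level $m-j_0$ so that $B(x,\eps_{m-j_0})$ and $B(y,\eps_{m-j_0})$ are far apart. This goes the wrong way: $\eps_{m-j_0}>\eps_m>2|x-y|$, so the balls overlap \emph{more}. The correct move (and what the paper does) is to go one scale \emph{finer}: replace the fine pieces by $E^{m+1,n}(x)$ and $E^{m+1,n}(y)$, whose defining flow lines live in $B(x,\tfrac12\eps_{m+2})$ and $B(y,\tfrac12\eps_{m+2})$, and these \emph{are} separated by a definite multiple of their diameter since $|x-y|\ge\tfrac12\eps_{m+1}$ and $a>\log 8$.

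Second, your claim that ``the coarse part is shared'' is not literally true: $E^m(x)$ is built from the auxiliary flow lines $\eta^{x_2},\dots,\eta^{x_m}$ and $E^m(y)$ from $\eta^{y_2},\dots,\eta^{y_m}$, and for $k\ge 2$ these start at different points $x_k\ne y_k$. The paper does not try to identify the two coarse events; it simply \emph{drops} $E^m(y)$ in the first line ($\p[E^n(x),E^n(y)]\le\p[E^n(x),E^{m,n}(y)]$), then after factorizing reinserts it by multiplying and dividing by $\p[E^m(y)]$ and invoking the lower bound of Lemma~\ref{lem::two_point_lower_bound}. This also bypasses your worry about the single ``active flow line'' having to travel from $x$ to $y$: the fine events at $x$ and at $y$ involve \emph{disjoint} families of auxiliary flow lines, so the decoupling in Lemma~\ref{lem::approx_ind1} is a straightforward application of Lemma~\ref{lem::change_of_domains_bound_RN} in three well-separated regions, with no need to track a path transiting between them.
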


The main steps in the proof of Proposition~\ref{prop::two_point_estimate} are contained in the following three lemmas.

\begin{lemma}
\label{lem::approx_ind1}
For each $x \geq 1$ and $m,n \in \N$ with $m \leq n$, we have that
\begin{equation}
\p[ E^{m,n}(x), E^m(x)] \asymp \p[ E^{m,n}(x)] \p[ E^m(x)] \label{eqn::given_one}
\end{equation}
If, moreover, $y \geq 1$ and $\tfrac{1}{2}\eps_{m+2} < |x-y| \leq \tfrac{1}{2} \eps_{m+1}$, then we have that
\begin{equation*}
\p[E^{m+1,n}(x), E^{m+1,n}(y), E^m(x)] \asymp \p[E^{m+1,n}(x)] \p[ E^{m+1,n}(y)] \p[ E^m(x)].
\end{equation*}
In each of the above, the constants in $\asymp$ depend only on $\delta$, $\kappa$ and $\rho$.
\end{lemma}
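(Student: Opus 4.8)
The plan is to prove both statements by exposing the flow lines near $x$ down to scale $\eps_{m+1}$ and then comparing the conditional law of the remaining finer-scale configuration with its unconditional law by means of Lemma~\ref{lem::change_of_domains_bound_RN}. The point is that $E^m(x)$ is measurable with respect to the coarse data, while $E^{m,n}(x)$ (resp.\ $E^{m+1,n}(x)$) depends only on the fine data, so that the two ranges of scales decouple up to a bounded multiplicative error.

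First I would record the relevant measurability and geometry. From the definitions, $E^m(x)$ is measurable with respect to the $\sigma$-algebra $\CF_m$ generated by the curves $\eta^{x_j}$, $j=1,\dots,m$, each run until it first hits $\partial B(x,\eps_{m+1})$; moreover, on $E^m(x)$ the events $E_k^2$ force these curves to merge into a single curve $\gamma$ which reaches $\partial B(x,\eps_{m+1})$ at the point $\eta^{x_m}(\sigma_m^x)$, lies in $\overline{\HH}\setminus B(x,\eps_{m+1})$ apart from that endpoint, and --- by condition~(i) of $E_m^1(x)$ --- has that endpoint at height at least $\delta\eps_{m+1}$. Hence, on $E^m(x)$, the component $\wt{D}_m$ of $\HH\setminus\gamma$ containing $B(x,\tfrac12\eps_{m+1})\cap\HH$ satisfies $\dist(B(x,\tfrac12\eps_{m+1})\cap\HH,\ \HH\setminus\wt{D}_m)\ge\tfrac12\eps_{m+1}$ --- this holds whether or not $\gamma$ disconnects $B(x,\tfrac12\eps_{m+1})\cap\HH$ from $\infty$, since any path realizing the distance must exit $B(x,\eps_{m+1})$ --- and the boundary data of $h$ conditioned on $\CF_m$ coincides, along $\partial\wt{D}_m\cap B(x,\tfrac12\eps_{m+1})$, with that of the unconditioned field, as there this boundary is just $\R$.

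For the first assertion, note that $E^{m,n}(x)$ depends only on $\eta^{x_{m+1}},\dots,\eta^{x_n}$ run until they leave $B(x,\tfrac12\eps_{m+1})$: condition~(ii) of $E_{m+1}^1(x)$ keeps $\eta^{x_{m+1}}$ inside that ball until it hits $B(x,\eps_{m+2})$, and all of $E_{m+2}(x),\dots,E_n(x)$ take place there. Conditionally on $\CF_m$ and on $E^m(x)$, the joint law of these stopped curves is the joint law of the corresponding flow lines of $h$ in $\wt{D}_m$; applying Lemma~\ref{lem::change_of_domains_bound_RN} with $U=B(x,\tfrac12\eps_{m+1})$, $D=\HH$, $\widetilde{D}=\wt{D}_m$, and $\zeta$ comparable to $\eps_{m+1}$ --- the force point at $0$ is at distance $\gtrsim1$ from $U$, and the new force points coming from $\gamma$ lie on $\partial B(x,\eps_{m+1})$ at height $\ge\delta\eps_{m+1}$, hence at distance comparable to $\eps_{m+1}$ from $U$ --- this conditional law is mutually absolutely continuous with the unconditional law of the same stopped curves in $\HH$, with Radon--Nikodym derivative bounded above and below by constants depending only on $\delta$, $\kappa$, $\rho$. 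Integrating $\one_{E^{m,n}(x)}$ against the two laws gives $\p[E^{m,n}(x)\mid\CF_m]\asymp\p[E^{m,n}(x)]$ on $E^m(x)$, and since $E^m(x)\in\CF_m$, taking expectations yields the first claim.

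The second assertion is the same argument carried out simultaneously at $x$ and at $y$: since $\tfrac12\eps_{m+2}<|x-y|\le\tfrac12\eps_{m+1}$, the point $y$ lies in $B(x,\eps_{m+1})$, so $\gamma$ does not reach $y$ and we enlarge the conditioning $\sigma$-algebra so that it also exposes the curve near $y$ down to scale $\eps_{m+1}$; then $E^{m+1,n}(x)$ depends only on the flow lines run until they leave $B(x,\tfrac12\eps_{m+2})$ and $E^{m+1,n}(y)$ only on those run until they leave $B(y,\tfrac12\eps_{m+2})$, and as $|x-y|>\tfrac12\eps_{m+2}$ the relevant parts of these two families are disjoint, so one further application of Lemma~\ref{lem::change_of_domains_bound_RN} --- now comparing the conditional law with the \emph{product} of the two unconditional laws --- finishes the proof after taking expectations. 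The step I expect to be the main obstacle is the uniformity of the Radon--Nikodym bounds in $m$ (and in $x,y$): after rescaling by $\eps_{m+1}^{-1}$ and translating $x\mapsto0$ one must check that the exposed configuration always looks like $\HH$ with a slit touching $\partial B(0,1)$ at height $\ge\delta$ and otherwise outside $B(0,1)$, so that the hull and the new force points stay a definite distance from $B(0,\tfrac12)$ and the constants in Lemma~\ref{lem::change_of_domains_bound_RN} do not degenerate; this is exactly what the imaginary-part condition in the definition of $E_k^1$ provides, which is why the constants are allowed to depend on $\delta$.
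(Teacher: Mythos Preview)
Your overall architecture is right: expose the coarse data, note that $E^{m,n}(x)$ lives inside $B(x,\tfrac12\eps_{m+1})$ while the coarse hull stays outside $B(x,\eps_{m+1})$, and compare laws via Lemma~\ref{lem::change_of_domains_bound_RN}. The geometry you record (including the role of the $\delta$--condition in keeping the new force points at definite distance after rescaling) is exactly what is needed.

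The gap is in the sentence ``applying Lemma~\ref{lem::change_of_domains_bound_RN} \dots\ this conditional law is mutually absolutely continuous with the unconditional law of the same stopped curves in $\HH$, with Radon--Nikodym derivative bounded above and below.'' Lemma~\ref{lem::change_of_domains_bound_RN} is a statement about a \emph{single} $\SLE_\kappa(\ul{\rho})$ curve in two configurations; it does not directly bound the Radon--Nikodym derivative of the \emph{joint} law of the $n-m$ flow lines $\eta^{x_{m+1}},\dots,\eta^{x_n}$. Applying it iteratively (to $\eta^{x_{m+1}}$, then to $\eta^{x_{m+2}}$ given $\eta^{x_{m+1}}$, and so on) would pick up a multiplicative constant at each step and produce a bound of the form $C^{\,n-m}$, which is useless here. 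Nor can you simply pass to the underlying GFF: the Radon--Nikodym derivative between $h|_U$ and its conditional version is log-normal, not bounded.

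The paper closes this gap with a shielding argument. It applies Lemma~\ref{lem::change_of_domains_bound_RN} only \emph{twice}: once to $\eta^{x_{m+1}}$ stopped on exiting $U=B(x,\tfrac12\eps_{m+1})$, and once more to $\eta^{x_n}$ (given $\eta^{x_{m+1}}$) stopped on exiting the relevant component of $B(x,\tfrac12\eps_n)\setminus\eta^{x_{m+1}}$. The point is that, conditional on $\eta^{x_{m+1}}$ run all the way down to $\partial B(x,\eps_{n+1})$ and on $\eta^{x_n}$, \emph{and on the event that these two have merged inside $U$}, the intermediate flow lines $\eta^{x_j}$, $m+2\le j\le n-1$, are trapped in the pocket bounded by this merged curve and $\R$; their conditional law therefore does not see the coarse hull $H$ at all (Figure~\ref{fig::one_point_approx_ind}). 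This reduces the whole comparison to two bounded Radon--Nikodym factors, independent of $n-m$. The same device handles the second assertion, where again one must avoid comparing joint laws directly and instead use the outermost curves at $x$ and at $y$ to shield the finer ones. Your proof becomes correct once you insert this step.
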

\begin{proof}
We begin by proving \eqref{eqn::given_one} which is equivalent to
\[ \p[E^{m,n}(x) \giv E^m(x)]\asymp \p[E^{m,n}(x)].\]
Recall that $\eta^{x_{m+1}}$ is an $\SLE_{\kappa}(2+\rho,-2-\rho;\rho)$ process with configuration
\[c=(\HH,x_{m+1}, (0,x_{m+1}^-),(x_{m+1}^+),\infty).\]
Let $\omega=\eta(\sigma(B(x,\eps_m)))$, let $H$ be the closure of the complement of the unbounded connected component of $\h \setminus \cup_{j=1}^m \eta^{x_j}([0,\sigma_j^x])$, and let $v$ be the rightmost point of $H \cap \R$ (see Figure~\ref{fig::given_one}).  The conditional law of $\eta^{x_{m+1}}$ given $\eta^{x_1}|_{[0,\sigma_1^x]},\ldots, \eta^{x_m}|_{[0,\sigma_m^x]}$ on $E^m(x)$ is that of an $\SLE_{\kappa}(2,\rho,-2-\rho;\rho)$ process in
\[\wt{c}=(\HH\setminus H, x_{m+1}, (\omega, v,x_{m+1}^-), (x_{m+1}^+),\infty)\]
(recall Figure~\ref{fig::conditional_law}.)

\begin{figure}[ht!]
\begin{center}
\includegraphics[scale=0.85]{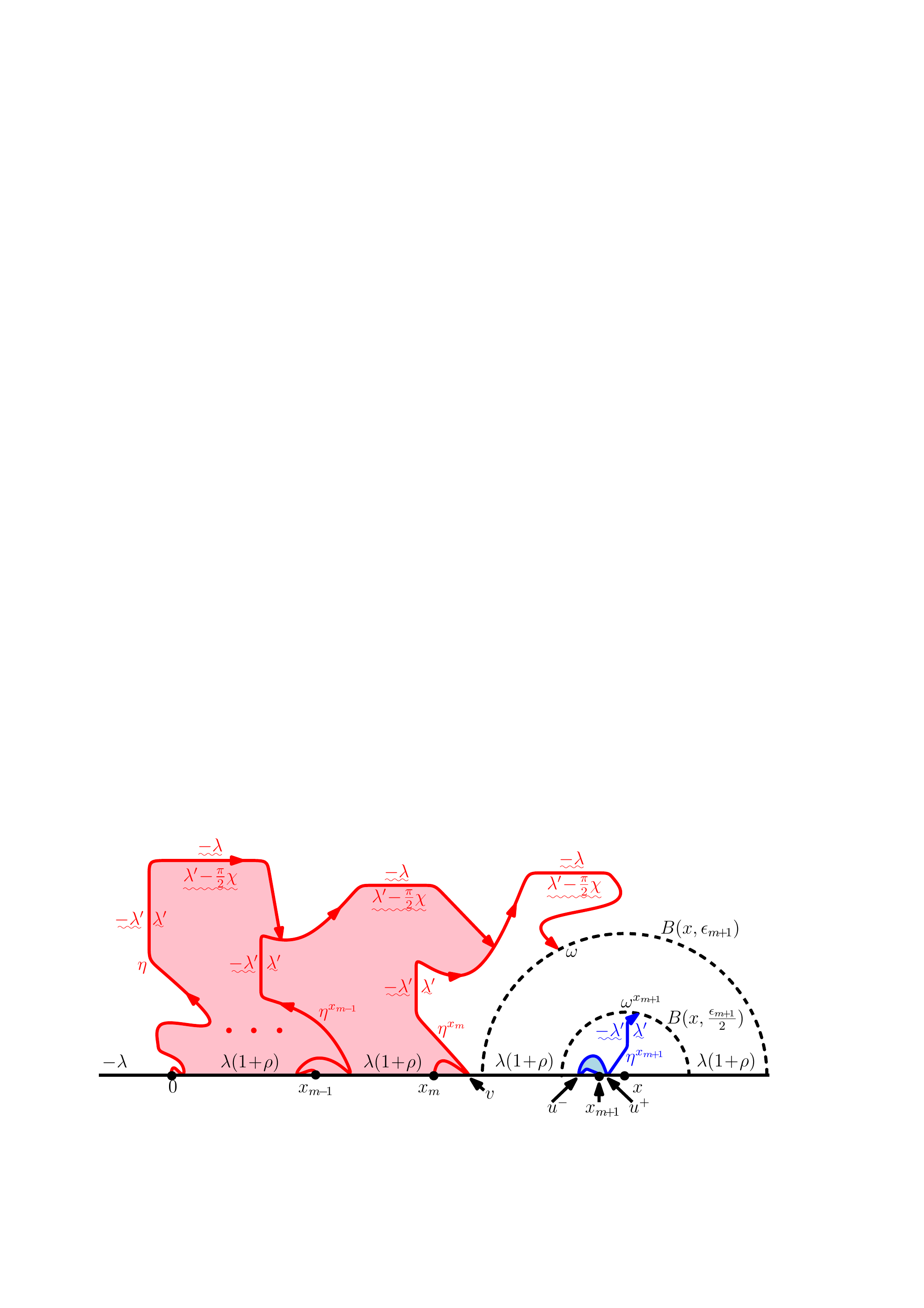}
\end{center}
\caption{\label{fig::given_one}
Let $H$ (shown in red) be the closure of the complement of the unbounded connected component of $\h \setminus \cup_{j=1}^m \eta^{x_j}([0,\sigma_j^x])$ and let $K$ (shown in blue) be the closure of the complement of the unbounded connected component of $\h \setminus \eta^{x_{m+1}}([0,\tau])$ where $\tau$ is the first time that $\eta^{x_{m+1}}$ leaves $U=B(x,\tfrac{\eps_{m+1}}{2})$.  Then $\dist(H,K)\gtrsim \diam(U)$.}
\end{figure}

Let $U = B(x,\tfrac{1}{2}\eps_{m+1})$, $\tau = \sigma^{x_{m+1}}(\h \setminus U)$, $K$ be the closure of the complement of the unbounded connected component of $\h \setminus \eta^{x_{m+1}}([0,\tau])$, $\omega^{x_{m+1}}=\eta^{x_{m+1}}(\tau)$, and let $u^-,u^+$ be the leftmost (resp.\ rightmost) point of $K \cap \R$.  By Lemma~\ref{lem::change_of_domains}, we have that
\[\frac{d\mu_{\wt{c}}^U}{d\mu_{c}^U}=\frac{Z(\wt{c}_{\tau})/Z(\wt{c})}{Z(c_{\tau})/Z(c)}\exp(-\xi m(\HH; H,K))\]

where
\begin{align*}
c_{\tau} &= (\HH\setminus K,\omega^{x_{m+1}},(0,u^-),(u^+),\infty),\\
\wt{c}_{\tau} &= (\HH\setminus (H \cup K), \omega^{x_{m+1}}, (\omega,v,u^-),(u^+),\infty).
\end{align*}
Note that $H \subseteq \h \setminus B(x,\tfrac{3}{4}\eps_{m+1})$, $K \subseteq \ol{B(x,\tfrac{1}{2}\eps_{m+1})}$, and $\diam(U)=\eps_{m+1}$.  Consequently,
\[\frac{\dist(H,K)}{\diam(U)}\gtrsim 1. \]
Therefore Lemma \ref{lem::change_of_domains_bound_RN} implies there exists $C_1 \geq 1$ so that
\begin{equation}
\label{eqn::radon_bound}
\frac{1}{C_1} \leq \frac{d\mu^U_{\wt{c}}}{d\mu^U_{c}} \leq C_1.
\end{equation}
This proves \eqref{eqn::given_one} in the case that $n=m+1$.  We now suppose that $n \geq m+2$.  Given $\eta^{x_{m+1}}|_{[0,\tau]}$, we similarly have that the Radon-Nikodym derivative between the conditional law of $\eta^{x_n}$ stopped upon exiting the connected component of $B(x,\tfrac{1}{2}\eps_n) \setminus \eta^{x_{m+1}}([0,\tau])$ with $x_n$ on its boundary with respect to the law in which we additionally condition on $H$ on $E_m(x)$ is bounded from above and below by $C_1$ and $C_1^{-1}$, respectively, possibly by increasing the value of $C_1 > 1$ (see Figure~\ref{fig::one_point_approx_ind}).  Moreover, conditional on both of the paths $\eta^{x_{m+1}}|_{[0,\sigma^{x_{m+1}}(B(x,\eps_{n+1}))]}$ and $\eta^{x_n}|_{[0,\sigma_n^x]}$ as well as the event that they have merged before exiting $U$, the joint law of $\eta^{x_j}|_{[0,\sigma_j^x]}$ for $j=m+2,\ldots,n-1$ is independent of $\eta^{x_k}|_{[0,\sigma_k^x]}$ for $k=1,\ldots,m$ (see Figure~\ref{fig::one_point_approx_ind}).  This proves \eqref{eqn::given_one}.

The second part of the lemma is proved similarly.
\end{proof}

\begin{figure}[ht!]
\begin{center}
\includegraphics[scale=0.85]{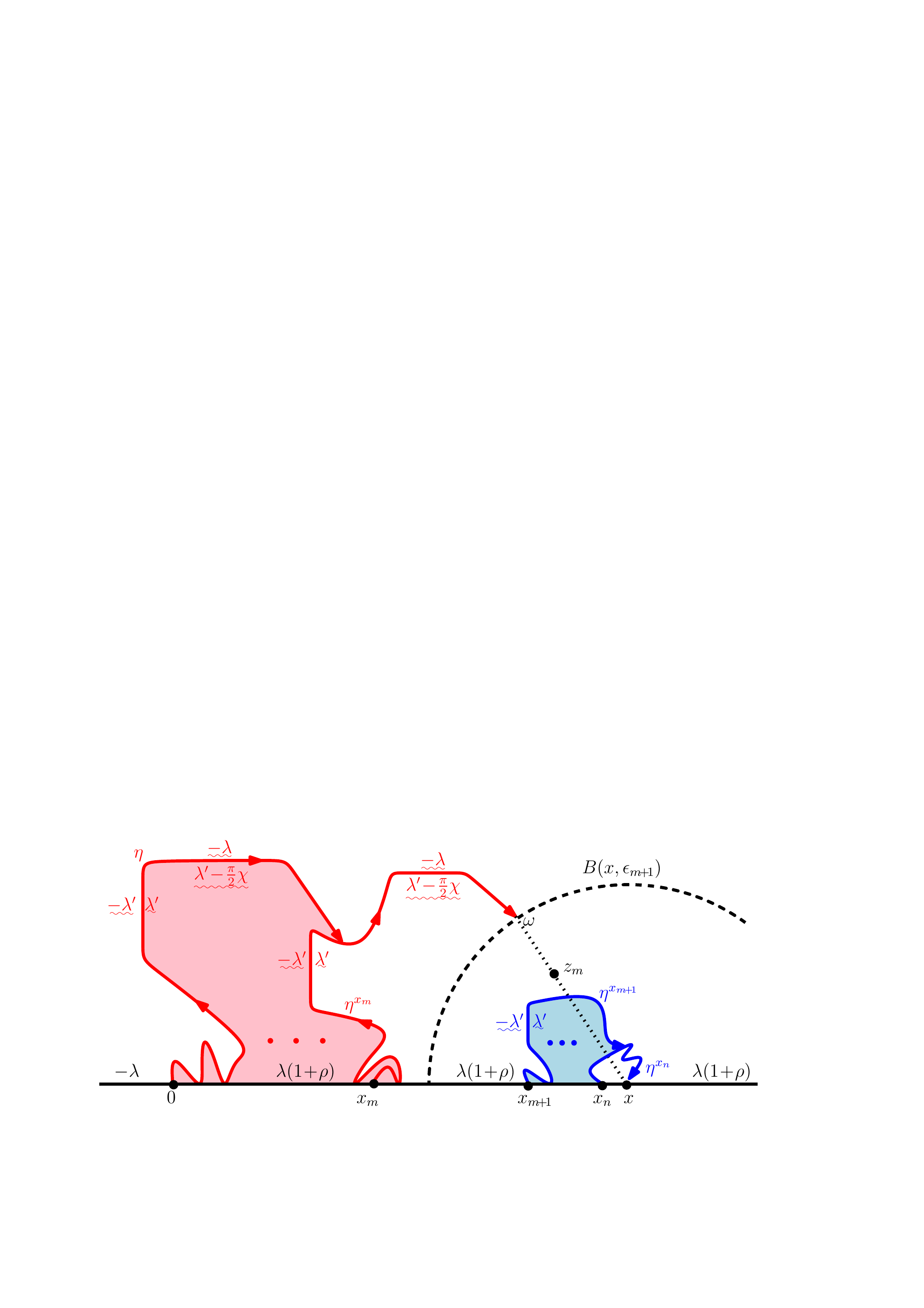}
\end{center}
\caption{\label{fig::one_point_approx_ind}
Assume that we are working on $E^m(x) \cap E^{m,n}(x)$.  Let $H$ (shown in red) be the closure of the complement of the unbounded connected component of $\h \setminus \cup_{j=1}^m \eta^{x_j}([0,\sigma_j^x])$ and let $K$ (shown in blue) be the closure of the complement of the unbounded connected component of $\h \setminus \cup_{j=m+1}^n \eta^{x_j}([0,\sigma^x_j])$.  Let $z_m$ be the point that lies at distance $\delta\eps_{m+1}$ from $\omega$ along the line connecting $\omega$ to $x$.  Then a Brownian motion starting from $z_m$ has positive probability to exit $\h\setminus(H\cup K)$ through each of the left side of $H$, the right side of $H$, and the left side of $K$.}
\end{figure}

\begin{lemma}
\label{lem::approx_ind2}
For each $x \geq 1$ and $m,n \in \N$ with $m \leq n$ we have that
\begin{equation}
\label{eqn::one_point_approx_ind}
 \p[E^n(x)]\asymp \p[E^m(x)] \p[E^{m,n}(x)]
\end{equation}
where the constants depend only on $\delta$, $\kappa$, and $\rho$.
\end{lemma}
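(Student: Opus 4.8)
The plan is to reduce to Lemma~\ref{lem::approx_ind1} and then establish a quantitative merging estimate.

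\textbf{Step 1 (decomposition; the upper bound).} Unwinding the definitions of $E^n(x)$, $E^m(x)$, $E^{m,n}(x)$ and $E_{m+1}(x)=E_{m+1}^1(x)\cap E_{m+1}^2(x)$ gives the set identity
\[ E^n(x)=E^m(x)\cap E^{m,n}(x)\cap E_{m+1}^2(x). \]
In particular $E^n(x)\subseteq E^m(x)\cap E^{m,n}(x)$, so \eqref{eqn::given_one} of Lemma~\ref{lem::approx_ind1} gives at once
\[ \p[E^n(x)]\le\p[E^m(x)\cap E^{m,n}(x)]\asymp\p[E^m(x)]\,\p[E^{m,n}(x)]. \]
Using the same identity together with Lemma~\ref{lem::approx_ind1} once more, the lower bound in \eqref{eqn::one_point_approx_ind} follows once we produce a constant $c=c(\delta,\kappa,\rho)>0$, uniform in $x\ge1$ and $m\le n$, with $\p[E_{m+1}^2(x)\giv E^m(x)\cap E^{m,n}(x)]\ge c$.

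\textbf{Step 2 (setting up the conditioning).} Let $\CG$ be the $\sigma$-algebra generated by the initial segments $\eta^{x_j}|_{[0,\sigma_j^x]}$, $1\le j\le n$, together with the continuations $\eta^{x_j}|_{[\sigma_j^x,\,\cdot\,]}$ for $j\ne m$, each run until it merges with $\eta^{x_{j+1}}$ or exits $B(x,\tfrac12\eps_j)\setminus B(x,\eps_{j+2})$. Inspecting the definitions shows $E^m(x),E^{m,n}(x)\in\CG$, and that $E_{m+1}^2(x)$ is determined by $\CG$ together with the continuation $\eta^{x_m}|_{[\sigma_m^x,\,\cdot\,]}$, which $\CG$ does \emph{not} reveal. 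Since each revealed piece is a local set of $h$ determined by $h$, the Markov property of the GFF/flow-line coupling (Figure~\ref{fig::conditional_law}) shows that the conditional law given $\CG$ of $\eta^{x_m}$ run from $\sigma_m^x$ is that of a flow line of $h$ in the complement $D_m$ of the revealed configuration, started from the boundary point $\omega_m:=\eta^{x_m}(\sigma_m^x)$ and targeted at $\infty$. On $E^m(x)\cap E^{m,n}(x)$, after translating $x$ to $0$ and rescaling by $\eps_{m+1}^{-1}$ (permitted by scale invariance), we obtain uniform control: $\omega_m\in\partial\D$ with $\im(\omega_m)\ge\delta$; the arc $S:=\eta^{x_{m+1}}|_{[0,\sigma_{m+1}^x]}$ is a crosscut from $-\tfrac14$ to a point of $\partial B(0,\eps_{m+2}/\eps_{m+1})$ staying inside $\tfrac12\D$; every other revealed path avoids $\tfrac12(\eps_m/\eps_{m+1})\D\supseteq 2\D$; and, by the distortion estimates of Section~\ref{subsec::distort} controlling the winding contributions, the boundary data of $h$ on $\partial D_m$ in the annular region $\{\eps_{m+2}/\eps_{m+1}<|z|<\tfrac12\eps_m/\eps_{m+1}\}$ is bounded in terms of $\kappa,\rho$ only.

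\textbf{Step 3 (forcing the merge).} Because $\eta^{x_m}$ and $\eta^{x_{m+1}}$ are flow lines of $h$ of the same angle and $x_m<x_{m+1}$, the path $\eta^{x_m}$ stays to the left of $S$, cannot cross it, and upon intersecting $S$ merges with $\eta^{x_{m+1}}$ and never separates (Figure~\ref{fig::two_flowlines}); moreover one computes, by the boundary-data bookkeeping of the coupling, that the cumulative force-point weight carried by $S$ as seen from $\eta^{x_m}$ equals $-2$, which for $\kappa\in(0,4)$ lies in the boundary-intersecting range $(\tfrac\kappa2-4,\tfrac\kappa2-2)$ (recall Lemma~\ref{lem::sle_kappa_rho_boundary_interaction}), while the force points at $\omega_m^\pm$ both have weight $0>-2$. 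It therefore suffices to bound below, by a constant depending only on $\delta,\kappa,\rho$, the conditional probability that $\eta^{x_m}$ run from $\omega_m$ hits $S$ before leaving $\tfrac12(\eps_m/\eps_{m+1})\D$: such a hitting point lies at distance at least $\eps_{m+2}/\eps_{m+1}$ from $0$, so un-rescaling, the hit produces $E_{m+1}^2(x)$. To get this bound, fix a simple curve $\gamma$ from $\omega_m$ that stays in the annular region and terminates on a sub-arc of $S$ at distance of order $1$ from $0$ (such a sub-arc exists since $S$ meets every circle of radius in $[\eps_{m+2}/\eps_{m+1},\tfrac14]$); map $D_m$ conformally to $\h$ with $\omega_m\mapsto0$ (the distortion on the relevant scale being controlled because $\im(\omega_m)\ge\delta$) and use \cite[Proposition~3.2]{IG1} to compare the GFF on the annular region with a GFF on a fixed domain; then Lemma~\ref{lem::sle_kappa_rho_hit_boundary_segment} applied to this picture yields exactly the required uniform lower bound. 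The main obstacle is this last step: verifying rigorously that $\eta^{x_m}|_{[\sigma_m^x,\,\cdot\,]}$ is an $\SLE_\kappa(\ul\rho)$ process to which Lemma~\ref{lem::sle_kappa_rho_hit_boundary_segment} genuinely applies (identifying the weight on $S$ as the merging weight, and checking the geometric hypotheses hold with constants independent of $x,m,n$), and arranging $\gamma$ so that following it forces $\eta^{x_m}$ to strike $S$ itself rather than another arc of $\partial D_m$; this bookkeeping is of the same nature as, and uses the same tools as, the proofs of Lemmas~\ref{lem::sle_kappa_rho_close_to_curve}--\ref{lem::sle_kappa_rho_boundary_hitting} and Lemma~\ref{lem::approx_ind1}.
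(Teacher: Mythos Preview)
Your proposal is correct and follows the same strategy as the paper: reduce via Lemma~\ref{lem::approx_ind1} to the bound $\p[E_{m+1}^2(x)\mid E^m(x),E^{m,n}(x)]\asymp 1$, and then obtain the merge using Lemma~\ref{lem::sle_kappa_rho_hit_boundary_segment}. The paper's execution of the geometric verification is, however, more direct than your Step~3: rather than rescaling and appealing to distortion and GFF absolute continuity, it fixes the reference point $z_m$ at distance $\delta\eps_{m+1}$ from $\omega=\eta^{x_m}(\sigma_m^x)$ along the segment $[\omega,x]$, notes that a Brownian motion from $z_m$ exits $\h\setminus(H\cup K)$ through each of the left side of $H$, the right side of $H$, and the left side of $K$ with probability $\asymp 1$ (depending only on $\delta$), and conformally maps $\h\setminus(H\cup K)\to\h$ with $z_m\mapsto i$, $\omega\mapsto 0$; conformal invariance of Brownian motion then immediately gives the separation bounds on the images of the relevant force points (namely $|x_L|,|x_R|\ge\eps$, $y-y_L\ge\eps$, $y_L\le\eps^{-1}$ for some $\eps=\eps(\delta)$) that Lemma~\ref{lem::sle_kappa_rho_hit_boundary_segment} requires. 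This avoids your bookkeeping entirely. (Incidentally, your assertion in Step~2 that ``every other revealed path avoids $2\D$'' after rescaling is not quite right---on $E_m^2(x)$ the pre-merge continuation of $\eta^{x_{m-1}}$ is only constrained to the annulus $B(x,\tfrac12\eps_{m-1})\setminus B(x,\eps_{m+1})$, so after rescaling it may enter the region between $\partial\D$ and $2\D$---but this is harmless since that path lies to the left of $\eta^{x_m}$ and hence does not obstruct the merge with $\eta^{x_{m+1}}$ on the right.)
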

\begin{proof}
The upper bound follows from \eqref{eqn::given_one} of Lemma~\ref{lem::approx_ind1}.  To complete the proof of the lemma, it suffices to show that
\[ \p[ E_{m+1}^2(x)\giv E^m(x), E^{m,n}(x)] \asymp 1.\]
Throughout, we assume that we are working on $E^m(x) \cap E^{m,n}(x)$.  To see this, we let $H$ (resp.\ $K$)  be the closure of the complement of the unbounded connected component of $\h \setminus \cup_{j=1}^m \eta^{x_j}([0,\sigma^x_j])$ (resp.\ $\h \setminus \cup_{j=m+1}^n \eta^{x_j}([0,\sigma^x_j])$).  Let $\omega = \eta^{x_m}(\sigma^x_m)$ and let $z_m$ be the point which lies at distance $\delta \eps_{m+1}$ from $\omega$ along the line segment connecting $\omega$ to $x$ (see Figure~\ref{fig::one_point_approx_ind}).  Note that the probability that a Brownian motion starting from $z_m$ exits $\h \setminus (H \cup K)$ in the left (resp.\ right) side of $H$ is $\asymp 1$ (though this probability decays as $\delta \downarrow 0$) and likewise for the left side of $K$.  Let $\varphi \colon \h \setminus (H \cup K) \to \h$ be the conformal map which takes $z_m$ to $i$ and $\omega$ to $0$.  Let $x_L$ (resp.\ $x_R$) be the image of the leftmost (resp.\ rightmost) point of $H \cap \R$ under $\varphi$.  The conformal invariance of Brownian motion implies that there exists $\eps > 0$ depending only on $\delta$ such that $|x_q| \geq \eps$ for $q \in \{L,R\}$.  Let $y_L$ (resp.\ $y$) be the image of the leftmost point of $K \cap \R$ (resp.\ $\eta^{x_{m+1}}(\sigma_{m+1}^x)$) under $\varphi$.  By shrinking $\eps > 0$ if necessary (but still depending only on $\delta$), it is likewise true that $y-y_L \geq \eps$ and $y_L \leq \eps^{-1}$.  Consequently, it follows from Lemma~\ref{lem::sle_kappa_rho_hit_boundary_segment} that $\eta^{x_m}|_{[\sigma_m^x,\infty)}$ has a positive chance (depending only on $\delta$, $\kappa$, and $\rho$) of hitting (hence merging into) the left side of $\eta^{x_{m+1}}|_{[0,\sigma_{m+1}^x)}$ before leaving $B(x,\tfrac{1}{2}\eps_{m}) \setminus B(x,\eps_{m+2})$.
\end{proof}

\begin{lemma}
\label{lem::two_point_lower_bound}
For each $\delta \in (0,1)$ there exists a constant $c(\delta) > 0$ such that the following is true.  For each $x \geq 1$, we have that
\[\p[E^m(x)] \geq c(\delta)^m \times \eps_m^{\alpha}.\]
\end{lemma}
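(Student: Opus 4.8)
The plan is to reduce $\p[E^m(x)]$ to a product over scales using Lemma~\ref{lem::approx_ind2}, and then to bound each factor from below by a positive constant. From the definitions one has $E^{k-1,k}(x) = E_k^1(x)$ for every $k$ and $E^1(x) = E_1^1(x)$. Applying Lemma~\ref{lem::approx_ind2} with the roles of $(m,n)$ there played by $(m-1,m)$ gives $\p[E^m(x)] \asymp \p[E^{m-1}(x)]\,\p[E_m^1(x)]$, and iterating down to the first scale yields
\[ \p[E^m(x)] \asymp \p[E_1^1(x)] \prod_{k=2}^m \p[E_k^1(x)]. \]
As Lemma~\ref{lem::approx_ind2} is invoked $m-1$ times with implicit constants depending only on $\delta,\kappa,\rho$, the error in this comparison lies between $c_0^{m-1}$ and $c_0^{-(m-1)}$ for some $c_0 = c_0(\delta,\kappa,\rho) \in (0,1]$. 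Hence it suffices to exhibit constants $c_1,c_2>0$, depending only on $\delta,\kappa,\rho$ and the fixed parameter $a$, with $\p[E_k^1(x)]\ge c_1$ for all $k\ge 2$ and $\p[E_1^1(x)]\ge c_2$, uniformly for $x$ in a fixed compact subset of $[1,\infty)$: indeed then $\p[E^m(x)]\ge c_0^{m-1}c_2 c_1^{m-1}\ge \mu^m$ with $\mu=\min(c_0c_1,c_2)\in(0,1]$, and since $\eps_m^\alpha=e^{-a\alpha m}$ is merely exponential in $m$, this is of the form $c(\delta)^m\eps_m^\alpha$ with $c(\delta)=\mu\, e^{a\alpha}$.

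For $k\ge 2$ the event $E_k^1(x)$ is measurable with respect to $\eta^{x_k}$ stopped at the first time it exits $B(x,\tfrac{1}{2}\eps_k)$, so under the scaling $z\mapsto\eps_k^{-1}(z-x)$ it becomes a fixed event $F$, not depending on $k$ or $x$: namely, first meeting $\partial B(0,e^{-a})$ at a point of imaginary part $\ge\delta e^{-a}$ before exiting $B(0,\tfrac{1}{2})$, for the $\SLE_\kappa(2+\rho,-2-\rho;\rho)$ process started at $-\tfrac{1}{4}$ with force points at $-x\,\eps_k^{-1}$, $(-\tfrac{1}{4})^-$, $(-\tfrac{1}{4})^+$ of weights $2+\rho$, $-2-\rho$, $\rho$. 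Since $\kappa<4$ forces $\rho<\tfrac{\kappa}{2}-2<0$, the weights $-2-\rho$ and $\rho$ at the two force points adjacent to the starting point lie in $(-2,0)$, so Lemma~\ref{lem::sle_kappa_rho_close_to_curve} applies and, for each fixed position of the remaining (left) force point, gives a positive probability that the path follows a prescribed tube from $-\tfrac{1}{4}$ that rises into $\h$, arcs over, and descends toward $0$ from high above so as to first meet $\partial B(0,e^{-a})$ near the top while staying in $B(0,\tfrac{1}{2})$; such a tube forces $F$. Because $a>\log 8$, that remaining force point lies at distance $\ge\eps_k^{-1}\ge e^{2a}>\tfrac{1}{2}$ from $B(0,\tfrac{1}{2})$, so the bounded Radon--Nikodym estimate of Lemma~\ref{lem::change_of_domains_bound_RN} compares the law of the stopped path, with constants depending only on $\kappa,\rho,a$, to the law with that force point deleted; combined with continuity of the law in the force points (Lemma~\ref{lem::sle_kappa_rho_cont_in_force_points}) and compactness of the range of $x$, this gives $\p[E_k^1(x)]=\p[F]\ge c_1$ uniformly in $k\ge 2$. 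The bound $\p[E_1^1(x)]\ge c_2$ is obtained the same way: $\eta^0$ is an $\SLE_\kappa(\rho)$ process with a single force point at $0^+$, and by Lemma~\ref{lem::sle_kappa_rho_close_to_curve} it has positive probability, depending continuously on $x$, of following a tube from $0$ out to and into $B(x,\eps_2)$ realizing conditions (i) and (ii) in the definition of $E_1^1(x)$.

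Combining the two preceding paragraphs gives $\p[E^m(x)]\ge c_0^{m-1}c_2c_1^{m-1}\ge\mu^m=c(\delta)^m e^{-a\alpha m}=c(\delta)^m\eps_m^\alpha$, as required. The step I expect to be the main obstacle is the uniformity in $k$ in the second paragraph: one must rule out degeneration of $\p[E_k^1(x)]$ as $k\to\infty$, i.e.\ as the ``$0$'' force point of $\eta^{x_k}$ recedes to infinity in the rescaled picture, which is exactly what the bounded Radon--Nikodym estimate (Lemma~\ref{lem::change_of_domains_bound_RN}), used with the a priori positivity of Lemma~\ref{lem::sle_kappa_rho_close_to_curve} and continuity in force points (Lemma~\ref{lem::sle_kappa_rho_cont_in_force_points}), is designed to handle. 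Note that $\p[E_1^1(x)]$ cannot be bounded below uniformly over all $x\in[1,\infty)$ since $\eta^0$ must traverse a macroscopic distance to reach $B(x,\eps_2)$; this is why the constant is taken uniform only over $x$ in compact subsets, which is all that is needed in Proposition~\ref{prop::two_point_estimate} and the sequel.
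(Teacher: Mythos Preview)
Your proof is correct and follows essentially the same skeleton as the paper's: factor $\p[E^m(x)]$ into per-scale contributions (you do this via Lemma~\ref{lem::approx_ind2}, the paper via Lemma~\ref{lem::approx_ind1} together with the $E_k^2$ estimate, which is exactly how Lemma~\ref{lem::approx_ind2} is proved), then bound each factor from below and absorb the far force point at $0$ via Lemma~\ref{lem::change_of_domains_bound_RN}.

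The one genuine difference is in how the per-scale lower bound $\p[E_k^1(x)]$ is obtained. You use the tube lemma (Lemma~\ref{lem::sle_kappa_rho_close_to_curve}) to get mere positivity $\p[E_k^1(x)]\ge c_1(\delta,\kappa,\rho,a)$, whereas the paper invokes the sharp one-point estimate (Corollary~\ref{cor::lower_bound_one_point_exponent}) to obtain $\p[E_k^1(x)]\ge c(\delta)\,e^{-a\alpha}$ with $c(\delta)$ \emph{independent of $a$}. Both prove the lemma as stated, but the paper's version makes the factor $\eps_m^\alpha=e^{-am\alpha}$ emerge intrinsically from the estimate rather than being inserted at the end via $c(\delta)=\mu e^{a\alpha}$. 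This distinction matters downstream: in the proof of Theorem~\ref{thm::boundary_dimension} one needs $c(\delta)^{-m}\lesssim \eps_m^{-\zeta}$ for arbitrarily small $\zeta>0$ by taking $a$ large, which requires that the non-$a$ part of the per-scale constant be bounded away from zero. Your approach does deliver this (since $c_1\asymp e^{-a\alpha}$ with constants independent of $a$, by the very Corollary~\ref{cor::lower_bound_one_point_exponent} you avoid), but the proof as written does not make that $a$-dependence visible.

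Your remark about the failure of uniformity of $\p[E_1^1(x)]$ over all of $[1,\infty)$ is well taken and is a genuine imprecision in the lemma's hypothesis; as you note, only $x\in[1,2]$ is needed for Proposition~\ref{prop::two_point_estimate} and the subsequent argument.
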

\begin{proof}
By \eqref{eqn::given_one} of Lemma~\ref{lem::approx_ind1}, we know that
\[ \p[ E_k^{1}(x) \giv E^{k-1}(x)] \asymp \p[ E_k^1(x)].\]
Therefore we just have to show that there exists a constant $c(\delta) > 0$ such that
\begin{align}
 \p[ E_k^1(x)] \geq c(\delta) \left( \frac{\eps_{k+1}}{\eps_k} \right)^{\alpha} &= c(\delta) e^{-a \alpha} \quad\text{and} \label{eqn::ek1_giv_ek}\\
 \p[ E_k^2(x) \giv E^{k-1}(x), E_k^1(x)] &\asymp 1. \label{eqn::ek2_giv_ek}
\end{align}

Note that \eqref{eqn::ek2_giv_ek} follows from Lemma~\ref{lem::sle_kappa_rho_hit_boundary_segment} using the same argument as in the proof of Lemma~\ref{lem::approx_ind2}.
We know that $\eta^{x_k}$ is an SLE$_{\kappa}(2+\rho,-2-\rho;\rho)$ process within the configuration $c=(\h,x_k,(0,x_k^-), (x_k^+),\infty)$. Consequently, \eqref{eqn::ek1_giv_ek} follows by combining Corollary~\ref{cor::lower_bound_one_point_exponent} and Lemma~\ref{lem::change_of_domains_bound_RN}.  The latter is used to get that the Radon-Nikodym derivative between the law of an $\SLE_{\kappa}(2+\rho,-2-\rho;\rho)$ process with configuration $(\h,x_k,(0,x_k^-), (x_k^+),\infty)$ and the law of an $\SLE_{\kappa}(-2-\rho;\rho)$ process with configuration $(\h,x_k,(x_k^-),(x_k^+),\infty)$, where each path is stopped upon exiting $B(x,
\tfrac{\eps_k}{2})$, is bounded both from below and above by universal positive and finite constants.
\end{proof}

\begin{proof}[Proof of Proposition~\ref{prop::two_point_estimate}]
We have that,
{\allowdisplaybreaks
\begin{align*}
\p[E^n(x),E^n(y)] \leq& \p[E^n(x),E^{m,n}(y)]\\
\lesssim& \p[E^{m}(x)] \p[E^{m+1,n}(x)] \p[E^{m+1,n}(y)]  \quad\quad\text{(Lemma~\ref{lem::approx_ind1})}\\
=& \frac{\p[E^{m}(x)] \p[E^m(y)] }{\p[E^m(y)] }\p[E^{m+1,n}(x)]\p[E^{m+1,n}(y)]\\
\lesssim& \frac{\p[E^n(x)]\p[E^n(y)]}{c(\delta)^m \eps_m^{\alpha}}\quad\quad\text{(Lemma~\ref{lem::approx_ind2} and Lemma~\ref{lem::two_point_lower_bound})}
\end{align*}}
\end{proof}

\begin{proof}[Proof of Theorem~\ref{thm::boundary_dimension}]
We are first going to give the lower bound for $\kappa \in (0,4)$ and then explain how to extract the dimension result for $\kappa' > 4$ from the result for $\kappa \in (0,4)$.  For each $\beta \in \R$ and Borel measure $\mu$, let
\[ I_{\beta}(\mu):=\int\int \frac{\mu(dz)\mu(dw)}{|z-w|^{\beta}}\]
be the $\beta$-energy of $\mu$.  To prove the lower bound, we will show that, for each $\zeta>0$, there exists a nonzero Borel measure supported on $\eta \cap [1,2]$ that has finite $(1-\alpha-2\zeta)$-energy.

Fix $n \in \N$.  We divide $[1,2]$ into $\eps_n^{-1}$ intervals of equal length $\eps_n$ and let $z_{j,n} = (j-\tfrac{1}{2}) \eps_n + 1$ be the center of the $j$th such interval for $j=1,\ldots,\eps_n^{-1}$.  Let $\CC_n$ be the subset of $\CD_n = \{z_{j,n} : j=1,\ldots,\eps_n^{-1}\}$ for which $E^n(z)$ occurs. Let $I_n(z) = [z-\tfrac{\eps_n}{2},z+\tfrac{\eps_n}{2}]$ be the interval with center $z$ and length $\eps_n$. Finally, we let
\[ \CC=\bigcap_{k\ge 1}\overline{\bigcup_{n\ge k}\bigcup_{z\in\CC_n} I_n(z)}.\]
It is easy to see that
\[ \CC \subseteq \eta\bigcap\R_+.\]
Let $\mu_n$ be the measure on $[1,2]$ defined by
\[ \mu_n(A)=\int_A \sum_{z\in\CD_n}\frac{\one_{E^n(z)}}{\p[E^n(z)]}\one_{I_n(z)}(z')dz'\quad\text{for}\quad A\subseteq [1,2] \quad\text{Borel}.\]
Then $\E[ \mu_n([1,2])]=1$.  Moreover, we have that
{\allowdisplaybreaks
\begin{align*}
\E&[\mu_n([1,2])^2]=\eps_n^2\sum_{z,w\in\CD_n}\frac{\p[ E^n(z)\cap E^n(w)] }{\p[E^n(z)]\p[E^n(w)]}\\
&=\eps_n^2 \sum_{\substack{z,w\in\CD_n \\ z\neq w}}\frac{\p[ E^n(z)\cap E^n(w)]}{\p[E^n(z)]\p[E^n(w)]}+\eps_n^2\sum_{z\in\CD_n}\frac{1}{\p[E^n(z)]}\\
&\lesssim \eps_n^2\sum_{\substack{z,w\in\CD_n \\ z\neq w}} |z-w|^{-\alpha-\zeta}+\eps_n^2\sum_{z\in\CD_n}\eps_n^{-1+\alpha-\zeta}\quad\text{(Proposition \ref{prop::two_point_estimate} and Lemma~\ref{lem::two_point_lower_bound})}\\
&\lesssim 1
\end{align*}}
provided we choose $n$ and $a$ large enough.  Set $\beta=1-\alpha-2\zeta$.  We also have that
{\allowdisplaybreaks
\begin{align*}
\E&[I_{\beta}(\mu_n)]=\sum_{\substack{z,w\in\CD_n \\ z\neq w}}\frac{\p[ E^n(z)\cap E^n(w)]}{\p[ E^n(z)]\p[E^n(w)]}\iint\limits_{I_n(z)\times I_n(w)}\frac{dz'dw'}{|z'-w'|^{\beta}}\\
&=\sum_{\substack{z,w\in\CD_n \\ z\neq w}}\frac{\p[ E^n(z)\cap E^n(w)]}{\p[E^n(z)]\p[E^n(w)]}\iint\limits_{I_n(z)\times I_n(w)}\frac{dz'dw'}{|z'-w'|^{\beta}}\\
&\quad\quad\quad\quad+\sum_{z\in\CD_n}\frac{1}{\p[ E^n(z)] }\iint\limits_{I_n(z)\times I_n(z)}\frac{dz'dw'}{|z'-w'|^\beta}\\
&\lesssim \sum_{\substack{z,w\in\CD_n \\ z\neq w}}\frac{\p[ E^n(z)\cap E^n(w)]}{\p[E^n(z)]\p[E^n(w)]}\frac{\eps_n^2}{|z-w|^{\beta}}+\sum_{z\in\CD_n}\frac{1}{\p[ E^n(z)]}\eps_n^{2-\beta}\\
&\lesssim \sum_{\substack{z,w\in\CD_n \\ z\neq w}} |z-w|^{-\alpha-\zeta}\eps_n^2|z-w|^{-\beta}+\sum_{z\in\CD_n} \eps_n^{-1+\alpha-\zeta}\eps_n^{2-\beta}
 \lesssim 1.
\end{align*}}
Consequently, the sequence $(\mu_n)$ has a subsequence $(\mu_{n_k})$ that converges weakly to some nonzero measure $\mu$. It is clear that $\mu$ is supported on $\CC$ and has finite $(1-\alpha-2\zeta)$-energy. From \cite[Theorem 4.27]{BM}, we know that
\[ \p\left[ \dimH(\eta\bigcap\R_+)\ge 1-\alpha-2\zeta \right]>0.\]
Since $\eta$ is conformally invariant, by 0-1 law (see \cite{beffara2008}), we have that
\[ \p\left[ \dimH(\eta\bigcap\R_+)\ge 1-\alpha-2\zeta\right]=1\]
for any $\zeta>0$.  This proves the lower bound for $\kappa \in (0,4)$.

\medbreak

It is left to prove the result for $\kappa'>4$.  Fix $\rho' \in (\frac{\kappa'}{2}-4, \frac{\kappa'}{2}-2)$. Consider a GFF $h$ on $[-1,1]^2$ with the boundary values as depicted in Figure~\ref{fig::counterflowline_and_flowline} with $\rho_R' = \rho'$ and $\rho_L'=0$, and let $\eta'$ be the counterflow line of $h$ from $i$ to $-i$. Then $\eta'$ is an $\SLE_{\kappa'}(\rho')$ process with a single force point located at $(i)^+$, i.e., immediately to the right of $i$.  As explained in Figure~\ref{fig::counterflowline_and_flowline}, the right boundary of $\eta'$ is equal to the flow line $\eta_R$ of $h$ with angle $-\tfrac{\pi}{2}$ starting from $-i$.  In particular, $\eta_R$ is an $\SLE_{\kappa}(\frac{\kappa}{2}-2;\kappa-4+\tfrac{\kappa}{4}\rho')$ process with force points $((-i)^-;(-i)^+)$ where $\kappa=\tfrac{16}{\kappa'} \in (0,4)$. The intersection of $\eta'$ with the counterlcockwise segment $\CS$ of $\partial ([-1,1]^2)$ from $-i$ to $i$ coincides with $\eta_R \cap \CS$.  Consequently, it follows that the dimension of $\eta'\cap \CS$ is given by
\[ 1-\frac{1}{\kappa}\left( \kappa-2+\frac{\kappa}{4}\rho' \right)\left(\frac{\kappa}{2}+\frac{\kappa}{4}\rho' \right)=1-\frac{1}{\kappa'}\left(\rho'+2 \right)\left(\rho'+4-\frac{\kappa'}{2}\right).\]

\end{proof}

\section{The intersection of flow lines}
\label{sec::flow_line_intersection}

In this section, we will prove Theorem~\ref{thm::two_flowline_dimension}.  We begin in Section~\ref{subsec::derivative} by proving an estimate for the derivative of the Loewner map associated with an $\SLE_\kappa(\ul{\rho})$ process when it gets close to a given point.  Next, in Section~\ref{subsec::hitting} we will prove the one point estimate which we will use in Section~\ref{subsec::intersection_upper_bound} to prove the upper bound.  Finally in Section~\ref{subsec::intersection_lower_bound} we will complete the proof by establishing the lower bound.

\subsection{Derivative estimate}
\label{subsec::derivative}

Recall from Section~\ref{subsec::distort} that for a point $w$ in a simply connected domain $U$, $\confrad(w;U)$ denotes the conformal radius of $U$ as viewed from $w$.  Fix $\kappa\in(0,4)$, let $\eta$ be an ordinary $\SLE_\kappa$ process in $\h$ from $0$ to $\infty$ and, for each $t$, let $\h_t$ denote the unbounded connected component of $\h \setminus \eta([0,t])$.  We use the notation of \cite[Section~6.1]{lawler2009}.  We let
\[ Z_t = Z_t(z) = X_t + i Y_t = g_t(z) - W_t.\]
For $z \in \h$, we let
\begin{equation}
\label{eqn::processes}
\Delta_t = |g_t'(z)|,\quad \Upsilon_t = \frac{Y_t}{|g_t'(z)|},\quad \Theta_t = \arg Z_t,\quad\text{and}\quad S_t = \sin \Theta_t.
\end{equation}
We note that $\Upsilon_t = \tfrac{1}{2} \confrad(z; \h_t) \asymp \dist(z,\partial \h_t)$.  For each $r \in \R$, we also let
\begin{equation}
\label{eqn::lambda_xi_def}
 \nu =\nu(r)= \frac{r^2}{4} \kappa + r\left(1-\frac{\kappa}{4}\right)\quad\text{and}\quad \xi =\xi(r)= \frac{r^2}{8} \kappa.
\end{equation}
(In the notation of \cite{lawler2009}, $a=2/\kappa$.)  Then we have that \cite[Proposition~6.1]{lawler2009}:
\begin{equation}
\label{eqn::mg_def}
 M_t  = M_t(z) = |Z_t|^r Y_t^\xi \Delta_t^\nu = S_t^{-r} \Upsilon_t^{\xi+r} \Delta_t^{\nu+r}
\end{equation}
is a local martingale.  This martingale also appears in \cite[Theorem~6]{\SW}, though it is expressed there in a slightly different form.  (The martingale in \eqref{eqn::martingalebetweensles} is of the same type, though there we have not included the interior force points.)  For each $\eps > 0$ and $R > 0$, we let
\begin{equation}
\label{eqn::tau_sigm_def}
\begin{split}
\tau_{\eps} &=\inf\{t\ge 0: \Upsilon_t= \tfrac{1}{2}\eps\} = \inf\{t \geq 0: \confrad(z;\h_t) = \epsilon\} \quad\text{and}\\
\sigma_R &=\inf\{t\ge 0: |\eta(t)|=R\}.
\end{split}
\end{equation}

\begin{lemma}
\label{lem::sle_cr_martingale}
Fix $r<\frac{1}{2}-\frac{4}{\kappa}$, $\delta \in (0,\tfrac{\pi}{2})$, and $z \in \h$ such that $\arg(z) \in (\delta,\pi-\delta)$.  Let $\p^\star$ be the law of $\eta$ weighted by $M$.  We have that,
\begin{equation}
\label{eqn::radial_finite}
\p^\star[\tau_{\eps}<\infty]=1
\end{equation}
and
\begin{equation}
\label{eqn::radial_sin_finite}
\E^\star[S_{\tau_{\eps}}^r]\asymp 1
\end{equation}
where the constants depend only on $\delta$, $\kappa$, and $r$.  We also have that
\begin{equation}
\label{eqn::radial_angle}
\p^\star[\Theta_{\tau_{\eps}}\in (\delta,\pi-\delta)]\asymp 1
\end{equation}
where constants depend only on $\delta$, $\kappa$, and $r$.  Finally, we have that
\begin{equation}
\label{eqn::radial_exiting_small}
\p^\star[\sigma_R\le \tau_{\eps}]\to 0 \quad\text{as}\quad R\to \infty
\end{equation}
uniformly over $\eps > 0$.
\end{lemma}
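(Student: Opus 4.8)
The plan is to use the change of measure given by the local martingale $M$ and the fact that under $\p^\star$ the process $\Theta_t$ becomes a diffusion whose drift keeps it bounded away from $\{0,\pi\}$. First I would write down the SDEs satisfied by $(\Theta_t, \log\Upsilon_t)$ under $\p$, for instance using the computations in \cite[Section~6]{lawler2009}: reparameterizing by the ``radial time'' $u$ defined via $d u = \tfrac{1}{2}\Upsilon_t^{-2}\,dt$ (equivalently $-\log\Upsilon$ plays the role of time, since $d(\log\Upsilon_t) = -du$ up to the usual factor), the angle $\Theta$ solves an autonomous SDE of the form $d\Theta_u = (\cdots)\,du + \sqrt{\kappa}\,d\wt B_u$. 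Under the weighting by $M = S_t^{-r}\Upsilon_t^{\xi+r}\Delta_t^{\nu+r}$, Girsanov adds a drift proportional to $-r\,\partial_\theta\log\sin\Theta_u = -r\cot\Theta_u$ to the equation for $\Theta$. For $r<\tfrac12-\tfrac4\kappa$ this extra drift points \emph{inward} (toward $\pi/2$) strongly enough that $\Theta_u$ is a diffusion on $(0,\pi)$ that never reaches the endpoints; this is exactly the Bessel-like criterion $-r \cdot 2/\kappa > -(1 - \ldots)$ that the condition $r < \tfrac12 - \tfrac4\kappa$ encodes. In particular $\tau_\eps$ — which in radial time is just the deterministic time $u = \log(Y_0/(\tfrac12\eps))$, give or take the bounded quantity $\log(\Upsilon_t/Y_t)=-\log\Delta_t$ contribution — is reached, proving \eqref{eqn::radial_finite}. (Here one must be slightly careful: $\tau_\eps$ is defined by $\Upsilon_t=\tfrac12\eps$, and $\Upsilon$ is monotone decreasing, so $\tau_\eps<\infty$ on the event that $\Upsilon$ actually decays to $0$, which holds $\p^\star$-a.s.\ because the $\Theta$-diffusion spends a positive fraction of radial time bounded away from $0,\pi$, making $-\log\Upsilon_u$ grow linearly.)

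Next, for \eqref{eqn::radial_sin_finite} and \eqref{eqn::radial_angle}, I would invoke the fact that the law of $\Theta_u$ under $\p^\star$ converges exponentially fast to its (explicit, $\propto (\sin\theta)^{\text{something}}$) invariant density, with rate depending only on $\kappa,r$ and initial condition only through the constraint $\arg z\in(\delta,\pi-\delta)$; hence for all radial times past an $O(1)$ burn-in the law of $\Theta_u$ is comparable to the stationary law, uniformly over starting angles in $(\delta,\pi-\delta)$. Since $\tau_\eps$ corresponds to a radial time that is at least this burn-in for $\eps$ small, $\E^\star[S_{\tau_\eps}^r] = \E^\star[(\sin\Theta_{\tau_\eps})^r]\asymp 1$ (the stationary density integrates $(\sin\theta)^r$ to a finite positive constant for $r$ in the allowed range — one should double-check integrability near the endpoints, which is where $r<\tfrac12-\tfrac4\kappa$ is again exactly what is needed), and similarly $\p^\star[\Theta_{\tau_\eps}\in(\delta,\pi-\delta)]\asymp 1$. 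For small radial times one just uses continuity of the diffusion started from a point in $(\delta,\pi-\delta)$. I would handle the mismatch between the ``capacity time'' stopping rule $\Upsilon_{\tau_\eps}=\tfrac12\eps$ and the ``radial clock'' by noting that the distortion estimates \eqref{eqn::ir.cr.or} bound $\Upsilon_t\asymp\dist(z,\partial\h_t)$ and that the additive discrepancy $-\log\Delta_t$ between $-\log\Upsilon_t$ and $-\log Y_t$ is controlled; in any case $\tau_\eps$ as a radial time is between $\log(Y_0/\eps)-C$ and $\infty$, and the exponential mixing makes the precise value irrelevant.

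Finally, for \eqref{eqn::radial_exiting_small}, the point is that under $\p^\star$ the process $\eta$ still has a chance of order $\Upsilon_t \asymp \dist(z,\partial\h_t)$ of getting close to $z$ before escaping to radius $R$; more precisely, $\dist(z,\partial\h_t) \geq \dist(z,\partial\h)$ initially and the modulus $|\eta(t)|$ growing to $R$ forces $\Upsilon$ to have decayed only by a bounded amount, whereas on $\{\sigma_R\le\tau_\eps\}$ we need $\Upsilon$ to stay above $\tfrac12\eps$ while $|\eta|$ reaches $R$; using the martingale $M$ (optional stopping at $\sigma_R\wedge\tau_\eps$, together with $M_0=M_0(z)$ finite) and the comparability $S_{\sigma_R}^{-r}\gtrsim 1$, $\Delta_{\sigma_R}^{\nu+r}$ and $\Upsilon_{\sigma_R}^{\xi+r}$ controlled, one bounds $\p^\star[\sigma_R\le\tau_\eps]$ by $\E^\star[\,\cdots 1_{\sigma_R\le\tau_\eps}\,]$ and shows this goes to $0$ as $R\to\infty$ uniformly in $\eps$ — essentially because, starting from a fixed $z$, the $\p^\star$-probability that $\eta$ travels out to modulus $R$ without $\dist(z,\partial\h_t)$ collapsing is small, by the same Beurling-type argument (Theorem~\ref{thm::beurling}) that shows a path passing near modulus $R$ around a fixed interior point must come within distance $\asymp R\cdot(\text{small})$ of it. I expect the main obstacle to be the bookkeeping around the two time parameterizations (capacity vs.\ radial) and making the ``uniform over $\eps$'' claim in \eqref{eqn::radial_exiting_small} genuinely uniform; the qualitative behavior — inward drift keeps $\Theta$ away from the endpoints, stationary measure controls everything — is standard and is precisely the content of \cite[Section~6]{lawler2009}, which I would cite for the SDE computations and the invariant-density formula rather than rederiving them.
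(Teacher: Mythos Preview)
Your treatment of \eqref{eqn::radial_finite}--\eqref{eqn::radial_angle} matches the paper's: both pass to the radial parametrization, identify the $\Theta$-diffusion under $\p^\star$ via Girsanov, and use convergence to the stationary density $\propto (\sin\theta)^{2(1-4/\kappa-r)}$. One simplification you should make: once you set $\wh{\Upsilon}_t = \Upsilon_{u(t)} = e^{-4t/\kappa}$, the stopping time $\tau_\eps$ corresponds \emph{exactly} to the deterministic radial time $t=\tfrac{\kappa}{4}\log(2/\eps)$, so the ``give or take a bounded $-\log\Delta_t$'' hedging and the worry about mismatched clocks are unnecessary. For short radial times the paper compares to Brownian motion via Girsanov on a compact interval, and for large times it uses a monotone coupling of three copies of the diffusion started at $\delta$, $\wh\Theta_0$, $\pi-\delta$; your ``exponential mixing'' would also suffice.

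The genuine gap is in \eqref{eqn::radial_exiting_small}. Your Beurling argument does not say what you want: Theorem~\ref{thm::beurling} is a statement about Brownian motion avoiding a given curve, not about where an SLE-type curve must go. There is no geometric reason a continuous path from $0$ to $\partial B(0,R)$ must come close to a fixed $z\in\h$ (take a straight ray far from $z$), so $\{\sigma_R\le\tau_\eps\}$ does not force $\Upsilon$ to collapse. Your optional-stopping sketch is also incomplete: bounding $M_{\sigma_R}$ requires control of $\Delta_{\sigma_R}^{\nu+r}$ and $S_{\sigma_R}^{-r}$ which you do not have uniformly in $\eps$ and $R$.

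The paper's approach is different and cleaner: it identifies the $\p^\star$-law of $\eta$ as a radial $\SLE_\kappa(\rho)$ in $\h$ from $0$ to $z$ with a single force point at $\infty$ of weight $\rho=\kappa-6-r\kappa$ (this is \cite[Theorems~3 and~6]{MR2188260}); the hypothesis $r<\tfrac12-\tfrac4\kappa$ gives $\rho\ge\tfrac\kappa2-2$. Then $\p^\star[\sigma_R\le\tau_\eps]\le\p^\star[\sigma_R<\infty]$, and by the endpoint continuity of radial $\SLE_\kappa(\rho)$ \cite[Theorem~1.12]{IG4} the path is almost surely bounded, so $\p^\star[\sigma_R<\infty]\to0$ as $R\to\infty$. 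Uniformity in $\eps$ is automatic because the right-hand side does not involve $\eps$.
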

\begin{proof}
Note that \eqref{eqn::radial_finite} and \eqref{eqn::radial_sin_finite} are proved in \cite[Equation (6.9)]{lawler2009}, so we will not repeat the arguments here.  Following \cite{lawler2009}, we define the radial parametrization (i.e., by $\log$ conformal radius) $u(t)$ by
\[ \wh{\Upsilon}_t=\Upsilon_{u(t)}=e^{-4t/\kappa}\]
and write $\wh{\eta}(t)=\eta(u(t))$ and $\wh{\Theta}_t=\Theta_{u(t)}$. Then $\wh{\Theta}_t$ satisfies the SDE (see \cite[Section 6.3]{lawler2009})
\begin{equation}
\label{eqn::evolution_radial_parameter}
d\wh{\Theta}_t=\left(1-\frac{4}{\kappa}-r\right)\cot\big(\wh{\Theta}_t\big) dt+ d\wh{W}_t
\end{equation}
where $\wh{W}$ is a $\p^\star$-Brownian motion.  The process $\wh{\Theta}$ almost surely does not hit $\{0,\pi\}$ (see \cite[Lemma 1.27]{lawler2005}) and the density with respect to Lebesgue measure on $[0,\pi]$ for the stationary distribution for \eqref{eqn::evolution_radial_parameter} is given by
\[ f(\theta)=c(\sin\theta)^{2\left(1-\tfrac{4}{\kappa}-r\right)}\]
where $c > 0$ is a normalizing constant (see \cite[Lemma~1.28]{lawler2005}).  Moreover, as $t \to \infty$, the law of $\wh{\Theta}_t$ converges to the stationary distribution with respect to the total variation norm.

We can use this to extract \eqref{eqn::radial_angle} as follows.  Fix $0 < T < \infty$.  We first note that by the Girsanov theorem the law of $\wh{\Theta}|_{[0,T]}$ stopped upon leaving $(\tfrac{\delta}{2},\pi-\tfrac{\delta}{2})$ is mutually absolutely continuous with respect to that of $B|_{[0,T]}$ where $B$ is a Brownian motion starting from $\wh{\Theta}_0$, also stopped upon leaving $(\tfrac{\delta}{2},\pi-\tfrac{\delta}{2})$.  Fix $ 0 \leq t \leq T$.  Then a Brownian motion starting from $\wh{\Theta}_0 \in [\delta,\pi-\delta]$ has a uniformly positive chance of staying in $(\tfrac{\delta}{2},\pi-\tfrac{\delta}{2})$ during the time interval $[0,t]$ and then being in $(\delta,\pi-\delta)$ at time $t$.  Therefore it is easy to see that \eqref{eqn::radial_angle} holds for all $0 \leq t \leq T$.

The lower bound, however, that comes from this estimate decays as $T$ increases.  We are now going to explain how we make our choice of $T$ as well as get a uniform lower bound for $t \geq T$.  We suppose that $\wh{\Theta}^1,\wh{\Theta}^2$ are solutions of \eqref{eqn::evolution_radial_parameter} where $\wh{\Theta}_0^1 = \delta$ and $\wh{\Theta}_0^2 = \pi-\delta$.  We assume further that the Brownian motions driving $\wh{\Theta}$, $\wh{\Theta}^1$, and $\wh{\Theta}^2$ are independent of each other until the time that any two of the processes meet, after which we take the Brownian motions for the pair to be the same.  This gives us a coupling $(\wh{\Theta}^1,\wh{\Theta},\wh{\Theta}^2)$ such that $\wh{\Theta}_t^1 \leq \wh{\Theta}_t \leq \wh{\Theta}_t^2$ for all $t \geq 0$ almost surely.  Note that after $\wh{\Theta}^1$ first hits $\wh{\Theta}^2$, all three processes stay together and never separate.  Let $q_\delta > 0$ be the mass that the stationary distribution puts on $(\delta,\pi-\delta)$.  We then take $T > 0$ sufficiently large so that:
\begin{enumerate}
\item For all $t \geq T$, the total variation distance between the law of $\wh{\Theta}_t^1$ and the stationary distribution is at most $\tfrac{q_\delta}{2}$.
\item Let $\xi = \inf\{t \geq 0 : \wh{\Theta}_t^1 = \wh{\Theta}_t^2\}$.  Then $\p[\xi \geq T] \leq \tfrac{q_\delta}{4}$.
\end{enumerate}
With this particular choice of $T$, we have that
\begin{align*}
  \p^\star[ \wh{\Theta}_t \in (\delta,\pi-&\delta)]
 \geq \p^\star[ \wh{\Theta}_t^1 \in (\delta,\pi-\delta)] - \p^\star[ \xi \geq T]\\
\geq& \frac{q_\delta}{2} - \frac{q_\delta}{4}
= \frac{q_\delta}{4} \quad\text{for all}\quad t \geq T.
\end{align*}
This proves \eqref{eqn::radial_angle}.

For \eqref{eqn::radial_exiting_small}, note that, under $\p^\star$, $\wh{\eta}$ has the same law as a radial $\SLE_{\kappa}(\rho)$ in $\h$ from $0$ to $z$ with a single boundary force point located at $\infty$ of weight $\rho =\kappa-6-r\kappa\ge \frac{\kappa}{2}-2$ (see \cite[Theorem~3 and Theorem~6]{MR2188260}). Define $\wh{\sigma}_R=\inf\{t\ge 0: |\wh{\eta}(t)|=R\}$.  Then
\[\p^\star[\sigma_R<\tau_{\eps}]\le \p^\star[\wh{\sigma}_R<\infty].\]
The endpoint continuity of the radial $\SLE_\kappa(\rho)$ processes with $\rho > -2$ \cite[Theorem~1.12]{IG4} implies that $\p^\star[\wh{\sigma}_R < \infty] \to 0$ as $R \to \infty$, as desired.
\end{proof}

We are now going to use Lemma~\ref{lem::sle_cr_martingale} to estimate the moments of $g_t'(z)$ at times when $\eta$ is close to $z$.  We will actually prove this for general $\SLE_\kappa(\ul{\rho})$ processes which is why we truncate on various events in the estimates proved below.

\begin{lemma}
\label{lem::derivative_hit_before_exit}
Fix $r < \tfrac{1}{2}-\tfrac{4}{\kappa}$ and $\delta \in (0,\tfrac{\pi}{2})$.  There exists $R_0 = R_0(r) > 0$ such that for all $R \geq R_0$ the following holds.  Suppose $\eta \sim \SLE_\kappa(\ul{\rho})$ in $\h$ from $0$ to $\infty$ where the force points lie outside of $2R \D$.  Fix $z \in \D \cap \h$ with $\arg(z) \in (\delta,\pi-\delta)$.  For each $\eps > 0$ and $R > 0$ we let $\tau_\eps$ and $\sigma_R$ be as in \eqref{eqn::tau_sigm_def}.  Then
\begin{equation}
\label{eqn::loewner_deriv_upper_bound_theta}
\E\left[\left|g'_{\tau_{\eps}}(z)\right|^{\nu+r}\one_{\{\tau_{\eps}<\sigma_R\}}\right]\asymp \eps^{-\xi-r} \quad\text{provided}\quad \confrad(z;\h) \geq \eps
\end{equation}
where the constants depend only on $\delta$, $\kappa$, and the weights $\ul{\rho}$ of the force points.  Fix a constant $C > 1$ and suppose that $\zeta_\eps$ is a stopping time for $\eta$ such that $\tau_{C \eps} \leq \zeta_\eps \leq \tau_{\eps/C}$.
Let
\begin{align}
 E_{\eps,R}^\delta &= \{\zeta_\eps < \sigma_R,\ \Theta_{\zeta_\eps} \in (\delta,\pi-\delta)\}.
\end{align}
Then we have that
\begin{align}
 \E\left[ \left|g_{\zeta_\eps}'(z)\right|^{\nu+r} \one_{E_{\eps,R}^\delta} \right] &\asymp \eps^{-\xi-r} \quad\text{provided}\quad \confrad(z;\h) \geq \eps \label{eqn::loewner_deriv_bound_theta}
\end{align}
where the constants depend only on $C$, $\delta$, $\kappa$, and the weights $\ul{\rho}$ of the force points.
\end{lemma}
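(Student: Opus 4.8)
The plan is to reduce everything to the case of an ordinary $\SLE_\kappa$ process, for which the explicit martingale $M$ of \eqref{eqn::mg_def} and the estimates of Lemma~\ref{lem::sle_cr_martingale} are available. Since the force points lie outside $2R\D$ and only the behavior of $\eta$ up to $\tau_\eps\wedge\sigma_R$ (resp.\ $\zeta_\eps\wedge\sigma_R$) enters the estimates, the Loewner hull $K_{t\wedge\sigma_R}$ stays inside $R\overline{\D}$, hence is at distance $\ge R$ from every force point while $\hcap(K_{t\wedge\sigma_R})\le R^2$; standard Loewner distortion estimates (as in the proof of Lemma~\ref{lem::change_of_domains_bound_RN}) then show that each factor of the martingale \eqref{eqn::martingalebetweensles} converting $\SLE_\kappa$ into $\SLE_\kappa(\ul{\rho})$ stays comparable, with constants depending only on $\kappa$ and the weights, to its value at time $0$. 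Thus the laws of the two processes, stopped at $\sigma_R$, are mutually absolutely continuous with Radon--Nikodym derivative bounded above and below by constants depending only on $\kappa$ and $\ul{\rho}$, and it suffices to prove both estimates when $\eta$ is an ordinary $\SLE_\kappa$. I write $\p$ for that law and $\p^{\star}$ for its reweighting by $M/M_0$, as in Lemma~\ref{lem::sle_cr_martingale}; since $M$ is a uniformly integrable martingale up to $\tau_\eps\wedge\sigma_R$, one has $\E[M_{\zeta\wedge\sigma_R}G]=M_0\,\E^{\star}[G]$ for any stopping time $\zeta\le\tau_{\eps/C}$ and any bounded $\mathcal{F}_{\zeta\wedge\sigma_R}$--measurable $G$.

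For \eqref{eqn::loewner_deriv_upper_bound_theta}, on $\{\tau_\eps<\sigma_R\}$ one has $\Upsilon_{\tau_\eps}=\tfrac12\eps$, so solving \eqref{eqn::mg_def} gives $|g'_{\tau_\eps}(z)|^{\nu+r}=(\tfrac12\eps)^{-\xi-r}M_{\tau_\eps}S_{\tau_\eps}^{\,r}$ there, and hence
\[
\E\!\left[|g'_{\tau_\eps}(z)|^{\nu+r}\one_{\{\tau_\eps<\sigma_R\}}\right]=(\tfrac12\eps)^{-\xi-r}\,M_0\,\E^{\star}\!\left[S_{\tau_\eps}^{\,r}\one_{\{\tau_\eps<\sigma_R\}}\right].
\]
Here $M_0=S_0^{-r}\Upsilon_0^{\xi+r}\asymp1$ (using $\Delta_0=1$, $\arg z\in(\delta,\pi-\delta)$, and that $\confrad(z;\h)$ is of unit order in the regime where the lemma is applied; otherwise the right side carries an extra fixed power of $\confrad(z;\h)$). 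I would then show $\E^{\star}[S_{\tau_\eps}^{\,r}\one_{\{\tau_\eps<\sigma_R\}}]\asymp1$: the upper bound is at most $\max(1,\E^{\star}[S_{\tau_\eps}^{\,r}])\lesssim1$ by \eqref{eqn::radial_sin_finite}, while for the lower bound one restricts to $\{\tau_\eps<\sigma_R,\ \Theta_{\tau_\eps}\in(\delta,\pi-\delta)\}$, on which $S_{\tau_\eps}^{\,r}\gtrsim1$, and whose $\p^{\star}$--probability is at least $\p^{\star}[\Theta_{\tau_\eps}\in(\delta,\pi-\delta)]-\p^{\star}[\sigma_R\le\tau_\eps]\gtrsim1$ once $R\ge R_0$, by \eqref{eqn::radial_angle} and \eqref{eqn::radial_exiting_small} (choose $R_0$ so that the escape probability in \eqref{eqn::radial_exiting_small} is at most half of the lower bound in \eqref{eqn::radial_angle}).

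For \eqref{eqn::loewner_deriv_bound_theta} the argument is parallel: since $\tau_{C\eps}\le\zeta_\eps\le\tau_{\eps/C}$ one has $\Upsilon_{\zeta_\eps}\asymp\eps$, and on $E_{\eps,R}^{\delta}$ also $S_{\zeta_\eps}\in[\sin\delta,1]$, so \eqref{eqn::mg_def} gives $|g'_{\zeta_\eps}(z)|^{\nu+r}\one_{E_{\eps,R}^{\delta}}\asymp\eps^{-\xi-r}M_{\zeta_\eps}\one_{E_{\eps,R}^{\delta}}$, and the change--of--measure identity with $G=\one_{E_{\eps,R}^{\delta}}$ (valid because $E_{\eps,R}^{\delta}\subseteq\{\zeta_\eps<\sigma_R\}$ and $\zeta_\eps\le\tau_{\eps/C}$) yields
\[
\E\!\left[|g'_{\zeta_\eps}(z)|^{\nu+r}\one_{E_{\eps,R}^{\delta}}\right]\asymp\eps^{-\xi-r}\,M_0\,\p^{\star}[E_{\eps,R}^{\delta}].
\]
The bound $\p^{\star}[E_{\eps,R}^{\delta}]\le1$ gives the upper estimate. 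For the lower estimate I would pass to a sub--event that does not refer to the particular choice of $\zeta_\eps$: let $F$ be the event that $\tau_{\eps/C}<\sigma_R$ and $\Theta_t\in(\delta,\pi-\delta)$ for every $t\in[\tau_{C\eps},\tau_{\eps/C}]$. On $F$ one has $\zeta_\eps\le\tau_{\eps/C}<\sigma_R$ and $\zeta_\eps\in[\tau_{C\eps},\tau_{\eps/C}]$, so $\Theta_{\zeta_\eps}\in(\delta,\pi-\delta)$; hence $F\subseteq E_{\eps,R}^{\delta}$. It then remains to see $\p^{\star}[F]\gtrsim1$, which I would obtain from the strong Markov property at $\tau_{C\eps}$: use \eqref{eqn::radial_angle} (at scale $C\eps$) to get $\Theta_{\tau_{C\eps}}\in(\delta,\pi-\delta)$ with probability $\gtrsim1$, a one--dimensional diffusion estimate for the angle equation \eqref{eqn::evolution_radial_parameter} (through the Girsanov comparison with Brownian motion used in the proof of Lemma~\ref{lem::sle_cr_martingale}) to keep the angle inside $(\delta,\pi-\delta)$ over the subsequent $\tfrac{\kappa}{2}\log C$ units of radial time, which is bounded, and \eqref{eqn::radial_exiting_small} to ensure $\tau_{\eps/C}<\sigma_R$; enlarging $R_0$ if necessary makes this uniform in $R\ge R_0$.

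The step I expect to be the main obstacle is precisely this last one: one must control the law of the angle $\Theta$ at the stopping time $\zeta_\eps$, which may be chosen adversarially inside $[\tau_{C\eps},\tau_{\eps/C}]$, rather than at a deterministic time. Passing to the sub--event $F$, which forces $\Theta_{\zeta_\eps}\in(\delta,\pi-\delta)$ for every admissible $\zeta_\eps$ simultaneously, reduces the matter to a ``stay inside a fixed interval for bounded radial time'' estimate for the angle diffusion, started from a configuration that is itself favorable with probability $\gtrsim1$ by the ergodicity of $\widehat{\Theta}$ already established in the proof of Lemma~\ref{lem::sle_cr_martingale}. Keeping track throughout that all implied constants depend only on $\delta$, $\kappa$, $r$ (and on $C$ in the second part), and in particular not on $R\ge R_0$ nor on the position of $z$ within the allowed cone once $\confrad(z;\h)$ has been normalized, is the part that requires the most care.
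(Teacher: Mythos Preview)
Your proposal is correct and follows essentially the same route as the paper: reduce to ordinary $\SLE_\kappa$ via the boundedness of the Girsanov martingale \eqref{eqn::martingalebetweensles} when the force points lie outside $2R\D$, rewrite $|g_t'(z)|^{\nu+r}$ via $M_t$ and change measure to $\p^\star$, then invoke \eqref{eqn::radial_sin_finite}, \eqref{eqn::radial_angle}, \eqref{eqn::radial_exiting_small}. Your handling of the general stopping time $\zeta_\eps$ by passing to the sub-event $F=\{\tau_{\eps/C}<\sigma_R,\ \Theta_t\in(\delta,\pi-\delta)\ \text{for all}\ t\in[\tau_{C\eps},\tau_{\eps/C}]\}$ and controlling it via the angle diffusion over the bounded radial-time window $\tfrac{\kappa}{4}\log C^2$ is exactly what the paper does in its final paragraph.
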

\begin{proof}
It suffices to prove the result for an ordinary $\SLE_\kappa$ process since it is clear from the form of \eqref{eqn::martingalebetweensles} that the Radon-Nikodym derivative between the law of an $\SLE_\kappa$ and an $\SLE_\kappa(\ul{\rho})$ process whose force points lie outside of $2R\D$ stopped at time $\sigma_R$ is bounded from above and below by finite and positive constants which depend only on the total (absolute) weight of the force points and $\kappa$.

We are now going to prove the upper bound of \eqref{eqn::loewner_deriv_upper_bound_theta} and the lower bound of \eqref{eqn::loewner_deriv_bound_theta} with $\tau_\eps = \zeta_\eps$.  We have that,
\begin{align*}
\E\left[\left|g'_{\tau_{\eps}}(z)\right|^{\nu+r}\one_{\{\tau_{\eps}<\sigma_R\}}\right]&\le \E\left[\left|g'_{\tau_{\eps}}(z)\right|^{\nu+r}\one_{\{\tau_{\eps}<\infty\}}\right]\\
&\asymp \eps^{-\xi-r}\E[M_{\tau_{\eps}}S^r_{\tau_{\eps}}\one_{\{\tau_{\eps}<\infty\}}]\\
&=\eps^{-\xi-r}M_0\E^\star[S^r_{\tau_{\eps}}]\\
&\lesssim \eps^{-\xi-r}\quad\quad(\text{by \eqref{eqn::radial_sin_finite}}).
\end{align*}
This proves the upper bound of \eqref{eqn::loewner_deriv_upper_bound_theta}.  For the lower bound, we compute
\begin{align*}
\E\left[ \left|g_{\tau_\eps}'(z)\right|^{\nu+r} \one_{E_{\eps,R}^\delta} \right]&\asymp \eps^{-\xi-r}\E\left[M_{\tau_{\eps}}S^r_{\tau_{\eps}}\one_{E_{\eps,R}^\delta}\right]\\
&\ge \eps^{-\xi-r}\E\left[M_{\tau_{\eps}}\one_{E_{\eps,R}^\delta}\right]\\
&=\eps^{-\xi-r}M_0\p^\star[E_{\eps,R}^\delta].
\end{align*}
To bound $\p^\star[E_{\eps,R}^\delta]$, we have
\begin{align*}
\p^\star[E_{\eps,R}^\delta]
&=\p^\star[\tau_\eps < \sigma_R,\ \Theta_{\tau_\eps} \in (\delta,\pi-\delta)]\\
&\geq \p^\star[\Theta_{\tau_\eps} \in (\delta,\pi-\delta)]-\p^\star[\sigma_R<\tau_{\eps}].
\end{align*}
From \eqref{eqn::radial_angle}, we know that $\p^\star[\Theta_{\tau_\eps} \in (\delta,\pi-\delta)]$ is bounded from below uniformly in $\epsilon > 0$. From \eqref{eqn::radial_exiting_small}, we know that $\p^\star[\sigma_R<\tau_{\eps}]$ converges to zero as $R\to\infty$ uniformly over $\eps > 0$. These show that $\p^\star[E_{\eps,R}^\delta]$ is bounded from below which proves the lower bound for \eqref{eqn::loewner_deriv_bound_theta}.  The upper bound in the case that we replace $\tau_\epsilon$ with $\zeta_\epsilon$ is proved similarly.  For the lower bound, it is not difficult to see that
\[ \p^\star[ \Theta_t \in (\delta,\pi-\delta) \text{ for all } t \in [\tau_{C \eps},\tau_{\eps/C}] \giv \Theta_{\tau_{C \eps}} \in (\delta,\pi-\delta)] > 0\]
uniformly in $\eps > 0$ and
\[ \p^\star[ \sigma_R \leq \zeta_\eps] \leq \p^\star[ \sigma_R \leq \tau_{\eps/C}] \to 0 \quad\text{as}\quad R \to \infty\]
uniformly in $\eps >0$. 
\end{proof}

\subsection{Hitting probabilities}
\label{subsec::hitting}

\begin{figure}[ht!]
\begin{center}
\includegraphics[scale=0.85]{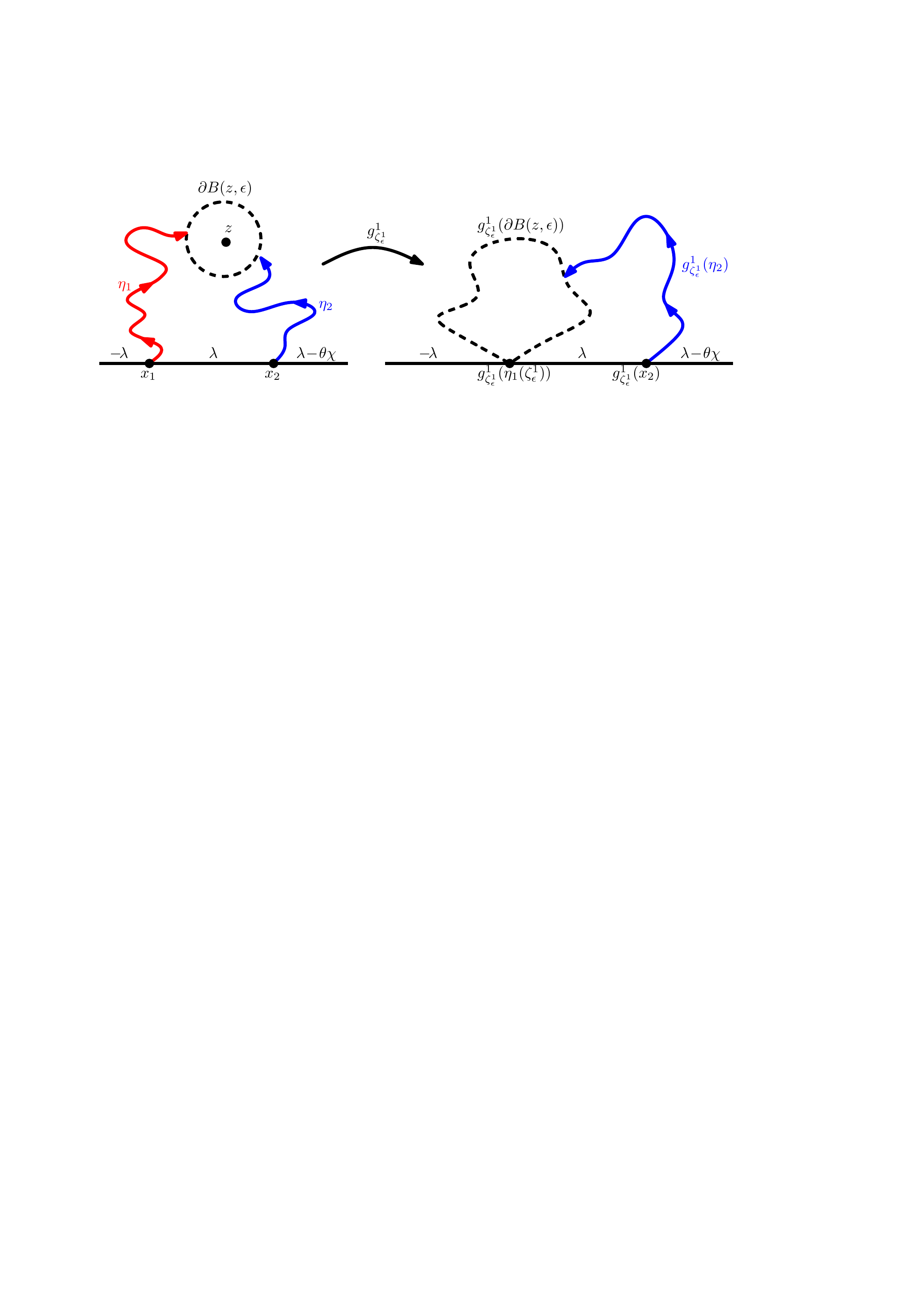}
\end{center}
\caption{\label{fig::flow_line_intersection_one_point}
Illustration of the setup of Lemma~\ref{lem::one_point_both}, the one point estimate for the intersection dimension.  On the left side, $\eta_1$ (resp.\ $\eta_2$) is a flow line of a GFF on $\h$ with the indicated boundary data with angle $0$ (resp.\ $\theta \in (\pi-2\lambda/\chi,0)$) starting from $x_1$ (resp.\ $x_2 > x_1$).  Note that $\eta_1$ (resp.\ $\eta_2$) is an $\SLE_\kappa(-\theta \chi/\lambda)$ (resp.\ $\SLE_\kappa(2,-\theta \chi/\lambda -2)$) process.  The force point for $\eta_1$ is located at $x_2$ and the force points for $\eta_2$ are located at $x_1$ and $x_2^-$.  By Figure~\ref{fig::conditional_law}, the conditional law of $\eta_2$ given $\eta_1$ drawn up to any stopping time is also an $\SLE_\kappa(2,-\theta \chi/\lambda-2)$ process.  Shown is the event $G^{\delta}_\eps(z)$ that $\eta_1$ hits $\partial B(z,\eps)$, say for the first time at $\zeta_\eps^1$, before exiting $B(0,R_0)$ where $R_0 > 0$ is a large, fixed constant, the harmonic measure of the left (resp.\ right) side of $\eta_1$ stopped upon hitting $\partial B(z,\epsilon)$ is not too small, and that $\eta_2$ also hits $\partial B(z,\eps)$.  We estimate the probability of $G^{\delta}_\eps(z)$ by combining Lemma~\ref{lem::derivative_hit_before_exit} with Theorem~\ref{thm::one_point_exponent_general}.}
\end{figure}

\begin{figure}[ht!]
\begin{center}
\includegraphics[scale=0.85]{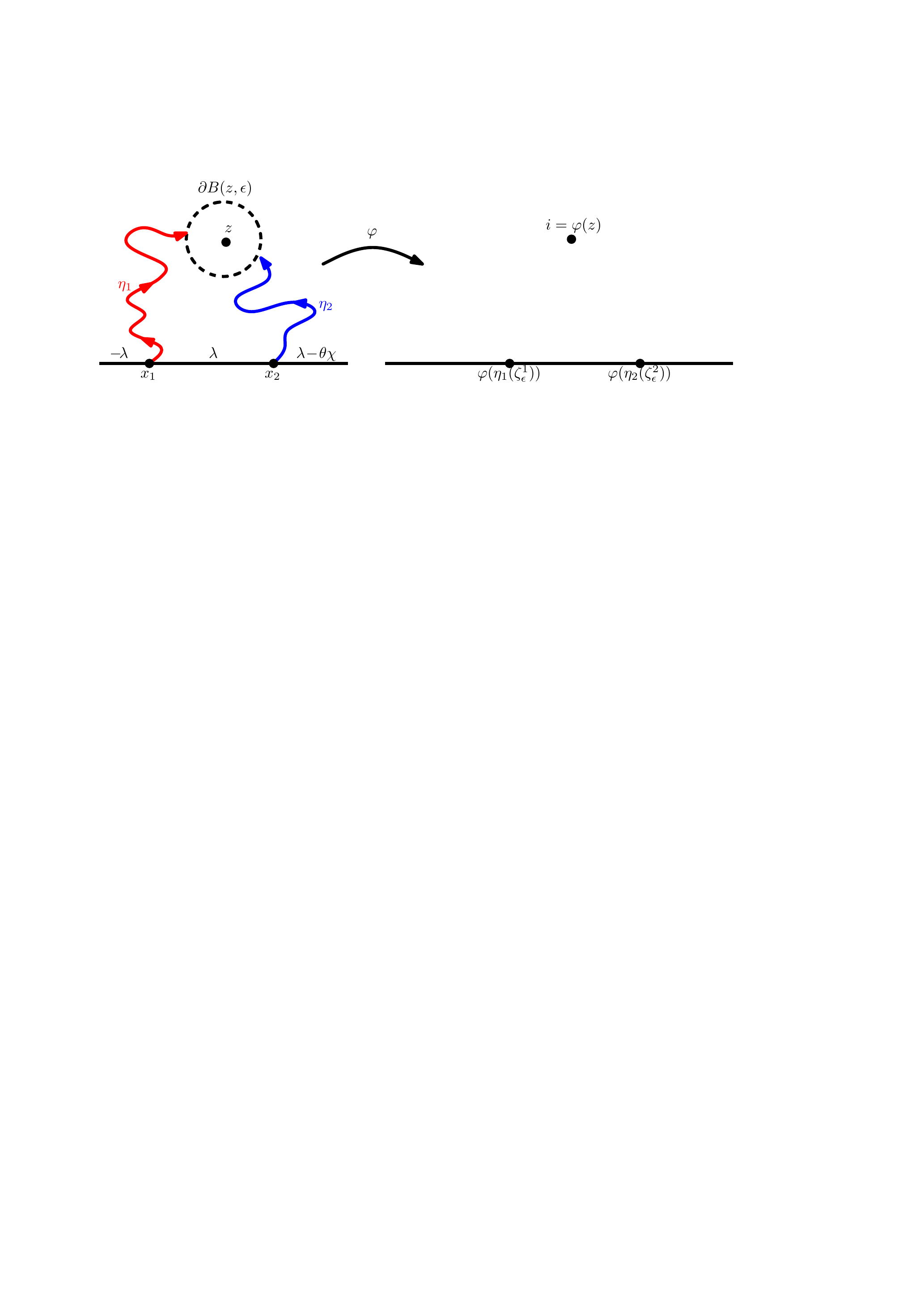}
\end{center}
\caption{\label{fig::flow_line_intersection_one_point2} (Continuation of Figure~\ref{fig::flow_line_intersection_one_point}.) Let $\zeta_\epsilon^1,\zeta_\epsilon^2$ be the times that $\eta_1,\eta_2$ hit $\partial B(z,\eps)$, respectively, and let $\varphi$ be the unique conformal map that uniformizes the unbounded connected component of $\h \setminus (\eta_1([0,\zeta_\epsilon^1]) \cup \eta_2([0,\zeta_\epsilon^2]))$ with $z$ sent to $i$ and $\infty$ fixed.  For the lower bound of Theorem~\ref{thm::two_flowline_dimension}, we will also need to estimate the probability of the event $H^{\delta}_\eps(z)$ that $G^{\delta}_\eps(z)$ occurs (as described in Figure~\ref{fig::flow_line_intersection_one_point}), that the diameter of $\eta_2([0,\zeta_\eps^2])$ is not too large, and that the images of $\eta_i(\zeta_\epsilon^i)$ for $i=1,2$ under $\varphi$ are not too far from $i$ as illustrated on the right.}
\end{figure}

Fix an angle $\theta \in (\pi-2\lambda/\chi,0)$.  This is the range so that GFF flow lines with angles $0,\theta$ are able to intersect each other where the flow line with angle $\theta$ stays to the right of the flow line with angle $0$ \cite[Theorem~1.5]{IG1}.  Let
\begin{equation}
\label{eqn::two_flow_line_exponent}
A = \frac{1}{2\kappa} \left(\rho+\frac{\kappa}{2} + 2\right)\left( \rho-\frac{\kappa}{2}+6 \right) \quad\text{where}\quad \rho = -\frac{\theta\chi}{\lambda} - 2.
\end{equation}

\begin{lemma}
\label{lem::one_point_both}
Fix $C > 2$, let $x_1 = 0$, and fix $x_2 \geq 2R_0$ where $R_0$ is the constant from Lemma~\ref{lem::derivative_hit_before_exit} with
\[ r = -\frac{2}{\kappa}\left(\rho+6-\frac{\kappa}{2} \right).\]
Let $h$ be a GFF on $\h$ with boundary data as illustrated in Figure~\ref{fig::flow_line_intersection_one_point}.  That is,
\begin{equation}
\label{eqn::h_bd_one_point}
 h|_{(-\infty,x_1)} \equiv -\lambda,\quad h|_{[x_1,x_2]} \equiv \lambda,\quad\text{and}\quad h|_{(x_2,\infty)} \equiv \lambda - \theta \chi.
\end{equation}
Let $\eta_1$ (resp.\ $\eta_2$) be the flow line of $h$ starting from $x_1$ (resp.\ $x_2$) with angle $0$ (resp.\ $\theta$).  Fix $\delta  \in (0,\tfrac{\pi}{2})$ and let $z \in \D \cap \h$ with $\arg(z) \in (\delta,\pi-\delta)$.  For $i=1,2$, let $\zeta_\epsilon^i$ be the first time that $\eta_i$ hits $\partial B(z,\epsilon)$ and let $\Theta_t^1$ be the process as in Lemma~\ref{lem::derivative_hit_before_exit} for $\eta_1$.
\begin{enumerate}[(i)]
\item Let $G^{\delta}_\eps(z)$ be the event that $\eta_1$ hits $\partial B(z,\eps)$ before hitting $\partial B(0,R_0)$, $\Theta_{\zeta_\eps^1}^1 \in (\delta,\pi-\delta)$, and that $\eta_2$ hits $\partial B(z,\eps)$.  Then we have that
\begin{align}
\label{eqn::one_point_both}
 \p[ G^{\delta}_\eps(z)] =\eps^{A+o(1)}
\end{align}
where the $o(1)$ term depends only on $\delta$, $\kappa$, $\theta$, and $x_2$.
\item On $G^{\delta}_\eps(z)$, let $\varphi$ be the unique conformal map which takes the unbounded connected component of $\h \setminus ( \eta_1([0,\zeta_\eps^1]) \cup \eta_2([0,\zeta_\eps^2]))$ to $\h$ sending $z$ to $i$ and fixing $\infty$.  There exists a constant $R_1 > 0$ such that with
\[ H^{\delta}_{\eps}(z) = G^{\delta}_\eps(z) \cap \{ \max_{i=1,2} |\varphi(\eta_i(\zeta_\eps^i))| \leq R_1,\ \ \eta_2([0,\zeta_\eps^2]) \subseteq B(0,10x_2)\}\]
we have that
\begin{align}
\label{eqn::one_point_both_good}
\p[ H^{\delta}_{\eps}(z)] \gtrsim \eps^A
\end{align}
where the constants depend only on $\delta$, $\kappa$, $\theta$, and $x_2$.
\end{enumerate}
The same likewise holds if $h$ is a GFF on $\h$ with piecewise constant boundary conditions which change values a finite number of times and in the interval $[-20 x_2,20 x_2]$ takes the form in \eqref{eqn::h_bd_one_point}.  In this case, the constants also depend on $\|h|_{\R}\|_\infty$.
\end{lemma}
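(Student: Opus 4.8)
The plan is to estimate $\p[G^\delta_\eps(z)]$ by drawing $\eta_1$ first and then analyzing $\eta_2$ conditionally, combining the derivative estimate of Lemma~\ref{lem::derivative_hit_before_exit} (applied to $\eta_1$) with the boundary one‑point estimate of Theorem~\ref{thm::one_point_exponent_general} --- and, for part~(ii), Corollary~\ref{cor::lower_bound_one_point_exponent} --- applied to $\eta_2$. Write $\zeta_\eps^1$ for the first time $\eta_1$ hits $\partial B(z,\eps)$ and set $A_1(\eps) = \{\zeta_\eps^1 < \sigma_{R_0},\ \Theta_{\zeta_\eps^1}^1 \in (\delta,\pi-\delta)\}$, with $\sigma_{R_0}$ as in \eqref{eqn::tau_sigm_def}, so that $G^\delta_\eps(z) = A_1(\eps) \cap \{\eta_2 \text{ hits } \partial B(z,\eps)\}$. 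Since $z \in \D\cap\h$ with $\confrad(z;\h) = 2\im z \ge \eps$, one has $\dist(z,\partial\h_{\zeta_\eps^1}) = \min(\im z,\eps) \asymp \eps$, so $B(z,c\eps)\subseteq\h_{\zeta_\eps^1}$ for a small fixed $c$ and, by \eqref{eqn::ir.cr.or}, $\zeta_\eps^1$ lies between the conformal‑radius stopping times $\tau_{C\eps}$ and $\tau_{\eps/C}$ of \eqref{eqn::tau_sigm_def} for a fixed $C$ (say $C = 4$). As $E^\delta_{\eps,R_0} = A_1(\eps)$ when $\zeta_\eps = \zeta_\eps^1$, Lemma~\ref{lem::derivative_hit_before_exit} (which applies because the prescribed $r = -\tfrac2\kappa(\rho+6-\tfrac\kappa2)$ satisfies $r < \tfrac12 - \tfrac4\kappa$) gives
\[ \E\big[\,|g'_{\zeta_\eps^1}(z)|^{\nu+r}\,\one_{A_1(\eps)}\,\big] \;\asymp\; \eps^{-\xi-r}. \]

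Next I would analyze $\eta_2$ given $\eta_1|_{[0,\zeta_\eps^1]}$. By the GFF/SLE coupling (Figure~\ref{fig::conditional_law}), its conditional law is that of an $\SLE_\kappa(2,\rho)$ process in $\h_{\zeta_\eps^1}$ from $x_2$, with the weight‑$2$ force point at the tip $\eta_1(\zeta_\eps^1)$ and the weight‑$\rho$ force point at $x_2^-$. Uniformizing $\h_{\zeta_\eps^1}$ by $f_{\zeta_\eps^1} = g_{\zeta_\eps^1} - W_{\zeta_\eps^1}$: the tip maps to $0$, the point $z$ maps to $Z_{\zeta_\eps^1}^1$ with $\im Z_{\zeta_\eps^1}^1 \asymp \eps|g'_{\zeta_\eps^1}(z)|$ and, by the angle bound, $|Z_{\zeta_\eps^1}^1| \asymp \eps|g'_{\zeta_\eps^1}(z)|$, while $f_{\zeta_\eps^1}(x_2) \asymp 1$ since $K_{\zeta_\eps^1}\subseteq B(0,R_0)$ and $x_2 \ge 2R_0$. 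Using the distortion bounds of Section~\ref{subsec::distort} on $B(z,c\eps)$, the event that $\eta_2$ comes within $\eps$ of $z$ is --- up to bounded distortion and the good‑angle truncations below --- the event that the image of $\eta_2$ under $f_{\zeta_\eps^1}$ comes within $\asymp\eps|g'_{\zeta_\eps^1}(z)|$ of the boundary point $0$. A M\"obius change of coordinates puts this image into the standard form of Theorem~\ref{thm::one_point_exponent_general}: an $\SLE_\kappa(\rho_{1,R},\rho_{2,R})$ process from $0$ to $\infty$ with force points $(0^+,1)$, where $\rho_{1,R} = \rho$ (the $x_2^-$ force point, now adjacent to the start) and $\rho_{2,R} = 2$ (the force point at $0$, now the point to be approached); the theorem then gives that the conditional probability is comparable to $(\eps|g'_{\zeta_\eps^1}(z)|)^{\alpha_2 + o(1)}$ with $\alpha_2 = \tfrac1\kappa(\rho+2)(\rho+6-\tfrac\kappa2)$, evaluated at the random scale $\eps|g'_{\zeta_\eps^1}(z)|$, which stays $\lesssim 1$ on $A_1(\eps)$ because $\eps|g'_{\zeta_\eps^1}(z)| \asymp \im g_{\zeta_\eps^1}(z) \le \im z$ and the imaginary part is nonincreasing under the Loewner flow. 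The two algebraic identities that make the argument close are $\nu(r)+r = \alpha_2$ and $\nu(r)-\xi(r) = A$ for the prescribed $r$; both follow from \eqref{eqn::lambda_xi_def} and the definition \eqref{eqn::two_flow_line_exponent} of $A$ by direct computation.

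Combining the two displays: $\p[G^\delta_\eps(z)] = \E[\one_{A_1(\eps)}(\eps|g'_{\zeta_\eps^1}(z)|)^{\alpha_2+o(1)}]$, and since $\E[\one_{A_1(\eps)}|g'_{\zeta_\eps^1}(z)|^{\nu+r}] \asymp \eps^{-\xi-r}$ with $\nu+r = \alpha_2$ and $\nu-\xi = A$, this equals $\eps^{A+o(1)}$; the only bookkeeping is that the scale‑dependent $o(1)$ from Theorem~\ref{thm::one_point_exponent_general} is absorbed into an arbitrarily small perturbation of the exponent, i.e.\ by replacing $r$ with a nearby admissible value and using that $r\mapsto\nu(r)+r$ and $r\mapsto\nu(r)-\xi(r)$ are continuous (on $\{\eps|g'_{\zeta_\eps^1}(z)|\gtrsim 1\}$ the conditional probability is $\asymp 1 \asymp (\eps|g'_{\zeta_\eps^1}(z)|)^{\alpha_2}$ trivially). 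This proves \eqref{eqn::one_point_both}. For \eqref{eqn::one_point_both_good} I would rerun the argument with two‑sided estimates --- the lower bound of Lemma~\ref{lem::derivative_hit_before_exit} on $E^\delta_{\eps,R}$ and the exact two‑sided estimate of Corollary~\ref{cor::lower_bound_one_point_exponent} (no $o(1)$) for $\eta_2$ --- which is legitimate precisely because the extra events defining $H^\delta_\eps(z)$, namely the boundedness of $\eta_2([0,\zeta_\eps^2])$ and of $\max_i|\varphi(\eta_i(\zeta_\eps^i))|$, encode the conditions ``$\tau_\eps<\sigma_r$'' and ``$\im\eta(\tau_\eps)\ge\delta\eps$'' of that corollary and keep the distortion comparisons uniform; this yields $\p[H^\delta_\eps(z)] \gtrsim \eps^{\alpha_2}\eps^{-\xi-r} = \eps^{A}$. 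Finally, for a GFF with general piecewise‑constant boundary data agreeing with \eqref{eqn::h_bd_one_point} on $[-20x_2,20x_2]$: on all the events in question $\eta_1([0,\zeta_\eps^1])$ and $\eta_2([0,\zeta_\eps^2])$ stay in $B(0,10x_2)$ ($\eta_1$ because it reaches $\partial B(z,\eps)\subseteq\D$ before $\partial B(0,R_0)$, $\eta_2$ by the truncation in $H^\delta_\eps(z)$), so by absolute continuity of the GFF restricted to a bounded subdomain (\cite[Proposition~3.2]{IG1}) the relevant laws are mutually absolutely continuous with Radon--Nikodym derivative bounded above and below by constants depending on $\|h|_{\R}\|_\infty$, which preserves the exponent in \eqref{eqn::one_point_both} and the lower bound in \eqref{eqn::one_point_both_good}.

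The step I expect to be the main obstacle is the scale‑matching in the second paragraph: one must show that, after conditioning on $\eta_1|_{[0,\zeta_\eps^1]}$, the fixed interior point $z$ sits at conformal radius $\asymp\eps$, and that ``$\eta_2$ gets within $\eps$ of $z$'' is genuinely, up to multiplicative constants, a boundary one‑point event for the conformal image of $\eta_2$ at the \emph{random} scale $\eps|g'_{\zeta_\eps^1}(z)|$ --- which forces one to invoke Theorem~\ref{thm::one_point_exponent_general} at that scale and to use the good‑angle truncations both to compare $\zeta_\eps^1$ with a conformal‑radius stopping time and to exclude the degenerate geometries (in which $z$ is atypically close to $\partial\h_{\zeta_\eps^1}$ relative to $\eps$, or in which $\eta_2$ merely grazes $\partial B(z,\eps)$ far from $z$). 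The remaining exponent bookkeeping and the absolute‑continuity argument are routine.
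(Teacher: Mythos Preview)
Your proposal is correct and follows essentially the same route as the paper. The paper conditions on $\eta_1$ up to a stopping time close to $z$, applies Theorem~\ref{thm::one_point_exponent_general} to $\eta_2$ in the uniformized picture, and closes the argument with the same two algebraic identities $\nu+r=\alpha$ and $\nu-\xi=A$ that you identify. The only minor differences are: (a) for the upper bound the paper conditions on $\eta_1|_{[0,\tau_{4\eps}^1]}$ (the conformal-radius time) rather than $\eta_1|_{[0,\zeta_\eps^1]}$, which avoids having to argue that $\zeta_\eps^1$ is squeezed between two conformal-radius times; (b) for the sharp lower bound the paper invokes Lemma~\ref{lem::lower_bound_one_point_exponent} directly, whereas you cite Corollary~\ref{cor::lower_bound_one_point_exponent}; and (c) for the extension to general piecewise-constant boundary data the paper uses the martingale~\eqref{eqn::martingalebetweensles} rather than GFF absolute continuity. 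None of these change the structure of the argument.
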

\begin{proof}
For each $t \geq 0$, let $\h_t^1$ be the unbounded connected component of $\h \setminus \eta_1([0,t])$, let $\tau_\eps^1 = \inf\{t \geq 0 : \confrad(z;\h_t^1) = \eps\}$, $\sigma_{R_0}^1 = \inf\{t \geq 0: \eta_1(t) \notin B(0,R_0)\}$, and let $(g_t^1)$ be the Loewner evolution associated with $\eta_1$.  By \eqref{eqn::growth}, note that $\tau_{4\eps}^1 \leq \zeta_\eps^1$.  It then follows from Theorem~\ref{thm::one_point_exponent_general} that
\begin{align*}
\p[G^{\delta}_{\eps}(z) \giv \eta_1|_{[0,\tau_{4\eps}^1]}]\leq |(g^1_{\tau_{4\eps}^1})'(z)\eps|^{\alpha+o(1)}.
\end{align*}
Note that $r < 1-\tfrac{8}{\kappa}<\tfrac{1}{2}-\tfrac{4}{\kappa}$ since $\rho > -2$ and $\kappa \in (0,4)$.  With this choice of $r$, we have
\[ \nu + r = \alpha \quad\text{and}\quad \nu-\xi=A.\]
Thus, by \eqref{eqn::loewner_deriv_upper_bound_theta} of Lemma~\ref{lem::derivative_hit_before_exit}, we have that
\[\p[G^{\delta}_{\eps}(z)] \le  \E\left[ |(g^1_{\tau_{4\eps}^1})'(z)\eps|^{\alpha+o(1)} \one_{\{\tau_{4\eps}^1 \leq \sigma_{R_0}^1\}} \right] \le \eps^{A+o(1)}.\]
This gives the upper bound for \eqref{eqn::one_point_both}.

Let $E_{\eps,R_0}^\delta = \{ \zeta_\eps^1 < \sigma_{R_0}^1,\ \Theta_{\zeta_\eps^1}^1 \in (\delta,\pi-\delta)\}$.  On $E_{\eps,R_0}^\delta$ and $\{\zeta_\eps^2 < \infty\}$, we let $w_\eps = g_{\zeta_\eps^1}^1(\eta_2(\zeta_\eps^2))$ and $r_\eps = |(g^1_{\zeta_\eps^1})'(z)|\eps$.  From Lemma~\ref{lem::lower_bound_one_point_exponent}, we have that
\begin{align*}
\p\left[G_{\eps}^{\delta}(z) \giv\eta_1|_{[0,\zeta_{\eps}^1]} \right]\one_{E_{\eps,R_0}^{\delta}}\gtrsim r_\eps^{\alpha}\one_{E_{\eps,R_0}^{\delta}}.
\end{align*}
We see from \eqref{eqn::loewner_deriv_bound_theta} of Lemma~\ref{lem::derivative_hit_before_exit} that
$\p[ G^{\delta}_\eps(z)] \gtrsim \eps^A$.

We will now explain how to prove the result for $H^{\delta}_{\eps}(z)$ in place of $G^{\delta}_{\eps}(z)$.  First of all, we note that on $E_{\eps,R_0}^\delta$, it follows from \cite[Corollary~3.44]{lawler2005} that $|g_{\zeta_\eps^1}^1(w) - w| \leq 3 R_0$ for all $w\in \h^1_{\zeta^1_\eps}$.  Consequently,
\begin{equation}
\label{eqn::img_ball_contain}
 B(g_{\zeta_\eps^1}^1(z), 10 x_2 - 6 R_0) \subseteq g_{\zeta_\eps^1}^1(B(z, 10 x_2));
 \end{equation}
recall that $10 x_2 \geq 20 R_0$.  By Lemma~\ref{lem::lower_bound_one_point_exponent} and \eqref{eqn::img_ball_contain}, we have that,
\begin{align*}
 &\p\left[\zeta_\eps^2 < \infty,\ \eta_2([0,\zeta_\eps^2]) \subseteq B(z, 10x_2),\ \im(w_\eps)\ge \delta r_\eps \giv \eta_1|_{[0,\zeta_\eps^1]} \right]\one_{E^{\delta}_{\eps,R_0}}\\
&\quad\quad\quad\quad\gtrsim r_\eps^{\alpha}\one_{E^{\delta}_{\eps,R_0}}.
\end{align*}
On the event in the probability above, a Brownian motion starting from $z$ has a uniformly positive chance (depending on $\delta$) of hitting both the left side of $\eta_1([0,\zeta_\eps^1])$ and right side of $\eta_2([0,\zeta_\eps^2])$.  Consequently, the desired result follows by applying \eqref{eqn::loewner_deriv_bound_theta} from Lemma~\ref{lem::derivative_hit_before_exit}.

The final claim of the lemma follows from \eqref{eqn::martingalebetweensles} to compare the case with extra force points to the case without considered above.
\end{proof}

In order for Lemma~\ref{lem::one_point_both} to be useful, we need that as $\eta_1$ gets progressively closer to a given point $z$, it is unlikely that $\Theta^1 \notin (\delta,\pi-\delta)$ for some $\delta > 0$.  This is the purpose of the following estimate.

\begin{lemma}
\label{lem::hit_with_bad_angle}
Suppose that $\eta$ is an $\SLE_\kappa$ process in $\h$ from $0$ to $\infty$ with $\kappa \in (0,4)$.  Fix $z \in \h$ and let $n_z = -\log_2 \im(z)$ so that $n \geq n_z$ implies that $B(z,2^{-n}) \subseteq \h$.  Let $\Theta$ be the process as in \eqref{eqn::processes}.  For each $n$, let $\zeta_n$ be the first time that $\eta$ hits $\partial B(z,2^{-n})$ and, for each $\delta \in (0,\tfrac{\pi}{2})$, let $E_n^\delta = \{\zeta_n < \infty,\ \Theta_{\zeta_n} \notin (\delta,\pi-\delta)\}$.  There exists a function $p \colon (0,1) \to [0,1]$ with $p \downarrow 0$ as $\delta \downarrow 0$ such that for each $r \geq n_z$ we have that
\[ \p[ \cap_{m=n}^r E_m^\delta] \leq (p(\delta))^{r-n} \quad\text{for all}\quad n_z \leq n \leq r.\]
\end{lemma}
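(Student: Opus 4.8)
The plan is to reduce everything, via the strong Markov property of $\SLE_\kappa$ applied at the hitting times $\zeta_m$, to a single blockwise estimate for the angle process in the radial parametrization. Fix a universal integer $K$ (one can take $K=4$); it suffices to produce a function $q\colon(0,1)\to[0,1]$ with $q\downarrow0$ as $\delta\downarrow0$ such that, writing $\mathcal F_t$ for the filtration generated by $\eta$, for every $m\ge n_z$
\[ \p[E_{m+K}^\delta \giv \mathcal F_{\zeta_m}]\,\one_{E_m^\delta}\le q(\delta)\,\one_{E_m^\delta}.\]
Granting this, since $\one_{\cap_{j=n}^m E_j^\delta}$ is $\mathcal F_{\zeta_m}$-measurable and is dominated by $\one_{E_m^\delta}$, iterating over the scales $n,n+K,n+2K,\dots$ gives $\p[\cap_{m=n}^r E_m^\delta]\le q(\delta)^{\lfloor(r-n)/K\rfloor}$; together with the trivial bound for the $O(1)$ leftover scales and the monotonicity $\cap_{m=n}^rE_m^\delta\subseteq\cap_{m=n}^{r'}E_m^\delta$ for $r'\le r$, a routine bookkeeping step converts this into a bound of the stated form $p(\delta)^{r-n}$.

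Next I would unpack the blockwise estimate. On $E_m^\delta$, conditioning on $\mathcal F_{\zeta_m}$ and uniformizing the unbounded component $\h_{\zeta_m}$ of $\h\setminus\eta([0,\zeta_m])$ to $\h$ by $g_{\zeta_m}$, the continuation is a fresh $\SLE_\kappa$ with target $w=g_{\zeta_m}(z)-W_{\zeta_m}$ and $\arg w=\Theta_{\zeta_m}\notin(\delta,\pi-\delta)$. Two geometric points are used. First, since $\eta$ meets $\partial B(z,2^{-j})$ for the first time at $\zeta_j$ and $2^{-j}\le\im z$ for $j\ge n_z$, one has $\dist(z,\partial\h_{\zeta_j})=2^{-j}$, so by \eqref{eqn::ir.cr.or} the conformal radius $2\Upsilon_{\zeta_j}\in[2^{-j},4\cdot2^{-j}]$; hence on $E_m^\delta\cap E_{m+K}^\delta$ the conformal radius at $z$ drops by a factor in $[\tfrac14 2^K,4\cdot2^K]$, which for $K=4$ is at least $4$, and the radial time (log conformal radius, as in \eqref{eqn::processes}) elapsed between $\zeta_m$ and $\zeta_{m+K}$ lies in a fixed interval $[\Delta_0,\Delta_1]$, $0<\Delta_0\le\Delta_1<\infty$, depending only on $\kappa$. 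Second, $\Theta_t$ is a conformal invariant (it is $\pi$ times a harmonic measure in $\h_t$), so $\Theta_{\zeta_{m+K}}$ equals the angle process of the fresh curve evaluated at the shifted radial time, lying in $[\Delta_0,\Delta_1]$, at which its conformal radius at $w$ first drops by that factor. Thus the blockwise estimate follows from: for a fresh $\SLE_\kappa$ in $\h$ with arbitrary target $w$ and starting angle $\arg w\notin(\delta,\pi-\delta)$, the probability that at the first radial time the conformal radius at $w$ decreases by a factor $4$ the angle again lies outside $(\delta,\pi-\delta)$ is at most $q(\delta)$.

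To prove this one-block statement I would change measure. For $\kappa\in(0,4)$ the angle process $\widehat\Theta$ of ordinary $\SLE_\kappa$ solves \eqref{eqn::evolution_radial_parameter} with $r=0$, i.e.\ $d\widehat\Theta_s=(1-\tfrac4\kappa)\cot\widehat\Theta_s\,ds+d\widehat B_s$, whose drift is \emph{attracting} toward $\{0,\pi\}$, so a direct SDE estimate under $\p$ is hopeless (and genuinely fails near $\kappa=8/3$, where the associated Bessel process can be absorbed at the endpoints). Instead one reweights by the local martingale $M_t=M_t(z)$ of \eqref{eqn::mg_def} with $r$ negative and small enough, e.g.\ $r<(\tfrac12-\tfrac4\kappa)\wedge(2-\tfrac8\kappa)$: under the law $\p^\star$ of Lemma~\ref{lem::sle_cr_martingale}, $\widehat\Theta$ has drift coefficient $1-\tfrac4\kappa-r>\tfrac12$ and is \emph{repelled} from $\{0,\pi\}$, behaving near either endpoint like a Bessel process of dimension $3-\tfrac8\kappa-2r>2$; hence, from any starting point, after a radial time in $[\Delta_0,\Delta_1]$ one has $\p^\star[\widehat\Theta_s\notin(\delta,\pi-\delta)]\le C\delta^{\,3-8/\kappa-2r}\to0$, using \eqref{eqn::radial_angle}, the Bessel comparison, and the strong Markov property to pass from a deterministic time to the $[\Delta_0,\Delta_1]$-valued stopping time (which is legitimate because the radial times at which Euclidean scales are crossed are deterministically squeezed, as above). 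Transferring back, $\p[E_{m+K}^\delta\giv\mathcal F_{\zeta_m}]=\E^\star[(M_{\zeta_m}/M_{\zeta_{m+K}})\one_{E_{m+K}^\delta}\giv\mathcal F_{\zeta_m}]$; on $E_m^\delta\cap E_{m+K}^\delta$ the ratio $M_{\zeta_m}/M_{\zeta_{m+K}}$ is a bounded power of the (bounded) conformal-radius ratio, times $(\sin\Theta_{\zeta_{m+K}}/\sin\Theta_{\zeta_m})^{r}$, times a power of $|g_{\zeta_{m+K}}'(z)|/|g_{\zeta_m}'(z)|$; the derivative factor is controlled by Lemma~\ref{lem::derivative_hit_before_exit} after truncating on the curve not leaving a large ball (a further union bound over scales via \eqref{eqn::radial_exiting_small}), and the $\sin$-factor is integrated against the $\p^\star$-law of $\Theta_{\zeta_{m+K}}$, whose density near $\{0,\pi\}$ vanishes at rate $(3-8/\kappa-2r)-1>1$; the condition $r<2-\tfrac8\kappa$ is exactly what makes $\E^\star[(\sin\Theta_{\zeta_{m+K}})^{r};E_{m+K}^\delta]$ finite and $O(q(\delta))$.

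The main obstacle is the change of measure in the last step: the Radon–Nikodym derivative $M$ degenerates to $0$ precisely on the bad-angle configurations we are trying to exclude, so reweighting buys the favorable repelling dynamics only at the cost of a factor $1/M$ that blows up on the event of interest, and one must verify that the $\p^\star$-smallness of the bad event beats this blow-up — which is what forces the sharp choice of the exponent $r$ and the moment estimate for $\sin\Theta$ just described. The remaining, more technical point — that the radial times at which the events $E_m^\delta$ are recorded are random and path-dependent rather than deterministic — is handled by the two-sided deterministic control of these radial times coming from the Koebe distortion bounds \eqref{eqn::distortion}–\eqref{eqn::growth} and \eqref{eqn::ir.cr.or}, which is what permits the blockwise iteration of the Markov property with $K$ scales per block.
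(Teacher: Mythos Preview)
Your change-of-measure approach has a genuine gap, and it also misses the much simpler idea the paper uses.

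\textbf{The gap.} In your transfer-back step you write
\[
\p[E_{m+K}^\delta\mid\CF_{\zeta_m}]
=\E^\star\!\left[\frac{M_{\zeta_m}}{M_{\zeta_{m+K}}}\,\one_{E_{m+K}^\delta}\,\Big|\,\CF_{\zeta_m}\right],
\]
and then try to bound the right-hand side by $q(\delta)$ uniformly on $E_m^\delta$. But $M_{\zeta_m}/M_{\zeta_{m+K}}$ contains the factor $(S_{\zeta_m})^{-r}=(\sin\Theta_{\zeta_m})^{-r}$ with $-r>0$, and on $E_m^\delta$ the angle $\Theta_{\zeta_m}$ can be arbitrarily close to $0$ or $\pi$ (the event only says $\Theta_{\zeta_m}\notin(\delta,\pi-\delta)$). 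So $(S_{\zeta_m})^{-r}$ is \emph{not} bounded on $E_m^\delta$, and your bound $\theta_0^{-r}\cdot C\delta^{d+r}$ (with $\theta_0=\Theta_{\zeta_m}$) blows up as $\theta_0\to 0$. There is no visible compensation from the $\Upsilon$- or $\Delta$-factors: the $\Upsilon$-ratio is bounded by Koebe, and the $\Delta$-ratio is $\le 1$ but you have not pinned down the sign of $\nu+r$ (for $r\in(1-8/\kappa,\,1/2-4/\kappa)$ one has $\nu+r<0$, so that factor is $\ge 1$). The appeal to Lemma~\ref{lem::derivative_hit_before_exit} does not help here, since that lemma controls moments at a single scale, not the ratio across scales conditionally on a bad starting angle.

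\textbf{What the paper does instead.} The paper never tracks the angle at the \emph{next} scale; it simply observes that on $E_m^\delta$ the uniformized target $w=g_{\zeta_m}(z)-W_{\zeta_m}$ has $\arg w\notin(\delta,\pi-\delta)$, i.e.\ lies close to $\partial\h$. Since $\SLE_\kappa$ for $\kappa\in(0,4)$ is simple and transient, the probability that a fresh chordal $\SLE_\kappa$ from $0$ hits $B(w,1)$ (with $\im w=1$) tends to $0$ as $\arg w\to 0$ or $\pi$; call this $q(\delta)$. A Koebe distortion bound shows $\varphi_m(B(z,2^{-m-3}))\subseteq B(\varphi_m(z),1)$ after normalizing so $\im\varphi_m(z)=1$. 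Hence
\[
\p[E_{m+3}^\delta\mid\CF_{\zeta_m}]\,\one_{E_m^\delta}
\le \p[\zeta_{m+3}<\infty\mid\CF_{\zeta_m}]\,\one_{E_m^\delta}
\le q(\delta)\,\one_{E_m^\delta},
\]
and iterating with $p(\delta)=q(\delta)^{1/3}$ gives the lemma. In short: instead of arguing that the angle recovers under $\p^\star$ and then paying the Radon--Nikodym cost, one uses directly that a bad angle makes it unlikely for the curve even to \emph{reach} the next ball. This avoids the unbounded $S_{\zeta_m}^{-r}$ factor entirely.
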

\begin{proof}
Since the $\SLE_\kappa$ processes are scale-invariant in law, almost surely transient, and do not intersect the boundary for $\kappa \in (0,4)$ \cite{\RohdeSchramm}, it follows that
\[ \lim_{s \to \infty} \p\big[ \eta \text{ hits } [s,s+2] \times [0,2] \big] = \lim_{s \to \infty} \p\big[ \eta \text{ hits } [1,1+\tfrac{2}{s}] \times [0,\tfrac{2}{s}] \big] = 0.\]
(For otherwise $\eta$ would intersect the boundary with positive probability.)  Consequently, it follows that there exists a function $q \colon (0,1) \to [0,1]$ with $q(\delta) \downarrow 0$ as $\delta \downarrow 0$ such that the following is true.  If $z \in \h$ with $\im(z) = 1$ and $\arg(z) \notin (\delta,\pi-\delta)$, then
\begin{equation}
\label{eqn::hit_ball_decay_theta}
\p[\eta\text{ hits } B(z,1)] \leq q(\delta).
\end{equation}
For each $n \geq n_z$, on the event $\{\zeta_n < \infty\}$, let $\varphi_n \colon \h \setminus \eta([0,\zeta_n]) \to \h$ be the unique conformal map with $\varphi_n(\eta(\zeta_n)) = 0$, $\varphi_n(\infty) = \infty$, and satisfies $\im(\varphi_n(z)) = 1$.  Note that $\varphi_n(B(z,2^{-n-3)})) \subseteq B(\varphi_n(z),1)$ by \cite[Corollary~3.25]{lawler2005}.  Therefore it follows from \eqref{eqn::hit_ball_decay_theta} that
\begin{equation}
\label{eqn::hit_ball_iterate_decay}
 \p[ E_{n+3}^\delta \giv \eta|_{[0,\zeta_{n}]}] \one_{E_n^\delta} \leq q(\delta) \one_{E_n^\delta}.
\end{equation}
Iterating \eqref{eqn::hit_ball_iterate_decay} and taking $p(\delta) = (q(\delta))^{1/3}$ proves the lemma.
\end{proof}

\begin{figure}[ht!]
\begin{center}
\includegraphics[scale=0.85]{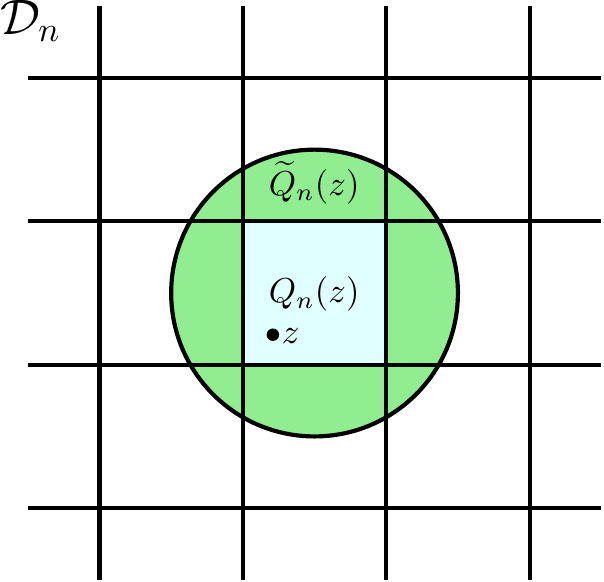}
\end{center}
\caption{
\label{fig::grids}
Shown in the illustration are $Q_n(z)$ and $\wt{Q}_n(z)$ for a given point $z \in \h$.
}
\end{figure}

For each $n \in \N$, we let $\CD_n$ be the set of squares with side length $2^{-n}$ which are contained in $\h$ and with corners in $2^{-n} \Z^2$.  For each $Q \in \CD_n$, let $z(Q)$ be the center of $Q$ and let $\wt{Q}_n(Q) = B(z(Q),2^{1-n})$.  For each $z \in \h$, let $Q_n(z)$ be the element of $\CD_n$ which contains $z$ and let $\wt{Q}_n(z) = \wt{Q}_n(Q_n(z))$.  See Figure~\ref{fig::grids} for an illustration.

\begin{lemma}
\label{lem::no_bad_points}
Suppose that $\eta$ is an $\SLE_\kappa$ process in $\h$ from $0$ to $\infty$ with $\kappa \in (0,4)$.  For each $z \in \h$, let $\Theta^z$ be the process from~\eqref{eqn::processes} (with respect to $z$) and let $\zeta_{z,n} = \inf\{t \geq 0: \eta(t) \in \partial \wt{Q}_n(z)\}$.  Let $\CS_n^\delta$ be the set of points $z \in \h$ such that $E_{z,n}^\delta = \{\zeta_{z,n} < \infty,\ \Theta_{\zeta_{z,n}}^z \notin (\delta,\pi-\delta)\}$ occurs and let $\CS^\delta = \cup_{n=1}^\infty \cap_{m = n}^\infty \CS_m^\delta$.  There exists $\delta_0 > 0$ such that for every $\delta \in (0,\delta_0)$ we have that $\CS^\delta = \emptyset$ almost surely.
\end{lemma}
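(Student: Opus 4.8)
The plan is to show that $\CS^\delta = \emptyset$ almost surely for $\delta$ sufficiently small by a first-moment/Borel--Cantelli argument based on the exponential decay in Lemma~\ref{lem::hit_with_bad_angle}. First I would fix a countable dense set of ``reference'' points, namely the centers $z(Q)$ of the dyadic squares $Q \in \CD_n$, and observe that $\CS^\delta_m$ is, by definition, a union (over $z$) of events $E^\delta_{z,m}$ that depend only on $\eta$ hitting the fixed ball $\wt{Q}_m(z)$ with a bad angle. The key point is that this event only depends on the square $Q_m(z)$ containing $z$, not on the precise location of $z$: $\wt{Q}_m(z) = \wt{Q}_m(Q_m(z))$ and $\Theta^z$ is controlled up to uniformly bounded multiplicative/additive error by $\Theta^{z(Q_m(z))}$ when $z$ ranges over a single square $Q_m(z)$. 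Hence, after slightly enlarging $\delta$, the event $E^\delta_{z,m}$ is implied by a corresponding event $\wh E^{2\delta}_{Q,m}$ indexed by the \emph{square} $Q = Q_m(z)$, and $\CS^\delta_m$ is covered by $\bigcup_{Q \in \CD_m} Q \cdot \one_{\wh E^{2\delta}_{Q,m}}$.

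Next I would estimate $\p[z \in \cap_{m=n}^r \CS^\delta_m]$. Fixing $z$, the nested events $E^\delta_{z,m}$ for $m = n, \dots, r$ are exactly the events $\cap_{m=n}^r E^\delta_{z,m}$ appearing in Lemma~\ref{lem::hit_with_bad_angle} (with $2^{-m}$ in place of $2^{-m}$ up to the factor-two discrepancy between $\wt Q_m$ and $B(z,2^{-m})$, which only changes constants), so Lemma~\ref{lem::hit_with_bad_angle} gives
\[
 \p[\cap_{m=n}^r E^\delta_{z,m}] \leq (p(\delta))^{r-n}.
\]
Therefore, taking a union over the $\lesssim 2^{2r}$ squares $Q \in \CD_r$ that can be reached (only those within bounded distance of the origin contribute, since $\eta$ is transient and $\cap_{m\ge n}\CS^\delta_m$ for $m$ large forces $z$ to lie in a region $\eta$ actually visits; but even the crude bound suffices), we get
\[
 \p\Big[ \cap_{m=n}^r \CS^{\delta/2}_m \neq \emptyset \Big]
 \;\le\; \sum_{Q \in \CD_r,\, Q \subseteq B(0,R)} \p[\wh E^{\delta}_{Q,r}\text{-type event}]
 \;\lesssim\; 2^{2r} (p(\delta))^{r-n}
\]
for fixed $R$, after a truncation argument controlling the far-away squares using transience of $\eta$. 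Choosing $\delta_0 > 0$ small enough that $p(\delta_0) < 2^{-3}$ makes the right-hand side $\le 2^{-r}$ for $r$ large (with $n$ fixed, say $n = n_z$-type threshold), so letting $r \to \infty$ gives $\p[\cap_{m\ge n}\CS^\delta_m \neq \emptyset] = 0$ for every $n$, and then a union over $n$ gives $\p[\CS^\delta \neq \emptyset] = 0$.

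The main obstacle I anticipate is the bookkeeping needed to pass cleanly from the pointwise estimate of Lemma~\ref{lem::hit_with_bad_angle} (which is for a fixed $z$) to a statement uniform over the uncountably many $z \in \h$: one must (i) discretize correctly so that $E^\delta_{z,m}$ is dominated by a square-indexed event, paying a factor $2$ in $\delta$ and absorbing the $\wt Q_m$ versus $B(z,2^{-m})$ mismatch into constants via the distortion estimates of Section~\ref{subsec::distort}, and (ii) control the growth of the number of relevant squares against the exponential decay $(p(\delta))^{r-n}$, which forces the restriction to small $\delta$. A secondary technical point is to handle squares far from the support of $\eta$: here one uses that $\eta$ is transient and a.s.\ bounded on compact time-intervals, together with a Beurling-type estimate (Theorem~\ref{thm::beurling}), to see that for any fixed $R$ the event $\{\CS^\delta \cap (\h \setminus B(0,R)) \neq \emptyset\}$ has probability tending to $0$ as $R\to\infty$, reducing to the bounded case. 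Once these are in place the conclusion is immediate from Borel--Cantelli.
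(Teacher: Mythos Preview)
Your overall strategy coincides with the paper's: discretize to dyadic squares, transfer the pointwise decay from Lemma~\ref{lem::hit_with_bad_angle} to a square-indexed event, and run a first-moment/Borel--Cantelli argument balancing the $2^{2r}$ square count against $(p(\delta))^{r-n}$. Two points deserve correction.

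First, the comparison of $\Theta^z$ across $z$ in a fixed square is done in the paper via the Harnack inequality, not distortion estimates. The observation is that for $Q\in\CD_m$ and $n\le m$ the stopping time $\zeta_{w,n}$ is the same for all $w\in Q$, and $w\mapsto \Theta^w_{\zeta_{w,n}}=\arg(g_{\zeta_{w,n}}(w)-W_{\zeta_{w,n}})$ is positive and harmonic on $Q$; Harnack then yields a universal $K\ge 1$ with $E^\delta_{w,m}\subseteq E^{K\delta}_{z(Q),m}$. Your ``factor $2$ in $\delta$'' is too optimistic, but the multiplicative constant $K$ is harmless: one simply requires $4\,p(K\delta)<1$.

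Second, your treatment of the unbounded domain is where the argument as written would fail. The claim that $\p[\CS^\delta\cap(\h\setminus B(0,R))\ne\emptyset]\to 0$ via transience is circular: by scale invariance of $\SLE_\kappa$, if this probability were positive for one $R$ it would be bounded below for all $R$, so the limit vanishes only if $\CS^\delta=\emptyset$ a.s., which is the conclusion you seek. The paper instead fixes $\omega\in(0,1)$, restricts to squares $Q\in\CD_r$ with $Q\subseteq\{|z|<\omega^{-1},\ \im(z)\ge\omega\}$, and shows that for $\delta<\delta_0$ the expected number of such squares with $\cap_{m=n}^r E^\delta_{Q,m}$ occurring is summable in $r$ (here $n=-\log_2\omega$, so that $\wt Q_m(z)\subseteq\h$ for the relevant $m$). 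This gives $\CS^\delta\cap\{|z|<\omega^{-1},\ \im(z)\ge\omega\}=\emptyset$ a.s.\ for each $\omega$, and a countable union over $\omega\downarrow 0$ finishes. The lower bound $\im(z)\ge\omega$ is essential: it makes $n_z$ uniformly bounded so that the intersection $\cap_{m=n}^r$ can start from a fixed $n$, and without it the square count is infinite.
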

\begin{proof}
Fix $z \in \h$ and let $n_z = -\log_2 \im(z)$.  Note that $\wt{Q}_n(z) \subseteq B(z,2^{2-n})$ so that $\wt{Q}_n(z) \subseteq \h$ provided $n \geq n_z+2$.  By Lemma~\ref{lem::hit_with_bad_angle}, we have that
\begin{equation}
\label{eqn::bad_square_center}
\p[ \cap_{m=n}^r E_{z,m}^\delta] \leq \big(p(\delta) \big)^{r-n} \quad\text{for all}\quad n_z+2 \leq n \leq r
\end{equation}
(where $p(\delta)$ is as in the statement of Lemma~\ref{lem::hit_with_bad_angle}).

Suppose that $Q \in \CD_m$ and suppose that $n \in \N$ with $n \leq m$.  Then the function $Q \to \R$ given by $w \mapsto \Theta_{\zeta_{w,n}}^w$ is positive and harmonic.  Consequently, it follows from the Harnack inequality \cite[Proposition~2.26]{lawler2005} that there exists a universal constant $K \geq 1$ (independent of $m,n$) such that the following is true.  If $E_{w,m}^\delta$ occurs for any $w \in Q$, then $E_{z(Q),m}^{K \delta}$ occurs.  Thus letting $E_{Q,m}^\delta = \cup_{w \in Q} E_{w,m}^\delta$ we have that
\begin{align}
\label{eqn::bad_square}
 \p[ \cap_{m=n}^r E_{Q,m}^\delta] \leq \p[ \cap_{m=n}^r E_{z(Q),m}^{K\delta}]\quad&\text{for any}\quad n_{z(Q)}+2 \leq n \leq r.\\
\intertext{Combining this with Lemma~\ref{lem::hit_with_bad_angle} implies that}
\label{eqn::bad_square2}
 \p[ \cap_{m=n}^r E_{Q,m}^\delta] \leq (p(K\delta))^{r-n} \quad&\text{for any}\quad n_{z(Q)}+2 \leq n \leq r.
 \end{align}
Fix $\omega \in (0,1)$ and let $n = -\log_2 \omega$.  For each $r \geq n+2$, let $\CV_r^{\omega,\delta}$ be the collection of squares $Q$ in $\CD_r$ with $Q \subseteq \{z \in \h : |z| < \tfrac{1}{\omega},\ \im(z) \geq \omega\}$ and for which $\cap_{m=n}^r E_{Q,m}^\delta$ occurs.  Then \eqref{eqn::bad_square2} implies that there exists a constant $C > 0$ such that
\begin{equation}
\label{eqn::bad_square_summation}
 \sum_{r = n}^\infty \E\big[ |\CV_r^{\omega,\delta}| \big] \leq \frac{C}{\omega^2} \sum_{r = n}^\infty 2^{2 r} \big(p(K\delta)\big)^{r-n}.
\end{equation}
Take $\delta_0  > 0$ so that $\delta \in (0,\delta_0)$ implies that $4 p(K \delta) < 1$.  Then for $\delta \in (0,\delta_0)$, the summation on the right side of \eqref{eqn::bad_square_summation} is finite.  This implies that for every $\omega \in (0,1)$, $\CV_r^{\omega,\delta} = \emptyset$ for all but finitely many $r$ almost surely.  This, in turn, implies the desired result since $\omega \in (0,1)$ was arbitrary and $\CV_r^{\omega,\delta}$ increases as $\omega$ decreases.
\end{proof}

\subsection{The upper bound}
\label{subsec::intersection_upper_bound}

Now that we have established Lemma~\ref{lem::one_point_both} and Lemma~\ref{lem::no_bad_points}, we can prove the upper bound in Theorem~\ref{thm::two_flowline_dimension}.

\begin{proposition}
\label{prop::dim_intersection_upper_bound}
Suppose that $h$ is a GFF on $\h$ with piecewise constant boundary conditions which change values a finite number of times.  Let $\eta_1$ (resp.\ $\eta_2$) be the flow line of $h$ starting from $x_1=0$ (resp.\ $x_2 > 0$) with angle $0$ (resp.\ $\theta \in (\pi - 2\lambda/\chi,0)$).  We have that
\[ \dimH(\eta_1 \cap \eta_2 \cap \h) \leq 2-A \quad \text{ almost surely}\]
where $A$ is as in \eqref{eqn::two_flow_line_exponent}.
\end{proposition}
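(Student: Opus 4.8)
The plan is a first–moment (covering) argument: I will show that for every $\zeta>0$ the $(2-A+\zeta)$–dimensional Hausdorff measure of $\eta_1\cap\eta_2\cap\h$ vanishes almost surely, using the one–point estimate of Lemma~\ref{lem::one_point_both}(i) to supply the exponent and Lemma~\ref{lem::no_bad_points} to deal with the points at which $\eta_1$ approaches with a degenerate angle. First I reduce to a bounded target set. Since
\[ \eta_1\cap\eta_2\cap\h \;=\; \bigcup_{k\in\N}\big(\eta_1\cap\eta_2\cap\{z\in\h:|z|\le k,\ \im(z)\ge 1/k\}\big) \]
is a countable union and $\dimH$ is stable under countable unions, it suffices to fix $R\ge 1$, $\omega\in(0,1)$, and prove $\dimH(F)\le 2-A$ almost surely for $F:=\eta_1\cap\eta_2\cap\{z\in\h:|z|\le R,\ \im(z)\ge\omega\}$. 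Fix $\zeta>0$ and set $\beta=2-A+\zeta$; the goal is $\LH^\beta(F)=0$ a.s.

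The structural point — and the only reason the argument is not a textbook first moment bound — is that the one–point estimate is only available when $\eta_1$ reaches $z$ with a non–degenerate angle $\Theta^1\in(\delta,\pi-\delta)$, and there is no deterministic lower bound on how often this occurs; so a single–scale cover will not work and I build a multi–scale one. Fix $\delta\in(0,\delta_0)$ small (with $\delta_0$ as in Lemma~\ref{lem::no_bad_points}, small enough to absorb the Harnack constant $K$ there). By Lemma~\ref{lem::no_bad_points}, a.s.\ $\CS^\delta=\emptyset$: for every $z\in\h$ and every $n$ there is a scale $m\ge n$ at which, when $\eta_1$ first hits $\partial\wt{Q}_m(z)$, it does so at an angle in $(\delta,\pi-\delta)$. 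For $m\ge n$ and $Q\in\CD_m$ with $Q\subseteq\{|z|\le 2R,\ \im(z)\ge\omega/2\}$, let $B_Q$ be the event that (a) $\eta_1$ hits $\partial\wt{Q}_m(z(Q))$ at an angle in $(\delta/K,\pi-\delta/K)$, and (b) $\eta_2$ enters $\wt{Q}_m(z(Q))$. Then, a.s., for every $n$ the countable family $\CU_n:=\{Q\in\bigcup_{m\ge n}\CD_m: Q\subseteq\{|z|\le 2R,\im(z)\ge\omega/2\},\ B_Q\text{ occurs}\}$ covers $F$: if $z\in F$ then $\eta_1,\eta_2$ both pass through $z$ hence enter every $\wt{Q}_m(z)$; choosing $m\ge n$ so that $\eta_1$ has a good angle at $\partial\wt{Q}_m(z)$ and letting $Q=Q_m(z)$ (so $\wt{Q}_m(z)=\wt{Q}_m(z(Q))$, and $Q$ lies in the enlarged region for $n$ large), the Harnack comparison of Lemma~\ref{lem::no_bad_points} gives $B_Q$. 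Since $\diam Q\le\sqrt{2}\,2^{-n}$ for $Q\in\CU_n$,
\[ \E\big[\LH^\beta_{\sqrt{2}\,2^{-n}}(F)\big]\le\E\Big[\sum_{Q\in\CU_n}(\diam Q)^\beta\Big]=\sum_{m\ge n}\ \sum_{\substack{Q\in\CD_m\\ Q\subseteq\{|z|\le 2R,\,\im(z)\ge\omega/2\}}}(\diam Q)^\beta\,\p[B_Q]. \]

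It remains to bound $\p[B_Q]$ for $Q\in\CD_m$ by $C\,(2^{-m})^{A-\zeta'}$, uniformly over the relevant $Q$ and over $m\ge m_0(\zeta')$, for each fixed $\zeta'\in(0,\zeta)$; here is where Lemma~\ref{lem::one_point_both} and Lemma~\ref{lem::derivative_hit_before_exit} enter. The event $B_Q$ asserts exactly that $\eta_1$ reaches within $2^{1-m}$ of $z(Q)$ with good angle and that $\eta_2$ also reaches within $2^{1-m}$ of $z(Q)$ — i.e.\ the angle-and-hitting content of $G^{\delta/K}_{2^{1-m}}(z(Q))$ but without the restriction ``before $\partial B(0,R_0)$''. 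To handle this, run $\eta_1$ up to the first time $\tau$ it enters $B(z(Q),4\cdot 2^{-m})$, and use the conformal Markov property of the pair $(\eta_1,\eta_2)$ in $\h\setminus\eta_1([0,\tau])$ (with the absolute continuity of the GFF on bounded sets, \cite[Proposition~3.2]{IG1}, controlling the boundary data of the conditional field, and after a routine reduction via \eqref{eqn::martingalebetweensles} putting $\eta_1$'s force points outside a large ball as required by Lemma~\ref{lem::derivative_hit_before_exit}); equivalently, and more directly, mimic the proof of the upper bound in Lemma~\ref{lem::one_point_both}(i): by Theorem~\ref{thm::one_point_exponent_general} applied to the conditional law of $\eta_2$,
\[ \p\big[\eta_2\text{ enters }B(z(Q),2^{1-m})\ \big|\ \eta_1|_{[0,\tau]}\big]\ \lesssim\ \big(2^{-m}\,|(g^1_\tau)'(z(Q))|\big)^{\alpha+o(1)}, \]
where $g^1_\tau$ is the Loewner map of $\eta_1$ at time $\tau$, and by \eqref{eqn::loewner_deriv_upper_bound_theta} of Lemma~\ref{lem::derivative_hit_before_exit} (valid since $|z(Q)|\le 2R\ge R_0$ supplies the truncation $\{\tau<\sigma_{2R}\}$, and with the same $r$ as in Lemma~\ref{lem::one_point_both}, so $\nu+r=\alpha$ and $\nu-\xi=A$),
\[ \E\big[|(g^1_\tau)'(z(Q))|^{\nu+r}\,\one_{\{\tau<\sigma_{2R}\}}\big]\ \asymp\ (2^{-m})^{-\xi-r}. \]
Taking expectations over $\eta_1|_{[0,\tau]}$ gives $\p[B_Q]\le(2^{-m})^{A+o(1)}$, as needed.

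Plugging this in, using that $\#\{Q\in\CD_m:Q\cap\{|z|\le 2R\}\neq\emptyset\}=O(2^{2m})$ and $\diam Q\asymp 2^{-m}$,
\[ \E\big[\LH^\beta_{\sqrt{2}\,2^{-n}}(F)\big]\ \lesssim\ \sum_{m\ge n}2^{2m}\cdot 2^{-\beta m}\cdot 2^{-(A-\zeta')m}\ =\ \sum_{m\ge n}2^{-(\zeta-\zeta')m}, \]
which tends to $0$ as $n\to\infty$ because $\zeta'<\zeta$. By Fatou's lemma $\E[\LH^\beta(F)]=0$, hence $\LH^\beta(F)=0$ and $\dimH(F)\le\beta=2-A+\zeta$ a.s.; letting $\zeta\downarrow 0$ along a sequence and combining over the bounded regions yields $\dimH(\eta_1\cap\eta_2\cap\h)\le 2-A$ a.s. I expect the main obstacle to be the uniform bound on $\p[B_Q]$ in the third step: it is there that one must convert the conditional one–point estimate (Theorem~\ref{thm::one_point_exponent_general}) and the derivative moment bound (Lemma~\ref{lem::derivative_hit_before_exit}) into the setting of Lemma~\ref{lem::one_point_both} via a conformal–Markov restart of $\eta_1$, while the switch from a single–scale to a multi–scale cover — forced by the absence of a deterministic control on the frequency of non–degenerate approach angles, and resolved by Lemma~\ref{lem::no_bad_points} — is the point that makes the overall scheme slightly more delicate than a direct first–moment computation.
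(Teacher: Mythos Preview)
Your overall strategy --- a multi-scale cover using Lemma~\ref{lem::no_bad_points} to guarantee a good-angle scale for every intersection point, then Lemma~\ref{lem::one_point_both} (or its ingredients) for the one-point bound, then a first-moment Hausdorff computation --- is exactly the paper's approach. The gap is in your third step, the bound on $\p[B_Q]$.

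You invoke \eqref{eqn::loewner_deriv_upper_bound_theta} of Lemma~\ref{lem::derivative_hit_before_exit} for $\eta_1$ at the point $z(Q)$, but that lemma requires $z\in\D$ and the force points of the process to lie outside $2R_0\D$. Neither is arranged here: $z(Q)$ ranges over $\{|z|\le 2R\}$, and after reducing to the boundary data of Lemma~\ref{lem::one_point_both} the force point of $\eta_1$ sits at the fixed location $x_2$. Rescaling cannot simultaneously place $z(Q)$ in $\D$ and push $x_2$ outside $2R_0\D$ once $|z(Q)|\gtrsim x_2$. Your parenthetical ``routine reduction via \eqref{eqn::martingalebetweensles}'' is exactly the non-routine point: the Radon--Nikodym derivative \eqref{eqn::martingalebetweensles} contains the factor $|W_t-V_t^R|^{\rho_{1,R}/\kappa}$, which is not uniformly bounded on $\{\tau<\infty\}$ because $\eta_1$ may pass arbitrarily close to $x_2$ before reaching $B(z(Q),4\cdot 2^{-m})$. (Separately, $|z(Q)|\le 2R$ does \emph{not} give $\tau<\sigma_{2R}$; the path can exit $B(0,2R)$ and return.)

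The paper repairs this with an intermediate \emph{macroscopic} scale $\eps$, fixed small relative to $\delta$ but independent of $m$: run $\eta_1$ to $\partial B(z(Q),\eps)$, condition on a good angle there, and conformally map tip~$\mapsto 0$, $z(Q)\mapsto$ near $i$. A Beurling estimate then shows every boundary force point (each at distance $\ge\delta$ from $z(Q)$) is sent to distance $\gtrsim\sqrt{\delta/\eps}$ from $0$, so the hypotheses of Lemma~\ref{lem::one_point_both} are genuinely met for the mapped configuration at the microscopic scale $\asymp 2^{-m}/\eps$. The paper also adds to its covering event the requirement that after reaching $\partial B(z(Q),\eps)$, $\eta_1$ hits $Q$ before exiting $B(z(Q),\delta)$; this supplies, after the map, the ``before $\partial B(0,R_0)$'' truncation in $G_\eps^\delta$. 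With this two-scale structure in place your argument goes through verbatim.
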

\begin{proof}
We are going to prove the proposition assuming that the boundary data is as in Lemma~\ref{lem::one_point_both}.  This suffices by absolute continuity for GFFs.  Fix $0 < \eps < \tfrac{\delta}{2} < \delta < \tfrac{\pi}{4}$.  For each $t > 0$, we let $\h_t^1$ be the unbounded connected component of $\h \setminus \eta_1([0,t])$.  For each $z \in \h$, we let $\zeta_{z,\eps}^1 = \inf\{t \geq 0 : \eta^1(t) \in \partial B(z,\epsilon)\}$ and let $\Theta^{1,z}$ be the process as in \eqref{eqn::processes} for $\eta_1$ and $z$.  We let $I^{\eps,\delta}$ consist of those $z \in \eta_1 \cap \eta_2 \cap B(0,\delta^{-1})$ such that
\begin{enumerate}[(i)]
\item $\im(z) \geq \delta$.
\item $\Theta^{1,z}_t\in (2\delta,\pi-2\delta)$ for all $t \in [\zeta_{z,\eps/2}^1,\zeta_{z,2\eps}^1]$.
\item Let $\zeta_z^1$ be the first time that $\eta_1$ hits $z$ and $\sigma_{z,\delta}^1$ be the first time after $\zeta_{z,\eps}^1$ that $\eta_1$ hits $\partial B(z, \delta)$.  Then $\zeta_z^1 \leq \sigma_{z,\delta}^1$.
\end{enumerate}
By the transience, continuity, and simplicity of the $\SLE_\kappa(\ul{\rho})$ processes for $\kappa \in (0,4)$ (which almost surely do not hit the continuation threshold) \cite[Theorem~1.3]{IG1}, we have that $\eta_1 \cap \eta_2 \cap \h \subseteq \cup_{\eps\in \Q_+} \cup_{\delta\in \Q_+}  I^{\eps,\delta}$ almost surely. (If this were not true then we would be led to the contradiction that $\eta_1$ has double points with positive probability.)  We are going to prove the result by showing that for every $\eps,\delta > 0$,
\[ \dimH(I^{\eps,\delta}) \leq 2-A \quad \text{almost surely}.\]
It in fact suffices to show that this is the case for $0 < \eps < \tfrac{\delta}{2} < \delta < \delta_0$ where $\delta_0$ is as in Lemma~\ref{lem::no_bad_points}.  Let $\CD_n$ and $z(Q)$ be as before the statement of Lemma~\ref{lem::no_bad_points}.  We let $\CU_n^{\eps,\delta}$ consist of those $Q \in \CD_n$ which are hit by both $\eta_1$ and $\eta_2$, contained in $B(0,\delta^{-1})$, and:
\begin{enumerate}[(i)]
\item $\im(z(Q)) \geq \delta$.
\item $\Theta_{\zeta_{z(Q),\eps}^1}^{1,z(Q)}\in (\delta,\pi-\delta)$ and $\Theta_{\zeta_{z(Q),2^{-n}}}^{1,z(Q)} \in (\delta,\pi-\delta)$.
\item After $\zeta_{z(Q),\eps}^1$, $\eta_1$ hits $Q$ before $\sigma_{z(Q),\delta}^1$.
\end{enumerate}
We are now going to show that, for every $n \in \N$, $\CW_n^{\eps,\delta} = \cup_{m \geq n} \CU_m^{\eps,\delta}$ is a cover of $I^{\eps,\delta}$.  To see this, we fix $z \in I^{\eps,\delta}$ and let $(Q_k)$ be a sequence of squares in $\cup_{m \geq n} \CD_m$ such that $z \in Q_k$ for every $k$ and $|Q_k| \to 0$ as $k \to \infty$.  Let $z_k = z(Q_k)$.  Since $\zeta_{z_k,\eps}^1 \in [\zeta_{z,\eps/2}^1,\zeta_{z,2\eps}^1]$ for all $k$ large enough, there exists $K_0 =K_0(z)$ such that for all $k\ge K_0$, we have that $\Theta^{1,z_k}_{\zeta_{z_k,\eps}^1}\in (\delta,\pi-\delta)$.  Since $z \in Q_k$, we have that $\eta_1$ hits $Q_k$. If there exists a subsequence $(k_j)$ such that, for every $j$, $\eta_1$ hits $\partial B(z_{k_j},\delta)$ after hitting $\partial B(z_{k_j},\eps)$ and before hitting $Q_{k_j}$, we get a contradiction that $z \in I^{\eps,\delta}$.  Therefore there exists $K_1 = K_1(z)$ such that for every $k \geq K_1$, we have that, after hitting $\partial B(z_k,\eps)$, $\eta_1$ hits $Q_k$ before hitting $\partial B(z_k, \delta)$. Combing this with Lemma~\ref{lem::no_bad_points} implies that there exists a sequence $(k_j)$ such that $Q_{k_j}\in \CW_n^{\eps,\delta}$ for all $j$, which proves our claim.

By running $\eta_1$ until time $\zeta_{z,\eps}^1$ and then conformally mapping back, Lemma~\ref{lem::one_point_both} implies for $Q \in \CD_m$ with $Q \subseteq B(0,\delta^{-1})$ and $\im(z(Q)) \geq \delta$ that $\p[Q \in \CU_m^{\eps,\delta}] \le 2^{-m (A+o(1))}$ provided $m$ is large enough and $\eps > 0$ is small enough relative to $\delta > 0$.  (The purpose of choosing $\eps > 0$ smaller than $\delta > 0$ is so that the force points of $\eta_1$ are mapped far away from $\eta_1(\zeta_{z,\eps}^1)$ relative to the distance of $z$.)  Consequently, it follows that there exists $C=C(\eps,\delta) > 0$ such that for each $\xi > 0$, we have
\[ \E[ \LH^{2-A+2\xi}(I^{\eps,\delta})] \leq C \sum_{m=n}^\infty 2^{2m} \times 2^{-m (A-\xi)} \times 2^{-m(2-A+2\xi)} < \infty.\]
Since the above holds for every $n$, we therefore have that $\LH^{2-A+2\xi}(I^{\eps,\delta}) = 0$ almost surely.  Since $\xi > 0$ was arbitrary, we have that $\dimH(I^{\eps,\delta}) \leq 2-A$ almost surely, as desired.
\end{proof}

\subsection{The lower bound}
\label{subsec::intersection_lower_bound}

\begin{figure}[ht!]
\begin{center}
\includegraphics[scale=0.85]{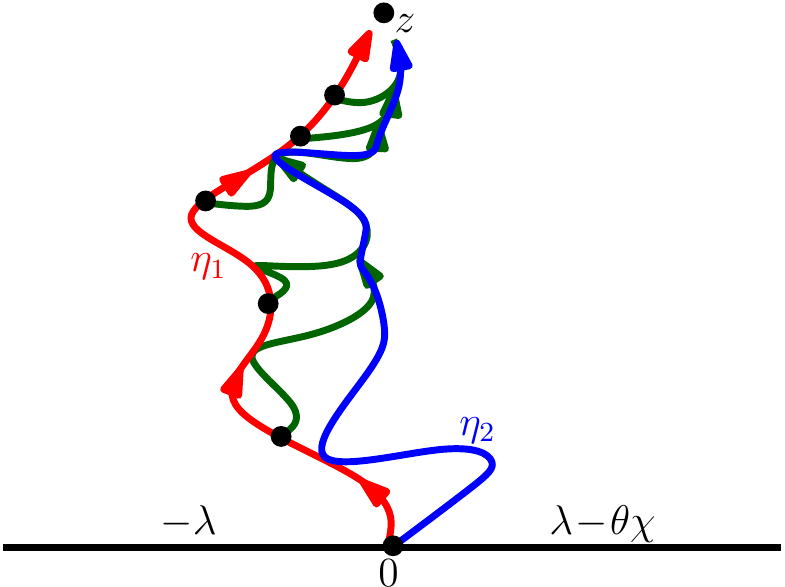}
\end{center}
\caption{\label{fig::perfect_illustration}
Suppose that $h$ is a GFF on $\h$ with the illustrated boundary data.  Let $\eta_1$ (resp.\ $\eta_2$) be the flow line of $h$ starting from $0$ with angle $0$ (resp.~$\theta \in (\pi-2\lambda/\chi,0)$).  Shown is an illustration of the construction of the event that a given point, say $z \in \h$, is a ``perfect point'' for the intersection of $\eta_1$ and $\eta_2$.  Each of the green flow lines has angle $\theta$ --- the same as that of $\eta_2$ --- and start at points along $\eta_1$ which get progressively closer to $z$.  The reason that we introduce the auxiliary green flow lines is that this is what gives us the approximate independence necessary for the two point estimate, see e.g.\ Figure~\ref{fig::two_point_approx_ind}.
}
\end{figure}

\begin{figure}[ht!]
\begin{center}
\includegraphics[scale=0.85]{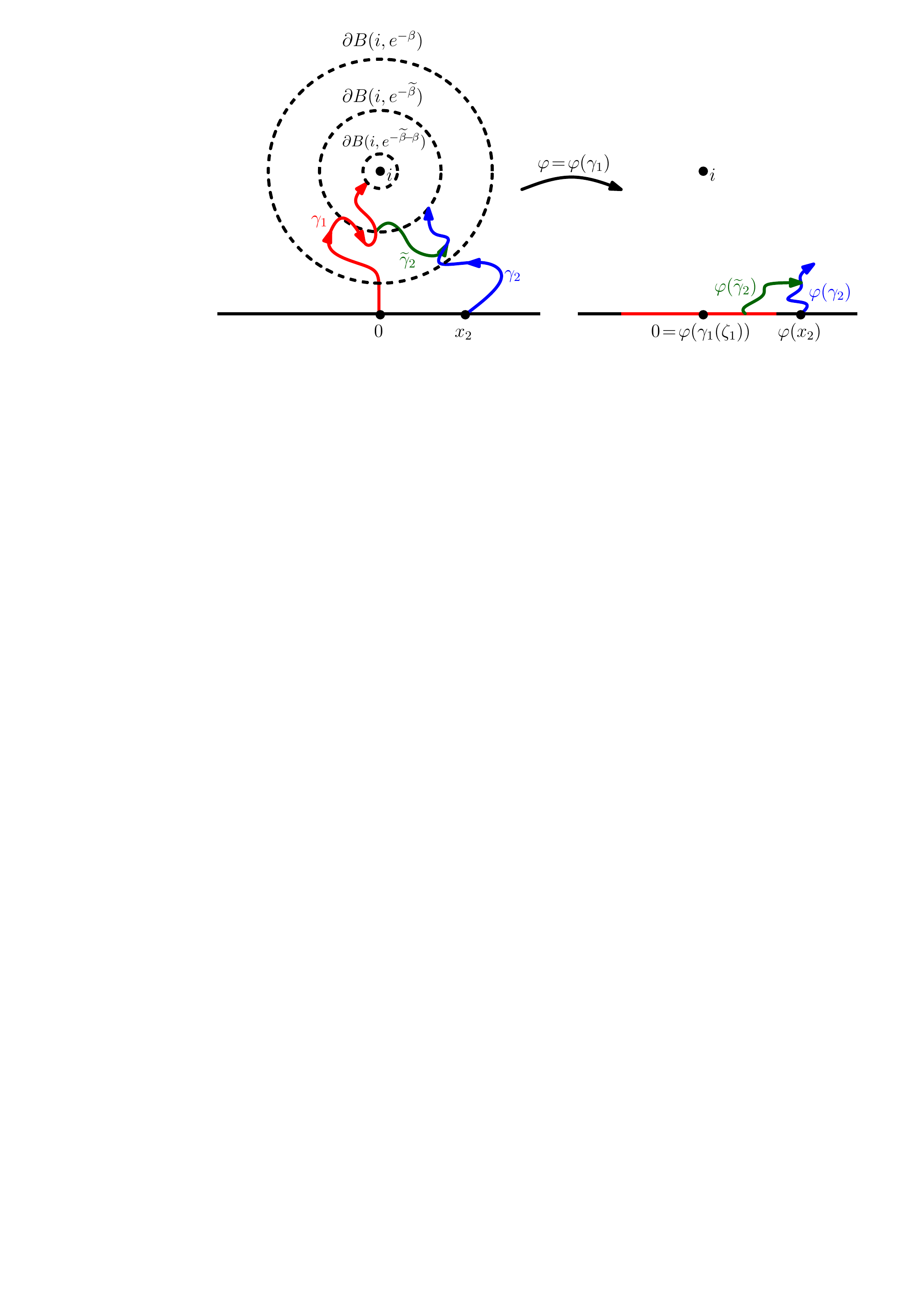}
\end{center}
\caption{\label{fig::perfect_def}
Suppose that $\gamma_1$, $\gamma_2$ are paths in $\h$ starting from $0,x_2 \in \R$, respectively, with $x_2 \in  [0,e^{\beta}]$.  Let $\wt{\zeta}_1$ be the first time that $\gamma_1$ hits $\partial B(i,e^{-\wt{\beta}})$ and let $\wt{\gamma}_2$ be a path starting from $\gamma_1(\wt{\zeta}_1)$.  Fix $u \in \R \setminus [0,x_2]$.  Then $E_{u}^{\beta,\wt{\beta}}(\gamma_1,\wt{\gamma}_2,\gamma_2)$ is the event that the following hold.  First, $\gamma_1$ hits $\partial B(i, e^{-\beta})$ before leaving the $e^{-2\beta}$ neighborhood of $[0,i]$.  Second, $\gamma_1$ (resp.\ $\gamma_2$) hits $\partial B(i,e^{-\wt{\beta}-\beta})$ (resp.\ $\partial B(i,e^{-\wt{\beta}})$) before leaving $B(i,e^{2\beta})$.  Let $\zeta_1,\zeta_2$ be the first hitting times for $\gamma_1$, $\gamma_2$, respectively, for these small circles.  Third, the first time $\wt{\zeta}_2$ that $\wt{\gamma}_2$ hits $\gamma_2$ is finite and $\wt{\gamma}_2([0,\wt{\zeta}_2])$ is disjoint from both $\partial B(i,\tfrac{1}{2} e^{-\wt{\beta}})$ and $\partial B(i,2e^{-\wt{\beta}})$.  Fourth, the three paths stopped at the aforementioned times do not separate $i$ from $u$.  Fifth, the probability that a Brownian motion starting from $i$ exits $\h \setminus (\gamma_1([0,\zeta_1]) \cup \wt{\gamma}_2([0,\wt{\zeta}_2]) \cup \gamma_2([0,\zeta_2]))$ in the left (resp.\ right) side of $\gamma_1$ is at least $\tfrac{1}{2} - e^{-\beta/4}$ and in the left (resp.\ right) side of $\wt{\gamma}_2([0,\wt{\zeta}_2])$ (resp.\ $\gamma_2([0,\zeta_2])$) is at least $e^{-\beta}$.  We take $H$ to be the connected component of $\h \setminus \gamma_1([0,\zeta_1])$ with $u$ on its boundary and let $\varphi = \varphi(\gamma_1)$ be the conformal transformation $H \to \h$ fixing $i$ and with $\varphi(\gamma_1(\zeta_1)) = 0$.  Then the image of (the right side of) $\gamma_1(\wt{\zeta}_1)$ under $\varphi$ is contained in $[0,e^\beta]$ and $\varphi(\wt{\gamma}_2([0,\wt{\zeta}_2])) \subseteq B(i,e^{\beta})$.}
\end{figure}

We are now going to prove the lower bound for Theorem~\ref{thm::two_flowline_dimension}.  As in the proof of Theorem~\ref{thm::boundary_dimension}, we will accomplish this by introducing a special class of points, so-called ``perfect points,'' which are contained in the intersection of two flow lines whose correlation structure is easy to control.  Fix $\wt{\beta} > \beta^2 > \beta > 1$; we will eventually send $\wt{\beta} \to \infty$ but we will take $\beta$ fixed and large.

\subsubsection{Definition of the events}
\label{subsubsec::flow_line_perfect}

We are going to define the perfect points as follows.  Suppose that $\gamma_1$ is a path in $\h$ starting from $0$ and $\gamma_2$ is a path starting from $x_2 \in [0,e^\beta]$.  Let $\wt{\zeta}_1$ be the first time that $\gamma_1$ hits $\partial B(i,e^{-\wt{\beta}})$ and suppose that $\wt{\gamma}_2$ is a path starting from $\gamma_1(\wt{\zeta}_1)$.  Fix $u \in \R \setminus [0,x_2]$.  We let $E_u^{\beta,\wt{\beta}}(\gamma_1,\wt{\gamma}_2,\gamma_2)$ be the event that the following hold (see Figure~\ref{fig::perfect_def} for an illustration):
\begin{enumerate}[(i)]
\item\label{it::initial_straight} $\gamma_1$ hits $\partial B(i, e^{-\beta})$ before leaving the $e^{-2\beta}$ neighborhood of $[0,i]$,
\item The first time $\zeta_1$ (resp.\ $\zeta_2$) that $\gamma_1$ (resp.\ $\gamma_2$) hits $\partial B(i,e^{-\wt{\beta}-\beta})$ (resp.\ $\partial B(i,e^{-\wt{\beta}})$) is finite and $\gamma_i([0,\zeta_i]) \subseteq B(i,e^{2\beta})$ for $i=1,2$.
\item The first time $\wt{\zeta}_2$ that $\wt{\gamma}_2$ hits $\gamma_2$ is finite and $\wt{\gamma}_2([0,\wt{\zeta}_2])$ does not intersect either $\partial B(i,\tfrac{1}{2} e^{-\wt{\beta}})$ or $\partial B(i,2e^{-\wt{\beta}})$.
\item The connected component of $\h \setminus (\gamma_1([0,\zeta_1]) \cup \wt{\gamma}_2([0,\wt{\zeta}_2]) \cup \gamma_2([0,\zeta_2]))$ which contains $i$ also contains $u$ on its boundary.
\item \label{it::e_beta_left_right}  The probability that a Brownian motion starting from $i$ exits $\h \setminus (\gamma_1([0,\zeta_1]) \cup \wt{\gamma}_2([0,\wt{\zeta}_2]) \cup \gamma_2([0,\zeta_2]))$ on the left (resp.\ right) side of $\gamma_1([0,\zeta_1])$ is at least $\tfrac{1}{2}-e^{-\beta/4}$ and the probability of exiting on the left (resp.\ right) side of $\wt{\gamma}_2([0,\wt{\zeta}_2])$ (resp.\ $\gamma_2([0,\zeta_2])$) is at least $e^{-\beta}$.  We take $H$ to be the connected component of $\h \setminus \gamma_1([0,\zeta_1])$ with $u$ on its boundary and let $\varphi=\varphi(\gamma_1)$ be the conformal transformation $H \to \h$ which fixes $i$ and with $\varphi(\gamma_1(\zeta_1)) = 0$.  Finally, the image of (the right side of) $\gamma_1(\wt{\zeta}_1)$ under $\varphi$ is contained in $[0,e^\beta]$ and $\varphi(\wt{\gamma}_2([0,\wt{\zeta}_2])) \subseteq B(i,e^{\beta})$.
\end{enumerate}

The purpose of Part~\eqref{it::initial_straight} above is that, by drawing a path up until hitting $\partial B(i,e^{-\beta})$ and then conformally mapping back, the resulting configuration of paths satisfies the hypotheses of Lemma~\ref{lem::one_point_both}.

\begin{lemma}
\label{lem::e_b_property}
Suppose that we have the same setup described just above.  There exists a constant $C_1 > 0$ such that the following is true.  On the event $E_u^{\beta,\wt{\beta}}(\gamma_1,\wt{\gamma}_2,\gamma_2)$, with $\varphi=\varphi(\gamma_1)$, for each $\alpha \in (0,1)$ we have that $B(i,C_1 e^{(1-\alpha) (\beta+\wt{\beta})/2}) \subseteq \varphi(B(i,e^{-\alpha (\beta+\wt{\beta})}))$.
\end{lemma}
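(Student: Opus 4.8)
The plan is to pin down $\confrad(i;H)$, reduce the inclusion to a square-root distortion estimate for $\varphi$ near the tip $p:=\gamma_1(\zeta_1)$ of $\gamma_1$, and then establish that estimate from the conditions defining $E_u^{\beta,\wt\beta}(\gamma_1,\wt\gamma_2,\gamma_2)$ together with standard conformal estimates.

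Write $\mu=\beta+\wt\beta$. Since $\zeta_1$ is the \emph{first} time $\gamma_1$ hits $\partial B(i,e^{-\mu})$, the slit $\gamma_1([0,\zeta_1])$ is disjoint from $B(i,e^{-\mu})$ and meets its boundary only at $p$, while the remainder of $\partial H$ lies on $\R$ at distance $1$ from $i$; hence $\dist(i,\partial H)=e^{-\mu}$, and the Schwarz lemma and Koebe $1/4$ theorem (recall \eqref{eqn::ir.cr.or}) give $e^{-\mu}\le\confrad(i;H)\le 4e^{-\mu}$, so $|\varphi'(i)|=\confrad(i;\h)/\confrad(i;H)\asymp e^{\mu}$. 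Since $\varphi(B(i,e^{-\alpha\mu})\cap H)$ is an open connected set containing $i=\varphi(i)$ whose topological boundary lies in $\R\cup\varphi\big(\partial B(i,e^{-\alpha\mu})\cap\overline H\big)$, the asserted inclusion follows once we prove that $|\varphi(z)-i|\ge C_1 e^{(1-\alpha)\mu/2}$ for every $z\in\overline H$ with $|z-i|=e^{-\alpha\mu}$. If $(1-\alpha)\mu\le\log 100$ the right side is bounded and this is immediate from the Koebe distortion theorem applied to $\varphi$ on $B(i,e^{-\mu})\subseteq H$ (which forces $\varphi(B(i,e^{-\mu}))\supseteq B(i,c_0)$ for a universal $c_0>0$), once $C_1\le c_0/10$; so we may assume $\mu$ large, whence $|z-p|\ge e^{-\alpha\mu}-e^{-\mu}\ge\tfrac12 e^{-\alpha\mu}$ for such $z$.

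The key is the square-root distortion estimate
\[ |\varphi(w)-i|\ \asymp\ e^{\mu/2}\,|w-p|^{1/2}\qquad\text{for }w\in\overline H\text{ with }e^{-\mu}\le|w-p|\le\tfrac12, \]
equivalently $|\varphi^{-1}(\zeta)-p|\asymp e^{-\mu}|\zeta|^2$ for $\zeta\in\overline\h$ in the corresponding range of moduli. Granting this, let $s_0\in(0,\zeta_1)$ be the largest parameter with $\gamma_1(s_0)\in\partial B(i,\tfrac14 e^{-\alpha\mu})$ (which exists by the intermediate value theorem, as $|\gamma_1(0)-i|=1$ and $|\gamma_1(\zeta_1)-i|=e^{-\mu}<\tfrac14 e^{-\alpha\mu}$); then $\gamma_1([s_0,\zeta_1])\subseteq\overline{B(i,\tfrac14 e^{-\alpha\mu})}$, so $\varphi$ carries the two sides of this connected piece of $\partial H$ onto an interval of $\R$ about $0$ whose endpoints, by the estimate applied at $w=\gamma_1(s_0)$, have modulus $\gtrsim e^{\mu/2}(e^{-\alpha\mu})^{1/2}=e^{(1-\alpha)\mu/2}$. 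Hence $\varphi\big(\partial H\cap\overline{B(i,\tfrac14 e^{-\alpha\mu})}\big)\supseteq(-C_1 e^{(1-\alpha)\mu/2},C_1 e^{(1-\alpha)\mu/2})$ for a suitable $C_1>0$, and since $\varphi$ is injective on $\partial H$, a boundary point $z$ with $|z-i|=e^{-\alpha\mu}>\tfrac14 e^{-\alpha\mu}$ satisfies $\varphi(z)\notin(-C_1 e^{(1-\alpha)\mu/2},C_1 e^{(1-\alpha)\mu/2})$; the case $z\in H$ is handled the same way, using the inverse form of the estimate to see $\varphi^{-1}\big(\partial B(i,C_1 e^{(1-\alpha)\mu/2})\cap\h\big)\subseteq B(i,e^{-\alpha\mu})$.

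The main obstacle is the square-root distortion estimate itself. Its upper half is the Koebe distortion theorem near the slit tip. For the lower half one proceeds scale by scale over the dyadic radii $e^{-\mu}\le 2^k e^{-\mu}\le\tfrac12$: on $E_u^{\beta,\wt\beta}(\gamma_1,\wt\gamma_2,\gamma_2)$ the curve $\gamma_1$ is simple, approaches $i$ through the $e^{-2\beta}$-neighbourhood of $[0,i]$ (condition~\ref{it::initial_straight}), stays in $B(i,e^{2\beta})$ up to time $\zeta_1$, and satisfies $\varphi(\gamma_1(\wt\zeta_1))\in[0,e^\beta]$ (condition~\ref{it::e_beta_left_right}); these facts keep $\gamma_1$ from doubling back on itself at the relevant scales and let one compare $H$, near $p$ at each scale, with the uniformization of a straight radial slit of that size, the Beurling estimate (Theorem~\ref{thm::beurling}) controlling the harmonic measure that propagates the comparison from one scale to the next and from the anchoring scale $e^{-\wt\beta}$ outward.
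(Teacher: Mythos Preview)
Your reduction to a pointwise estimate on $\partial B(i,e^{-\alpha\mu})$ is fine, and your computation of $\confrad(i;H)\asymp e^{-\mu}$ is correct. The real problem is the ``square-root distortion estimate'' $|\varphi(w)-i|\asymp e^{\mu/2}|w-p|^{1/2}$: you assert it but do not prove it. The sketch you give for the lower half---a scale-by-scale comparison to a straight slit, justified by the claim that conditions~\eqref{it::initial_straight} and~\eqref{it::e_beta_left_right} ``keep $\gamma_1$ from doubling back''---does not work as stated. Condition~\eqref{it::initial_straight} only controls $\gamma_1$ down to scale $e^{-\beta}$, and between $e^{-\beta}$ and $e^{-\wt\beta-\beta}$ the event imposes essentially nothing on the geometry of $\gamma_1$ beyond containment in $B(i,e^{2\beta})$; the curve may oscillate arbitrarily at intermediate scales, so a straight-slit comparison at each dyadic scale is not available. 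The two-sided $\asymp$ is also more than the lemma requires.

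The paper's proof bypasses all of this with a single global harmonic-measure estimate. Since $\gamma_1([0,\zeta_1])$ connects $\partial\h$ to $\partial B(i,e^{-\mu})$, the Beurling estimate (Theorem~\ref{thm::beurling}) gives directly that a Brownian motion from $i$ reaches $\partial B(i,e^{-\alpha\mu})$ before hitting $\partial H$ with probability $O(e^{-(1-\alpha)\mu/2})$. By conformal invariance this equals the probability that a Brownian motion from $i$ in $\h$ reaches $\varphi(\partial B(i,e^{-\alpha\mu}))$ before $\R$; an elementary estimate in $\h$ shows this probability is $\gtrsim d^{-1}$ where $d=\dist(i,\varphi(\partial B(i,e^{-\alpha\mu})))$. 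Hence $d\gtrsim e^{(1-\alpha)\mu/2}$, which is exactly the inclusion. No condition of the event beyond the definition of $\zeta_1$ is used, and no scale-by-scale analysis is needed.
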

\begin{proof}
Throughout, we shall suppose that $E_u^{\beta,\wt{\beta}}(\gamma_1,\wt{\gamma}_2,\gamma_2)$ occurs.  Fix $\alpha \in (0,1)$.  The probability that a Brownian motion starting from $i$ hits $\partial B(i,e^{-\alpha (\beta+\wt{\beta})})$ before hitting $\partial \h \cup \gamma_1([0,\zeta_1])$ is $O(e^{-(1-\alpha) (\beta+\wt{\beta})/2})$ by the \hyperref[thm::beurling]{Beurling estimate}.  By the conformal invariance of Brownian motion, the probability of the event $X$ that a Brownian motion starting from $i$ exits $\varphi(B(i,e^{-\alpha (\beta+\wt{\beta})}))$ in $\varphi(\partial B(i,e^{-\alpha (\beta+\wt{\beta})}))$ is also $O(e^{-(1-\alpha) (\beta+\wt{\beta})/2})$.  Let
\[ d = \dist(\varphi(\partial B(i,e^{-\alpha (\beta+\wt{\beta})})),i).\]
We claim $\p[X]\gtrsim d^{-1}$.  Indeed, $X_1 \cap X_2 \subseteq X$ where $X_1$ is the event that the Brownian motion exits $\partial B(0,d)$ before hitting $\partial \h$ at a point with argument in $[\tfrac{\pi}{4},\tfrac{3\pi}{4}]$ and $X_2$ is the event that it hits $\varphi(\partial B(i,e^{-\alpha (\beta+\wt{\beta})}))$ after hitting $\partial B(0,d)$ before hitting $\partial \h$.  It is easy to see that $\p[X_1] \gtrsim d^{-1}$ and $\p[X_2 \giv X_1]\gtrsim 1$.  Consequently, $e^{-(1-\alpha) (\beta+\wt{\beta})/2}\gtrsim d^{-1}$ hence $d \gtrsim e^{(1-\alpha) (\beta+\wt{\beta})/2}$, as desired.
\end{proof}

\subsubsection{Flow line estimates}
\label{subsubsec::flow_line_estimates}

Fix $\theta \in (\pi-2\lambda/\chi,0)$; recall that this is the range of angles so that a GFF flow line with angle $\theta$ can hit and bounce off of a GFF flow line with angle $0$ on its right side.  We will now use the events introduced in Section~\ref{subsubsec::flow_line_perfect} to define the perfect points.  Suppose that $h_1$ is a GFF on $\h$ with the following boundary data: 
suppose $x_{1,1} = x_{1,2} = 0$ and $u_1 \in \R \setminus \{0\}$.  If $u_1 < x_{1,1}=x_{1,2}=0$, the boundary data is
\begin{align*}
h|_{(-\infty,u_1]} \equiv \lambda+(2\pi-\theta)\chi, \quad h|_{(u_1,0]} \equiv -\lambda,\quad\text{and}\quad h|_{(0,\infty)} \equiv \lambda-\theta \chi.
\end{align*}
If $u_1 > x_{1,1}=x_{1,2}=0$, then the boundary data is
\begin{align*}
h|_{(-\infty,0]} \equiv -\lambda,\quad h|_{(0,u_1]} \equiv \lambda-\theta \chi,\quad\text{and}\quad h|_{(u_1,\infty)} \equiv -\lambda-2\pi\chi.
\end{align*}
These two possibilities correspond to the boundary data that arises when one takes a GFF with boundary conditions as in Figure~\ref{fig::flow_line_intersection_one_point} and Figure~\ref{fig::flow_line_intersection_one_point2} and then applies a change of coordinates which takes a given point $z \in \h$ to $i$.  In either case, we let $\eta_{1,1}$ (resp.\ $\eta_{1,2}$) be the flow line of $h_1$ starting from $x_{1,1}$ (resp.\ $x_{1,2}$) of angle $0$ (resp.~$\theta$).  We also let $\wt{\zeta}_{1,1}$ be the first time that $\eta_1$ hits $\partial B(i,e^{-\wt{\beta}})$ and let $\wt{\eta}_{1,2}$ be the flow line of $h_1$ starting from (the right side of) $\eta_{1,1}(\wt{\zeta}_{1,1})$ with angle $\theta$.

Let $E_1 = E_{u_1}^{\beta,\wt{\beta}}(\eta_{1,1},\wt{\eta}_{1,2},\eta_{1,2})$. Let $\zeta_{1,1}$ (resp.\ $\zeta_{1,2}$) be the first time that $\eta_{1,1}$ (resp.\ $\eta_{1,2}$) hits $\partial B(i,e^{-\wt{\beta}-\beta})$ (resp.\ $\partial B(i,e^{-\wt{\beta}})$) and let $\wt{\zeta}_{1,2}$ be the first time that $\wt{\eta}_{1,2}$ hits $\eta_{1,2}$.  Let $\varphi_1$ be the unique conformal map from the connected component of $\h \setminus \eta_{1,1}([0,\zeta_{1,1}])$ with $u_1$ on its boundary which fixes $i$ and sends the tip $\eta_{1,1}(\zeta_{1,1})$ to $0$.

Suppose that the events $E_j$ have been defined as well as paths $\eta_{j,1},\wt{\eta}_{j,2}, \eta_{j,2}$, GFFs $h_j$, and conformal transformations $\varphi_j$ for $1 \leq j \leq k$.  On the event that $\eta_{k,1}$ hits $\partial B(i,e^{-\beta-\wt{\beta}})$, we take $\eta_{k+1,1} = \varphi_k(\eta_{k,1})$ and $\eta_{k+1,2} = \varphi_k(\wt{\eta}_{k,2})$.  Note that $\eta_{k+1,1}$ is the flow line of the GFF $h_{k+1} = h_k \circ \varphi_k^{-1} - \chi \arg (\varphi_k^{-1})'$ starting from $0$. Similarly, $\eta_{k+1,2}$ is the flow line of $h_{k+1}$ starting from $x_{k+1,2} = \varphi_k(\eta_{k,1}(\wt{\zeta}_{k,1}))$ with angle $\theta$.  We let $\wt{\zeta}_{k+1,1}$ be the first time that $\eta_{k+1,1}$ hits $\partial B(i,e^{-\wt{\beta}})$ and let $\wt{\eta}_{k+1,2}$ be the flow line starting from (the right side of) $\eta_{k+1,1}(\wt{\zeta}_{k+1,1})$ with angle $\theta$ and let $u_{k+1} = \varphi_k(u_k)$.

On the event that $\eta_{k+1,1}$ hits $\partial B(i,e^{-\wt{\beta}-\beta})$, say for the first time at time $\zeta_{k+1,1}$, we let $\varphi_{k+1}$ be the conformal transformation which uniformizes the connected component of $\h \setminus \eta_{k+1,1}([0,\zeta_{k+1,1}])$ with $u_{k+1}$ on its boundary fixing $i$ and with $\varphi_{k+1}(\eta_{k+1,1}(\zeta_{k+1,1}))=0$.  We then define the event $E_{k+1}$ in terms of the paths $\eta_{k+1,1}$, $\wt{\eta}_{k+1,2}$, and $\eta_{k+1,2}$ analogously to $E_1$ as well as stopping times $\zeta_{k+1,2},\wt{\zeta}_{k+1,2}$.  For each $n \ge m$ we let
\begin{equation}
\label{eqn::perfect_definition}
E^{m,n} = \cap_{k=m+1}^n E_k \quad\text{and}\quad E^n = E^{0,n}.
\end{equation}

\begin{remark}
\label{rem::measurable}
$\ $

\begin{enumerate}[(i)]
\item Note that $E^{m,n}$ for $n > m \geq 1$ can occur even if only a subset of (or none of) $E^1,\ldots,E^m$ occur.
\item The conformal maps $\varphi_j$ are measurable with respect to $\eta_{1,1}$.  Note that each of the paths $\wt{\eta}_{k,2}$ is given by the conformal image of a flow line which starts at a point in the range of $\eta_{1,1}$.  The starting points of these flow lines are likewise measurable with respect to $\eta_{1,1}$.  These facts will be important when we establish the two point estimate for the lower bound of Theorem~\ref{thm::two_flowline_dimension} at the end of this subsection.
\end{enumerate}
\end{remark}

We will now work towards proving the one point estimate for the perfect point $i$.

\begin{proposition}
\label{prop::many_perfect}
There exists $\beta_0 > 1$ such that for all $\wt{\beta} > \beta^2 > \beta \geq \beta_0$ we have
\begin{equation}
\label{eqn::perfect_probability}
\p[E^n] \asymp e^{-\wt{\beta}(1 + O_\beta(1) o_{\wt{\beta}}(1)) n A}
\end{equation}
where $A$ is the constant from \eqref{eqn::two_flow_line_exponent} and the constants in the $\asymp$ of \eqref{eqn::perfect_probability} depend only on $u_1$, $\kappa$, and $\theta$.
\end{proposition}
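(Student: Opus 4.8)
The plan is to prove \eqref{eqn::perfect_probability} by an iteration over scales in which the conformal Markov property for GFF flow lines reduces the conditional probability of each $E_k$, given the earlier stages, to a single application of Lemma~\ref{lem::one_point_both} at $z=i$. Let $\mathcal{F}_k$ be the $\sigma$-algebra generated by $\eta_{1,1}$ run up to the time that $\varphi_k$ is defined together with the auxiliary paths $\wt{\eta}_{j,2},\eta_{j,2}$ for $j\le k$; by Remark~\ref{rem::measurable} the maps $\varphi_j$ and the starting points of the $\wt{\eta}_{j,2}$ are $\mathcal{F}_k$-measurable. On $E^k$, by construction $\eta_{k+1,1}$ is the flow line of $h_{k+1}=h_k\circ\varphi_k^{-1}-\chi\arg(\varphi_k^{-1})'$ from $0$ and $\eta_{k+1,2}$ is its flow line from $x_{k+1,2}=\varphi_k(\eta_{k,1}(\wt{\zeta}_{k,1}))$ with angle $\theta$, and part~\eqref{it::e_beta_left_right} of the definition of $E_u^{\beta,\wt{\beta}}$ forces the boundary data of $h_{k+1}$ inside $B(0,20 e^{\beta})$ to be exactly of the form \eqref{eqn::h_bd_one_point} with $\|h_{k+1}|_{\R}\|_\infty=O_\beta(1)$ (the bound on the harmonic‑extension‑of‑winding contribution being a consequence of parts~\eqref{it::initial_straight} and~\eqref{it::e_beta_left_right}). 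By part~\eqref{it::initial_straight}, $\eta_{k+1,1}$ first descends to $\partial B(i,e^{-\beta})$ while staying in the $e^{-2\beta}$‑neighborhood of $[0,i]$; conditioning on this (an event of $\beta$‑dependent positive, $\wt{\beta}$‑uniform probability by the absolute‑continuity argument of Lemma~\ref{lem::sle_kappa_rho_close_to_curve}) and conformally mapping the remaining domain back, we land exactly in the setting of Lemma~\ref{lem::one_point_both} with $\eps=e^{-\wt{\beta}}$, some fixed $\delta=\delta(\beta)>0$ (the exit‑probability clause pins $\arg$ near $\tfrac{\pi}{2}$), and $x_2=x_{k+1,2}\gtrsim e^{\beta}\ge 2R_0$ for $\beta$ large.

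For the upper bound, observe that $E_{k+1}$ is contained in the event $G_{e^{-\wt{\beta}}}^{\delta}(i)$ of Lemma~\ref{lem::one_point_both}(i) for the mapped configuration, since every additional clause in the definition of $E_u^{\beta,\wt{\beta}}$ only shrinks the event. Hence $\p[E_{k+1}\giv\mathcal{F}_k]\one_{E^k}\le e^{-\wt{\beta}(A+o_{\wt{\beta}}(1))}\one_{E^k}$, where the $o_{\wt{\beta}}(1)$ depends only on $\beta,\kappa,\theta$ and $\to 0$ as $\wt{\beta}\to\infty$; iterating over $k=0,\dots,n-1$ gives the upper half of \eqref{eqn::perfect_probability}. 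For the lower bound, Lemma~\ref{lem::one_point_both}(ii) gives that, on $E^k$, the event $H_{e^{-\wt{\beta}}}^{\delta}(i)$ for the mapped configuration has conditional probability $\gtrsim e^{-\wt{\beta}A}$, and on that event both paths have reached $\partial B(i,e^{-\wt{\beta}})$, the diameter of $\eta_{k+1,2}([0,\zeta_{k+1,2}])$ is controlled, the images of the two tips under the uniformizing map lie within a fixed distance of $i$, and a Brownian motion from $i$ sees both sides of each path with uniformly positive probability. Conditionally on this, the remaining requirements in $E_{k+1}$ — that $\wt{\eta}_{k+1,2}$ (angle $\theta$, started at $\eta_{k+1,1}(\wt{\zeta}_{k+1,1})$) merges into $\eta_{k+1,2}$ before leaving the relevant annulus (using that equal‑angle flow lines merge upon meeting, Figure~\ref{fig::two_flowlines}, together with Lemmas~\ref{lem::sle_kappa_rho_close_to_curve} and~\ref{lem::sle_kappa_rho_hit_boundary_segment}), that $i$ is not separated from $u_{k+1}$, and that the harmonic‑measure and $\varphi$‑image bounds of part~\eqref{it::e_beta_left_right} hold — each have $\beta$‑dependent, $\wt{\beta}$‑uniform positive conditional probability by the same GFF absolute‑continuity arguments used throughout Section~\ref{sec::preliminaries}. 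This yields $\p[E_{k+1}\giv\mathcal{F}_k]\one_{E^k}\ge e^{-\wt{\beta}A(1+O_\beta(1)o_{\wt{\beta}}(1))}\one_{E^k}$, and iterating gives the matching lower bound; the residual $u_1$‑, $\kappa$‑, $\theta$‑dependent constants in $\asymp$ come from the first stage $k=1$ (the only one that actually depends on $u_1$) and the last stage, which are not scale invariant.

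The main obstacle is the bookkeeping needed to produce the error term in the advertised form $e^{-\wt{\beta}(1+O_\beta(1)o_{\wt{\beta}}(1))nA}$ with $\asymp$‑constants independent of $n$ and $\wt{\beta}$. Three points must be checked at each of the $n$ scales: (a) the $o(1)$ in Lemma~\ref{lem::one_point_both} is uniform over the class of configurations that actually occur — boundary data of the prescribed form with $O_\beta(1)$ norm, $x_2\in[0,e^{\beta}]$, $\arg z\in(\delta(\beta),\pi-\delta(\beta))$ — which it is, since Lemma~\ref{lem::one_point_both} already allows exactly such a class; (b) the fixed multiplicative constants generated per scale (from the straight‑down event, from the merging/non‑separation events, and from the $\asymp$ in Lemma~\ref{lem::one_point_both}) are $\beta$‑dependent but not $\wt{\beta}$‑ or $n$‑dependent, so rewriting $c\,e^{-\wt{\beta}A}=e^{-\wt{\beta}A(1+\log(1/c)/(\wt{\beta}A))}$ converts each into an $O_\beta(1/\wt{\beta})=O_\beta(1)o_{\wt{\beta}}(1)$ correction to the per‑scale exponent that survives the product over scales; and (c) part~\eqref{it::e_beta_left_right} genuinely keeps the mapped configuration in the regime required at the next scale, in particular that after taking $\beta\ge\beta_0$ large the exit‑probability bound $\tfrac12-e^{-\beta/4}$ propagates so that $\arg z$ stays bounded away from $0$ and $\pi$ by a $\beta$‑independent amount. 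Controlling the distortion of the maps $\varphi_k$ across scales — for which Lemma~\ref{lem::e_b_property} (relating ball images before and after uniformizing) and the angle estimate Lemma~\ref{lem::hit_with_bad_angle} are the key tools — is where most of the remaining work lies.
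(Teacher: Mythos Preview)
Your approach is essentially the paper's own: the paper's proof of Proposition~\ref{prop::many_perfect} is the one-line ``combine Lemma~\ref{lem::single_perfect} with Lemma~\ref{lem::perfect_conditional}'', and your step-by-step iteration is exactly what that combination unrolls to. The per-scale upper/lower bounds via Lemma~\ref{lem::one_point_both}, the handling of the merging and non-separation clauses via Lemmas~\ref{lem::sle_kappa_rho_close_to_curve} and~\ref{lem::sle_kappa_rho_hit_boundary_segment}, and the absorption of the $\beta$-dependent per-scale constants into the exponent as $O_\beta(1)o_{\wt\beta}(1)$ corrections all match the paper's Lemmas~\ref{lem::single_perfect} and~\ref{lem::perfect_conditional}.

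One point to tighten: your assertion that on $E^k$ the boundary data of $h_{k+1}$ on $[-20e^\beta,20e^\beta]$ is ``exactly of the form~\eqref{eqn::h_bd_one_point}'' is not literally correct. To the right of $x_{k+1,2}=\varphi_k(\eta_{k,1}(\wt\zeta_{k,1}))$ one is on the image of the right side of $\eta_{k,1}$, where the boundary value is $\lambda$, not $\lambda-\theta\chi$; so $\eta_{k+1,2}$ carries a different force-point configuration than $\eta_{1,2}$ in Lemma~\ref{lem::one_point_both}. The paper does not try to force the form~\eqref{eqn::h_bd_one_point}; instead Lemma~\ref{lem::perfect_conditional} compares $h_{m+1}$ directly to $h_1$ on a fixed tube around $[0,i]$ using Lemma~\ref{lem::change_of_domains_bound_RN} (both fields agree on $(-c_1,0)$ and $(0,c_1)$, and Lemma~\ref{lem::e_b_property} pushes the remaining marked points far from the tube), and then invokes the shielding of Figure~\ref{fig::paths_shield} so the intermediate auxiliary paths are conditionally independent of the outside data. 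If you route the comparison through Lemma~\ref{lem::change_of_domains_bound_RN} in this way rather than through the last paragraph of Lemma~\ref{lem::one_point_both}, the argument goes through cleanly; as written, your direct appeal to Lemma~\ref{lem::one_point_both} needs that small adjustment.
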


In the statement of Proposition~\ref{prop::many_perfect}, we write $o_{\wt{\beta}}(1)$ to indicate a quantity which converges to $0$ as $\wt{\beta} \to \infty$ and $O_\beta(1)$ for a term which is bounded by some constant which depends only on $\beta$.  In particular, for $\beta$ fixed, $O_{\beta}(1) o_{\wt{\beta}}(1) \to 0$ as $\wt{\beta} \to \infty$.  The first step in the proof of Proposition~\ref{prop::many_perfect} is Lemma~\ref{lem::single_perfect}.  The second step, which allows one to iterate the estimate in \eqref{eqn::perfect_probability_one}, is Lemma~\ref{lem::perfect_conditional} and is stated and proved below.

\begin{lemma}
\label{lem::single_perfect}
There exists $\beta_0 > 1$ such that for all $\wt{\beta} > \beta^2 > \beta \geq \beta_0$ we have
\begin{equation}
\label{eqn::perfect_probability_one}
\p[E_1] \asymp e^{-\wt{\beta}(1 + O_\beta(1) o_{\wt{\beta}}(1)) A}
\end{equation}
where $A$ is the constant from \eqref{eqn::two_flow_line_exponent} and the constants in the $\asymp$ of \eqref{eqn::perfect_probability_one} depend only on $u_1$, $\kappa$, and $\theta$.
\end{lemma}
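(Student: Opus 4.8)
The goal is to show that $\p[E_1]$, the probability of the basic ``perfect point'' event for the pair of flow lines near $i$, behaves like $e^{-\wt\beta(1+O_\beta(1)o_{\wt\beta}(1))A}$. The strategy is to decompose $E_1$ into the initial stage, in which $\eta_{1,1}$ travels down close to the segment $[0,i]$ and hits $\partial B(i,e^{-\beta})$ without leaving its $e^{-2\beta}$-neighborhood, and the main stage, in which the two flow lines (and the auxiliary path $\wt\eta_{1,2}$) get to within distance $\asymp e^{-\wt\beta}$ of $i$ while staying in $B(i,e^{2\beta})$. The first stage has probability bounded below by a constant depending only on $\beta$ (and $u_1$, $\kappa$, $\theta$) by Lemma~\ref{lem::sle_kappa_rho_close_to_curve} together with Lemma~\ref{lem::sle_kappa_rho_exit_disk} applied after a conformal change of coordinates, and it is $\le 1$ trivially; this contributes only to the $O_\beta(1)o_{\wt\beta}(1)$ error in the exponent since $\beta$ is fixed while $\wt\beta\to\infty$. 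So the content is to estimate the probability of the main stage.

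\textbf{Main stage.} Conditioned on $\eta_{1,1}$ up to the first time $\tau$ it hits $\partial B(i,e^{-\beta})$, apply the conformal map sending the relevant connected component to $\h$ and $i$ to $i$. Part~\eqref{it::initial_straight} in the definition of $E_u^{\beta,\wt\beta}$ guarantees (via the distortion/Koebe estimates of Section~\ref{subsec::distort} and the Beurling estimate, exactly as in Lemma~\ref{lem::e_b_property}) that after this map the configuration of boundary data near $i$ is the one appearing in Lemma~\ref{lem::one_point_both}, with $x_2$ bounded by a $\beta$-dependent constant, and the target ``radius'' is comparable to $e^{-\wt\beta}$ up to factors $e^{O(\beta)}$. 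Now Lemma~\ref{lem::one_point_both} gives both the upper bound $\p[G^\delta_\eps(z)]=\eps^{A+o(1)}$ and the lower bound $\p[H^\delta_\eps(z)]\gtrsim \eps^A$ with $\eps\asymp e^{-\wt\beta}$; the additional geometric requirements in the definition of $E_1$ beyond $G^\delta_\eps$ (that $\wt\gamma_2$ hits $\gamma_2$ without touching $\partial B(i,\tfrac12 e^{-\wt\beta})$ or $\partial B(i,2e^{-\wt\beta})$, that the three stopped paths do not separate $i$ from $u_1$, the Brownian-exit lower bounds of Part~\eqref{it::e_beta_left_right}, and the image bounds $\varphi(\gamma_1(\wt\zeta_1))\in[0,e^\beta]$, $\varphi(\wt\gamma_2([0,\wt\zeta_2]))\subseteq B(i,e^\beta)$) are exactly the ``good'' features built into $H^\delta_\eps(z)$ in Lemma~\ref{lem::one_point_both}(ii), together with elementary Brownian-motion and flow-line-hitting arguments (Lemma~\ref{lem::sle_kappa_rho_hit_boundary_segment} for the merging/hitting of $\wt\eta_{1,2}$ into $\eta_{1,2}$, and conformal invariance of Brownian motion for the separation and exit-probability requirements). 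Taking $\delta$ to be a small fixed constant (e.g.\ below the $\delta_0$ of Lemma~\ref{lem::no_bad_points}) and $\eps = e^{-\wt\beta}$, one gets
\[
e^{-\wt\beta A}\, e^{-O(\beta)} \;\lesssim\; \p[E_1] \;\lesssim\; e^{-\wt\beta(A+o_{\wt\beta}(1))}\, e^{O(\beta)},
\]
which is \eqref{eqn::perfect_probability_one} after absorbing $O(\beta)$ and $o_{\wt\beta}(1)$ into $\wt\beta\cdot O_\beta(1)o_{\wt\beta}(1)$. The roles of $\delta$ are controlled because Lemma~\ref{lem::one_point_both}'s constants are uniform once $\delta$ is fixed, and the requirement $\Theta^1\in(\delta,\pi-\delta)$ along the relevant scales is arranged via Part~\eqref{it::initial_straight} (which forces $\eta_{1,1}$ to approach $i$ roughly vertically, hence with argument near $\pi/2$).

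\textbf{Main obstacle.} The delicate point is bookkeeping the error terms: one must check that all the multiplicative distortions incurred by (a) the initial conformal map straightening $\eta_{1,1}|_{[0,\tau]}$, (b) the comparison of conformal radius with Euclidean distance at scale $e^{-\wt\beta}$ inside a domain whose boundary is only controlled at scale $e^{-2\beta}$, and (c) the $o(1)$ in the exponent of Theorem~\ref{thm::one_point_exponent_general} (which feeds into Lemma~\ref{lem::one_point_both}) are all of the form $e^{O_\beta(1)}$ or $e^{\wt\beta\, o_{\wt\beta}(1)}$ and never of the form $e^{c\wt\beta}$ with $c$ not going to $0$. This is where Part~\eqref{it::initial_straight} and the Beurling-type argument of Lemma~\ref{lem::e_b_property} are essential: they confine all the uncontrolled geometry to the fixed scale $\beta$ and guarantee that the map $\varphi_1$ distorts distances near $i$ by at most $e^{O(\beta)}$. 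Once this separation of scales is in place the estimate follows by assembling Lemma~\ref{lem::one_point_both} with the elementary positive-probability lemmas of Section~\ref{subsec::imaginary_geometry}, so the remainder is routine.
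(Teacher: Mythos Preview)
Your proposal is correct and follows essentially the same approach as the paper: decompose into the initial stage (handled by Lemma~\ref{lem::sle_kappa_rho_close_to_curve}), conformally map after $\eta_{1,1}$ first hits $\partial B(i,e^{-\beta})$ to put yourself in the setting of Lemma~\ref{lem::one_point_both}, extract the main $e^{-\wt\beta A}$ factor from that lemma, and then handle the remaining geometric conditions (merging of $\wt\eta_{1,2}$ into $\eta_{1,2}$, harmonic-measure lower bounds, image constraints) by repeated use of Lemma~\ref{lem::sle_kappa_rho_close_to_curve} and Lemma~\ref{lem::sle_kappa_rho_hit_boundary_segment}. Your write-up is in fact more explicit than the paper's about the error bookkeeping and the role of Part~\eqref{it::initial_straight} in controlling the distortion, which is helpful.
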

\begin{proof}
By Lemma~\ref{lem::sle_kappa_rho_close_to_curve}, we know that $\eta_{1,1}$ has a positive chance of being uniformly close to $[0,i]$ before hitting $\partial B(i,e^{-\beta})$.  Let $\tau$ be the first time that $\eta_{1,1}$ hits $\partial B(i,e^{-\beta})$ and let $g$ be the conformal transformation from the connected component of $\h \setminus \eta_{1,1}([0,\tau])$ containing $i$ which fixes $i$ and sends $\eta_{1,1}(\tau)$ to $0$.  By choosing $\beta_0$ sufficiently large, it is clear that $g(\eta_{1,1})$ and $g(\eta_{1,2})$ satisfy the hypotheses of \eqref{eqn::one_point_both_good} of Lemma~\ref{lem::one_point_both}.  From this, we deduce that the probability that $\eta_{1,1}$ and $\eta_{1,2}$ both hit $\partial B(i,2 e^{-\wt{\beta}})$ before leaving $B(i,e^{2\beta})$ and such that the harmonic measure of the left (resp.\ right) side of each of the paths stopped at this time as viewed from $i$ is bounded from below by some universal constant is equal to $e^{-\wt{\beta} (1 + O_{\beta}(1) o_{\wt{\beta}}(1)) A}$.  The rest of the lemma follows from repeated applications of Lemma~\ref{lem::sle_kappa_rho_close_to_curve} and Lemma~\ref{lem::sle_kappa_rho_hit_boundary_segment}.
\end{proof}

For each $z \in \h$, we let $\psi_z$ be the unique conformal transformation $\h \to \h$ taking $z$ to $i$ and fixing $0$.  For each $k \in \N$, we let $\eta_{k,i}^z$ for $i=1,2$ and $\wt{\eta}_{k,2}^z$ be the paths after applying the conformal map $\psi_z$ and we let $\zeta_{k,i}^z$, $\wt{\zeta}_{k,i}^z$ be the corresponding stopping times.  We define
\begin{equation}
\label{eqn::g_n_z_def}
 \begin{split}
 E^{m,n}(z) &= E^{m,n}(\eta_{1,1}^z, \wt{\eta}_{1,2}^z, \eta_{1,2}^z) \quad\text{and}\\
  E^n(z) &= E^{0,n}(z).
 \end{split}
\end{equation}
In other words, $E^{m,n}(z)$ and $E^n(z)$ are the events corresponding to $E^{m,n}$ and $E^n$ defined in \eqref{eqn::perfect_definition} but with respect to the flow lines of the GFF $h_1 \circ \psi_z^{-1} - \chi \arg (\psi_z^{-1})'$ starting from $0$.  Let $\varphi_{k,z}$ be the corresponding conformal maps.  We let
\begin{equation}
\label{eqn::psi_iterative}
 \varphi_z^{j,k} = \varphi_{j+1,z} \circ \cdots \circ \varphi_{k,z} \quad\text{for each}\quad 0 \leq j \leq k\quad\text{and}\quad \varphi_z^k = \varphi_z^{0,k}.
\end{equation}
We also let
\begin{align*}
 V_n(z) &= B(z, 2^{8n+4} \im(z) e^{-n(\beta+\wt{\beta})}) \quad\text{for each}\quad n \in \N \quad\text{and}\quad z \in \h.
\end{align*}

\begin{lemma}
\label{lem::q_size}
There exists $\beta_0 > 1$ such that for all $\wt{\beta} >\beta^2> \beta \geq \beta_0$, the following is true.  For each $m,n \in \N$ with $m \geq n+2$, on $E^m(z)$ we have that $\psi_z^{-1} \circ (\varphi_z^{m-1})^{-1}(\gamma) \subseteq V_n(z)$ for $\gamma = \eta_{m,i}^z([0,\zeta_{m,i}^z])$ for $i=1,2$ and $\gamma=\wt{\eta}_{m,2}^z([0,\wt{\zeta}_{m,2}^z])$.
\end{lemma}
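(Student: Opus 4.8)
The plan is to follow the image of $\gamma$ as one pulls it back from the level-$m$ half-plane to the original domain one level at a time, applying $\varphi_{m-1,z}^{-1}$, then $\varphi_{m-2,z}^{-1},\dots,\varphi_{1,z}^{-1}$, and finally $\psi_z^{-1}$ — this composite is exactly the map $\psi_z^{-1}\circ(\varphi_z^{m-1})^{-1}$ of the statement — while showing at each stage that the image lies in a ball about the relevant base point of controlled radius. We work throughout on $E^m(z)=\cap_{k=1}^{m}E_k(z)$ and take $\beta$ large with $\wt\beta>\beta^2$; recall $n\geq 1$ and $m\geq n+2$. The top level is handled by a crude a priori bound: on $E_m(z)$, parts~(ii)--(iii) of the definition of the events force $\eta_{m,i}^z([0,\zeta_{m,i}^z])\subseteq B(i,e^{2\beta})$ for $i=1,2$ and $\wt\eta_{m,2}^z([0,\wt\zeta_{m,2}^z])\subseteq B(i,2e^{-\wt\beta})$, so in every case $\gamma\subseteq B(i,e^{2\beta})$ at level $m$.

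The first pullback, via $\varphi_{m-1,z}^{-1}$, is where Lemma~\ref{lem::e_b_property} is essential. We apply it at level $m-1$ with $\gamma_1=\eta_{m-1,1}^z$ and $\varphi=\varphi(\gamma_1)=\varphi_{m-1,z}$ (valid since $E_{m-1}(z)$ holds), choose $\alpha\in(0,1)$ with $C_1e^{(1-\alpha)(\beta+\wt\beta)/2}=e^{2\beta}$ — admissible precisely because $3\beta-2\log C_1<\wt\beta$ — and pass to inverse images, obtaining $\varphi_{m-1,z}^{-1}(\gamma)\subseteq B(i,r_1)$ in the level-$(m-1)$ domain, where $r_1=C_1^{-2}e^{3\beta}e^{-\wt\beta}\leq\tfrac18$ for $\beta$ large. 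A direct distortion estimate is useless here because $\gamma$ only lies in the macroscopic ball $B(i,e^{2\beta})$; Lemma~\ref{lem::e_b_property}, whose proof runs through the Beurling estimate, is precisely the tool that controls the image of such a set.

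Once the set lies in $B(i,\rho)$ with $\rho\leq\tfrac18$, every further pullback is handled by the Koebe estimates \eqref{eqn::ir.cr.or}--\eqref{eqn::growth}. For $k=m-2,\dots,1$ the map $\varphi_{k,z}^{-1}$ takes $\h$ (where $\dist(i,\partial\h)=1$) onto the connected component $D_k$ of $\h\setminus\eta_{k,1}^z([0,\zeta_{k,1}^z])$ containing $i$, and since $\eta_{k,1}^z$ has been run exactly up to its first hit of $\partial B(i,e^{-\wt\beta-\beta})$ we have $\confrad(i;D_k)\asymp\dist(i,\partial D_k)=e^{-\wt\beta-\beta}$, so $\varphi_{k,z}^{-1}(B(i,\rho))\subseteq B(i,C'\rho\,e^{-\wt\beta-\beta})$ with a universal $C'$ (which may be taken $\leq 2^8$). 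Iterating these $m-2$ pullbacks and then applying the same estimate to $\psi_z^{-1}$, an automorphism of $\h$ with $|(\psi_z^{-1})'(i)|=\im(z)$, puts the final image of $\gamma$ inside a ball about $z$ of radius a universal multiple of $\im(z)\,(C')^{m-2}\,r_1\,e^{-(m-2)(\beta+\wt\beta)}$. Substituting $r_1$ and comparing with the radius $2^{8n+4}\im(z)e^{-n(\beta+\wt\beta)}$ of $V_n(z)$ reduces the claim to an inequality of the form $(C')^{m-2}e^{3\beta}\lesssim 2^{8n+4}e^{(m-2-n)\beta+(m-1-n)\wt\beta}$: since $m-1-n\geq 1$ there is always at least one spare factor $e^{\beta+\wt\beta}\geq e^{\beta^2}$ on the right, and this spare factor, the bound $(C')^{m-2}\leq 2^{8(m-2)}$ (absorbed into $2^{8n+4}$, with any excess soaked up by the further spare factors $e^\beta$ since $\beta>8\log 2$), and the fact that $\beta^2$ dominates $3\beta$ and $\log C_1$, together close the inequality for $\beta$ large.

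The crux of the argument is this final bookkeeping: the distortion errors accumulate multiplicatively as $(C')^{m-2}$, which grows with $m$, so one cannot simply bound the total constant. What makes the estimate go through uniformly in $m$, $n$, and $z$ is the exact matching between the universal per-level Koebe constant (taken $\leq 2^8$) and the factor $2^{8n+4}$ built into the definition of $V_n(z)$, together with the slack $m\geq n+2$, which always leaves at least one — and when $m\gg n$, many — spare powers of the huge factor $e^{\beta+\wt\beta}\geq e^{\beta^2}$. A secondary point to watch is that Lemma~\ref{lem::e_b_property}, stated only for $\alpha\in(0,1)$, pulls back balls of radius exceeding $C_1$ but not smaller ones, and so is available only for the very first pullback; after the set has shrunk below scale $C_1$ one must switch to the direct distortion estimates.
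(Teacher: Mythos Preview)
Your proof is correct and follows essentially the same route as the paper: both use Lemma~\ref{lem::e_b_property} to handle the first pullback $\varphi_{m-1}^{-1}$ (since $\gamma$ only sits in the macroscopic ball $B(i,e^{2\beta})$ at level~$m$), then iterate a Koebe-type distortion bound (the paper cites \cite[Corollary~3.25]{lawler2005}, which gives exactly $\varphi_k^{-1}(B(i,r))\subseteq B(i,16r\,e^{-\wt\beta-\beta})$) through the remaining levels, and finish with the same estimate for $\psi_z^{-1}$. Your bookkeeping is somewhat more explicit than the paper's (which simply notes $\varphi_{m-1}^{-1}(\gamma)\subseteq B(i,e^{-\wt\beta/4})\subseteq B(i,\tfrac12)$ after the first step and then applies the iterated bound), but the argument is the same; one small imprecision is that when $m=n+2$ the spare factor on the right is $e^{\wt\beta}$ rather than $e^{\beta+\wt\beta}$, but since $\wt\beta>\beta^2$ this is harmless.
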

\begin{proof}
We are first going to give the proof in the case that $z=i$.  Fix $m,n \in \N$ with $m \geq n+2$.  Throughout, we shall assume that we are working on $E^m$.  It follows from \cite[Corollary~3.25]{lawler2005} that if $r \in (0,\tfrac{1}{2})$ then
\begin{align}
\label{eqn::varphi_ball}
\varphi_k^{-1}(B(i,r)) \subseteq B(i,16 r e^{-\wt{\beta}-\beta}) \quad&\text{for}\quad 1 \leq k \leq m.
\end{align}
Iterating \eqref{eqn::varphi_ball} implies that
\begin{equation}
\label{eqn::varphi_iterate_contain}
\begin{split}
(\varphi^k)^{-1}(B(i,\tfrac{1}{2})) \subseteq B(i,2^{8 k} e^{-k (\wt{\beta}+\beta)}) \quad& \text{for}\quad 1 \leq k \leq m
\end{split}
\end{equation}
(provided we take $\beta_0$ large enough).

Note that $\eta_{m,i}([0,\zeta_{m,i}]) \subseteq B(i, e^{2\beta})$ for $i=1,2$ by the definition of the events.  Consequently, it follows from Lemma~\ref{lem::e_b_property} that $\varphi_{m-1}^{-1}(\eta_{m,i}([0,\zeta_{m,i}])) \subseteq B(i,e^{-\wt{\beta}/4})$ for $i=1,2$ provided $\beta_0$ is large enough.  We also assume that $\beta_0$ is sufficiently large so that $e^{-\wt{\beta}/4} < \tfrac{1}{2}$.  Applying \eqref{eqn::varphi_iterate_contain} proves the result for $\eta_{m,i}([0,\zeta_{m,i}])$ for $i=1,2$ and $\wt{\eta}_{m,2}([0,\wt{\zeta}_{m,2}])$.  This proves the result for $z=i$.  For the case that $z \neq i$, we note that applying \cite[Corollary~3.25]{lawler2005} again yields,
\begin{equation}
\label{eqn::psi_z_contain}
\psi_z^{-1}(B(i,r)) \subseteq B(i,16 r \im(z)).
\end{equation}
Combining \eqref{eqn::varphi_iterate_contain} with \eqref{eqn::psi_z_contain} gives the desired result.
\end{proof}

\begin{figure}[ht!]
\begin{center}
\includegraphics[scale=0.85]{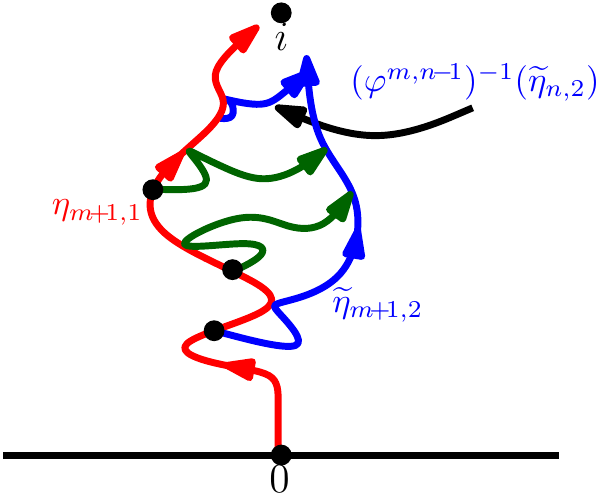}
\end{center}
\caption{\label{fig::paths_shield}
Illustration of the configuration of paths used in the proof of Lemma~\ref{lem::perfect_conditional}.  On $E^{m,n}$, $\eta_{m+1,1}$, $\wt{\eta}_{m+1,2}$, and $(\varphi^{m,n-1})^{-1}(\wt{\eta}_{n,2})$ separate the paths $(\varphi^{m,j-1})^{-1}(\wt{\eta}_{j,2})$ for $m+2 \leq j \leq n-1$ (shown in green) stopped upon hitting $\wt{\eta}_{m+1,2}$ from $i$.  Thus, once $\eta_{m+1,1}$, $\wt{\eta}_{m+1,2}$, and $(\varphi^{m,n-1})^{-1}(\wt{\eta}_{n,2})$ have been fixed, the conditional law of the remaining paths does not depend on the boundary data of $h_{m+1}$ or on the other auxiliary paths.
}
\end{figure}

For each $m \in \N$ and $z \in \h$, let $\CF_m(z)$ be the $\sigma$-algebra generated by $\eta_{k,i}^z|_{[0,\zeta_{k,i}^z]}$ for $i=1,2$ and $\wt{\eta}_{k,2}^z|_{[0,\wt{\zeta}_{k,2}^z]}$ for $1 \leq k \leq m$.

\begin{lemma}
\label{lem::perfect_conditional}
There exists $\beta_0 > 1$ such that for all $\wt{\beta} > \beta^2 > \beta \geq \beta_0$ the following is true.  Fix $\delta \in (0,\tfrac{\pi}{2})$ and $z \in \D \cap \h$ with $\arg(z) \in (\delta,\pi-\delta)$.  For each $m \in \N$ we have that
\begin{equation}
\label{eqn::perfect_conditional_same}
\p[ E^{m,n}(z) \giv \CF_m(z)] \one_{E^m(z)} \asymp  e^{O_{\beta}(1) o_{\wt{\beta}}(1) \wt{\beta}} \p[E^{n-m}] \one_{E^m(z)}
\end{equation}
where the constants in $\asymp$ depend only on $\delta$, $\kappa$, and $\theta$.
\end{lemma}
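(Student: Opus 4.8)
The plan is to establish \eqref{eqn::perfect_conditional_same} in one shot, by conditioning on $\CF_m(z)$ and transferring an absolute continuity estimate for the GFF down to the flow lines. Work on $E^m(z)$ and condition on $\CF_m(z)$. By the Markov property of the $\SLE$/GFF coupling and the conditional law of flow lines (Figure~\ref{fig::conditional_law} and \cite[Theorem~1.5]{IG1}), the conditional law of the step-$(m+1)$ data is that of a flow-line configuration of exactly the type used to define $E_1$, but built from a GFF $h_{m+1}^z$ whose boundary data is a (random, $\CF_m(z)$-measurable) modification of the ideal piecewise constant data. The first observation is that, on the event that the relevant parts of $E_{m+1}(z),\dots,E_n(z)$ hold, the event $E^{m,n}(z)$ is a function of the restriction of $h_{m+1}^z$ to a ball $B(i,\Lambda)$ with $\Lambda$ a fixed power of $e^\beta$: all of the step-$k$ activity ($m+1\le k\le n$), including the auxiliary paths $\wt\eta_{j,2}^z$, takes place at scales around $i$ that shrink geometrically (parts (i),(ii) of the events, together with the fact that each $\varphi_{k,z}$ fixes $i$ and contracts distances from $i$ by $\asymp e^{-\wt\beta-\beta}$), so it stays inside $B(i,e^{2\beta})\subseteq B(i,\Lambda)$ in every coordinate system; and the only far-away marked point $u$ lies at distance $\gg\Lambda$, so part (iv) is automatic. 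Granting that $h_{m+1}^z|_{B(i,\Lambda)}$ is mutually absolutely continuous with respect to the corresponding restriction of the reference GFF $h_1$ with Radon--Nikodym derivative in $[e^{-O_\beta(1)},e^{O_\beta(1)}]$ uniformly over the conditioning, one transfers this to the flow-line configurations using the local determination of flow lines \cite[Theorem~1.2]{IG1}; since on the event $E_{m+1}(z)\cap\cdots\cap E_n(z)$ the event $E^{m,n}(z)$ has exactly the shape of $E^{n-m}$ for the reference field, we get $\p[E^{m,n}(z)\giv\CF_m(z)]\one_{E^m(z)}\asymp e^{O_\beta(1)}\p[E^{n-m}]\one_{E^m(z)}$, which is \eqref{eqn::perfect_conditional_same} because $e^{O_\beta(1)}\le e^{O_\beta(1)o_{\wt\beta}(1)\wt\beta}$ in the regime $\wt\beta>\beta^2$.

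The heart of the argument is thus the claimed comparison, which reduces to controlling the boundary data of $h_{m+1}^z$ on $\partial\h\cap B(i,\Lambda)$ to within $O_\beta(1)$ of the ideal data (in a sense for which \cite[Proposition~3.2]{IG1} applies). There are three contributions to track. First, the "bad'' boundary segment carrying value $-\lambda-2\pi\chi$ or $\lambda+(2\pi-\theta)\chi$ lies beyond $u_{m+1}^z$, which is at Euclidean distance $\gtrsim e^{m(\wt\beta+\beta)}$ from $i$ (each $\varphi_{k,z}$ fixes $i$, has $|\varphi_{k,z}'(i)|\asymp e^{\wt\beta+\beta}$, and $|\varphi_{k,z}(u_k)-i|\ge|u_k-i|\ge 1$, so distances from $i$ are dilated by $\asymp e^{\wt\beta+\beta}$), whence its harmonic effect on $B(i,\Lambda)$ is $o_{\wt\beta}(1)$. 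Second, the breakpoint between the $\lambda$ and $\lambda-\theta\chi$ intervals sits at $x_{m+1,2}^z\in[0,e^\beta]$ by part (v), and moving a breakpoint within $[0,e^\beta]$ perturbs the harmonic extension on $B(i,\Lambda)$ by $O_\beta(1)$. Third, the flow-line portion of $\partial\h$ for $h_{m+1}^z$ near $0$ carries a winding correction; this is dealt with exactly as in the proof of Lemma~\ref{lem::single_perfect}, by first drawing $\eta_{m+1,1}^z$ until it hits $\partial B(i,e^{-\beta})$ — part (i) forces this initial piece to stay in the $e^{-2\beta}$ neighborhood of $[0,i]$ — and conformally mapping back (the distortion estimate \eqref{eqn::growth} and Lemma~\ref{lem::e_b_property} controlling the map), after which this boundary piece is a near-straight segment contributing only an $O(1)$ winding correction; the identical reduction is applied to the reference. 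The geometric constraints in the events are then reproduced as in the proof of Lemma~\ref{lem::single_perfect} via Lemmas~\ref{lem::sle_kappa_rho_close_to_curve} and~\ref{lem::sle_kappa_rho_hit_boundary_segment}. One also uses the shielding illustrated in Figure~\ref{fig::paths_shield}: once $\eta_{m+1,1}^z([0,\zeta_{m+1,1}^z])$, $\wt\eta_{m+1,2}^z([0,\wt\zeta_{m+1,2}^z])$ and $(\varphi_z^{m,n-1})^{-1}(\wt\eta_{n,2}^z)$ are fixed, the intermediate auxiliary paths $\wt\eta_{j,2}^z$ for $m+2\le j\le n-1$, stopped at their first intersection with the next main path, are separated from $i$, so they remain in $B(i,\Lambda)$ and their conditional law does not see the boundary data of $h_{m+1}^z$ outside $B(i,\Lambda)$ (recall Remark~\ref{rem::measurable}).

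For $z\ne i$ one pushes everything forward by $\psi_z$ and invokes the distortion bounds \eqref{eqn::distortion}--\eqref{eqn::growth}: on $\{\arg z\in(\delta,\pi-\delta)\}$ the map $\psi_z$ has distortion bounded in terms of $\delta$ on $B(i,\Lambda)$, so this step only affects the constants in $\asymp$ through $\delta$ — the same reduction as in the proof of Lemma~\ref{lem::q_size}. I expect the genuine difficulty to be the uniform boundary-data bookkeeping: one must verify, simultaneously for every admissible realization of the step-$m$ configuration, that after composing through the whole chain of conformal maps $\varphi_{k,z}$ the discrepancy between $h_{m+1}^z$ and the ideal field inside the working region is honestly $O_\beta(1)$ — in particular that the winding corrections accrued along the flow-line portions of $\partial\h$ do not accumulate across the $m$ steps — and that all error estimates stay uniform in $\wt\beta$. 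Once this is in hand, the absolute continuity transfer, the identification of $E^{m,n}(z)$ with the reference event $E^{n-m}$, and the use of the shielding are routine, and the lemma then feeds directly into the iteration that proves Proposition~\ref{prop::many_perfect}.
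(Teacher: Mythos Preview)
Your overall architecture is the same as the paper's --- reduce to $z=i$, use that on $E^m$ the boundary data of $h_{m+1}$ near $0$ is the ideal $\pm\lambda$ data, invoke absolute continuity, then use the shielding of Figure~\ref{fig::paths_shield} for the intermediate auxiliary paths --- but the central step you propose does not go through as stated.

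The gap is in the claim that, conditionally on $\CF_m(z)$, the law of $h_{m+1}^z|_{B(i,\Lambda)}$ is comparable to $h_1|_{B(i,\Lambda)}$ with Radon--Nikodym derivative in $[e^{-O_\beta(1)},e^{O_\beta(1)}]$, and that $E^{m,n}(z)$ then ``has exactly the shape of $E^{n-m}$.'' Conditioning on $\CF_m(z)$ fixes, in the $h_{m+1}$ coordinates, the curve $\varphi_m(\wt\eta_{m,2}|_{[0,\wt\zeta_{m,2}]})$ --- this is the already-drawn initial segment of $\eta_{m+1,2}$ --- and by part~(v) of the event definition this curve lies \emph{inside} $B(i,e^\beta)\subseteq B(i,\Lambda)$. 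The tip of $\varphi_m(\eta_{m,2}|_{[0,\zeta_{m,2}]})$ lands at distance $\asymp e^\beta$ from $i$ as well. So the conditional field is a GFF on $B(i,\Lambda)\cap\h$ with these extra slits removed and flow-line boundary data along them, which is \emph{not} absolutely continuous with respect to $h_1|_{B(i,\Lambda)}$ in any useful uniform way. Relatedly, $E^{m,n}(z)$ is not literally the same event as $E^{n-m}$: in the former, $\eta_{m+1,2}$ starts from $x_{m+1,2}\in[0,e^\beta]$ with a known initial segment inside $B(i,e^\beta)$, whereas in the latter $\eta_{1,2}$ starts from $0$ with no initial segment. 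Shrinking $\Lambda$ to push these curves outside does not help, because $\eta_{m+1,2}$ must then travel through $B(i,e^{2\beta})\setminus B(i,\Lambda)$, so the event ceases to be a function of $h_{m+1}^z|_{B(i,\Lambda)}$.

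The paper avoids this by never comparing the fields directly. It compares the \emph{path} laws via Lemma~\ref{lem::change_of_domains_bound_RN}: first $\eta_{m+1,1}$ versus $\eta_{1,1}$ stopped on exiting a thin tube around $[0,i]$ (both stay there on the relevant events), then $\wt\eta_{m+1,2}$ versus $\wt\eta_{1,2}$ given the same realization of the first path, then the outermost auxiliary path $(\varphi^{m,n-1})^{-1}(\wt\eta_{n,2})$; the intermediate auxiliary paths are handled by the shielding you cite. The remaining path $\eta_{m+1,2}$ --- the one that spoils your field comparison --- is treated \emph{separately}: conditional on the hull $K$ formed by all the other paths (which sits in $B(i,2e^{-\wt\beta})$), its chance of reaching $\partial B(i,10e^{-\wt\beta})$ before leaving $B(i,e^{2\beta})$ is estimated directly as $\asymp |g'(i)e^{-\wt\beta}|^{\alpha+O_\beta(1)o_{\wt\beta}(1)}$ via the one-point machinery of Lemma~\ref{lem::one_point_both}, with the extra force points from the non-ideal boundary absorbed by Lemma~\ref{lem::change_of_domains_bound_RN}; the subsequent merging uses Lemma~\ref{lem::sle_kappa_rho_hit_boundary_segment}. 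The same computation is done for $\eta_{1,2}$ in the reference, and the two probabilities are matched. This path-by-path decomposition is what makes the argument work; the single GFF-restriction comparison you propose cannot absorb the $\eta_{m+1,2}$ contribution.
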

\begin{proof}
By applying $\psi_z$, we may assume without loss of generality that $z=i$.  Recall the definition of the GFF $h_{m+1}$ as well as the paths $\eta_{k,i}$ for $i=1,2$ and $\wt{\eta}_{k,2}$ from just before Remark~\ref{rem::measurable}.  By the definition of $E^m$ and the conformal invariance of Brownian motion, we know that there exists a constant $c_1 > 0$ such that the boundary data for $h_{m+1}$ in $(-c_1,0)$ (resp.\ $(0,c_1)$) is given by $-\lambda$ (resp.\ $\lambda$).  The same is likewise true for $h_1$.  Moreover, by Lemma~\ref{lem::e_b_property}, it follows that the auxiliary paths coupled with $h_{m+1}$ are far away from $i$ provided $\beta_0$ is large enough.  Consequently, by Lemma~\ref{lem::change_of_domains_bound_RN}, the laws of $\eta_{m+1,1}$ (given $E^m$) and $\eta_{1,1}$ stopped upon exiting the $\tfrac{c_1}{2}$ neighborhood of the line segment from $0$ to $i$ are mutually absolutely continuous with Radon-Nikodym derivative which is bounded from above and below by universal positive and finite constants which depend only on $\kappa$ and $\theta$.

On $E^{m,n}$, $\eta_{m+1,1}$ does not leave this tube before getting very close to $i$ and neither does $\eta_{1,1}$ on $E^{n-m}$.  For a given choice of $\eta$, by Lemma~\ref{lem::change_of_domains_bound_RN}, we moreover have that the Radon-Nikodym derivative of the conditional law of $\wt{\eta}_{m+1,2}$ given $\eta_{m+1,1}=\eta$ stopped upon exiting the tube with respect to that of $\wt{\eta}_{1,2}$ given $\eta_{1,1} = \eta$ is bounded from above and below by universal finite and positive constants which do not depend on the specific choice of~$\eta$.  On this event, the same is also true for the Radon-Nikodym derivative of the conditional law of $(\varphi^{m,n-1})^{-1}(\wt{\eta}_{n,2})$ given $\eta_{m+1,1}=\eta$ and $\wt{\eta}_{m+1,2}=\wt{\eta}$ with respect to the conditional law of
$(\varphi^{n-m-1})^{-1}(\wt{\eta}_{n-m,2})$ given $\eta_{1,1}=\eta$ and $\wt{\eta}_{1,2}=\wt{\eta}$.
The conditional law of $(\varphi^{m,j-1})^{-1}(\wt{\eta}_{j,2})$ for $m+2 \leq j \leq n-1$ stopped upon hitting $\wt{\eta}_{m+1,2}$ given $\eta_{m+1,1}$, $\wt{\eta}_{m+1,2}$, and $\wt{\eta}_{n,2}$ is independent of the boundary data of $h_{m+1}$ (as well as the other auxiliary paths).  (See Figure~\ref{fig::paths_shield}.)  The same is likewise true for the conditional law of $(\varphi^{j-1})^{-1}(\wt{\eta}_{j,2})$ for $2 \leq j \leq n-m-1$ stopped upon hitting $\wt{\eta}_{1,2}$ given $\eta_{1,1}$, $\wt{\eta}_{1,2}$, and $\wt{\eta}_{n-m,2}$.

Let $K$ be the compact hull associated with these paths and let $g$ be the conformal transformation $\h \setminus K \to \h$ with $g(z) \sim z$ as $z \to \infty$.  Conditionally on all of these paths and the event that they are contained in $B(i,2e^{-\wt{\beta}})$, the probability that $\eta_{m+1,2}$ hits $\partial B(i,10 e^{-\wt{\beta}})$ before leaving $B(i,e^{2\beta})$ is $\asymp |g'(i) e^{-\wt{\beta}}|^{\alpha+O_{\beta}(1)o_{\wt{\beta}}(1)}$ (as in the proof of Lemma~\ref{lem::one_point_both}; the extra force points only change the probability by a positive and finite factor by Lemma~\ref{lem::change_of_domains_bound_RN}.) Given that $\eta_{m+1,2}$ has hit $\partial B(i,10 e^{-\wt{\beta}})$, the conditional probability that it then merges with $\wt{\eta}_{m+1,2}$ before the latter has hit $\partial B(i,\tfrac{1}{2} e^{-\wt{\beta}})$ or $\partial B(i,2e^{-\wt{\beta}})$ is positive by Lemma~\ref{lem::sle_kappa_rho_hit_boundary_segment}.  The same is true with $\eta_{1,2}$ in place of $\eta_{m+1,2}$, which completes the proof.
\end{proof}

\begin{proof}[Proof of Proposition~\ref{prop::many_perfect}]
This follows by combining Lemma~\ref{lem::single_perfect} with Lemma~\ref{lem::perfect_conditional}.
\end{proof}

\begin{figure}[ht!]
\begin{center}
\includegraphics[scale=0.85]{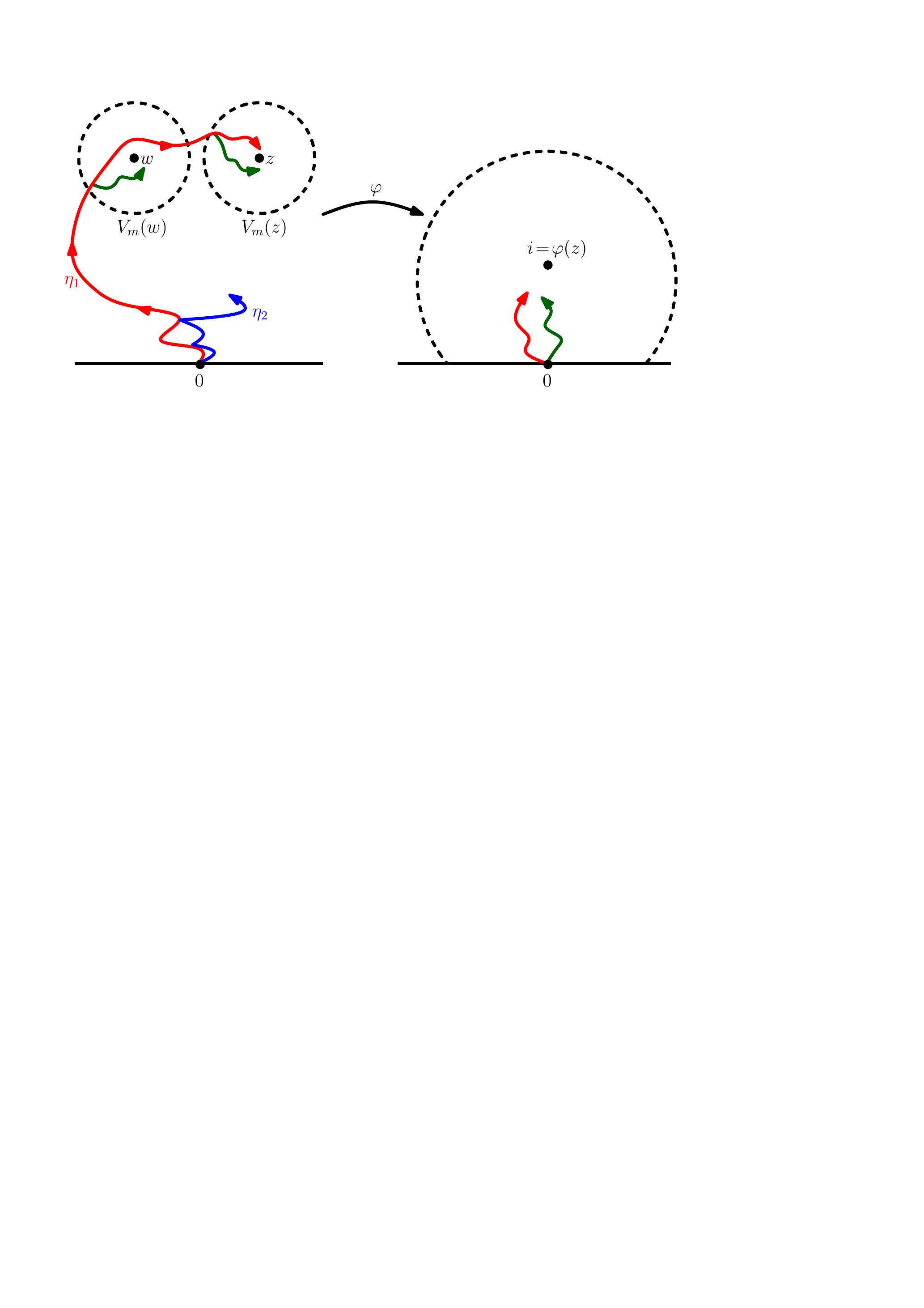}
\end{center}
\caption{\label{fig::two_point_approx_ind}
Illustration of the setup for the two point estimate (Lemma~\ref{lem::perfect_conditional} and Lemma~\ref{lem::two_point_perfect}) in the case that $\eta_1$ gets close first to $w$ and then to $z$.  Conformally map back $\eta_{1,1}$ drawn up until the path hits the neighborhood of $z$.  Then all of the auxiliary paths are outside of a large ball which is far from $i=\varphi(z)$, so we can apply the one point estimate for perfect points (Lemma~\ref{lem::single_perfect}) for this region as before.  We can also apply the one point estimate for the paths near $z$.  Finally, to complete the proof, we apply the one point estimate a final time for the paths up to when they hit a neighborhood containing both $z$ and $w$.}
\end{figure}

\begin{lemma}
\label{lem::perfect_conditional2}
Fix $\delta \in (0,\tfrac{\pi}{2})$ and $z,w \in \D \cap \h$ distinct with $\arg(z), \arg(w) \in (\delta,\pi-\delta)$ and let $m$ be the smallest integer such that $V_{m-1}(z) \cap V_{m-1}(w) = \emptyset$.  Let $P_w$ be the event that $\eta_{1,1}$ hits $V_m(w)$ before hitting $V_m(z)$.  There exists $\beta_0 > 1$ such that for every $\wt{\beta} > \beta^2 > \beta \geq \beta_0$ we have that
\begin{equation}
\label{eqn::perfect_conditional_distinct}
\p[ E^{m,n}(z) \giv \CF_k(w)] \one_{E^k(w), P_w} \leq e^{O_{\beta}(1) \wt{\beta}} \p[ E^{n-m}] \one_{E^k(w), P_w}
\end{equation}
for all $k \geq m$.
\end{lemma}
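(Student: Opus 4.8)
The plan is to adapt the proof of Lemma~\ref{lem::perfect_conditional}: the two events $E^{m,n}(z)$ and $E^k(w)$ are built from the \emph{same} flow line $\eta_{1,1}$ and auxiliary flow lines of the same field, so before running the one-point machinery near $z$ one must decouple the conditioning on $\CF_k(w)$ from the behaviour near $z$. First I would fix, on $E^{m,n}(z)\cap E^k(w)\cap P_w$, the stopping time $\tau$ equal to the first time $\eta_{1,1}$ enters $V_m(z)$; this is finite on $E^{m,n}(z)$ since the scales $1,\dots,m$ of the zoom toward $z$ demanded by $E^{m,n}(z)$ force $\eta_{1,1}$ within distance $\asymp\im(z)e^{-m(\beta+\wt\beta)}$ of $z$. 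Using $P_w$ together with the zoom structure of $E^k(w)$ one checks the ordering: $\eta_{1,1}$ first enters $V_m(w)$, then descends to scale $k$ near $w$ while staying inside $V_m(w)$ (using $V_m(z)\cap V_m(w)\subseteq V_{m-1}(z)\cap V_{m-1}(w)=\emptyset$), and only afterwards reaches $V_m(z)$; in particular the portion of $\eta_{1,1}$ recorded in $\CF_k(w)$ is contained in $\eta_{1,1}|_{[0,\tau]}$. Moreover, by Lemma~\ref{lem::q_size} and Lemma~\ref{lem::e_b_property}, the auxiliary angle-$\theta$ flow lines that also generate $\CF_k(w)$ all lie in a region $N_w\subseteq B(w,C\im(w)e^{-\wt\beta})$ with $N_w\cap V_m(z)=\emptyset$ and $\dist(N_w,V_m(z))\gtrsim_\beta \diam(V_m(z))$, and hence are measurable with respect to the restriction of $h$ to a neighbourhood of $N_w$. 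A further consequence of the ordering is that the scales $1,\dots,m$ of the $z$-zoom are already realized on $P_w\cap E^k(w)$ and are $\CF_k(w)$-measurable, so that on this event $\p[E^{m,n}(z)\giv\CF_k(w)]$ equals the conditional probability of realizing only the scales $m+1,\dots,n$ near $z$.

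Next I would condition on $\eta_{1,1}|_{[0,\tau]}$. By the domain Markov property of the $\SLE$/GFF coupling, the conditional law of $\eta_{1,1}$ continued and of all flow lines entering scales $m+1,\dots,n$ of the construction near $z$ depends only on the restriction of $h$ to the connected component $D$ of $\h\setminus\eta_{1,1}([0,\tau])$ containing the tip $\eta_{1,1}(\tau)\in\partial V_m(z)$. Since the auxiliary $w$-flow lines are determined by $h$ near $N_w$, which is at distance $\gtrsim_\beta\diam(V_m(z))$ from the part of $D$ that governs $E^{m,n}(z)$ (a region contained in $V_m(z)$), the absolute continuity of the GFF on which Lemma~\ref{lem::change_of_domains_bound_RN} rests lets me, exactly as in the proof of Lemma~\ref{lem::perfect_conditional}, replace the conditioning on all of $\CF_k(w)$ by the conditioning on $\eta_{1,1}|_{[0,\tau]}$ at the cost of a bounded ($\beta$-uniform) factor. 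Let $\varphi\colon D\to\h$ be the conformal map sending $\eta_{1,1}(\tau)$ to $0$ and $z$ to $i$. Because $\eta_{1,1}([0,\tau])$ avoids $V_m(z)$ we have $\confrad(z;D)\geq\tfrac{1}{2}\diam(V_m(z))$, while because $\eta_{1,1}$ reaches $N_w$ we have $\confrad(z;D)\lesssim (|z-w|+\im(w)e^{-\wt\beta})\wedge\im(z)$; by the minimality of $m$ (which gives $|z-w|\lesssim\diam(V_{m-2}(w))$) the ratio of these two bounds is at most $e^{O_\beta(1)\wt\beta}$, a single scale's worth of uncertainty. Since $\eta_{1,1}(\tau)\in\partial V_m(z)$ and the boundary data of $h$ is locally constant near $z$, after possibly applying Lemma~\ref{lem::sle_kappa_rho_close_to_curve} once to straighten the initial segment — as in the first item of the definition of the events $E_u^{\beta,\wt\beta}$ — the image configuration under $\varphi$ satisfies the hypotheses of Lemma~\ref{lem::one_point_both}, with the same local boundary data near $i$ as $h_1$.

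The final step is to estimate, exactly as in the proof of Lemma~\ref{lem::perfect_conditional} (iterating Lemma~\ref{lem::one_point_both}, the shielding argument of Figure~\ref{fig::paths_shield}, Lemma~\ref{lem::derivative_hit_before_exit} and Lemma~\ref{lem::sle_kappa_rho_hit_boundary_segment}), the probability of realizing scales $m+1,\dots,n$ of the perfect-point construction near $z$ in the image configuration. This yields a bound $\lesssim e^{O_\beta(1)\wt\beta}\,\p[E^{n-m}]$, where the factor $e^{O_\beta(1)\wt\beta}$ absorbs the conformal-radius uncertainty through the exponent appearing in the one-point estimate of Lemma~\ref{lem::one_point_both} (recall that in the proof of Lemma~\ref{lem::perfect_conditional} this exponent enters through $|g'(i)|^{\alpha+o(1)}$ and $\confrad$-type factors). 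Combining the three steps gives \eqref{eqn::perfect_conditional_distinct}. I expect the decoupling in the second step to be the main obstacle: one must verify that conditioning on the auxiliary flow lines of the same field $h$ that make up $\CF_k(w)$ does not meaningfully disturb the law near $z$, and that the only genuine loss — the conformal-radius uncertainty — is exactly one scale's worth, i.e. $e^{O_\beta(1)\wt\beta}$ and no more; this is precisely where the minimality of $m$ in the statement is used.
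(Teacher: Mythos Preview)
Your approach diverges from the paper's in a structural way, and the route you outline has a genuine gap.

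The paper's proof is much shorter than what you sketch. After reducing to $z=i$, it simply observes that $E^{m,n}=E_{m+1}\cap E^{m+1,n}$, so
\[
\p[E^{m,n}\giv\CF_k(w)]\,\one_{E^k(w),P_w}\le \p[E^{m+1,n}\giv E_{m+1},\CF_k(w)]\,\one_{E^k(w),P_w},
\]
and by Proposition~\ref{prop::many_perfect} the loss of one scale in the target ($\p[E^{n-m-1}]$ instead of $\p[E^{n-m}]$) costs exactly the $e^{O_\beta(1)\wt\beta}$ factor. Conditioning on $E_{m+1}$ then places the paths involved in $E^{m+1,n}$ (by Lemma~\ref{lem::q_size}) in a small region near $z$ that is disjoint from the $\CF_k(w)$ paths by the choice of $m$, and the machinery of Lemma~\ref{lem::perfect_conditional} applies verbatim to give $\p[E^{m+1,n}\giv E_{m+1},\CF_k(w)]\asymp\p[E^{1,n-m}\giv E_1]$. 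No temporal ordering of the path is needed; only spatial separation is used.

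Your approach instead relies on a temporal ordering claim: that on $P_w\cap E^k(w)$, $\eta_{1,1}$ descends to scale $k$ near $w$ while staying inside $V_m(w)$, and hence before entering $V_m(z)$. This is not justified. Lemma~\ref{lem::q_size} only confines the scale-$j$ path segments near $w$ to $V_m(w)$ for $j\ge m+2$; the segments at scales $1,\ldots,m+1$ live in regions of size $\asymp\im(w)e^{2\beta}$ (in the image coordinates), which after mapping back can easily contain $V_m(z)$. So $\eta_{1,1}$ may well enter $V_m(z)$ before time $T_k^w$, in which case $\CF_k(w)$ records strictly more of $\eta_{1,1}$ than $\eta_{1,1}|_{[0,\tau]}$, and your conditioning step breaks down. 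Relatedly, your assertion that ``the scales $1,\dots,m$ of the $z$-zoom are already realized on $P_w\cap E^k(w)$ and are $\CF_k(w)$-measurable'' is incorrect in either reading: the maps $\varphi_1^z,\ldots,\varphi_m^z$ require $\eta_{1,1}$ to reach points well \emph{inside} $V_m(z)$, i.e.\ strictly after your $\tau$, so they cannot be $\CF_k(w)$-measurable under your own ordering; and in any case $E^{m,n}(z)$ is by definition already only about scales $m+1,\ldots,n$, so no such reduction is needed.

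There is a second, more structural issue. The event $E^{m,n}(z)$ is phrased in terms of the specific iterated conformal maps $\varphi_j^z$, whereas you uniformize by a different map $\varphi$ (based on the stopping time $\tau$). Even granting the decoupling, it is not clear that $E^{m,n}(z)$ pulls back under your $\varphi$ to anything comparable to $E^{n-m}$ for the image configuration; the paper avoids this by conditioning on $E_{m+1}$ and working with the natural map $\varphi_m^z$, so that the image configuration is by construction in the standard form of Lemma~\ref{lem::perfect_conditional}.
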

\begin{proof}
We are going to extract \eqref{eqn::perfect_conditional_distinct} from \eqref{eqn::perfect_conditional_same} of Lemma~\ref{lem::perfect_conditional}.  As before, by applying $\psi_z$, we may assume without loss of generality that $z=i$.  Fix $k \geq m$.  By Proposition~\ref{prop::many_perfect}, it suffices to prove
\begin{equation}
\label{eqn::perfect_conditional_distinct_reduced}
\begin{split}
 &\p[ E^{m+1,n} \giv E_{m+1}, \CF_k(w)] \one_{E^k(w), P_w}
 \lesssim \p[ E^{n-m-1}] \one_{E^k(w), P_w}
 \end{split}
\end{equation}
in place of \eqref{eqn::perfect_conditional_distinct}.  By Lemma~\ref{lem::q_size}, we know that the paths involved in $E^{m,n}$ are disjoint from those involved in $E^k(w)$ due to the choice of $m$.  Thus by conformally mapping back (see Figure~\ref{fig::two_point_approx_ind}) and applying Lemma~\ref{lem::change_of_domains_bound_RN} as in the proof of Lemma~\ref{lem::perfect_conditional}, it is therefore not hard to see that
\[ \p[ E^{m+1,n} \giv E_{m+1}, \CF_k(w)] \one_{E^k(w), P_w} \asymp \p[ E^{1,n-m} \giv E_1] \one_{E^k(w), P_w}.\]

Combining this with \eqref{eqn::perfect_conditional_same} completes the proof.
\end{proof}

\begin{lemma}
\label{lem::two_point_perfect}
For every $\eps > 0$ and $\delta \in (0,\tfrac{\pi}{2})$ there exists $\beta_0 > 1$ such that for all $\wt{\beta} > \beta^2 > \beta \geq \beta_0$ there exists constants $C > 0$ and $n_0 \in \N$ such that the following is true.  Fix $z,w \in \D \cap \h$ distinct with $\arg(z), \arg(w) \in (\delta,\pi-\delta)$.  Let $m$ be the smallest integer such that $V_{m-1}(z) \cap V_{m-1}(w) = \emptyset$.  Then
\[ \p[ E^n(z), E^n(w)] \leq C e^{\wt{\beta} (1 + \eps)m A}  \p[ E^n(z)] \p[ E^n(w)] \quad \text{ for all } \quad n \geq n_0.\]
\end{lemma}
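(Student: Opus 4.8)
The plan is to establish the two point estimate by conditioning on the flow lines near the ``first'' point hit by $\eta_{1,1}$, reducing everything to the one point estimate of Proposition~\ref{prop::many_perfect} via the conditional estimate of Lemma~\ref{lem::perfect_conditional2}. By symmetry we may suppose, on the event $P_w$, that $\eta_{1,1}$ hits $V_m(w)$ before $V_m(z)$; the opposite case is handled by swapping the roles of $z$ and $w$. Working on $P_w$, first I would decompose
\[ \p[ E^n(z), E^n(w)] = \p[E^n(z), E^n(w), P_w] + \p[E^n(z), E^n(w), P_w^c] \]
and bound each term separately; by symmetry it suffices to bound the $P_w$-term. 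On $P_w$, the paths defining $E^k(w)$ are disjoint from (and hence measurable with respect to a $\sigma$-algebra independent of the tail of) the paths defining $E^{m,n}(z)$, by the choice of $m$ together with Lemma~\ref{lem::q_size}.

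The main chain of inequalities would run as follows. First, $E^n(z) \subseteq E^{m,n}(z)$ and $E^n(w) \subseteq E^k(w)$ for any $k \le n$; I would take $k = n$. Thus
\[ \p[E^n(z), E^n(w), P_w] \le \p[E^{m,n}(z), E^n(w), P_w] = \E\big[ \one_{E^n(w), P_w} \, \p[E^{m,n}(z) \giv \CF_n(w)] \big]. \]
Now apply Lemma~\ref{lem::perfect_conditional2} (with $k = n$) to get $\p[E^{m,n}(z) \giv \CF_n(w)] \one_{E^n(w), P_w} \le e^{O_\beta(1) \wt{\beta}} \p[E^{n-m}] \one_{E^n(w), P_w}$, so the right side is at most $e^{O_\beta(1)\wt{\beta}} \p[E^{n-m}] \p[E^n(w)]$. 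Next, by Proposition~\ref{prop::many_perfect} applied to both $\p[E^{n-m}]$ and $\p[E^n(z)]$ (note $\p[E^n(z)] \asymp \p[E^n]$ by conformal covariance and Lemma~\ref{lem::change_of_domains_bound_RN}, since $\arg(z) \in (\delta,\pi-\delta)$), we have
\[ \frac{\p[E^{n-m}]}{\p[E^n(z)]} \asymp e^{\wt{\beta}(1 + O_\beta(1) o_{\wt{\beta}}(1)) m A}. \]
Combining, $\p[E^n(z), E^n(w), P_w] \le e^{O_\beta(1)\wt{\beta}} e^{\wt{\beta}(1 + O_\beta(1)o_{\wt\beta}(1)) m A} \p[E^n(z)] \p[E^n(w)]$. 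Here I would absorb the $e^{O_\beta(1)\wt\beta}$ factor: since $m \to \infty$ as $|z-w| \to 0$, for $n \ge n_0$ large (equivalently $m$ bounded below) one has $e^{O_\beta(1)\wt\beta} \le e^{\wt\beta \eps m A / 2}$ once $\beta_0$ is taken large enough that the $O_\beta(1) o_{\wt\beta}(1)$ error is $\le \eps/2$; the leftover constant $C$ absorbs the finitely many small-$m$ cases and the implicit constants in $\asymp$.

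I expect the main obstacle to be the bookkeeping in verifying that Lemma~\ref{lem::perfect_conditional2} actually applies with $k = n$ in the regime we need — specifically, confirming that on $P_w \cap E^n(w)$ the event $E^{m,n}(z)$ is, up to the $e^{O_\beta(1)\wt\beta}$ Radon--Nikodym distortion, conditionally independent of $\CF_n(w)$ given the relevant initial segment of $\eta_{1,1}$, and that the disjointness from Lemma~\ref{lem::q_size} is strong enough (a neighborhood, not just the paths) for the absolute continuity argument of Lemma~\ref{lem::change_of_domains_bound_RN} to go through. The geometric content is already in Figure~\ref{fig::two_point_approx_ind}: one conformally maps away $\eta_{1,1}$ up to the first time it enters the neighborhood of $z$, checks the resulting configuration of auxiliary paths lies far from $i = \varphi(z)$, and then the one point estimates near $z$, near $w$, and near the common neighborhood of $z,w$ chain together. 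The remaining steps — the probability decomposition, the symmetry reduction, and absorbing error factors — are routine.
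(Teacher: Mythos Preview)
Your proposal is correct and follows essentially the same route as the paper: decompose according to which of $V_m(z),V_m(w)$ is hit first, drop $E^n(z)$ to $E^{m,n}(z)$, apply Lemma~\ref{lem::perfect_conditional2} with $k=n$, and then use Proposition~\ref{prop::many_perfect} (together with \eqref{eqn::perfect_conditional_same}) to convert $\p[E^{n-m}]$ back into $e^{\wt\beta(1+\eps)mA}\p[E^n(z)]$. One small correction: your claimed equivalence ``$n \ge n_0$ large $\Longleftrightarrow$ $m$ bounded below'' is spurious, since $m$ is determined by $|z-w|$ and is independent of $n$; however this does not matter, because $C$ is permitted to depend on $\beta,\wt\beta$ and can absorb the entire factor $e^{O_\beta(1)\wt\beta}$ directly, which is exactly what the paper does.
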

\begin{proof}
Suppose that $z,w \in \h$ are as in the statement of the lemma.  Let $P_w$ be the event that $\eta_1$ hits $V_m(w)$ before hitting $V_m(z)$ and let $P_z$ be the event in which the roles of $z$ and $w$ are swapped.  We have that
\begin{align}
 &\p[ E^n(z), E^n(w)]
= \p[ E^n(z), E^n(w), P_w] + \p[ E^n(z), E^n(w), P_z] \notag\\
\leq& \p[ E^n(z) \giv  E^n(w),P_w] \p[ E^n(w)] + \p[ E^n(w) \giv  E^n(z),P_z] \p[ E^n(z)]. \label{eqn::two_point_perfect1}
\end{align}
We are going to bound the first summand; the second is bounded analogously.  We have,
\begin{align}
\label{eqn::two_point_perfect2}
    \p[ E^n(z) \giv  E^n(w), P_w]
\leq \p[ E^{m,n}(z) \giv  E^n(w), P_w].
\end{align}
By \eqref{eqn::perfect_conditional_distinct} of Lemma~\ref{lem::perfect_conditional2}, we have that
\begin{equation}
\label{eqn::two_point_perfect3}
\p[ E^{m,n}(z) \giv E^n(w), P_w] \leq e^{O_\beta(1) \wt{\beta}} \p[ E^{n-m}].
\end{equation}
By \eqref{eqn::perfect_conditional_same} of Lemma~\ref{lem::perfect_conditional} and Proposition~\ref{prop::many_perfect}, we have that
\begin{equation}
\label{eqn::two_point_perfect4}
 \p[ E^{n-m}] \leq e^{\wt{\beta}(1+\eps) m A} \p[E^n(z)]
\end{equation}
(possibly increasing $\beta_0$).  The same likewise holds when we swap the roles of $P_w$ and $P_z$.  Combining \eqref{eqn::two_point_perfect1}--\eqref{eqn::two_point_perfect4} gives the result.
\end{proof}

We can now complete the proof of Theorem~\ref{thm::two_flowline_dimension}.

\begin{proof}[Proof of Theorem~\ref{thm::two_flowline_dimension}]
We suppose that $h$ is a GFF on $\h$ with boundary conditions
\[ h|_{(-\infty,0]} \equiv -\lambda \quad\text{and}\quad h|_{(0,\infty)} \equiv \lambda-\theta \chi\]
and let $\eta_1$ (resp.\ $\eta_2$) be the flow line of $h$ starting from $0$ with angle~$0$ (resp.~$\theta \in (\pi-2\lambda/\chi,0)$).  We have already established the upper bound for $\dimH(\eta_1 \cap \eta_2 \cap \h)$ in Proposition~\ref{prop::dim_intersection_upper_bound}.  We will now establish the lower bound.  Once we have proved this, we get the corresponding dimension when $h$ has general piecewise constant boundary data as described in the theorem statement by absolute continuity for GFFs.

The proof is completed in the same manner as the proof of Theorem~\ref{thm::boundary_dimension}.    Indeed, we let $\eps_n = 2^{8n+4} e^{-(\beta+\wt{\beta})n}$.  We divide $[-1,1] \times [1,2]$ into $2\eps_n^{-2}$ squares of equal side length $\eps_n$ and let $z^n_j$ be the center of the $j$th such square for $j=1,\ldots,2\eps_n^{-2}$.  Let $\CC_n$ be the set of centers $z$ of these squares for which $E^n(z)$ occurs.  Let $S_n(z)$ be the square with center $z$ and length $\eps_n$. Finally, we let
\[ \CC=\bigcap_{k\ge 1}\overline{\bigcup_{n\ge k}\bigcup_{z\in\CC_n} S_n(z)}.\]
It is easy to see that
\[ \CC \subseteq \eta_1 \cap \eta_2 \cap \h.\]
The argument of the proof of Theorem~\ref{thm::boundary_dimension} combined with Lemma~\ref{lem::two_point_perfect} implies, for each $\xi > 0$, that $\p[ \dimH(\eta_1 \cap \eta_2) \geq 2-A-\xi] > 0$.
To finish the proof, we only need to explain the 0-1 argument: that for each $d\in[0,2]$, $\p[\dimH(\eta_1 \cap \eta_2 \cap \h)=d]\in\{0,1\}$.  For $r>0$, let $D_r=\dimH(\eta_1\cap\eta_2\cap B(0,r) \cap \h)$. It is clear that $0 < r_1<r_2$ implies $D_{r_1}\le D_{r_2}$.  By the scale invariance of the setup, we have that $D_{r_1}$ has the same law as $D_{r_2}$. Thus $D_{r_1}=D_{r_2}$ almost surely for all $0 < r_1<r_2$. In particular, $\p[D_{\infty}=D_r]=1$ for all $r>0$. Thus the events $\{D_{\infty}=d\}$ and $\{D_r=d\}$ are the same up to a set of probability zero.  The latter is measurable with respect to the GFF restricted to $B(0,r)$. Letting $r \downarrow 0$, we see that this implies that the event $\{D_{\infty}=d\}$ is trivial, which completes the proof.
\end{proof}

\section{Proof of Theorem~\ref{thm::double_point_dimension}}
\label{sec::main_results}

We will first work towards proving \eqref{eqn::double_point_dimension} for $\kappa' \in (4,8)$; let $\kappa=\frac{16}{\kappa'}\in (2,4)$.  It suffices to compute the almost sure Hausdorff dimension of the double points of the chordal $\SLE_{\kappa'}(\tfrac{\kappa'}{2}-4;\tfrac{\kappa'}{2}-4)$ processes.  Indeed, this follows since the conditional law of an $\SLE_{\kappa'}$ process given its left and right boundaries is independently that of an $\SLE_{\kappa'}(\tfrac{\kappa'}{2}-4;\tfrac{\kappa'}{2}-4)$ in each of the bubbles which lie between these boundaries (recall Figure~\ref{fig::counterflowline_and_flowline}).  In order to establish this result, we are going to make use of the path decomposition developed in \cite{IG3} which was used to prove the reversibility of $\SLE_{\kappa'}$ for $\kappa' \in (4,8)$.  This, in turn, makes use of the duality results established in \cite[Section~7]{IG1}.  For the convenience of the reader, we are going to review the path decomposition here.

Throughout, we suppose that $h$ is a GFF on the horizontal strip $\T=\R\times (0,1)$ with boundary values given by $-\lambda+\frac{\pi}{2}\chi=-\lambda'$ on the lower boundary $\partial_L\T=\R$ of the strip and $\lambda-\frac{3\pi}{2}\chi=\lambda'-\pi \chi$ on the upper boundary $\partial_U\T=\R\times\{1\}$ of the strip.  (See Figure~\ref{fig::pocket} for an illustration of the setup and recall the identities from \eqref{eqn::constants}.)  Let $\eta'$ be the counterflow line of $h$ from $+\infty$ to $-\infty$.  Then $\eta'$ is an $\SLE_{\kappa'}(\tfrac{\kappa'}{2}-4;\tfrac{\kappa'}{2}-4)$ process in $\T$ from $+\infty$ to $-\infty$ where the force points are located immediately to the left and right of the starting point of the path.  Recall that $\frac{\kappa'}{2}-4$ is the critical threshold at or below which an $\SLE_{\kappa'}(\rho)$ process fills the domain boundary.  Fix $z \in \partial \T$ and let $t(z)$ be the first time $t$ that $\eta'$ hits $z$.  Then $t(z) < \infty$ almost surely (and this holds for all boundary points simultaneously).  Assume further that $z \in \partial_L \T$ and let $\eta_z^1$ be the outer boundary of $\eta'([0,t(z)])$.  Explicitly, $\eta_z^1$ is equal to the flow line of $h$ with angle $\tfrac{\pi}{2}$ starting from $z$ stopped at time $\tau_z^1$, the first time that it hits $\partial_U \T$ (see Figure~\ref{fig::pocket}).  The conditional law of $\eta'$ given $\eta_z^1([0,\tau_z^1])$ in each of the connected components $C$ of $\T \setminus \eta_z^1([0,\tau_z^1])$ which lie to the right of  $\eta_z^1([0,\tau_z^1])$ is independently that of an $\SLE_{\kappa'}(\frac{\kappa'}{2}-4;\frac{\kappa'}{2}-4)$ process starting from the first point of $\ol{C}$ visited by $\eta'$ and terminating at the last.

\begin{figure}[ht!]
\begin{center}
\includegraphics[scale=0.85]{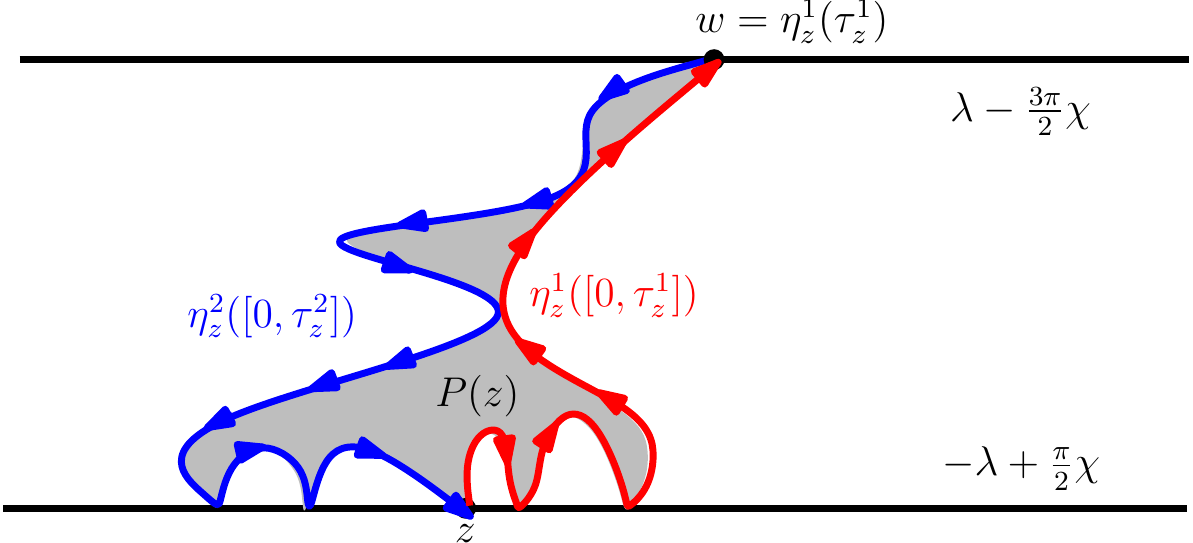}
\end{center}
\caption{\label{fig::pocket}
Suppose that $h$ is a GFF on the horizontal strip $\T = \R \times (0,1)$ with the illustrated boundary data and let $\eta'$ be the counterflow line of $h$ starting from $+\infty$ and targeted at $-\infty$.   Then $\eta'$ is an $\SLE_{\kappa'}(\tfrac{\kappa'}{2}-4;\tfrac{\kappa'}{2}-4)$ with force points located immediately to the left and right of the starting point of the path. Fix $z$ in the lower boundary $\partial_L \T = \R$ of $\T$ and let $t(z)$ be the first time that $\eta'$ hits $z$.  Since $\eta'$ is boundary filling, $t(z) < \infty$ almost surely.  Let $\eta_z^1$ be the outer boundary of $\eta'([0,t(z)])$.  Then $\eta_z^1$ is equal to the flow line of $h$ with angle $\tfrac{\pi}{2}$ starting from $z$ and stopped at time $\tau_z^1$, the first time that it hits $\partial_U\T$.  Let $w=\eta_z^1(\tau_z^1)$. Given $\eta_z^1([0,\tau_z^1])$, let $\eta_z^2$ be the outer boundary of $\eta'([t(z),\infty))$. Then $\eta_z^2$ is equal to the flow line of $h$ given $\eta_z^1([0,\tau_z^1])$ with angle $\tfrac{\pi}{2}$ started from $w$ stopped at time $\tau_z^2$, the first time it hits $z$. Let $P(z)$ be the region between $\eta_z^1([0,\tau_z^1])$ and $\eta_z^2([0,\tau_z^2])$ (indicated in gray). Given $P(z)$, the conditional law of $\eta'$ in each component $C$ of $\T\setminus P(z)$ is independently that of an $\SLE_{\kappa'}(\frac{\kappa'}{2}-4;\frac{\kappa'}{2}-4)$ from the first point in $\ol{C}$ visited by $\eta'$ to the last.  The points $\eta_z^1([0,\tau_z^1]) \cap \eta_z^2([0,\tau_z^2])$ are double points of $\eta'$.}
\end{figure}

\begin{figure}[ht!]
\begin{center}
\includegraphics[scale=0.85]{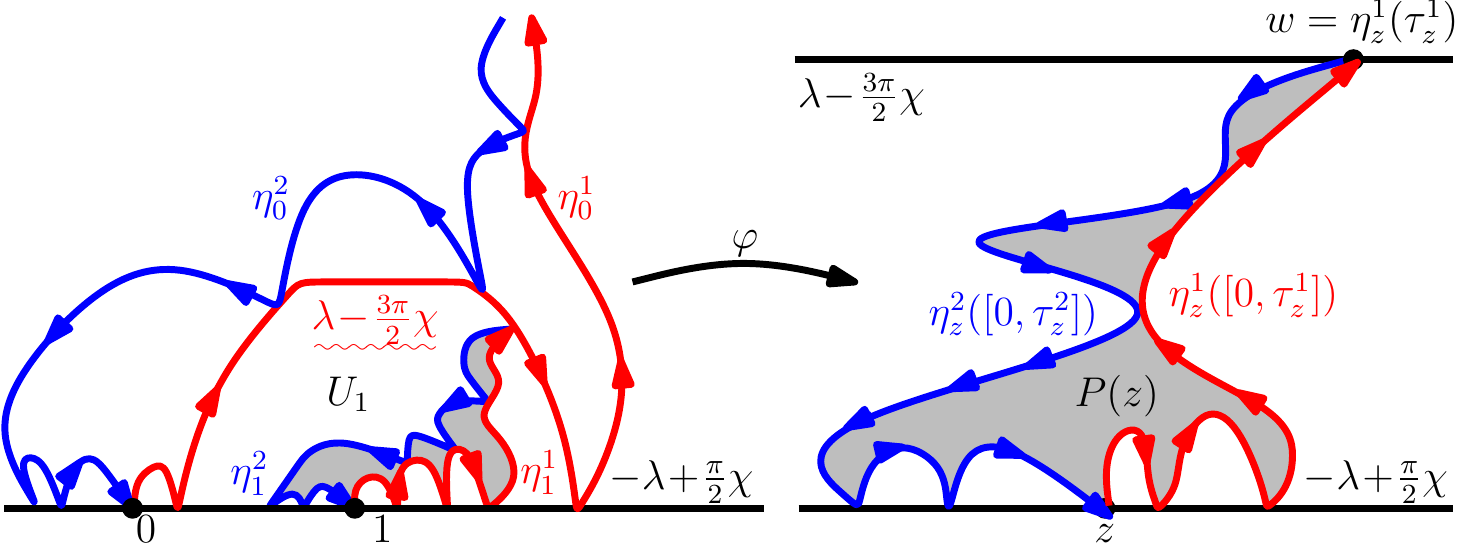}
\end{center}
\caption{\label{fig::pocket2}
(Continuation of Figure~\ref{fig::pocket}.)  Suppose that $\wt{h}$ is a GFF on $\h$ with the boundary data indicated on the left side.  Let $\eta_0^1$ be the flow line of $\wt{h}$ from $0$ to $\infty$ with angle $\tfrac{\pi}{2}$.  Given $\eta_0^1$, let $\eta_0^2$ be the flow line of $\wt{h}$ given $\eta_0^1$ from $\infty$ with angle $\tfrac{\pi}{2}$ in the connected component of $\h \setminus \eta_0^1$ which is to the left of $\eta_0^1$.  Then $\eta_0^1$ is an $\SLE_\kappa(\tfrac{\kappa}{2}-2;-\tfrac{\kappa}{2})$ in $\h$ from $0$ to $\infty$.  Moreover, the conditional law of $\eta_0^2$ given $\eta_0^1$ is that of an $\SLE_\kappa(\kappa-4;-\tfrac{\kappa}{2})$ in the component of $\h \setminus \eta_0^1$ which is to the left of $\eta_0^1$ from $\infty$ to $0$ (the $\kappa-4$ force point lies between the paths).  Shown is the boundary data for the conditional law of $\wt{h}$ given $(\eta_0^1,\eta_0^2)$ in the component $U_1$ of $\h \setminus (\eta_0^1 \cup \eta_0^2)$ which contains $1$ on its boundary.  Let $\varphi \colon U_1 \to \h$ be the conformal transformation with $\varphi(1) = z$ and which takes leftmost (resp.\ rightmost) point of $\partial U_1 \cap \partial \h$ to $-\infty$ (resp.\ $+\infty$).  Then $\wt{h} \circ \varphi^{-1} - \chi \arg (\varphi^{-1})'$ has the boundary data shown on the right side.  Let $(\eta_1^1,\eta_1^2)$ be a pair of paths defined in the same way as $(\eta_0^1,\eta_0^2)$ except starting from $1$.  Then the image of the region in $U_1$ between $\eta_1^1$ and $\eta_1^2$ under $\varphi$ has the same law as $P(z)$ described in Figure~\ref{fig::pocket}.  (See also \cite[Figure~3.2]{IG3}.)
}
\end{figure}

\begin{figure}[ht!]
\begin{center}
\includegraphics[scale=0.85]{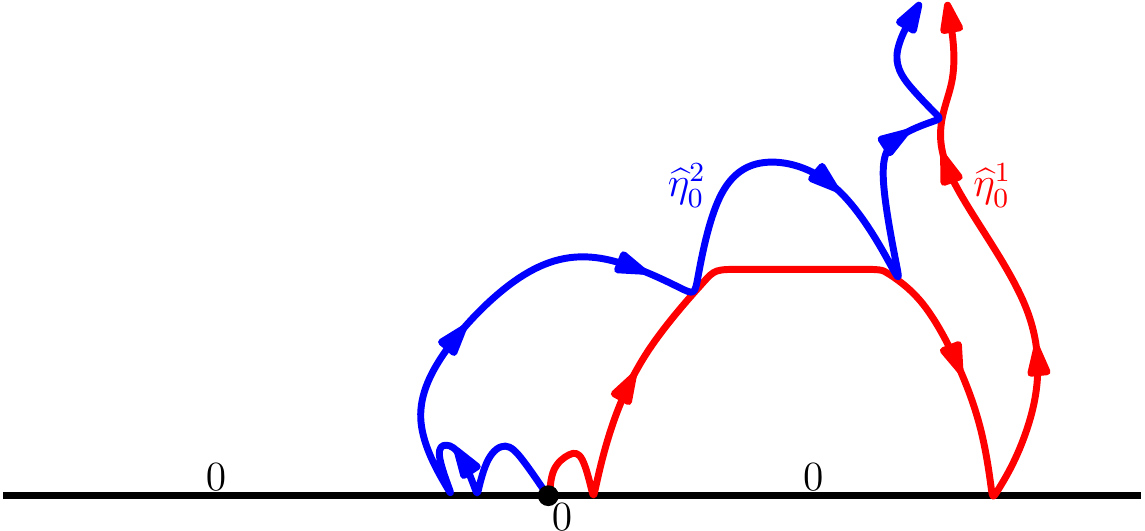}
\end{center}
\caption{\label{fig::pocket3}
Suppose that $\wh{h}$ is a GFF on $\h$ with zero boundary conditions as illustrated.  Let $\wh{\eta}_0^1$ (resp.\ $\wh{\eta}_0^2$) be the flow line of $\wh{h}$ starting from $0$ with angle $-\tfrac{1}{2}\theta_{\rm double}$ (resp.\ $\tfrac{1}{2} \theta_{\rm double}$); recall \eqref{eqn::angle_double}.  Then $\wh{\eta}_0^1$ is an $\SLE_\kappa(\tfrac{\kappa}{2}-2;-\tfrac{\kappa}{2})$ process in $\h$ from $0$ to $\infty$ (Figure~\ref{fig::gff_boundary_data_flow}) and the conditional law of $\wh{\eta}_0^2$ given $\wh{\eta}_0^1$ in the connected component of $\h \setminus \wh{\eta}_0^1$ which is to the left of $\wh{\eta}_0^1$ is an $\SLE_\kappa(-\tfrac{\kappa}{2};\kappa-4)$ process from $0$ to $\infty$ (Figure~\ref{fig::conditional_law}).  Similarly, $\wh{\eta}_0^2$ is an $\SLE_\kappa(-\tfrac{\kappa}{2};\tfrac{\kappa}{2}-2)$ process in $\h$ from $0$ to $\infty$ (Figure~\ref{fig::gff_boundary_data_flow}) and the conditional law of $\wh{\eta}_0^1$ given $\wh{\eta}_0^2$ is an $\SLE_\kappa(\kappa-4;-\tfrac{\kappa}{2})$ process from $0$ to $\infty$ in the component of $\h \setminus \wh{\eta}_0^2$ which is to the right of $\wh{\eta}_0^2$ (Figure~\ref{fig::conditional_law}).  In particular, by the main result of \cite{IG2}, the joint law of the ranges of $\wh{\eta}_0^1$ and $\wh{\eta}_0^2$ is equal to the joint law of the ranges of $\eta_0^1$ and $\eta_0^2$ from the left side of Figure~\ref{fig::pocket2}.  Consequently, we can use Theorem~\ref{thm::two_flowline_dimension} to compute the almost sure dimension of the intersection of the latter.
}
\end{figure}

Let $w=\eta_z^1(\tau_z^1)\in\partial_U\T$. Since $\eta'$ is boundary filling and cannot enter the loops it creates with itself or with the domain boundary, the first point on $\partial_U\T$ that $\eta'$ hits after time $t(z)$ is $w$. Let $\eta_z^2$ be the outer boundary of $\eta'([t(z),\infty))$.  Then $\eta_z^2$ is the flow line of $h$ given $\eta_z^1([0,\tau_z^1])$ with angle $\tfrac{\pi}{2}$ starting from $w$ and stopped at time $\tau_z^2$, the first time the path hits $z$. Let $P(z)$ be the region which lies between $\eta_z^1([0,\tau_z^1])$ and $\eta_z^2([0,\tau_z^2])$.  Then $P(z)$ separates the set of points that $\eta'$ visits before and after hitting $z$.  The right (resp.\ left) boundary of $P(z)$ is given by $\eta_z^1([0,\tau_z^1])$ (resp.\ $\eta_z^2([0,\tau_z^2])$).  The conditional law of $\eta'$ given $P(z)$ is independently that of an $\SLE_{\kappa'}(\tfrac{\kappa'}{2}-4;\tfrac{\kappa'}{2}-4)$ process in each of the components $C$ of $\T \setminus P(z)$ starting from the first point of $\ol{C}$ hit by $\eta'$ and terminating at the last --- the same as that of $\eta'$ up to a conformal transformation.  This symmetry allows us to iterate this exploration procedure to eventually discover the entire path.  Note that the intersection points $\eta_z^1([0,\tau_z^1]) \cap \eta_z^2([0,\tau_z^2])$ are double points of $\eta'$.  If $z \in \partial_U \T$, then we can define the paths $\eta_z^1,\eta_z^2$ analogously except the angle $\tfrac{\pi}{2}$ is replaced with $-\tfrac{\pi}{2}$.  This is because when $\eta'$ hits $z \in \partial_U \T$, only its right boundary is visible from $-\infty$ which is contrast to the case when it hits $z \in \partial_L \T$ when only its left boundary is visible from~$-\infty$.

The following lemma allows us to relate the dimension of the double points of $\eta'$ to the intersection dimension of GFF flow lines given in Theorem~\ref{thm::two_flowline_dimension}.  This immediately leads to the lower bound in Theorem~\ref{thm::double_point_dimension} for $\kappa' \in (4,8)$.  We will explain a bit later how to extract from this the upper bound as well.

\begin{lemma}
\label{lem::strip_paths}
Let $P_\cap(z) = \eta_z^1([0,\tau_z^1]) \cap \eta_z^2([0,\tau_z^2])$.  We have that
\[ \dimH (P_\cap(z)) = 2-\frac{(12-\kappa')(4+\kappa')}{8\kappa'} \quad\text{almost surely}.\]
That is, $\dimH (P_\cap(z))$ is almost surely equal to the Hausdorff dimension of the intersection of two GFF flow lines with an angle gap of $\theta_{\rm double}$ (recall \eqref{eqn::angle_double}) as given in Theorem~\ref{thm::two_flowline_dimension}.
\end{lemma}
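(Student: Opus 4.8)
The plan is to transfer the question, via the path decomposition recalled just above and the imaginary geometry machinery of \cite{IG1,IG2,IG3}, into the setting of Theorem~\ref{thm::two_flowline_dimension}. First I would use Figure~\ref{fig::pocket2} to replace $P_\cap(z)$ by an intersection of flow lines in $\h$: the region $P(z) \subseteq \T$ is conformally equivalent (with $P_\cap(z)$ corresponding to the common boundary of the two paths bounding it) to the region lying between the pair $(\eta_0^1,\eta_0^2)$, where $\eta_0^1$ is the flow line from $0$ to $\infty$ of angle $\tfrac{\pi}{2}$ of a GFF $\wt h$ on $\h$ with the boundary data of Figure~\ref{fig::pocket2} --- an $\SLE_\kappa(\tfrac{\kappa}{2}-2;-\tfrac{\kappa}{2})$ process --- and, conditionally on $\eta_0^1$, $\eta_0^2$ is an $\SLE_\kappa(\kappa-4;-\tfrac{\kappa}{2})$ process from $\infty$ to $0$ in the connected component of $\h\setminus\eta_0^1$ lying to the left of $\eta_0^1$. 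Since the conformal maps involved are locally bi-Lipschitz on the relevant sets and Hausdorff dimension is invariant under such maps, and since deleting the two boundary endpoints of $P_\cap(z)$ does not change its dimension, this yields $\dimH(P_\cap(z)) = \dimH(\eta_0^1\cap\eta_0^2\cap\h)$ almost surely.

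Next I would replace $(\eta_0^1,\eta_0^2)$ by a pair of flow lines of a single zero-boundary GFF. Let $\wh h$ be a zero-boundary GFF on $\h$ and let $\wh\eta_0^1$ (resp.\ $\wh\eta_0^2$) be the flow line of $\wh h$ from $0$ of angle $-\tfrac12\theta_{\rm double}$ (resp.\ $\tfrac12\theta_{\rm double}$), as in Figure~\ref{fig::pocket3}. By the imaginary geometry rules (Figure~\ref{fig::gff_boundary_data_flow} and Figure~\ref{fig::conditional_law}), $\wh\eta_0^1$ is an $\SLE_\kappa(\tfrac{\kappa}{2}-2;-\tfrac{\kappa}{2})$ process and, given $\wh\eta_0^1$, $\wh\eta_0^2$ is an $\SLE_\kappa(-\tfrac{\kappa}{2};\kappa-4)$ process from $0$ to $\infty$ in the left component of $\h\setminus\wh\eta_0^1$; by the reversibility of $\SLE_\kappa(\rho_1;\rho_2)$, the latter has the same range as an $\SLE_\kappa(\kappa-4;-\tfrac{\kappa}{2})$ process from $\infty$ to $0$, matching the conditional law of $\eta_0^2$ above, and the first coordinates agree outright. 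Hence, by the main result of \cite{IG2} --- which characterizes the joint law of a pair of flow lines through these marginal and conditional laws; see the caption of Figure~\ref{fig::pocket3} --- the joint law of the ranges of $(\eta_0^1,\eta_0^2)$ coincides with that of the ranges of $(\wh\eta_0^1,\wh\eta_0^2)$, so that $\dimH(P_\cap(z)) = \dimH(\wh\eta_0^1\cap\wh\eta_0^2\cap\h)$ almost surely.

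Finally I would apply Theorem~\ref{thm::two_flowline_dimension} to $\wh\eta_0^1,\wh\eta_0^2$. These are flow lines of a GFF with (zero, hence piecewise constant) boundary data, started from $0$, with angles $\theta_1=-\tfrac12\theta_{\rm double}<\theta_2=\tfrac12\theta_{\rm double}$, and $\theta_2-\theta_1=\theta_{\rm double}=\tfrac{2\pi(\kappa-2)}{4-\kappa}$ lies in $\big(0,\tfrac{\kappa\pi}{4-\kappa}\big)$ precisely because $\kappa\in(2,4)$. The associated weight from Theorem~\ref{thm::two_flowline_dimension} is
\[ \rho = \frac{1}{\pi}(\theta_2-\theta_1)\Big(2-\frac{\kappa}{2}\Big) - 2 = \kappa - 4, \]
and since $\eta'$ has double points near $z$ almost surely (equivalently $\wh\eta_0^1\cap\wh\eta_0^2\cap\h\neq\emptyset$ a.s., as the two flow lines emanate from a common point with an admissible angle gap), Theorem~\ref{thm::two_flowline_dimension} gives
\[ \dimH(\wh\eta_0^1\cap\wh\eta_0^2\cap\h) = 2 - \frac{1}{2\kappa}\Big(\rho+\frac{\kappa}{2}+2\Big)\Big(\rho-\frac{\kappa}{2}+6\Big) = 2 - \frac{1}{2\kappa}\Big(\frac{3\kappa}{2}-2\Big)\Big(\frac{\kappa}{2}+2\Big) \]
almost surely. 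Substituting $\kappa=16/\kappa'$ and simplifying (using $\tfrac{3\kappa}{2}-2=\tfrac{2(12-\kappa')}{\kappa'}$, $\tfrac{\kappa}{2}+2=\tfrac{2(4+\kappa')}{\kappa'}$, $\tfrac{1}{2\kappa}=\tfrac{\kappa'}{32}$) turns the right side into $2-\tfrac{(12-\kappa')(4+\kappa')}{8\kappa'}$, which is the claimed value; and since $\wh\eta_0^1,\wh\eta_0^2$ have angle gap $\theta_{\rm double}$, this is by definition the intersection dimension of two GFF flow lines with angle gap $\theta_{\rm double}$. The step I expect to require the most care is the second one: making rigorous the identification of the joint law of the ranges of the counterflow-line boundary paths with that of two flow lines of a single GFF with angle gap $\theta_{\rm double}$, which rests on the flow-line interaction and reversibility results of \cite{IG1,IG2} together with the counterflow-line duality of \cite{IG3}, rather than on any new estimate; once this is in hand, the remainder is bookkeeping plus a direct invocation of Theorem~\ref{thm::two_flowline_dimension}.
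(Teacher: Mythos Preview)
Your approach matches the paper's: reduce via Figure~\ref{fig::pocket2} to a flow-line intersection in $\h$, identify its joint law with that of two flow lines $(\wh\eta_0^1,\wh\eta_0^2)$ of a zero-boundary GFF with angle gap $\theta_{\rm double}$ using the reversibility of \cite{IG2} (Figure~\ref{fig::pocket3}), and invoke Theorem~\ref{thm::two_flowline_dimension}. The one place your sketch is looser than the paper is the first step: Figure~\ref{fig::pocket2} does \emph{not} give a direct conformal equivalence between $P(z)$ and the region between $(\eta_0^1,\eta_0^2)$. Instead the paper first runs $(\eta_0^1,\eta_0^2)$ in $\h$, lets $U_1$ be the component of $\h\setminus(\eta_0^1\cup\eta_0^2)$ with $1$ on its boundary, and then runs a second pair $(\eta_1^1,\eta_1^2)$ from $1$ inside $U_1$; it is the region between \emph{this} pair whose image under the conformal map $\varphi\colon U_1\to\T$ has the law of $P(z)$. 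This nested construction is what produces the correct boundary data on $\T$ after transforming (see the right panel of Figure~\ref{fig::pocket2}). The discrepancy does not affect the conclusion, since $\dimH(\eta_1^1\cap\eta_1^2)=\dimH(\eta_0^1\cap\eta_0^2)$ almost surely by the same Figure~\ref{fig::pocket3}/Theorem~\ref{thm::two_flowline_dimension} argument (the theorem handles arbitrary piecewise constant boundary data), but the intermediate pocket is what makes the identification with $P(z)$ rigorous.
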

\begin{proof}
See Figure~\ref{fig::pocket2} for an illustration of the argument.  We shall assume throughout for simplicity that $z \in \partial_L \T$.  A similar argument gives the same result for $z \in \partial_U \T$.  Suppose that $\wt{h}$ is a GFF on $\h$ with the boundary data as indicated in the left side of Figure~\ref{fig::pocket2}.  Let $\eta_0^1$ be the flow line of $\wt{h}$ from $0$ with angle $\tfrac{\pi}{2}$.  Given $\eta_0^1$, let $\eta_0^2$ be the flow line of $\wt{h}$ with angle $\tfrac{\pi}{2}$ from $\infty$ in the component $L$ of $\h \setminus \eta_0^1$ which is to the left of $\eta_0^1$.  Note that $\eta_0^1$ is an $\SLE_\kappa(\tfrac{\kappa}{2}-2;-\tfrac{\kappa}{2})$ process in $\h$ from $0$ to $\infty$.  Moreover, the conditional law of $\eta_0^2$ given $\eta_0^1$ is an $\SLE_\kappa(\kappa-4;-\tfrac{\kappa}{2})$ process in $L$ from $\infty$ to $0$; see \cite[Lemma~3.3]{IG3}.  (The $\kappa-4$ force point lies between $\eta_0^1$ and $\eta_0^2$.)  By the main result of \cite{IG2}, the time-reversal $\wt{\eta}_0^2$ of $\eta_0^2$ is an $\SLE_\kappa(-\tfrac{\kappa}{2};\kappa-4)$ process in $L$ from $0$ to $\infty$.  As explained in Figure~\ref{fig::pocket3}, it consequently follows from Theorem~\ref{thm::two_flowline_dimension} that
\begin{equation}
\label{eqn::two_path_trick}
 \dimH(\eta_0^1 \cap \eta_0^2) = 2-\frac{(12-\kappa')(4+\kappa')}{8\kappa'} \quad\text{almost surely}
\end{equation}
since this is the almost sure dimension of $\wh{\eta}_0^1 \cap \wh{\eta}_0^2$ (using the notation of Figure \ref{fig::pocket3}).  Thus to complete the proof, we just have to argue that $\dimH(P_\cap(z))$ is also given by this value.

Let $U_1$ be the component of $\h \setminus (\eta_0^1 \cup \eta_0^2)$ which contains $1$ on its boundary.  Let $\varphi \colon U_1 \to \T$ be the conformal transformation which takes $1$ to $z$ and the leftmost (resp.\ rightmost) point of $\partial U_1 \cap \R$ to $-\infty$ (resp.\ $+\infty$).
Let $(\eta_1^1,\eta_1^2)$ be a pair of paths constructed in exactly the same manner as $(\eta_0^1,\eta_0^2)$ except starting from $1$ rather than $0$.  We consequently have that the image under $\varphi$ of the region between $\eta_1^1$ and $\eta_1^2$ is equal in distribution to $P(z)$ as described before the lemma statement.  Since $\dimH(\eta_1^1 \cap \eta_1^2)$ is also almost surely given by the value in \eqref{eqn::two_path_trick}, the desired result follows.
\end{proof}

\begin{figure}[ht!]
\begin{center}
\includegraphics[scale=0.85]{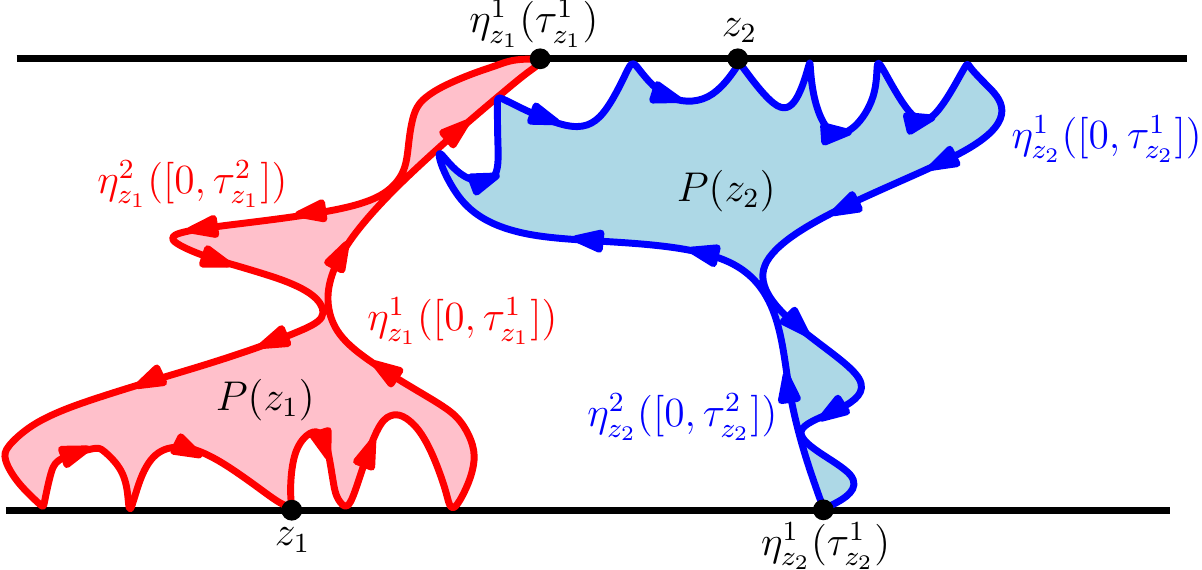}
\end{center}
\caption{\label{fig::two_pockets}
Suppose that we have the same setup as described in Figure~\ref{fig::pocket}.  Shown is $P(z_1)$ where $z_1 \in \partial \T$ is fixed.  The conditional law of $\eta'$ given $P(z_1)$ is independently that of an $\SLE_{\kappa'}(\tfrac{\kappa'}{2}-4;\tfrac{\kappa'}{2}-4)$ in each of the components $C$ of $\T \setminus P(z_1)$ starting from the first point of $\ol{C}$ hit by $\eta'$ and exiting at the last.  Fix $z_2$ on the boundary of a component $C$ of $\T \setminus P(z_1)$.  Then we can consequently form the set $P(z_2)$ which describes the interface between the set of points that $\eta'$, viewed as a path in $C$, hits before and after hitting $z_2$.  The intersection of the left and right boundaries of $P(z_2)$ consists of double points of $\eta'$.  Moreover, the conditional law of $\eta'$ given both $P(z_1)$ and $P(z_2)$ is independently that of an $\SLE_{\kappa'}(\tfrac{\kappa'}{2}-4;\tfrac{\kappa'}{2}-4)$ in each of the components of $\T \setminus (P(z_1) \cup P(z_2))$.  Consequently, we can iterate this procedure to eventually explore the entire trajectory of $\eta'$ (and, as we will explain in Lemma~\ref{lem::double_points_covered}, the double points of $\eta'$).  We will use this in Lemma~\ref{lem::double_points_covered} to reduce the double point dimension to computing the intersection dimension of GFF flow lines with an angle gap of $\theta_{\rm double}$ (recall \eqref{eqn::angle_double}).}
\end{figure}

Let $\CD$ be the set of double points of $\eta'$.  To complete the proof of Theorem~\ref{thm::double_point_dimension}, we will show that every double point of $\eta'$ is in fact in some $P_\cap(z)$. To this end, we explore the trajectory of $\eta'$ as follows.  Let $(d_j)_{j \in \N}$ be a sequence that traverses $\N\times\N$ in diagonal order, i.e. $d_1=(1,1)$, $d_2=(1,2)$, $d_3=(2,1)$, etc.  Let $(z_{1,k})_{k \in \N}$ be a countable dense subset of $\partial\T$, and set $z_1=z_{d_1}$.  Let $P(z_1)$ be the set which separates $\T$ into the set of points visited by $\eta'$ before and after hitting $z_1$, as in Figure~\ref{fig::pocket}.  We then let $(z_{2,k})_{k\in\N}$ be a countable dense subset of $\partial (\T\setminus P(z_1))$ and set $z_2=z_{d_2}$.  Recall that the conditional law of $\eta'$ given $P(z_1)$ is independently that of an $\SLE_{\kappa'}(\tfrac{\kappa'}{2}-4;\tfrac{\kappa'}{2}-4)$ process in each of the components of $\T \setminus P(z_1)$ --- this is the same as the law of $\eta'$ itself, up to conformal transformation.  Consequently, once we have fixed $P(z_1)$, we define $P(z_2)$ analogously in terms of the segment of $\eta'$ which traverses the component of $\T \setminus P(z_1)$ with $z_2$ on its boundary (see Figure \ref{fig::two_pockets}).
Generally, given $P(z_1),\ldots,P(z_n)$, we let $(z_{n+1,k})_{k\in\N}$ be a countable dense subset of $\partial (\T\setminus \cup_{j=1}^n P(z_j))$ and set $z_{n+1}=z_{d_{n+1}}$.  The conditional law of $\eta'$ given $P(z_1),\ldots,P(z_n)$ is independently that of an $\SLE_{\kappa'}(\tfrac{\kappa'}{2}-4;\tfrac{\kappa'}{2}-4)$ in each of the components of $\T\setminus \cup_{j=1}^n P(z_j)$.  Thus given $P(z_1),\ldots,P(z_n)$, we define $P(z_{n+1})$  analogously in terms of the segment of $\eta'$ which traverses the component which has $z_{n+1}$ on its boundary.  For each $n \in \N$, $\eta'$ almost surely hits $z_n$ only once at time $t(z_n)$.  Moreover, from the construction, we have that $(t(z_n))_{n\in\N}$ is a dense set of times in $[0,\infty)$ (see \cite[Section~3.3]{IG3}).

\begin{lemma}
\label{lem::double_points_covered}
Almost surely, $\CD \subseteq \cup_{j=1}^\infty P_\cap(z_j)$.
\end{lemma}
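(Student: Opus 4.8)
The plan is to show that if $z \in \CD$ is a double point of $\eta'$, visited by $\eta'$ at two distinct times $s < t$, then $z$ belongs to $P_\cap(z_j)$ for some $j$. The key observation is that the family of sets $P(z_1), P(z_2), \ldots$ exhausts the trajectory of $\eta'$ in the sense that $(t(z_n))_{n \in \N}$ is a dense set of times in $[0,\infty)$, as recorded just above the lemma statement (see \cite[Section~3.3]{IG3}). So first I would fix a double point $z$ with $\eta'(s) = \eta'(t) = z$, $s < t$. Since the times $t(z_n)$ are dense, we may choose $n$ with $s < t(z_n) < t$. I must first check that $z_n$ lies on the boundary of whichever pocket-complement component of $\T \setminus \cup_{j<n} P(z_j)$ the relevant segment of $\eta'$ is traversing at ``time'' $t(z_n)$ — this is automatic from the inductive construction, since each $z_{n,k}$ was chosen in $\partial(\T \setminus \cup_{j=1}^{n-1} P(z_j))$ and $\eta'$ hits $z_n = z_{d_n}$ exactly once, at time $t(z_n)$, inside that component.

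Next I would argue that $z \in P(z_n)$. Recall that $P(z_n)$ is precisely the set which separates, inside the relevant component $C$, the points visited by (the appropriate segment of) $\eta'$ before hitting $z_n$ from those visited after. Since $\eta'(s) = \eta'(t) = z$ with $s < t(z_n) < t$, the point $z$ is visited both strictly before and strictly after the time $t(z_n)$. A point visited on both sides of the separating set $P(z_n)$ must lie on $P(z_n)$ itself: if $z$ were in the interior of one of the two components of $C \setminus P(z_n)$, then $\eta'$ restricted to either $[\text{entry}, t(z_n)]$ or $[t(z_n), \text{exit}]$ could not reach it, contradicting $\eta'(s) = z$ or $\eta'(t) = z$ respectively. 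Hence $z \in P(z_n) = \ol{P(z_n)}$. Now $\partial P(z_n) = \eta_{z_n}^1([0,\tau_{z_n}^1]) \cup \eta_{z_n}^2([0,\tau_{z_n}^2])$, the union of the two flow-line segments forming the right and left boundaries of the pocket, and $P(z_n)$ is the closed region between them. So $z$ lies either on one of these two boundary arcs or in the open interior of $P(z_n)$.

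The remaining point — and I expect this to be the main obstacle — is to rule out that $z$ lies in the open interior of $P(z_n)$, or on the relative interior of just one of the two boundary arcs but not the other. Here one uses the conditional law: given $P(z_1), \ldots, P(z_n)$, the curve $\eta'$ is a collection of independent $\SLE_{\kappa'}(\tfrac{\kappa'}{2}-4;\tfrac{\kappa'}{2}-4)$ processes in the components of $\T \setminus \cup_{j=1}^n P(z_j)$, and in particular the segment of $\eta'$ that \emph{filled} the pocket $P(z_n)$ visited $P(z_n)$ on one of its two excursions across $C$ (before or after time $t(z_n)$). But a double point $z$ with visiting times on opposite sides of $t(z_n)$ is visited once by the ``before'' segment and once by the ``after'' segment; since these two segments meet only along $\partial P(z_n) = \eta_{z_n}^1 \cup \eta_{z_n}^2$ — indeed the ``before'' points are exactly $\eta'([\cdot, t(z_n)])$ whose closure meets $C$ in the region on one side of $P(z_n)$ together with $\eta_{z_n}^1$, and symmetrically for ``after'' with $\eta_{z_n}^2$ — the only way $z$ is in both is $z \in \eta_{z_n}^1([0,\tau_{z_n}^1]) \cap \eta_{z_n}^2([0,\tau_{z_n}^2]) = P_\cap(z_n)$. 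Care is needed because $\eta'$ is boundary-filling and self-intersecting, so ``the set of points visited before $t(z_n)$'' is a genuine two-dimensional region; the clean statement to lean on is that this region's intersection with $C$ is exactly the closure of the component of $C \setminus P(z_n)$ containing the first point of $\ol{C}$ hit by $\eta'$, together with the arc $\eta_{z_n}^1$, while the region visited after is the other closed component together with $\eta_{z_n}^2$. Taking the countable union over $n$ and noting $z$ was an arbitrary double point then gives $\CD \subseteq \cup_{j=1}^\infty P_\cap(z_j)$ almost surely, which is the assertion.
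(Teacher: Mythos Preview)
Your strategy coincides with the paper's: use density of $(t(z_n))_{n}$ to find $n$ with $s<t(z_n)<t$ and then argue that the double point $z$ lies in $P_\cap(z_n)$. There is, however, a genuine gap. The set $P(z_n)$ is constructed \emph{inside the specific component} $C$ of $\T\setminus\bigcup_{j<n}P(z_j)$ that carries $z_n$ on its boundary, and it separates only the segment of $\eta'$ traversing $C$. Your contradiction argument (``if $z$ were in the interior of a component of $C\setminus P(z_n)$, then $\eta'|_{[\text{entry},t(z_n)]}$ or $\eta'|_{[t(z_n),\text{exit}]}$ could not reach it, contradicting $\eta'(s)=z$ or $\eta'(t)=z$'') tacitly assumes that both visit times $s$ and $t$ lie in the interval $[\text{entry},\text{exit}]$ corresponding to $C$. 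If $n$ is not the \emph{first} index with $t(z_n)\in(s,t)$, then some earlier $P(z_{n'})$ with $s<t(z_{n'})<t$ has already separated the two visits to $z$, so that $s$ and $t$ lie in the time intervals of different components of $\T\setminus\bigcup_{j<n}P(z_j)$; in that case $z$ need not even belong to $\ol C$, and nothing in your argument ties $z$ to $P(z_n)$.

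The paper closes this gap by making the exploration explicitly iterative and taking $k=\min\{j\ge 1: t^f(\omega)<t(z_j)<t^\ell(\omega)\}$. For every $j<k$ one has $t(z_j)\notin(t^f(\omega),t^\ell(\omega))$: either $z_j$ lies in a different component (so constructing $P(z_j)$ leaves $\omega$'s component untouched), or $z_j$ shares $\omega$'s component and both visits to $\omega$ fall on the same side of $t(z_j)$, so $\omega$ remains a double point of the $\eta'$-segment in a single sub-component. This inductive bookkeeping guarantees that at step $k$ the point $z_k$ lies on the boundary of $\omega$'s component and both visit times are still inside its time interval; then your before/after separation argument applies verbatim and gives $\omega\in P_\cap(z_k)$. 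The one-line fix to your proposal is to choose $n$ minimal; with that change, the remainder of your outline matches the paper's proof.
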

\begin{proof}
For each $\omega \in \CD$, let $t^f(\omega)$ and $t^\ell(\omega)$ be the first and last time that $\eta'$ hits $\omega$.  For each $\delta>0$ we let $\CD_{\delta} = \{\omega \in \CD : t^\ell(\omega) - t^f(\omega) \geq \delta\}$.  Clearly, the sets $\CD_\delta$ increase as $\delta > 0$ decreases and $\CD = \cup_{\delta > 0} \CD_\delta$.  Therefore it suffices to show that $\CD_\delta  \subseteq \cup_{n=1}^\infty P_\cap(z_n)$ for each $\delta > 0$.  Fix $\omega\in\CD_{\delta}$ and consider $P(z_1)$. If $t^f(\omega)<t(z_1)<t^\ell(\omega)$, then $\omega\in P_\cap(z_1)$ and we stop the exploration. If $t(z_1)>t^\ell(\omega)$ or $t(z_1)<t^f(\omega)$, then $\omega$ is a double point of $\eta'|_{[0,t(z_1)]}$ or a double point of $\eta'|_{[t(z_1),\infty)}$, respectively. Consider $P(z_2)$. If $t^f(\omega)<t(z_2)<t^\ell(\omega)$, then $\omega\in P_\cap(z_2)$ and we stop the exploration. If $t(z_2)<t^f(\omega)$ or $t(z_2)>t^\ell(\omega)$, we continue the exploration.  We continue to iterate this until the first $k$ that $\omega \in P(z_k)$.  To see that the exploration terminates after a finite number of steps, recall that $(t(z_n))_{n \in \N}$ is a dense set of times in $[0,\infty)$.  In particular, letting
\[ k = \min\left\{j \geq 1 : t^f(\omega)<t(z_j)<t^\ell(\omega) \right\}\]
we have that $\omega \in P_\cap(z_k)$.
\end{proof}

We now have all of the ingredients to complete the proof of Theorem~\ref{thm::double_point_dimension} for $\kappa' \in (4,8)$.

\begin{proof}[Proof of Theorem~\ref{thm::double_point_dimension} for $\kappa' \in (4,8)$]
Lemma~\ref{lem::strip_paths} and Lemma~\ref{lem::double_points_covered} together imply that $\dim(\CD) = 2-(12-\kappa')(4+\kappa')/(8\kappa')$ almost surely, as desired.
\end{proof}

\medbreak

We finish by proving Theorem~\ref{thm::double_point_dimension} for $\kappa' \geq 8$.

\begin{proof}[Proof of Theorem~\ref{thm::double_point_dimension} for $\kappa' \geq 8$]
Fix $\kappa'\ge 8$ and let $\kappa=\tfrac{16}{\kappa'} \in (0,2]$.  Let $\eta'$ be an $\SLE_{\kappa'}$ process in $\h$ from $0$ to $\infty$ and let $\CD$ be the set of double points of $\eta'$.  Then $\eta'$ is space-filling \cite{\RohdeSchramm}.  For each point $z \in \h$, let $t(z)$ be the first time that $\eta'$ hits $z$ and let $\gamma(z)$ be the outer boundary of $\eta([0,t(z)])$.  It follows from \cite[Theorem~1.1 and Theorem~1.13]{IG4} and \cite{beffara2008} that the dimension of $\gamma(z)$ is equal to $1+\frac{\kappa}{8}=1+\frac{2}{\kappa'}$. Given $\gamma(z)$, $\eta'([t(z),\infty))$ is an $\SLE_{\kappa'}$ process in the remaining domain, and thus almost surely hits every point on $\gamma(z)$ except the point $z$. This implies that every point on $\gamma(z)$ except for $z$ is contained in $\CD$. This gives the lower bound for $\dimH(\CD)$.

Let $(z_k)_{k\in\N}$ be a countable dense set in $\h$.  For the upper bound, we will show that every element of $\CD$ is in fact on $\gamma(z_k)$ for some $k$.  Note that $(t(z_k))_{k \in \N}$ is a dense set of times in $[0,\infty)$ because $\eta'$ is continuous. For each $\omega \in \CD$, let $t^f(\omega)$ and $t^\ell(\omega)$ be the first and last times, respectively, that $\eta'$ hits $\omega$.  For each $\delta>0$, $\CD_{\delta} = \{ \omega \in \CD : t^\ell(\omega) - t^f(\omega) \geq \delta\}$. Then $\CD=\cup_{\delta>0}\CD_{\delta}$.  Since the sets $\CD_\delta$ are increasing as $\delta > 0$ decreases, it suffices to show that $\CD_\delta \subseteq \cup_k \gamma(z_k)$ for each $\delta > 0$.  Fix $\delta > 0$ and $\omega \in \CD_\delta$.  Since $(t(z_k))_{k \in \N}$ is dense, we have that
\[  k = \min\{ j \geq 1 : t^\ell(\omega) > t(z_j) > t^f(\omega)\} < \infty.\]
Clearly, $\omega\in\gamma(z_k)$.  This completes the proof for $\kappa'\ge 8$.
\end{proof}

\begin{remark}
\label{rem::triple_points}
We note that $\SLE_\kappa'$ for $\kappa' \in (4,8)$ does not have triple points and, when $\kappa' \geq 8$, the set of triple points is countable.  Indeed, to see this we note that if $z$ is a triple point of an $\SLE_\kappa'$ process $\eta'$ then there exists rational times $t_1 < t_2$ such that $z$ is a single-point of and contained in the outer boundary of $\eta'|_{[0,t_1]}$ and a double point of and contained in the outer boundary of $\eta'|_{[0,t_2]}$.  For each pair $t_1 < t_2$ there are precisely two points which satisfy these properties.  The claim follows for $\kappa' \in (4,8)$ since $\SLE_\kappa'$ for $\kappa' \in (4,8)$ almost surely does not hit any given boundary point distinct from its starting point.  The claim likewise follows for $\kappa' \geq 8$ because this describes a surjection from $\Q_+ \times \Q_+$, $\Q_+ = (0,\infty) \cap \Q$, to the set of triple points.
\end{remark}

\section*{Acknowledgments}
H.W.'s work is supported by the Fondation CFM JP Aguilar pour la recherche.
We thank Scott Sheffield and Wendelin Werner for helpful discussions.
We thank the support and hospitality of Microsoft Research as well as
FIM at ETH Z\"urich where part of this research was conducted.

\bibliographystyle{hmralpha}
\bibliography{gff_sle_dimension}
\end{document}